%
%
%


\documentclass{cams-l} 


\newcommand{\dave}[1]{#1}\newcommand{\davebegin}{}\newcommand{\daveend}{}\newcommand{\george}[1]{#1}

\usepackage{lipsum}
\usepackage{subfigure}
\usepackage{amsfonts}
\usepackage{graphicx}
\usepackage{float}
\usepackage{hyperref}
\usepackage[dvipsnames]{xcolor}
\hypersetup{
    colorlinks=true,
    linkcolor=NavyBlue,
    citecolor=Blue
}
\usepackage{multirow}
\usepackage{epstopdf}
\usepackage{algorithmic}
\usepackage[capitalise]{cleveref}

\crefformat{equation}{(#2#1#3)}
\ifpdf
  \DeclareGraphicsExtensions{.eps,.pdf,.png,.jpg}
\else
  \DeclareGraphicsExtensions{.eps}
\fi

\usepackage{float,mathtools,amsfonts,amssymb,xcolor,empheq,fancybox,booktabs,subcaption,array,longtable}
\usepackage[most]{tcolorbox}
\usepackage{thmtools}
\usepackage{thm-restate}
\usepackage{tikz,tkz-graph,tkz-berge}
\usepackage[ruled,vlined]{algorithm2e}

\usepackage[nocompress]{cite}
\newcommand{\CC}{\mathbb C}

\newcommand{\RR}{\mathbb R}
\newcommand{\ZZ}{\mathbb Z}
\newcommand{\HH}{\mathbb H}
\newcommand{\DD}{\mathbb D}

\newcommand{\cH}{\mathcal H}
\newcommand{\cL}{\mathcal L}

\newcommand{\cM}{\mathcal M}
\newcommand{\cN}{\mathcal N}

\newcommand{\cB}{\mathcal B}

\newcommand{\cC}{\mathcal C}
\newcommand{\cE}{\mathcal E}
\newcommand{\cF}{\mathcal F}

\newcommand{\cW}{\mathcal W}
\newcommand{\cZ}{\mathcal Z}
\newcommand{\cD}{\mathcal D}

\newcommand{\argmax}{\mathrm{arg\,max}}
\newcommand{\argmin}{\mathrm{arg\,min}}

\newcommand{\eps}{\varepsilon}

\newcommand{\ovl}[1]{{\overline{#1}}}

\newcommand{\tsum}{\sum\nolimits}

\newcommand{\op}{\operatorname}
\newcommand{\supp}{\operatorname{supp}}

\newcommand{\pv}{\op{p.\!v.}}

\renewcommand{\Re}{\op{Re}}
\renewcommand{\Im}{\op{Im}}

\DeclareRobustCommand{\gobblefour}[5]{}
\newcommand*{\SkipTocEntry}{\addtocontents{toc}{\gobblefour}}

\usepackage{bbm}

\makeatletter
\renewcommand*\l@figure{\@dottedtocline{1}{1.2em}{2.3em}}
\newcommand\@dotsep{140}
\let\l@table\l@figure
\makeatother

\usetikzlibrary{shapes.geometric, arrows}

\tikzstyle{startstop} = [rectangle, rounded corners, 
minimum width=3cm, 
minimum height=1cm,
text centered, 
draw=black, 
fill=blue!10]

\tikzstyle{io} = [trapezium, 
trapezium stretches=true, 
trapezium left angle=70, 
trapezium right angle=110, 
minimum width=3cm, 
minimum height=1cm, text centered, 
draw=black, fill=blue!30]

\tikzstyle{fake} = [trapezium, 
trapezium stretches=true, 
trapezium left angle=70, 
trapezium right angle=110, 
minimum width=0cm, 
minimum height=0cm, text centered, 
draw=black, fill=blue!30]

\tikzstyle{land} = [rectangle, 
minimum width=3cm, 
minimum height=1cm, 
text centered, 
text width=3cm, 
draw=black, 
fill=green!30!black!20]

\tikzstyle{ocean} = [rectangle, 
minimum width=3cm, 
minimum height=1cm, 
text centered, 
text width=3cm, 
draw=black, 
fill=cyan!20!]

\tikzstyle{decision} = [diamond, 
minimum width=3cm, 
minimum height=1cm, 
text centered, 
draw=black, 
fill=black!10]
\tikzstyle{arrow} = [thick,->,>=stealth]






\newtheorem{theorem}{Theorem}[section]
\newtheorem{lemma}[theorem]{Lemma}
\newtheorem{corollary}[theorem]{Corollary}
\newtheorem{proposition}[theorem]{Proposition}

\theoremstyle{definition}
\newtheorem{definition}[theorem]{Definition}
\newtheorem{example}[theorem]{Example}

\theoremstyle{remark}
\newtheorem{remark}[theorem]{Remark}

\numberwithin{equation}{section}

\begin{document}

\title[A Spectral Theory of Scalar Volterra Equations]{A Spectral Theory of Scalar Volterra Equations}


\author{David Darrow}
\address{Department of Mathematics, Massachusetts Institute of Technology, Cambridge, MA}
\curraddr{}
\email{ddarrow@mit.edu}
\thanks{}

\author{George Stepaniants}
\address{Department of Computing and Mathematical Sciences, California Institute of Technology, Pasadena, CA}
\curraddr{}
\email{gstepan@caltech.edu}
\thanks{}

\subjclass[2020]{Primary 45D05, 74D05, 34K37, 65R20}

\date{}

\dedicatory{}

\begin{abstract}
This work aims to bridge the gap between pure and applied research on scalar, linear Volterra equations by examining five major classes: integral and integro-differential equations with completely monotone kernels, \dave{such as linear} viscoelastic models; equations with positive definite kernels, \dave{such as} partially observed quantum systems; difference equations with discrete, positive definite kernels; a generalized class of delay differential equations; and a generalized class of fractional differential equations. We develop a general, spectral theory that provides a system of correspondences between these disparate domains. As a result, we see how `interconversion' (operator inversion) arises as a natural, continuous involution within each class, yielding a plethora of novel formulas for analytical solutions of such equations. \dave{This spectral theory unifies and extends existing results in viscoelasticity, signal processing, and analysis, and makes progress on an open question of Abel regarding the solution of integral equations of the first kind. Finally, it reduces simple Volterra equations of all classes to pen-and-paper calculation, and offers promising applications to the numerical solution of Volterra equations more broadly.}
\end{abstract}

\maketitle

\begin{figure}
    \centering
    \hspace*{-0.4in}
    \resizebox{1.2\textwidth}{!}{
    \begin{tikzpicture}
\node (img) {\includegraphics[width=\linewidth]{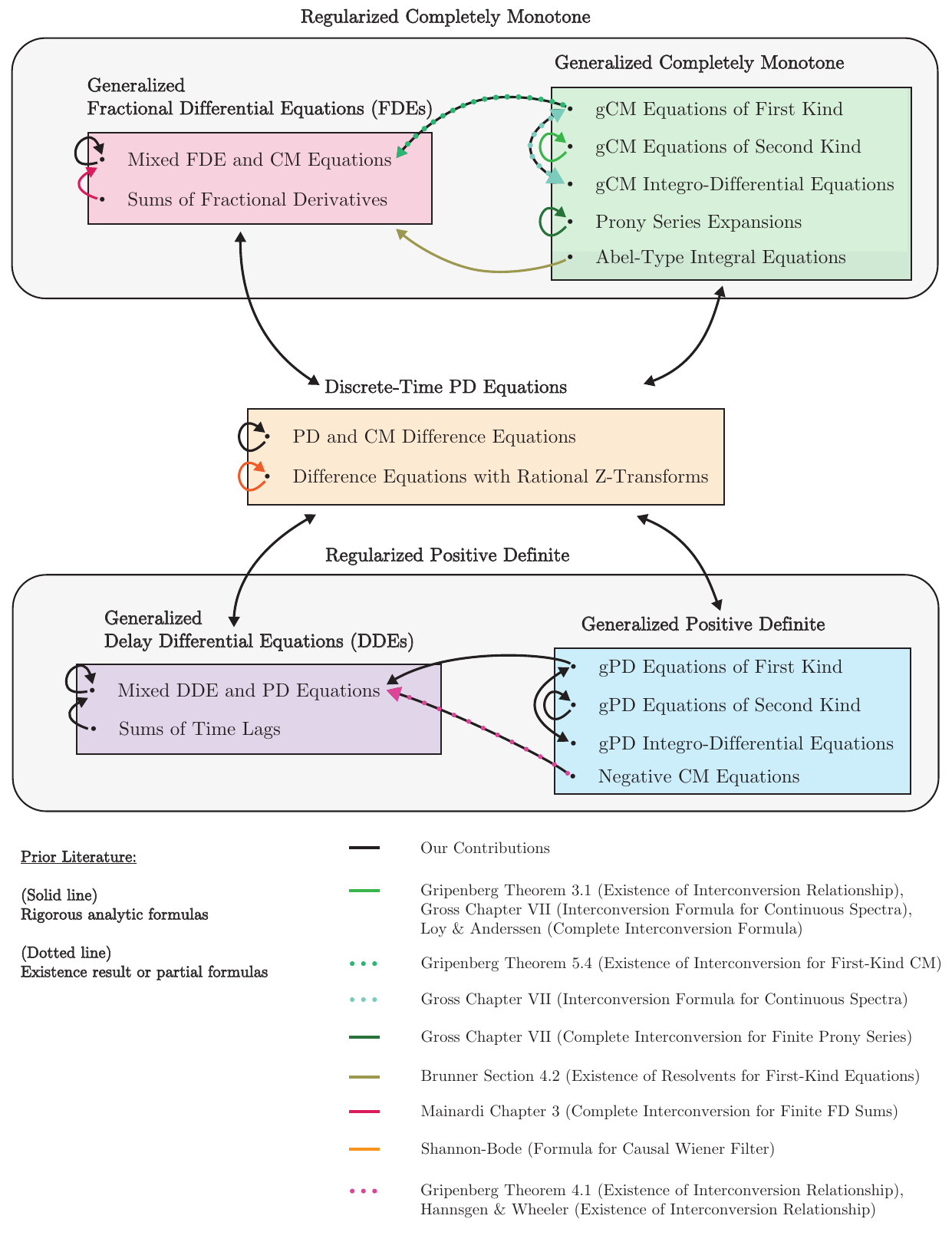}};
\node [below left,text width=3cm,align=center] at (1.67,7.25+0.22){\hyperref[prop:rCM]{$\cB_\mathrm{reg}$}}; 
\node [below left,text width=3cm,align=center] at (1.4,4.61+0.22){\hyperref[ex:abeltype]{$\cB_\mathrm{reg}$}}; 
\node [below left,text width=3cm,align=center] at (1.7,-0.15+0.22){\hyperref[prop:rPD]{$\cB_\mathrm{reg}$}}; 
\node [below left,text width=3cm,align=center] at (1.9,-1.7+0.22){\hyperref[ex:negativeCM]{$\cB_\mathrm{reg}$}}; 
\node [below left,text width=1cm,align=center] at (-5.1,-0.9+0.22){\hyperref[prop:rPD]{$\cB_\mathrm{reg}$}}; 
\node [below left,text width=1cm,align=center] at (-5,6.05+0.22){\hyperref[prop:rCM]{$\cB_\mathrm{reg}$}}; 
\node [below left,text width=3cm,align=center] at (2.05,6.4+0.22){\hyperref[prop:main_CM]{$\cB_\RR$}}; 
\node [below left,text width=3cm,align=center] at (2.19,5.35+0.22){\hyperref[cor:discrete_formula]{$\cB_\RR$}}; 
\node [below left,text width=3cm,align=center] at (2.15,-1.1+0.22){\hyperref[prop:main_PD]{$\cB_\RR$}}; 
\cref{ex:dPD}
\node [below left,text width=3cm,align=center] at (-1.7,1.95+0.22){\hyperref[prop:main_dPD]{$\cB$}}; 
\node [below left,text width=3cm,align=center] at (-1.7,2.47+0.22){\hyperref[prop:main_dPD]{$\cB$}}; 
\node [below left,text width=3cm,align=center] at (4.93,3.85+0.22){\hyperref[eq:Psi_map]{$\Psi$}}; 
\node [below left,text width=3cm,align=center] at (4.65,1.08+0.22){\hyperref[eq:Psi_map]{$\Psi$}}; 
\node [below left,text width=3cm,align=center] at (-1.6,3.85+0.22){\hyperref[eq:embedding_reg]{$\Psi_\mathrm{reg}$}}; 
\node [below left,text width=3cm,align=center] at (-1.62,1.08+0.22){\hyperref[eq:embedding_reg]{$\Psi_\mathrm{reg}$}}; 
\node [below left,text width=3cm,align=center] at (3.65,8.06+0.22){\footnotesize\ref{eq:integrodiff_rCM}};
\node [below left,text width=3cm,align=center] at (7.04,7.45+0.22){\footnotesize\ref{eq:integrodiff_CM}};
\node [below left,text width=3cm,align=center] at (3.34,3.14+0.22){\footnotesize\ref{eq:integrodiff_dPD}};
\node [below left,text width=3cm,align=center] at (3.34,0.90+0.22){\footnotesize\ref{eq:integrodiff_rPD}};
\node [below left,text width=3cm,align=center] at (6.76,-0.02+0.22){\footnotesize\ref{eq:integrodiff_PD}};
\node [below left,text width=3cm,align=center] at (0.92-0.35,-3.56+0.21){\tiny\cite{Gripenberg_Londen_Staffans_1990}};
\node [below left,text width=3cm,align=center] at (0.92-0.35,-3.82+0.21){\tiny\cite{gross1968mathematical}};
\node [below left,text width=3cm,align=center] at (0.92-0.35,-4.08+0.21){\tiny\cite{loy_anderssen}};
\node [below left,text width=3cm,align=center] at (0.92-0.35,-4.32){\tiny\cite{Gripenberg_Londen_Staffans_1990}};
\node [below left,text width=3cm,align=center] at (0.92-0.35,-4.6-0.2){\tiny\cite{gross1968mathematical}};
\node [below left,text width=3cm,align=center] at (0.92-0.35,-5.11-0.2){\tiny\cite{gross1968mathematical}};
\node [below left,text width=3cm,align=center] at (0.92-0.35,-5.62-0.2){\tiny\cite{BRUNNER199783}};
\node [below left,text width=3cm,align=center] at (0.92-0.35,-6.09-0.2){\tiny\cite{mainardi2022fractional}};
\node [below left,text width=3cm,align=center] at (0.92-0.35,-6.6-0.2){\tiny\cite{kailath1980linear}};
\node [below left,text width=3cm,align=center] at (0.92-0.35,-7.14-0.2){\tiny\cite{Gripenberg_Londen_Staffans_1990}};
\node [below left,text width=3cm,align=center] at (0.92-0.35,-7.4-0.2){\tiny\cite{doi:10.1137/0513067}};
\end{tikzpicture}
}
\vspace{-20pt}
    \caption[]{\dave{Our system of correspondences between the five classes of Volterra equations under consideration, with a summary of how it unifies and extends existing results. For detail on particular elements of this correspondence, click the relevant hyperlinks in the figure. For detail on the interconversion maps ($\cB$, $\cB_\RR$, and $\cB_\mathrm{reg}$) and embeddings ($\Psi$ and $\Psi_\mathrm{reg}$) that make up these correspondences, see \cref{sec:main}. For detail on existing literature, see \cref{sec:history}.}}
    \label{fig:map_of_paper}
\end{figure}

$ $
\pagebreak

\renewcommand{\contentsname}{Table of Contents}

\tableofcontents

\vspace{-0.5in}

\renewcommand{\listfigurename}{List of Numerical Experiments}

\let\oldnumberline\numberline
\renewcommand{\numberline}[1]{}
\SkipTocEntry
\listoffigures%
\let\numberline\oldnumberline

\davebegin
\section{Introduction}\label{sec:intro}
We study five classes of convolution equations. The first is the class of \emph{generalized completely monotone} (gCM) integral and integro-differential equations:
\begin{equation}\tag*{\textbf{(gCM)}}\label{eq:integrodiff_CM}
    y(t) = c_1\dot{x}(t) - c_0x(t) - \int_{0}^t K(t-\tau)x(\tau)\,d\tau,\qquad x(0) = x_0\;\;\text{(if }c_1\neq 0\text{)},
\end{equation}
where $c_0,c_1\in\RR$ with $c_1\geq 0$, the source term $y$ is a sufficiently regular\footnote{Sufficient conditions to yield a classical solution $x$ will be made clear in the theory that follows.} function on $\RR_+\doteq [0,\infty)$, and $K$ is a gCM kernel:
\begin{definition}\label{def:CMkernel}
    A smooth, non-negative $F:\RR_+\to\RR_+$ is \emph{completely monotone} (CM) if $(-1)^jF^{(j)}(t)\geq 0$ for all $j\geq 0$ and all $t>0$. A function $K:\RR_+\to\RR_+$ is \emph{generalized CM} (gCM) if $K(t) = e^{\sigma t}F(t)$ for a CM kernel $F$ and a value $\sigma\in\RR$.
\end{definition}

The second is the class of \emph{generalized positive definite} (gPD) integral and integro-differential equations; notably, these encompass the class \ref{eq:integrodiff_CM} with $c_1<0$:
\begin{equation}\tag*{\textbf{(gPD)}}\label{eq:integrodiff_PD}
	y(t) = c_1\dot{x}(t) -ic_0x(t) + \int_0^t K(t-\tau)x(\tau)\,d\tau,\qquad x(0) = x_0\;\;\text{(if }c_1\neq 0\text{)},
\end{equation}
noting the imaginary factor and the difference in sign with \ref{eq:integrodiff_CM}. Here, $c_1\geq 0$ as before, but we allow any $c_0\in\CC$ with $\Im c_0\geq 0$, and $K$ can be any gPD kernel:
\begin{definition}\label{def:PDkernel}
    A function $F:\RR\to\CC$ is \emph{positive (semi)definite} (PD) if, for all $\{t_1,...,t_N\}\subset\RR$, the matrix $A_{jk} = F(t_j - t_k)$ is positive semi-definite. A function $K:\RR\to\CC$ is \emph{generalized PD} (gPD) if $K(t) = (1 - \frac{d^2}{dt^2})^{1/2}F(t)$ weakly for a PD kernel $F$. It follows from Bochner's Theorem (\cref{lem:bochner} below) that PD kernels are absolutely continuous, and thus that gPD kernels are classical functions.
\end{definition}

The third is the class of \emph{discrete-time positive definite} (dPD) equations:
\begin{equation}\tag*{\textbf{(dPD)}}\label{eq:integrodiff_dPD}
    y(n) = c_0x(n) + \sum_{j=0}^n K(n-j)x(j).
\end{equation}
Here, $c_0\in\CC$ satisfies $\Re c_0\geq -\frac{1}{2}K(0)$, and $K:\ZZ\to\CC$ is \emph{positive (semi)definite}, defined analogously to the continuous case.

The fourth is the class of \emph{regularized PD} (rPD) equations, which generalize the \ref{eq:integrodiff_PD} class to encompass a variety of delay differential equations:
\begin{equation}\tag*{\textbf{(rPD)}}\label{eq:integrodiff_rPD}
	y(t) = c_1\dot{x}(t) + \frac{1}{2}\int_{-t}^t K(\tau)x(|t-\tau|)\,d\tau,\qquad x(0) = x_0\;\;\text{(if }c_1\neq 0\text{)},
\end{equation}
where $c_1\geq 0$ and $K$ is a real\footnote{The extension to complex $K$ can be made with minimal changes to our results.} \emph{rPD} kernel, i.e., $K(t)=(1-\frac{d^2}{dt^2})\widetilde{K}(t)$ weakly for a real PD kernel $\widetilde{K}$.

The fifth and final class is that of \emph{regularized CM} (rCM) equations, which generalize the \ref{eq:integrodiff_CM} class to encompass a wide range of fractional differential equations:
\begin{equation}\tag*{\textbf{(rCM)}}\label{eq:integrodiff_rCM}
    \begin{gathered}
	y(t) = c_1\dot{x}(t) - c_0x(t) - \int_0^t K_1(t-\tau)x(\tau)\,d\tau + \frac{d}{dt}\int_{0}^t K_2(t-\tau)x(\tau)\,d\tau,\\
    x(0) = x_0\;\;\text{(if }c_1\neq 0\text{)},
    \end{gathered}
\end{equation}
where $c_1\geq 0$, $c_0\in\RR$, $K_1$ is a gCM kernel, and $K_2$ is a CM kernel.

In the present work, we propose a unified spectral theory for these five classes of equations, solving a number of seemingly-disparate problems. At the broadest level, we develop a system of correspondences between these classes (summarized in \cref{fig:map_of_paper}), allowing insights gained for one class to be transferred to the others. We thus see how `interconversion' (operator inversion) arises as a natural, continuous involution within each class of equations, and we recover rigorous, closed-form solutions for all five classes. Particularly in the most challenging limits---including first-kind integral equations and fractional and delay differential equations---we find a plethora of novel formulas for the analytic solutions of scalar Volterra equations, as well as substantial insight into their behavior. For instance, in the context of \ref{eq:integrodiff_CM}, we will see that $x(t)$ and $y(t)$ satisfy the interconverted gCM equation
\[-\pi^2 x(t) = \zeta_1\dot{y}(t) - \zeta_0y(t) - \int_0^tJ(t-\tau)y(\tau)\,d\tau,\qquad y(0) = y_0\;\;\text{(if }\zeta_1\neq 0\text{)},\]
where $J$, $\zeta_0$, and $\zeta_1$ are calculated analytically through \cref{thm:main_formula}. We highlight several examples in \cref{fig:main_inversion}, corresponding to \cref{ex:dPD,ex:gCM,ex:gPD,ex:gPD_complex,ex:rPD}.

\begin{figure}
    \centering
    \resizebox{\textwidth}{!}{
    \begin{tikzpicture}
\node (img) {\includegraphics[width=\linewidth]{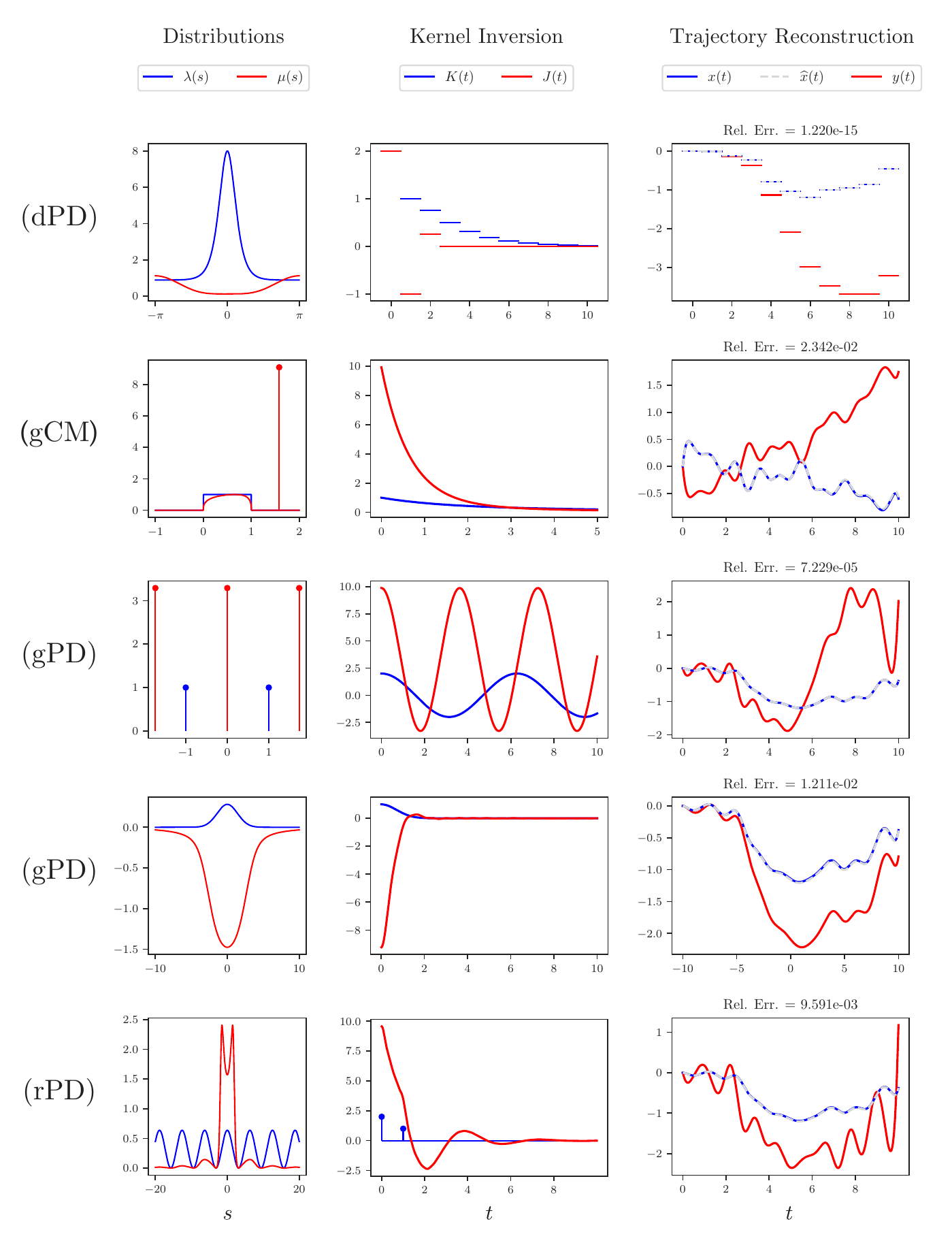}};
\node [below left,text width=3cm,align=center] at (-4,5.05){\small Ex.~\ref{ex:dPD}};
\node [below left,text width=3cm,align=center] at (-4,2.17){\small Ex.~\ref{ex:gCM}};
\node [below left,text width=3cm,align=center] at (-4,-0.75){\small Ex.~\ref{ex:gPD}};
\node [below left,text width=3cm,align=center] at (-4,-3.64){\small Ex.~\ref{ex:gPD_complex}};
\node [below left,text width=3cm,align=center] at (-4,-6.57){\small Ex.~\ref{ex:rPD}};
\end{tikzpicture}
}
    \vspace{-20pt}
    \caption[Simple, Analytic Volterra Interconversion]{Simple examples of four classes of Volterra equations studied in this paper: \ref{eq:integrodiff_dPD}, \ref{eq:integrodiff_CM}, \ref{eq:integrodiff_PD}, and \ref{eq:integrodiff_rPD}. \dave{The first column shows the measures $\lambda$ and $\mu$ that correspond to the spectrum of the original equation and its resolvent, respectively; these measures are defined on $S^1$ for \ref{eq:integrodiff_dPD} and on $\RR$ for the remaining examples. The second column depicts the Volterra integral kernels $K$ and $J$ associated with each spectrum; for instance, in the gCM context, we have $K=\cL[\lambda]$ and $J=\cL[\mu]$. In the third column, we confirm that the predictions of our theory in \cref{ex:dPD,ex:gCM,ex:gPD,ex:gPD_complex,ex:rPD} correctly solve the corresponding Volterra equations. Namely, we show that, given a Volterra equation with kernel $K$, input $x$, and output $y$, the interconverted Volterra equation with kernel $J$ accurately reconstructs the input $\widehat{x}\approx x$ from $y$.}}
    \label{fig:main_inversion}
\end{figure}

Our work unifies, extends, and recontextualizes several existing results from mathematics and applied science. In the context of \ref{eq:integrodiff_CM}, our theory places the classical interconversion formulas of Gross~\cite{gross1968mathematical} on rigorous ground, and it extends the results of Loy \& Anderssen~\cite{loy_anderssen} to yield a new duality between gCM integral equations of the first kind and gCM integro-differential equations. In the context of \ref{eq:integrodiff_rCM}, it greatly generalizes existing interconversion results relating fractional derivatives to \emph{Mittag--Leffler} integral kernels, revealing a general relationship between generalized fractional differential operators and gCM kernels. Moreover, it sheds new light on the work of Hannsgen \& Wheeler~\cite{doi:10.1137/0513067}, who found that integro-differential equations with CM kernels do not necessarily have CM resolvents. They (and other authors~\cite{Gripenberg_Londen_Staffans_1990}) considered only the `negative' CM equation
\begin{equation}\label{eq:negative_CM}
    y(t) = \dot{x}(t) + \int_0^t K(t-\tau) x(\tau)\,d\tau, \qquad x(0)=x_0,
\end{equation}
which interconverts within the class \ref{eq:integrodiff_PD} rather than the class \ref{eq:integrodiff_CM}; consequently, the resolvent is \emph{positive definite} rather than completely monotone. Our work also makes progress on an open question of Abel: given a kernel $K$ and an equation $y(t)=\int_0^tK(t-\tau)x(\tau)\,d\tau$, when does the solution take the form
\[x(t) = \frac{d}{dt}\int_0^t J(t-\tau)y(\tau)\,d\tau\]
for a kernel $J$ depending on $K$? Classical work of Abel offers a solution for the case $K(t) = t^{-\alpha}, \alpha\in(0,1)$, and work of Gripenberg shows that, if $K$ is completely monotone, then there exists a resolvent $J$ that is a completely monotone kernel plus an atom at $t=0$~\cite{Gripenberg1980}. Otherwise, the question remains generally open~\cite{BRUNNER199783}. We answer this question in the affirmative for both gCM and gPD kernels $K$, and offer an explicit formula for both the continuous and discrete parts of $J$.

Our work builds upon a deep and interdisciplinary literature on Volterra equations, with major contributions coming from pure and applied mathematics, science, and engineering. We attempt to summarize key elements of the literature in \cref{sec:history}, and indicate applications of the present work where appropriate. In \cref{sec:preliminaries}, we review the basic results of measure theory and complex analysis that are used in later sections. We describe the main results of our spectral theory in \cref{sec:main}. We handle \ref{eq:integrodiff_dPD} in \cref{sec:main_circ}, \ref{eq:integrodiff_CM} and \ref{eq:integrodiff_PD} in \cref{sec:main_line}, and \ref{eq:integrodiff_rPD} and \ref{eq:integrodiff_rCM} in \cref{sec:regularizedH}. We highlight analytical examples throughout \cref{sec:main}, demonstrating how our work brings simple Volterra equations from all classes within the realm of pen-and-paper calculation.

We prove the correspondences between our five classes of equations in \cref{sec:laplace}, introduce and study appropriate topologies for each class in \cref{sec:topologies}, and develop our spectral theory in \cref{sec:disc,sec:line}. We present the numerical side of our work in \cref{sec:numerics}. Namely, by connecting our theory with the \emph{AAA} rational approximation algorithm~\cite{nakatsukasa2018aaa}, we recover a promising approach to the numerical solution of scalar Volterra equations. We give numerical demonstrations involving a number of practical problems: fast interconversion of Volterra equations, interconversion from noisy time series data and/or sparsely-sampled integral kernels, analysis of quantum search algorithms, and others. We refine and generalize our numerical framework in the sequel~\cite{sieve}, to recover a high order of accuracy and to apply it to problems beyond the present scope.

Our codebase has been made available at the following GitHub link:
\begin{restatable}{equation*}{ourgithublink}
    \parbox{\dimexpr\linewidth-4em}{%
		\strut
		\texttt{\url{https://github.com/sgstepaniants/time-deconvolution}}
		\strut
	}
\end{restatable}

\daveend

\bigskip

\paragraph{\bfseries Note on infinite time horizons}\label{sec:note_initialconds}
In any of our integral or integro-differential equations, one might be interested in an \emph{infinite time horizon}, where we specify homogeneous conditions on the solution in the limit $t\to-\infty$. For instance, \ref{eq:integrodiff_CM} would then take the form
\[y(t) = c_1\dot{x}(t) - c_0x(t) - \int_{-\infty}^t K(t-\tau)x(\tau)\,d\tau.\]
Our results adapt straightforwardly to this setting; in the gCM case, for instance, all that is necessary is replacing the use of the Laplace transform in \cref{sec:laplace} with the \emph{bilateral Laplace transform}
\[\cL_b[y](s) = \int_{-\infty}^\infty y(t) e^{-ts}\,dt.\]
Similar modifications can be carried out for the other classes of equations under consideration.

\bigskip

\paragraph{\bfseries Note on higher-order integro-differential equations} As we \dave{discuss in the following section, CM integral equations of the second kind have been treated by existing literature~\cite{loy_anderssen}. One might wonder, then, could we solve CM integral equations of the first kind  or CM integro-differential equations by integrating or differentiating a second-kind equation appropriately? The answer turns out to be, \emph{sometimes, but not consistently}.}

Suppose we begin with a Volterra equation of the form \ref{eq:integrodiff_CM} with $c_1 = c_0 = 0$ and a CM kernel $K(t)$, and we differentiate (and negate) both sides:
\begin{equation*}
    -\dot{y}(t) = K(0)x(t) + \int_{0}^t \dot{K}(t-\tau)x(\tau)\,d\tau.
\end{equation*}
This equation is now of the second kind, and it follows from \cref{def:CMkernel} that $-\dot{K}(t)$ is CM, and thus that the equation is of the form \ref{eq:integrodiff_CM}. Of course, this procedure requires the additional hypothesis that $K(0)<\infty$, or equivalently, that $\dot{K}$ is locally integrable near $0$. This hypothesis is violated by important examples of CM kernels, such as those corresponding to fractional integrals~\cite{2014fcip.book.....H}.

It turns out that integro-differential CM equations are covered even less completely using this `integration by parts' strategy. Here, we start with a CM equation of the second kind, i.e., with $c_1=0$, $c_0\neq 0$, and a strictly CM kernel $K$, and we differentiate to find
\begin{align*}
    -\dot{y}(t) &= -c_0\dot{x}(t) + K(0)x(t) + \int_{0}^t \dot{K}(t-\tau)x(\tau)\,d\tau \\
    &= \widetilde{c}_1\dot{x}(t) - \widetilde{c}_0x(t) - \int_0^t\widetilde{K}(t-\tau)x(\tau)\,d\tau.
\end{align*}
Once again, it follows from \cref{def:CMkernel} that $\widetilde{K}(t) \doteq -\dot{K}(t)$ is CM. However, from the hypothesis that $K(t)$ itself is CM, we require that
\[0\leq \lim_{t\to\infty} K(t) = K(0) + \int_0^\infty \dot{K}(t)\,dt = -\widetilde{c}_0 - \int_0^\infty \widetilde{K}(t)\,dt,\]
which places substantial requirements on $\widetilde{c}_0$ and $\widetilde{K}$. Roughly, this requires that the contribution from the integral term is dominated by that of the $\widetilde{c}_0$ term.
\davebegin
\section{Prior Work}\label{sec:history}
Scalar Volterra equations have been studied from several perspectives, and much is already known about the solution of such equations. In the present section, we attempt to give a broad perspective of the literature surrounding Volterra equations, and indicate how the present work fits into this larger context. 

First and foremost, we note that the Volterra equations under consideration contain several particular subclasses of importance, which have historically been treated semi-independently. The classes \ref{eq:integrodiff_CM} and \ref{eq:integrodiff_PD} split naturally into three subclasses: integral equations \emph{of the first kind}, when $c_0=c_1=0$; integral equations \emph{of the second kind}, when $c_1=0$ but $c_0\neq 0$; and \emph{integro-differential} equations, when $c_1 \neq 0$. These three subclasses share many of the same physical applications; for instance, as we discuss shortly, CM equations corresponding to linear viscoelastic models can fall into any of these three subclasses. The \ref{eq:integrodiff_rCM} class also contains a wide range of \emph{delay differential equations}, of the form
\[y(t) = c_1\dot{x}(t) + \sum\nolimits_i b_i x(t-t_i).\]
Delay differential equations have found broad applications in biology, such as in the study of gene networks and neuron models~\cite{rihan2021delay}. Likewise, the \ref{eq:integrodiff_rPD} class encompasses many \emph{fractional differential equations}; given $\alpha\in(0,1)$, the \emph{Riemann--Liouville fractional derivative} is the integral operator~\cite{2014fcip.book.....H}
\begin{equation}\label{eq:frac_deriv}
    D^\alpha:f\mapsto \frac{1}{\Gamma(1-\alpha)}\,\frac{d}{dt}\int_0^t\frac{f(\tau)}{(t-\tau)^\alpha}\,d\tau.
\end{equation}
Fractional differential equations have been used to model anomalous diffusion processes~\cite{2000PhR...339....1M}, complex media~\cite{2011ASAJ..130.2195H}, and ladder models in materials science~\cite{gross1956ladder,gross1956ladder2}.

Now, classical formulas exist to solve various limits of these equations. Integral equations involving finite sums of exponentials (or `Prony series') have been independently solved in classical analysis~\cite{whittaker1918numerical, polyanin2017integral}, in signal processing~\cite[Sec.~7.5]{kailath1980linear}, and in the theory of viscoelastic materials~\cite{gross1968mathematical, anderssen2006interconversion, loy2015interconversion, hristov2018linear, bhattacharya2023learning}. Classical formulas also exist for equations involving finite sums of fractional derivatives~\cite{mainardi2022fractional}, used in the context of \emph{ladder models} in materials science~\cite{gross1956ladder,gross1956ladder2}. More recently, the work of Loy \& Anderssen~\cite{loy_anderssen} formalized a classical interconversion formula of Gross~\cite{gross1968mathematical} for CM integral equations of the second kind, which our work recovers in the appropriate limit. In turn, Loy \& Anderssen leveraged the operator-theoretic results of Aronszajn~\cite{e48c169e-0374-3786-a618-5e3861c40257} and Donoghue~\cite{https://doi.org/10.1002/cpa.3160180402}, which we describe in \cref{sec:operatortheory}.

Linear Volterra equations are also covered by broader existence and uniqueness results, both classical and recent~\cite{Gripenberg_Londen_Staffans_1990}. In the case of CM integral equations of the second kind, it has long been known that the resolvent of the equation is another CM kernel, and in the integro-differential case (but with the sign $c_1<0$), the results of Hannsgen and Wheeler~\cite{doi:10.1137/0513067} show that the resolvent differs only from a CM kernel by an exponentially-decaying function. 

Some other aspects of our theory also have strong precedents in the literature. It is well-known that certain classes of discrete- and continuous-time convolution equations can be brought into correspondence~\cite{Nikolski_2020}, for instance, though we carry the program further for Volterra equations in the present work. Moreover, the regularized Hilbert transform we use to do so has previously been constructed in the context of Calder\'on--Zygmund theory~\cite{10.1007/BF02392130} and in the context of rank-one perturbations of linear operators~\cite{Albeverio2004}.

The literature surrounding Volterra equations is spread across several areas of mathematics, science, and engineering, each of which has contributed to our current understanding of the subject. We attempt to outline these various threads in greater depth in the following subsections.

\subsection{Linear Time-Invariant Systems}\label{sec:LTI}
One application of completely monotone kernels is provided by partially-observed \emph{linear time-invariant} (LTI) systems. Suppose a vector quantity $\mathbf{q}=(q_0,q_1,...,q_N)$ evolves according to the system
\begin{equation}\label{eq:LTI}
    \dot{\mathbf{q}}(t) = -\mathbf{M}\mathbf{q}(t) + \mathbf{f}(t),
\end{equation}
where $\mathbf{M}$ is a positive semi-definite matrix\footnote{This hypothesis is not strictly necessary. One example relevant to carbon reservoir models (\cref{fig:co2}) is, if $M_{0k} = 0$ for $k\geq 2$, it turns out to be sufficient that the submatrix $[M_{ij}]_{1\leq i,j\leq N}$ is a product of a positive semi-definite matrix and a positive definite diagonal matrix.
} and $\mathbf{f}(t)$ is a time-dependent forcing term. In many applications, one is only able to observe the value of one element of $\mathbf{q}$, say, $q_0$. Formally solving \cref{eq:LTI} in terms of $q_0$, we can rewrite
\begin{align*}
    \dot{q}_0(t) = -\lambda q_0(t) + \int_0^t K(t-\tau)q_0(\tau)\,d\tau + g(t),
\end{align*}
where $K(t)$ is an integral kernel dependent only on $\mathbf{M}$ and $g$ is a modified forcing term dependent on $\mathbf{f}$ and on the initial values of $\mathbf{q}$. This program---an example of the more-general Mori--Zwanzig formalism~\cite{zwanzig2001nonequilibrium, givon2004extracting}---reduces the system \cref{eq:LTI} to model the self-interaction of $q_0$ as mediated by the other elements of $\mathbf{q}$. So long as $\mathbf{M}$ is positive semi-definite, this equation is of the form \ref{eq:integrodiff_CM}. As we show later, the LTI example is highly general; \emph{any} CM equation can be approximated to arbitrary precision by finite-dimensional LTI models of this form (see \cref{sec:topologies}).

In \cref{fig:co2}, we highlight how LTI systems are used to construct \emph{reservoir models} for carbon transport~\cite{Keeling1973}. In this example, $q_0$ would represent the carbon budget of the atmosphere, $q_{k\neq 0}$ would represent the carbon budget of other reservoirs, and $\mathbf{f}$ would represent the rate of emission from each reservoir into the atmosphere. 

\begin{figure}
    \centering
    \begin{tikzpicture}[node distance=2cm]

\node (start) [startstop] {Atmosphere};
\node (emissions) [decision, left of=start, xshift=-2cm,yshift=-2cm] {Emissions};
\node (ocean1) [ocean, below of=start,yshift=0.5cm] {Surface Ocean};
\node (bio) [land, right of=ocean1,xshift=2cm,yshift=0.0cm] {Biosphere};

\node (ocean2) [ocean, below of=ocean1,yshift=0.5cm] {Deep Ocean};
\node (soil) [land, below of=bio,yshift=0.5cm] {Soil};

\node (fake) [left of=bio] {};
\node (fake2) [left of=bio,yshift=0.3cm] {};

\draw [arrow] (emissions) |- (start);

\draw [arrow,<->] (start) -- (ocean1);
\draw [arrow,<->] (ocean1) -- (ocean2);
\draw [arrow,<->] (start) -| (bio);
\draw [arrow] (bio) -- (soil);
\draw [arrow,-] (start) -| (fake);
\draw [arrow,->] (fake2) |- (soil);

\end{tikzpicture}
    \caption[]{A five-box \emph{reservoir model} for the global carbon cycle~\cite{Keeling1973}. In such models, large environmental reservoirs of CO$_2$ are hypothesized to be well-mixed, such that the transport of CO$_2$ between them is determined by the total quantity in each. Such models have been in use since the 1950s~\cite{https://doi.org/10.1111/j.2153-3490.1957.tb01848.x,Keeling1973}, with subsequent developments introducing more reservoirs~\cite{https://doi.org/10.1111/j.2153-3490.1967.tb01509.x,Bjorkstrom1986}, refined diffusion effects~\cite{https://doi.org/10.1111/j.2153-3490.1975.tb01671.x}, and more. They have seen extensive use in understanding anthropogenic effects on the global carbon cycle; for instance, they have been used recently to estimate historical carbon budgets~\cite{KAMIUTO1994825} and the impacts of radiative forcing~\cite{CHOI2022424} and of burning biomass~\cite{CHOI2020106942} on global temperatures.}
    \label{fig:co2}
\end{figure}

Following a similar argument, one can see that the class \ref{eq:integrodiff_PD} corresponds to partially-observed quantum systems. Namely, fix a Hilbert space $\cH$, a (self-adjoint) Hamiltonian $\hat{H}$, and a state $|0\rangle\in\cH$, and decompose our wavefunction $\psi\in\cH$ as
$\psi = \phi_0|0\rangle + \phi_1$. If we write $\hat{P} = 1-|0\rangle\langle 0|$ and
\[H_0 = \langle 0|\hat{H}|0\rangle,\qquad \hat{H}_1 = \hat{P}\hat{H}\hat{P},\]
then it is straightforward to show that $\phi_0$ satisfies the Nakajima--Zwanzig equation~\cite{nakajima1958quantum, zwanzig1960ensemble}
\begin{equation}\label{eq:quantum_intro}
    \dot{\phi}_0(t) + iH_0\phi_0(t) + \int_0^t \langle 0|\hat{H}e^{-i\hat{H}_1(t-\tau)}\hat{H}|0\rangle\phi_0(\tau)\,d\tau = -i\langle 0|\hat{H}e^{-i\hat{H}_1t}|\phi_1(0)\rangle.
\end{equation}
Since $\hat{H}_1$ is self-adjoint, this equation falls into the class \ref{eq:integrodiff_PD} with
\[c_1=1,\qquad c_0 = -H_0,\qquad K(t) = \langle 0|\hat{H}e^{-i\hat{H}_1t}\hat{H}|0\rangle,\]
and forcing determined by the initial conditions of $\hat{P}\psi$. 

We investigate a practical example of a partially-observed quantum system in \cref{sec:quantumwalk}, where we show how our interconversion results allow one to maximize the probability of success of a basic quantum search algorithm.

\begin{figure}
    \centering
    \includegraphics[width=\linewidth]{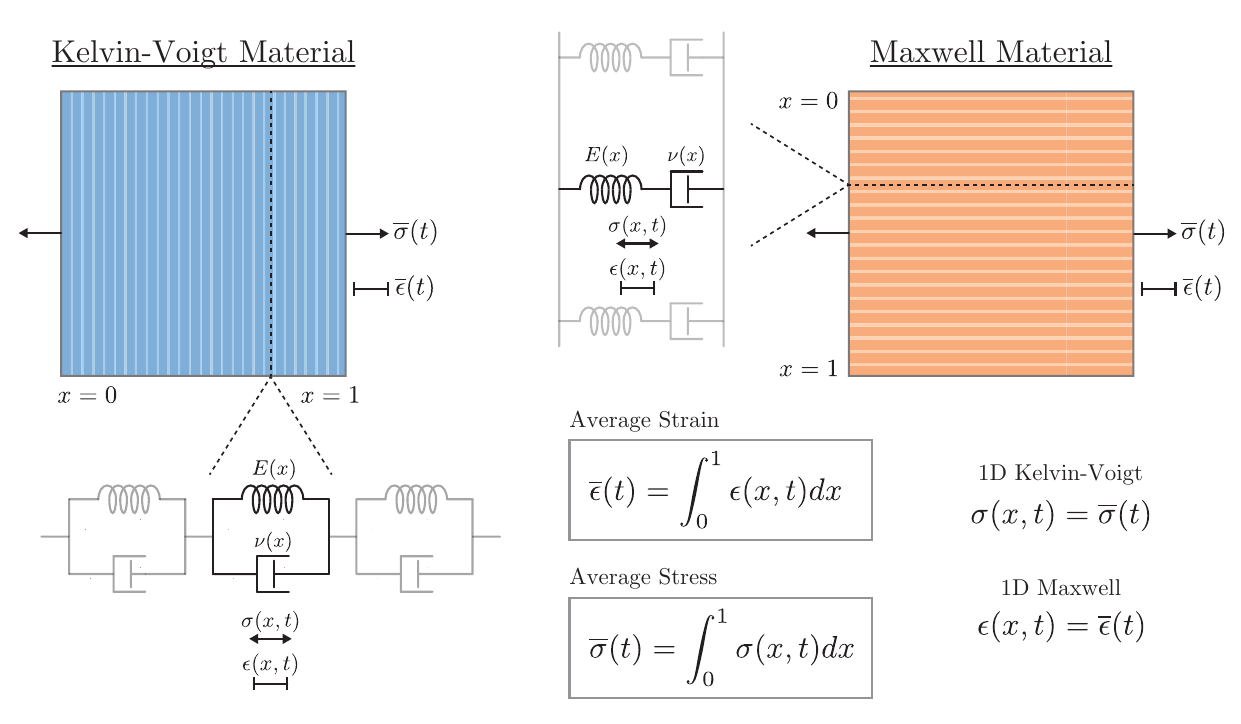}
    \caption[]{The Kelvin--Voigt and Maxwell models of viscoelasticity describe materials as (potentially-infinite) collections of springs and \dave{dashpots}, connected in series or in parallel, respectively. The spring-\dave{dashpot} elements in each model can be indexed by a position variable $x$, giving rise to a position-dependent strain (displacement gradient) $\epsilon(x,t)$ and stress (force gradient) $\sigma(x,t)$. The map from average stress $\ovl{\epsilon}(t)$ to average strain $\ovl{\sigma}(t)$ in a Kelvin--Voigt material is a CM integral equation of either the first or second kind, while for Maxwell materials, the map from average strain to average stress is either a CM integral equation of the second kind or a CM integro-differential equation.}
    \label{fig:materials_fig}
\end{figure}

\daveend

\subsection{Materials Science}\label{sec:materials}
Materials are defined by their response to applied stresses. Elastic solids deform (e.g., strain or shear) under force, but return to their original configuration as soon as the force is removed; viscous fluids resist deformation, but also resist returning from a deformed state. Naturally, then, \emph{viscoelastic} materials give rise to a rich family of stress-strain relations. 

The simplest examples of viscoelastic materials are constructed from springs and dashpots. When a single spring-dashpot pair is connected in parallel, we recover the Kelvin--Voigt model:
\[\sigma(t) = E\epsilon(t) + \nu\dot{\epsilon}(t),\qquad \frac{1}{E+s\nu}\cL[\sigma](s) = \cL[\epsilon](s),\]
written in both the time and Laplace domains. Here, $\sigma$ is the applied stress, $\epsilon$ is the resulting strain, $E$ is the material's elastic modulus, and $\nu$ is its viscosity. 

The Kelvin--Voigt model can be extended straightforwardly to model inhomogeneous media. If we connect Kelvin--Voigt spring-dashpot pairs in series over a continuous interval $x\in[0,1]$, we find
\begin{equation}\label{eq:material_KV}
    \cL[\sigma](x, s) = (E(x) + s\nu(x))\cL[\epsilon](x, s).
\end{equation}
Since our elements are connected in a one-dimensional chain, the applied stress must be constant throughout the material:
\[\sigma(x,t) = \overline{\sigma}(t) \doteq \int_0^1\sigma(x,t)\,dx,\]
so (similarly writing $\overline{\epsilon}(t) = \int_0^1\epsilon(x,t)\,dx$) we can integrate \cref{eq:material_KV} to recover
\begin{equation}\label{eq:KV_laplace}
    \Big(\int_0^1\frac{dx}{E(x) + s\nu(x)}\Big)\cL[\overline{\sigma}](s) = \cL[\overline{\epsilon}](s).
\end{equation}
Note that if $\nu(x)$ is strictly positive, then back in the time domain, the Kelvin-Voigt model corresponds to a {Volterra equation of the first kind} relating average stress to average strain:
\[\ovl{\epsilon}(t) = \int_0^t J(t-\tau)\dot{\ovl{\sigma}}(\tau)\,d\tau = (J*\dot{\ovl{\sigma}})(t),\]
where the \emph{creep compliance} function $J$ is given by
\[J(t) = \int_0^1\frac{1}{E(x)}\left(1 - e^{-\frac{E(x)}{\nu(x)}t}\right)\,dx.\]
The Kelvin--Voigt model is illustrated on the left-hand side of \cref{fig:materials_fig}. Notably, if $\nu$ is allowed to vanish anywhere in the domain, the relationship between $\ovl{\epsilon}$ and $\ovl{\sigma}$ becomes a {Volterra equation of the second kind}.

In a different direction, we could start with a single spring-dashpot pair connected in series, recovering the Maxwell model:
\[\frac{\dot{\sigma}(t)}{E} + \frac{\sigma(t)}{\nu} = \dot{\epsilon}(t),\qquad \left(\frac{s}{E}+\frac{1}{\nu}\right)\cL[\sigma](s) = s\cL[\epsilon](s).\]
The Maxwell model is illustrated on the right-hand side \cref{fig:materials_fig}. By connecting these pairs in parallel over a continuous interval $x\in[0,1]$, now orthogonal to the direction of stress, we obtain
\[\left(\frac{s}{E(x)} + \frac{1}{\nu(x)}\right)\cL[\sigma](x, s) = s\cL[\epsilon](x, s).\]
Now it is the \emph{strain} that must be constant throughout the material, so a similar analysis as above shows that
\[\cL[\overline{\sigma}](s) = \Big(\int_0^1\frac{dx}{\frac{s}{E(x)} + \frac{1}{\nu(x)}}\Big)s\cL[\overline{\epsilon}](s).\]
Back in the time domain, this corresponds to a {Volterra equation of the second kind} relating average strain and stress, so long as $E(x)$ is everywhere finite:
\[\ovl{\sigma}(t) = \int_0^t G(t-\tau)\dot{\ovl{\epsilon}}(\tau)\,d\tau = (G*\dot{\ovl{\epsilon}})(t),\]
with the \emph{relaxation modulus} $G$ defined by
\begin{equation}\label{eq:relaxation}
    G(t) = \int_0^1 E(x)e^{-\frac{E(x)}{\nu(x)}t}\,dx.
\end{equation}
Notably, if we allow $E(x) \to \infty$ anywhere in the domain, this is replaced by a Volterra integro-differential equation relating average strain and stress.

Although $E$ and $\nu$ generally differ between the Kelvin--Voigt and Maxwell models, any linear viscoelastic material should have well-defined, physical values of $\ovl{\sigma}$, $\ovl{\epsilon}$, $J$, and $G$. Moreover, the kernels $J$ and $G$ always satisfy the resolvent (or \emph{interconversion}) formula
\[(G*J)(t) \doteq \int_0^t G(t-\tau)J(\tau)\,d\tau = t,\]
which can be used to uniquely determine one from the other~\cite{ferry1980viscoelastic}. This program was first carried out by Gross to derive analytical formulas relating $J$ and $G$~\cite{gross1952inversion,gross1968mathematical}. Gross' interconversion formulas became a cornerstone of viscoelastic theory~\cite{ferry1980viscoelastic,mainardi2022fractional}, though a formal proof was given only recently by Loy \& Anderssen~\cite{loy_anderssen}, and only for a certain class of materials.

Indeed, as mentioned above, if $E(x)\to\infty$ for any $x$ in the Maxwell model, the expression \cref{eq:relaxation} must be replaced with an integro-differential equation relating $\ovl{\epsilon}$ to $\ovl{\sigma}$. The operator-theoretic techniques leveraged by Loy \& Anderssen (which we return to shortly) do not apply in this case, putting this class of materials outside the scope they studied. Physically, these materials correspond to a Maxwell model where some spring-\dave{dashpot} elements have no springs. Mapping to a Kelvin--Voigt model, this corresponds to a system where $\nu(x)$ vanishes for any $x$, or physically, where some spring-\dave{dashpot} elements have no \dave{dashpots}.

\dave{Mathematically, the work of Loy \& Anderssen allows one to solve CM integral equations of the second kind. Among other applications, the present work extends their results to cover CM integral equations of the first kind and CM integro-differential equations, allowing us to study more general viscoelastic materials.}

Materials science has also inspired a host of other solution methods for particular classes of Volterra (and related) equations. For one, classical results in the field allow for analytic interconversion of finite Prony series~\cite{tschoegl2012phenomenological,bhattacharya2023learning} and finite sums of fractional derivatives~\cite{mainardi2022fractional}, which are key to the ladder models employed by Gross~\cite{gross1956ladder,gross1956ladder2}. We will see that these results, along with the work of Loy \& Anderssen discussed above, form special cases of \dave{the present theory}.

\subsection{Electrical Networks}
Electrical networks are typically built from \dave{three kinds of elements:} resistors (R), which resist the flow of electric current; inductors (L), which oppose changes in current by exchanging energy with a magnetic field; and capacitors (C), which manipulate the flow of current by exchanging energy with an electric field. Mathematically, these elements relate the current $I$ to the voltage $V$ in a \dave{simple circuit} by the equations
\begin{equation}
    V(t) = RI(t), \qquad V(t) = L\dot{I}(t), \qquad V(t) = \frac{1}{C}\int_0^t I(s)ds.
\end{equation}
Arranging these elements in different network configurations allows \dave{one} to achieve a broad array of \emph{transfer functions} that map between current and voltage~\cite{darlington1984history}, and these networks are used in a wide array of applications including signal filtering, audio processing, and communication systems. Writing in the Laplace domain,
\begin{equation}
    \cL[V](s) = R\cL[I](s), \qquad \cL[V](t) = sL\cL[I](s), \qquad \cL[V](s) = \frac{1}{sC}\cL[I](s),
\end{equation}
we see that composing these elements in series or in parallel generally leads to transfer functions with complex poles. This is a fundamental difference from transfer functions in linear viscoelasticity (discussed above), which can only exhibit real poles. As a simple example, RLC circuits are able to form \dave{general \emph{biquadratic filters}---corresponding to rational transfer functions of degree two---which do not generically have real poles~\cite{morelli2019passive}. Consequently, RLC circuits can exhibit oscillatory dynamics, allowing for behaviors such as resonance and phase shifting.}

In practical applications, the use of RLC circuits may be unnecessary if modulation of complex frequencies is not needed, and RC or RL networks built with two of the three components may still offer important lowpass or highpass signal filtering functionality. The transfer functions of RC and RL networks are once again rational functions with real poles, and are therefore identical to the viscoelastic transfer functions described \dave{above}~\cite[Ch.~4]{morelli2019passive}. In fact, even the Kelvin--Voigt and Maxwell models discussed in the previous section have natural analogues in \emph{Foster synthesis}~\cite{bakshi2020electrical}. As such, \dave{our results} are applicable to RC and RL circuits in much the same way as they are to viscoelastic materials.

\subsection{Operator Theory}\label{sec:operatortheory}
With particular choices of parameters---corresponding to the case studied by Loy \& Anderssen~\cite{loy_anderssen}---our problem can be recast in the language of operator theory. Namely, fix a Hilbert space $\cH$, and suppose $A$ is a self-adjoint operator on $\cH$ with simple spectrum $\sigma(A)\subset\RR$. \dave{The spectral theorem guarantees that, for some (non-unique) non-negative measure $\lambda$ on $\sigma(A)$, the operator $A$ is unitarily equivalent to the multiplication operator $M_{s,\lambda}:g(s)\mapsto sg(s)$ on $L^2(\sigma(A),\lambda)$, as $A = U^\dagger M_{s,\lambda} U$. In this context, there is a unique \emph{Borel functional calculus} associated to $A$; for any real-valued Borel function $f$ on $\RR$, there is a unique (generally unbounded) operator 
\[f(A)= U^\dagger M_{f(s),\lambda}U,\]
independent of $\lambda$, with $M_{f(s),\lambda}:g(s)\mapsto f(s)g(s)$ on $L^2(\sigma(A),\lambda)$.
Since the spectrum is simple, we can fix a cyclic vector $v\in\cH$, i.e., such that the subspace
\[\{f(A)v\;|\;f\;\text{bounded and continuous}\}\subset\cH
\]
is dense in $\cH$. The measure $\lambda$ can then be uniquely chosen such that
\[\langle v\;|\;f(A)v\rangle = \int f(s)\,d\lambda(s).\]
We say that $\lambda$ is the \emph{spectral measure} of $A$ corresponding to $v$. }

\dave{Next, we say that $v\in\cH_{-1}(A)$ if 
\[\langle v\;|\;(1 + A^2)^{-1/2}v\rangle = \int (1+s^2)^{-1/2}\,d\lambda < \infty,\]
and in this case, we define the \emph{Borel transform}
\[F:t\mapsto \langle v\;|\;(A-t)^{-1}v\rangle = \int\frac{d\lambda(s)}{s-t}.\]}
The Borel transform provides a natural setting in which to study the spectrum of $A$. In particular, consider the rank-one perturbation
\[A_\alpha\doteq A + \alpha v\langle v\;|\;\cdot\;\rangle\]
for $\alpha\in\RR$, and let $\lambda_\alpha$ be the spectral measure of $A_\alpha$ corresponding to $v$. The Borel transform $F_\alpha$ of $A_\alpha$ is related to $F$ using the Aronszajn--Krein formula:
\[F_\alpha = \frac{F}{1+\alpha F},\]
from which key spectral properties of $A_\alpha$ can be derived. For instance, work of Aronszajn~\cite{e48c169e-0374-3786-a618-5e3861c40257} and Donoghue~\cite{https://doi.org/10.1002/cpa.3160180402} leverages this formula to recover explicit formulas for $\lambda_\alpha$ in terms of $\lambda$, corresponding to our \cref{thm:main_formula} in the case $c_0=-\alpha^{-1}$, $c_1=0$. As one consequence, for $\alpha_1\neq\alpha_2$, they deduce that the point spectra of $A_{\alpha_1}$ and $A_{\alpha_2}$ are disjoint.

The Aronszajn--Donoghue theory has since been extended in a number of directions. Simon and Wolff derived a necessary and sufficient criterion for the perturbations $A_\alpha$ to have pure point spectrum for almost all $\alpha$~\cite{Simon1986SingularCS}, and they showed that the ``almost all'' qualifier cannot be dropped in general. Gordon~\cite{AYaGordon_1992,Gordon1994} and del Rio et al.~\cite{delrio1994singularcontinuousspectrumgeneric,Rio97} (independently) took this analysis one step further, showing that for a wide class of operators $A$, there \dave{are} an uncountable number of $\alpha$ for which $A_\alpha$ has pure singular continuous spectrum. All three groups applied their results to random Hamiltonians, where spectral results can be related to questions of Anderson localization; see the review by Simon~\cite{Simon97} for more details. 

Separately, Gesztezy and Simon explored the strong-coupling limit $\alpha\to\infty$, showing that the (weighted) spectral measures of $A_\alpha$ converge weakly to a measure $\rho_\infty$ on $\RR$, and Albeverio and Koshmanenko~\cite{Albeverio1999} related this limit to the Friedrichs extension of $A$. More recently, Albeverio, Konstantinov, and Koshmanenko~\cite{Albeverio2004} have extended the Aronszajn--Krein relation to the case $v\in\cH_{-2}(A)$, i.e., when it is only known that
\[\langle v\;|\;(1 + A^2)^{-1}v\rangle = \int (1+s^2)^{-1}\,d\lambda < \infty.\]
\dave{Notably, they make use of a \emph{regularized Borel transform} that connects closely to the regularized Hilbert transform of Calder\'on and Zygmund~\cite{10.1007/BF02392130}.} Frymark and Liaw~\cite{Frymark2019SpectralAO} have separately applied Aronszajn--Donoghue-type techniques to explore infinite iterations of rank-one perturbations.

\dave{As a result of our theory, we will see that several results of the Aronszajn--Donoghue theory can be connected closely to the solution of Volterra equations. In particular, we believe that our theory may offer an alternate perspective on the extended Aronszajn--Krein relation for $v\in\cH_{-2}(A)$~\cite{Albeverio2004}.}

\subsection{Signal Processing}\label{sec:signals}
The field of signal processing focuses on the analysis, modification, and synthesis of time-dependent signals, which may be relayed, for instance, as physical waves or electronic signals~\cite{bhattacharyya2018handbook}. A classical problem in the signal processing literature is that of signal deconvolution~\cite{riad1986deconvolution}, which we present here in the discrete-time setting. Given a known filter $K(n)$ and output signal $y(n)$, we aim to determine the input signal $x(n)$ that satisfies the convolution equation
\begin{equation}\label{eq:disc_conv}
    y(n) = \sum_{i=-\infty}^n K(n-i) x(i).
\end{equation}
We can map this problem to the spectral domain by taking a bilateral Z-transform,
\begin{align*}
    \mathcal{Z}_b[x](z) \doteq \sum_{n=-\infty}^\infty x(n)z^n,
\end{align*}
interpreted as a formal power series in $z$. We can likewise define $Y = \mathcal{Z}_b[y]$ and $H = \cZ_b[K]$, defining $K(n) = 0$ for $n<0$. The function $H$ is called the \emph{transfer function} \dave{of the system; assuming $K$ does not grow with time, $H$} is a holomorphic function on the unit disc $\DD$. \dave{In the spectral domain, \cref{eq:disc_conv} becomes
\begin{align*}
    Y(z) = H(z)X(z),
\end{align*}
noting that convolution transforms into pointwise multiplication. For continuous-time deconvolution, a similar equation results from applying the Laplace transform}.

\dave{Continuing in the discrete-time setting, the classical solution to the deconvolution problem is given by the \emph{frequency domain deconvolution formula},
\begin{equation}\label{eq:freq_deconv}
    X(z) = \frac{Y(z)}{H(z)} \doteq G(z)Y(z)
\end{equation}
where $G(z) = 1/H(z)$ is defined wherever $H(z)\neq 0$. If $H$ is analytic and nonzero in $\DD$, then of course, its reciprocal $G$ is analytic and nonzero in $\DD$ as well. The inverse transform $J\doteq \cZ_b^{-1}[G]$ is thus a causal kernel (i.e., with $J(n)=0$ for $n<0$), and
\begin{equation}\label{eq:J_disc}
    x(n) = \sum_{i=-\infty}^n J(n-i)y(i).
\end{equation}
One objective of the present work in the discrete-time setting is to recover a rigorous, closed-form formula for $J$ even in cases where $H$ vanishes on the boundary of $\DD$, which appear in several problems of interest.} 

\dave{Indeed, when $H(z)$ vanishes at or near the boundary of $\DD$, the deconvolution problem becomes \emph{ill-posed}~\cite{riad1986deconvolution, brown2012introduction}, i.e., small errors in $y$ are magnified to become large errors in $x$. As such, instead of studying the exact deconvolution problem discussed above, several regularized variants have been proposed:
\begin{equation}\label{eq:approx_freq_deconv}
    X(z) = \frac{Y(z)}{H(z) + \varepsilon}, \quad X(z) = \frac{\chi_{\{|H(z)| > \varepsilon\}}(z)}{H(z)}Y(z), \quad X(z) = \frac{\overline{H}(z)}{|H(z)|^2 + \varepsilon}Y(z)
\end{equation}
where $\overline{H}$ denotes the complex conjugate and $\eps>0$ is small.} These methods all give rise to different regularized filters $G_\varepsilon(z)$, each of which approximately solves the inverse problem as $X(z) \approx G_\varepsilon(z)Y(z)$. The first two filters listed in \cref{eq:approx_freq_deconv} are pseudoinverse filters and the third is a form of \emph{Tikhonov regularization}, sometimes called the \textit{Wiener deconvolution filter}~\cite{brown2012introduction} if $\varepsilon$ is chosen to scale with the level of noise in $y$. Only the first of the filters in~\cref{eq:approx_freq_deconv} is holomorphic in the unit disc, and hence it is the only filter for which $J_\varepsilon \doteq \cZ_b^{-1}[G_\varepsilon]$ is causal. \dave{For the latter two, an alternate Shannon-Bode construction must be used to enforce causality~\cite{kailath1980linear}}.

A fundamental problem with frequency domain deconvolution is that the regularization in $G_\varepsilon$ biases the estimation of the true inverse filter $G(z)$, potentially leading to large errors in the reconstruction of $x$. Furthermore, the spectral reconstruction $X(z) = G_\varepsilon(z)Y(z)$ is typically evaluated at $N$ equispaced points $z_k = e^{2\pi ik/N}$ on the unit circle and then inverted by the FFT to estimate $x(n)$. This approach is efficient and performs inversion in near-linear time, but \dave{enforces that the reconstruction of $x$ is $N$-periodic}. Furthermore, \dave{standard} FFT inversion performs poorly when $G(z)$ is not a smooth function on the unit circle. \dave{More sophisticated FFT-based algorithms relax this smoothness assumption on $G$, at the cost of several FFT applications and considerable implementation complexity~\cite{commenges1984fast,chandrasekaran2008superfast, LIN2004511}}. In this paper, we develop analytical formulas for the non-regularized inverse transfer function $G(z)$ \dave{even when it is discontinuous or singular, thus mitigating the bias introduced by frequency-domain deconvolution and removing a primary source of error in this ill-posed problem (see \cref{sec:discrete_num}).}

An alternative approach to deconvolution is \textit{time-domain deconvolution}, which directly solves~\eqref{eq:disc_conv} by forming the Toeplitz triangular system
\begin{align*}
    \mathbf{y} = \mathbf{T}\mathbf{x}, \qquad \mathbf{x} = \begin{pmatrix}x(0)\\ \vdots\\ x(n)\end{pmatrix}, \ \mathbf{y} = \begin{pmatrix}y(0)\\ \vdots\\ y(n)\end{pmatrix}
\end{align*}
where $\mathbf{T} \in \RR^{(n+1) \times (n+1)}$ with $T_{ij} = \chi_{\{i \geq j\}}K(i-j)$, and where we have assumed that $x(n) = 0$ for all $n < 0$. Assuming $K(0) > 0$, this system can be solved by computing an inverse (or pseudoinverse) of $\mathbf{T}$, a method referred to as Finite Impulse Response (FIR) Wiener filtering. Performing deconvolution in the time domain alleviates the need to compute spectral properties of noisy signals $y$, as would be required by frequency deconvolution. However, inverting a triangular Toeplitz matrix is most easily done with forward substitution or with Levinson recursion~\cite{wiener1964wiener}, each of which has computational complexity $O(n^2)$. We show in \cref{sec:discrete_num} that \dave{a numerical implementation of our analytical spectral theory yields similar accuracy as time-domain deconvolution, but time complexity competitive with frequency-domain deconvolution}.

\subsection{Numerical Analysis}
Numerical solutions of Volterra equations have been developed for both linear and nonlinear equations. Linear equations can be solved in either the spectral or time domain, and methods for solving such linear equations largely follow the approaches summarized in the signal processing section above.

For linear Volterra equations of the first and second kind, discretization through Newton--Cotes quadrature leads to a triangular Toeplitz system, much like those discussed in the preceding section. Such systems can be inverted through forward substitution, Levinson recursion, or more involved \textit{superfast} methods based on repeated applications of the fast Fourier transforms~\cite{commenges1984fast, chandrasekaran2008superfast, LIN2004511}. For linear integro-differential equations, the convolution kernel can be discretized with Newton--Cotes, Gaussian, or other quadrature schemes, and the resulting delay differential equation can be integrated numerically~\cite{ansmann2018efficiently}. We investigate these methods for solving linear Volterra equations in \cref{sec:numerics}, and we show that analytic interconversion using our general theory is able to match the accuracy of these approaches. In the sequel, we rework our algorithm to achieve high-order, \emph{spectral} accuracy, improving upon the polynomial rate of convergence seen here~\cite{sieve}.

Although not explored in this work, there exist a variety of methods for obtaining numerical solutions of nonlinear Volterra equations. Important classes of algorithms consist of iterative methods based on Picard iteration, series solutions such as the Taylor or Adomian decompositions, analytic conversion into initial value or boundary value problems, direct numerical quadrature for integral equations and time stepping for integro-differential equations, or a combination of these approaches~\cite{linz1985analytical, wazwaz2011linear}.
\section{Preliminaries}\label{sec:preliminaries}
We largely study our Volterra equations under the action of various integral transforms, where they can be related to questions of measure theory. As a starting point, we introduce the following notation:
\begin{definition}[Sets of Measures]\label{def:measures}
    Let $\cM_\mathrm{loc}(\RR)$ and $\cM(S^1)$ be the spaces of signed Borel measures of locally bounded variation on $\RR$ and on $S^1$, respectively. We define the following subsets of each:
    \begin{enumerate}
        \item Let $\cM(\RR)\subset\cM_\mathrm{loc}(\RR)$ be the subspace of finite measures.
        \item Let $\cM_{+,\mathrm{loc}}(\RR)\subset\cM_\mathrm{loc}(\RR)$, $\cM_+(\RR)\subset\cM(\RR)$, and $\cM_+(S^1)\subset\cM(S^1)$ be the subsets of non-negative measures, excluding the zero measure.
        \item Let $\cM_c(\RR)\subset\cM_+(\RR)$ be the subset of non-negative, compactly supported measures. 
        \item For any $n\in\RR$, let $\cM^{(n)}(\RR)\subset\cM_\mathrm{loc}(\RR)$ be the subspace of measures $\lambda$ on $\RR$ such that $\int (1+s^2)^{-n/2}\,|d\lambda(s)|<\infty$. In particular, $\cM^{(0)}(\RR)=\cM(\RR)$.
        \item Let $\cM_+^{(n)}(\RR)=\cM_{+,\mathrm{loc}}(\RR)\cap \cM^{(n)}(\RR)$. In particular, $\cM_+^{(0)}(\RR)=\cM_+(\RR)$.
        \item Let $\cM_\mathrm{exp}^{(n)}(\RR)$ be the set of measures $\lambda\in\cM_+^{(n)}(\RR)$ with $\inf\supp\lambda>-\infty$.
    \end{enumerate}
\end{definition}
The notation $\cM_\mathrm{exp}^{(n)}(\RR)$ is inspired by the fact that, for any $\lambda\in\cM_\mathrm{exp}^{(n)}(\RR)$, the bilateral Laplace transform 
\[\cL_b[\lambda](t)\doteq \int e^{-\sigma t}\,d\lambda(\sigma)\]
is \dave{at most exponentially growing as} $t\to\infty$. Also note that $\cM_+^{(n)}(\RR)\subset\cM_+^{(m)}(\RR)$ and $\cM_\mathrm{exp}^{(n)}(\RR)\subset\cM_\mathrm{exp}^{(m)}(\RR)$ for $n\leq m$. To make contact between the theory on the circle and the theory on the real line, we make use of the embedding $\psi:\cM^{(2)}(\RR)\to \cM(S^1)$ given by
\begin{equation}\label{eq:embedding}
    d\lambda(s) \doteq \pi(1+s^2)\,\phi_*\psi[d\lambda](s),
\end{equation}
where
\begin{equation}\label{eq:cayleymap}
    \phi:z\mapsto i\,\frac{1-z}{1+z}, \qquad \phi^{-1}:w \mapsto \frac{i - w}{i + w},
\end{equation}
are Cayley maps between the unit disc and the upper half-plane. \dave{In particular, if $\lambda = f(s)\,ds$ is absolutely continuous with respect to the Lebesgue measure $ds$, then
\[\psi[f(s)\,ds] = \frac{1}{2\pi}(f\circ\phi)(\theta)\,d\theta,\]
where $d\theta$ is the Lebesgue measure on $S^1$.}

Such measures provide a helpful dual language for all three of \ref{eq:integrodiff_CM}, \ref{eq:integrodiff_PD}, and \ref{eq:integrodiff_dPD}, albeit, in slightly different ways; we return to the more-involved classes \ref{eq:integrodiff_rPD} and \ref{eq:integrodiff_rCM} in \cref{sec:regularizedH}.

\begin{lemma}[Bernstein~\cite{Widder1931NecessaryAS,Gripenberg_Londen_Staffans_1990}]\label{lem:bernstein}
    A kernel \dave{$K:\RR_+\to\RR_+$} is generalized-completely-monotone if and only if
    \[
        K(t) = \int e^{-\sigma t}\,d\lambda(\sigma)
    \]
    for a non-negative Borel measure $\lambda$ with $\inf\supp\lambda >-\infty$. We write $\lambda = \cL_b^{-1}[K]$ and $\cL_b[\lambda]=K$ for the (bilateral) Laplace transform in this context.
\end{lemma}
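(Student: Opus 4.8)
The plan is to peel off the exponential factor in \cref{def:CMkernel} and reduce everything to the classical Bernstein--Hausdorff--Widder theorem: a function $F$ on $(0,\infty)$ is completely monotone if and only if $F(t)=\int_{[0,\infty)}e^{-\tau t}\,d\mu(\tau)$ for a unique non-negative Borel measure $\mu$ carried by $[0,\infty)$. This is precisely the content of the cited sources \cite{Widder1931NecessaryAS,Gripenberg_Londen_Staffans_1990}, and I would invoke it verbatim rather than reprove it; with it in hand, only the bookkeeping of a translation of measures remains.

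For the forward direction, write a gCM kernel as $K(t)=e^{\sigma_0 t}F(t)$ with $F$ CM and $\sigma_0\in\RR$, apply the classical theorem to get $F(t)=\int_{[0,\infty)}e^{-\tau t}\,d\mu(\tau)$, and substitute $\sigma=\tau-\sigma_0$ to obtain $K(t)=\int e^{-\sigma t}\,d\lambda(\sigma)$, where $\lambda$ is the pushforward of $\mu$ under $\tau\mapsto\tau-\sigma_0$; then $\supp\lambda\subseteq[-\sigma_0,\infty)$, so $\inf\supp\lambda\geq-\sigma_0>-\infty$. Conversely, given $K(t)=\int e^{-\sigma t}\,d\lambda(\sigma)$ with $\lambda\geq0$ and $a\doteq\inf\supp\lambda>-\infty$, set $\sigma_0\doteq-a$ and let $\mu$ be the pushforward of $\lambda$ under $\sigma\mapsto\sigma+\sigma_0$, which is carried by $[0,\infty)$; then $e^{-\sigma_0 t}K(t)=\int_{[0,\infty)}e^{-\tau t}\,d\mu(\tau)$, which is CM by the easy half of the classical theorem (differentiate under the integral: $(-1)^j\frac{d^j}{dt^j}\int e^{-\tau t}\,d\mu(\tau)=\int\tau^j e^{-\tau t}\,d\mu(\tau)\ge0$), so $K=e^{\sigma_0 t}\cdot(\text{CM})$ is gCM. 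Uniqueness of $\lambda$ — which legitimizes the notation $\lambda=\cL_b^{-1}[K]$ — then follows from the uniqueness clause of the classical theorem, since a fixed translation of measures is a bijection; equivalently, a non-negative measure is determined by its bilateral Laplace transform on a half-line.

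The one point that needs genuine care is convergence of the integrals, especially for kernels with $K(0)=+\infty$ (e.g.\ fractional-integral kernels): one should observe that since $\supp\lambda$ is bounded below by $a$, the bound $e^{-\sigma t}\le e^{-a(t-t_0)}e^{-\sigma t_0}$ valid on $\supp\lambda$ for $t\ge t_0$ shows that finiteness of $\int e^{-\sigma t}\,d\lambda(\sigma)$ at one point propagates to all larger $t$, so that $K$ being a well-defined $\RR_+\to\RR_+$ function already forces the integral to converge on a half-line; the pushforward under a translation is then an elementary operation preserving non-negativity, and differentiation under the integral sign is justified by local uniform convergence on $(0,\infty)$. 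I do not anticipate any obstacle beyond this routine check — the entire analytic weight of the lemma sits in the classical Bernstein--Widder theorem, and the passage to the gCM class is a one-line change of variables.
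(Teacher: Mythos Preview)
Your proposal is correct. The paper does not actually prove this lemma---it is stated with citations to Widder and to Gripenberg--Londen--Staffans and treated as a classical fact---so there is no ``paper's own proof'' to compare against. Your reduction to the classical Bernstein--Hausdorff--Widder theorem by peeling off the factor $e^{\sigma_0 t}$ and pushing forward the measure under a translation is exactly the natural (and standard) way to obtain the gCM characterization from the CM one, and the convergence remark you include is the only point that requires any care.
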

\begin{lemma}[Bochner~\cite{reed1975ii}]\label{lem:bochner}
    A kernel $K:\RR\to\CC$ is positive definite if and only if it is the Fourier transform of a measure $\lambda\in\cM_+(\RR)$, and generalized-positive-definite if and only if it is the Fourier transform of a measure $\lambda\in\cM_+^{(1)}(\RR)$. In the positive definite case (for which $K(0)=\|\lambda\|<\infty$), $K$ takes the familiar form
    \[K(t) = \cF[\lambda](t) \doteq \int e^{-i\omega t}\,d\lambda(\omega).\]
    We write $\lambda = \cF^{-1}[K]$ for the (inverse) Fourier transform.

    Likewise, a kernel $K:\ZZ\to\CC$ is positive definite if and only if it is the (discrete) Fourier transform of a measure $\lambda\in\cM_+(S^1)$:
    \[K(n) = \int_0^{2\pi} e^{-in\theta}\,d\lambda(\theta).\]
    We apply the same notation for the Fourier transform in this context.
\end{lemma}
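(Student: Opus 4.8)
The plan is to reduce everything to the classical theorems of Bochner (on $\RR$) and Herglotz (on $S^1$), and to treat the generalized class as a Fourier-multiplier ``lift'' of the classical one. First, for a genuine PD kernel $K:\RR\to\CC$ with $K(0)<\infty$, I would apply the self-adjointness and $2\times 2$ non-negativity conditions of \cref{def:PDkernel} to the pair $\{0,t\}$ to get $F(-t)=\overline{F(t)}$ and $|K(t)|\le K(0)$; in particular $K$ is bounded, and under the standing regularity convention on our kernels this produces a continuous positive definite representative, to which the classical Bochner theorem~\cite{reed1975ii} applies and yields $K=\cF[\lambda]$ with $\lambda\in\cM_+(\RR)$ and $\lambda(\RR)=K(0)$. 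For the reverse implication I would just compute, for $\lambda\in\cM_+(\RR)$ and arbitrary $\{t_j\}$, $\{c_j\}$, that $\sum_{j,k}\overline{c_j}c_k\,\cF[\lambda](t_j-t_k)=\int\big|\sum_j c_j e^{i\omega t_j}\big|^2\,d\lambda(\omega)\ge 0$, with self-adjointness of the associated matrix automatic since $\lambda$ is real. This handles the first and third sentences of the $\RR$ part; the $\ZZ$/$S^1$ statement is Herglotz's theorem, which I would either cite directly (same proof with $\int_0^{2\pi}e^{-in\theta}\,d\lambda(\theta)$ replacing $\cF$) or transport through the embedding $\psi$ of \cref{def:measures}.

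Next, for the gPD part, the observation I would use is that $\big(1-\tfrac{d^2}{dt^2}\big)^{1/2}$, defined in \cref{def:PDkernel} as a weak Fourier multiplier, acts on the spectral side as multiplication by the symbol $(1+\omega^2)^{1/2}$. So, given a gPD kernel $K=\big(1-\tfrac{d^2}{dt^2}\big)^{1/2}F$ with $F$ a PD kernel, I would invoke the finite case to write $F=\cF[\mu]$, $\mu\in\cM_+(\RR)$, set $d\lambda(\omega)\doteq(1+\omega^2)^{1/2}\,d\mu(\omega)$, and check that $\int(1+\omega^2)^{-1/2}\,d\lambda=\mu(\RR)<\infty$, i.e.\ $\lambda\in\cM_+^{(1)}(\RR)$, with $K=\cF[\lambda]$ as tempered distributions. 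Conversely, given $\lambda\in\cM_+^{(1)}(\RR)$, I would put $d\mu(\omega)\doteq(1+\omega^2)^{-1/2}\,d\lambda(\omega)\in\cM_+(\RR)$, so that $F\doteq\cF[\mu]$ is PD by the finite case and $K\doteq\cF[\lambda]=\big(1-\tfrac{d^2}{dt^2}\big)^{1/2}F$ is gPD by \cref{def:PDkernel}. That $\cF[\lambda]$ is a well-defined tempered distribution I would note follows from $\lambda$ having polynomial growth of order one, and that it is actually a classical function I would take from \cref{rem:gPD_classical}.

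The step I expect to be the main obstacle is the distributional bookkeeping in the gPD case: verifying that the Fourier-multiplier identity $\cF\big[(1+\omega^2)^{1/2}\mu\big]=\big(1-\tfrac{d^2}{dt^2}\big)^{1/2}\cF[\mu]$ holds as an equality of tempered distributions (by testing against Schwartz functions, using that $(1+\omega^2)^{1/2}$ is a smooth symbol of order one), so that the claimed ``if and only if'' is a genuine equivalence of the objects named in \cref{def:PDkernel}; and, on the classical side, justifying the passage from the bare pointwise positivity condition to a continuous representative on which Bochner's theorem can be invoked. Both are standard, and I would not belabor the remaining, routine manipulations --- the change of measure $d\lambda=(1+\omega^2)^{1/2}\,d\mu$, the bound $|K(t)|\le K(0)$, and the sum-of-squares computation.
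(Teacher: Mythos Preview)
The paper does not prove this lemma at all; it is stated as a classical result attributed to Bochner, with a citation to Reed--Simon~\cite{reed1975ii}, and no argument is supplied. Your proposal therefore goes beyond what the paper does: you give an actual proof sketch, reducing the PD case to the classical Bochner and Herglotz theorems and handling the gPD extension by the Fourier-multiplier identity $\cF\big[(1+\omega^2)^{1/2}\mu\big]=(1-\tfrac{d^2}{dt^2})^{1/2}\cF[\mu]$ together with the change of measure $d\lambda=(1+\omega^2)^{1/2}\,d\mu$. This is the natural argument, and your identification of the distributional bookkeeping as the only real work is accurate; the gPD equivalence is not quite off-the-shelf, so your sketch is a genuine value-add over the paper's bare citation.
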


As such, we can reduce all three classes of equations to the common language of non-negative measures. In turn, we largely study these measures by extending them to holomorphic functions:
\begin{definition}[Integral Transforms on $S^1$]
    For any $\lambda\in\cM(S^1)$, we define the \emph{Cauchy transform}
\begin{equation}\label{eq:cauchycirc}
    Q[\lambda](z)\doteq \int_{0}^{2\pi}\frac{1+e^{-i\theta}z}{1-e^{-i\theta}z}\,d\lambda(\theta),
\end{equation}
viewed as a holomorphic map on the open unit disc $\DD\subset\CC$. The real part of $Q[\lambda](z)$ is known as the \emph{Poisson integral},
\begin{equation}\label{eq:poissoncirc}
    P[\lambda](re^{i\theta}) \doteq \Re Q[\lambda](re^{i\theta}) = \int_0^{2\pi} \frac{1-r^2}{1-2r\cos(\theta-\theta')+r^2}\,d\lambda(\theta'),
\end{equation}
and the imaginary part is the \emph{conjugate Poisson integral}:
\begin{equation}\label{eq:poissoncirc_conj}
    \Im Q[\lambda](re^{i\theta}) = \int_0^{2\pi} \frac{2r\sin(\theta-\theta')}{1-2r\cos(\theta-\theta')+r^2}\,d\lambda(\theta').
\end{equation}
\end{definition}
These integral transforms can be seen to be isometries of appropriate function spaces. To see this, we define the following harmonic Hardy spaces on the disc:
\begin{definition}
    Suppose $h$ is a real harmonic function on $\DD$, and fix $0<p\leq\infty$. We say that $h\in h^p(\DD)$ if, for all $r<1$, the circular traces $e^{i\theta}\mapsto h(re^{i\theta})$ are uniformly bounded in $L^p(S^1)$. We define $\|h\|_{h^p}=\sup_r \left(\frac{1}{2\pi}\int |h(re^{i\theta})|^p\,d\theta\right)^{1/p}$.
\end{definition}
In this language, we have the following classical results:
\begin{proposition}\label{prop:classic}
    The following classical results are established, for instance\footnote{Respectively, these correspond to Thm.~6.13a, Thm.~6.13b, Thm.~6.9, and Cor.~6.44.}, in Axler et al.~\cite{axler2013harmonic}:
    \begin{enumerate}
        \item The Poisson kernel $P:\lambda\to P[\lambda]$ is an isometry from $\cM(S^1)$ (with variation norm) to $h^1(\DD)$ (Herglotz--Riesz)~.\label{prop:classic1}
        \item If $1<p\leq \infty$, the map $P:f\mapsto P[(2\pi)^{-1}f(e^{i\theta})\,d\theta]$ is an isometry from $L^p(S^1)$ to $h^p(\DD)$.\label{prop:classic2}
        \item If $\lambda\in \cM(S^1)$, the measures $\lambda_r\doteq (2\pi)^{-1}P[\lambda](re^{i\theta})\,d\theta$ converge weakly to $\lambda$ as $r\to 1$.\label{prop:classic3}
        \item If $\lambda\in \cM(S^1)$, and $\lambda_c\in L^1(S^1)$ is the density of its continuous part with respect to the normalized Lebesgue measure $(2\pi)^{-1}d\theta$, as furnished by the Lebesgue decomposition~\cite{rudin1974real}, then $P[\lambda]$ has non-tangential limit $\lambda_c$ almost everywhere in $S^1$.\label{prop:classic4}
    \end{enumerate}
\end{proposition}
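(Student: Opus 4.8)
These four assertions are classical and the cited reference furnishes full proofs; since the task is to sketch an independent argument, the plan is to derive all four from a single mechanism---that $\{(2\pi)^{-1}P_r\}_{r<1}$, with $P_r(\theta)=\frac{1-r^2}{1-2r\cos\theta+r^2}$, is an approximate identity on $S^1$ made of probability measures---together with a dose of functional-analytic compactness.

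For the first assertion I would first obtain $\|P[\lambda]\|_{h^1}\leq\|\lambda\|$ from Fubini and $\frac{1}{2\pi}\int_0^{2\pi}P_r\,d\theta=1$. For the converse direction, given $h\in h^1(\DD)$ the circular traces $d\lambda_r\doteq(2\pi)^{-1}h(re^{i\theta})\,d\theta$ form a norm-bounded net in $\cM(S^1)=C(S^1)^\ast$; Banach--Alaoglu extracts a weak-$\ast$ subnet converging to some $\lambda$ with $\|\lambda\|\leq\|h\|_{h^1}$, and because the Poisson integral reproduces harmonic functions on the disc ($P[\lambda_r](\rho e^{i\theta})=h(r\rho e^{i\theta})$ for $\rho<1$) one passes to the limit to get $P[\lambda]=h$. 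This proves surjectivity and that $P$ neither increases nor decreases norms; injectivity follows from uniqueness of the representing measure, itself a corollary of the third assertion. The second assertion runs identically, with Jensen's inequality against the probability measure $(2\pi)^{-1}P_r(\theta-\cdot)\,d\theta$ replacing the trivial $L^1$ bound, and weak sequential compactness of bounded sets in $L^p(S^1)$ (reflexivity when $1<p<\infty$, weak-$\ast$ compactness of the unit ball of $L^\infty=(L^1)^\ast$ when $p=\infty$) replacing Banach--Alaoglu on measures.

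For the third assertion I would fix $g\in C(S^1)$, use Fubini to write $\int g\,d\lambda_r=\int (P_r\ast g)\,d\lambda$, and then invoke $P_r\ast g\to g$ uniformly together with finiteness of $\lambda$ to conclude $\int g\,d\lambda_r\to\int g\,d\lambda$, which is exactly weak convergence $\lambda_r\to\lambda$.

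The only assertion I expect to require genuine work is the fourth, a Fatou-type theorem. I would take the Lebesgue decomposition $d\lambda=\lambda_c\,\frac{d\theta}{2\pi}+d\lambda_s$ with $\lambda_s\perp d\theta$. For the absolutely continuous part, $P[\lambda_c\,d\theta/2\pi]$ converges non-tangentially to $\lambda_c$ at every Lebesgue point by the classical Fatou theorem, whose proof dominates the non-tangential approach region by the Hardy--Littlewood maximal function of $\lambda_c$ (finite a.e.) and applies the Lebesgue differentiation theorem. The subtle part is the singular piece: I would show $P[\lambda_s]$ has non-tangential limit $0$ a.e.\ by majorizing it, throughout a fixed cone at $e^{i\theta}$, by a constant times the upper symmetric derivative $\limsup_{h\to0^+}\tfrac{|\lambda_s|((\theta-h,\theta+h))}{2h}$, which vanishes at a.e.\ $\theta$ precisely because $\lambda_s$ is singular with respect to Lebesgue measure. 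Summing the two contributions yields the claim. The one delicate estimate in the whole proposition is this maximal-function majorization of the Poisson integral over a non-tangential cone; everything else is soft.
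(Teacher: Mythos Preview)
Your sketch is correct and follows the standard textbook route (approximate-identity properties of the Poisson kernel, weak-$\ast$ compactness via Banach--Alaoglu, and the Fatou theorem via the Hardy--Littlewood maximal function). The paper itself does not prove this proposition at all: it simply records these four facts as classical and points to Axler et al.\ for proofs, so there is nothing to compare against beyond noting that your outline is essentially what one finds in that reference.
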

In particular, we make use of the following corollary:
\begin{corollary}[Shifted Cauchy and Hilbert Transforms]\label{cor:uniqueness}
    For any $\lambda\in\cM(S^1)$ and $\sigma\in\RR$, there is a unique holomorphic function $Q_\sigma[\lambda](z)$ on $\DD$ such that $\Im Q_\sigma[\lambda](0) = \sigma$ and such that the measures $\lambda_r\doteq (2\pi)^{-1}\Re Q_\sigma[\lambda](re^{i\theta})\,d\theta$ \dave{have uniformly bounded variation $\|\lambda_r\|$ and} converge weakly to $\lambda$ as $r\to 1$. This function is given by
    \begin{equation}\label{eq:cauchygeneral}
        Q_\sigma[\lambda](z) = Q[\lambda](z) + i\sigma = \int_{0}^{2\pi}\frac{1+e^{-i\theta}z}{1-e^{-i\theta}z}\,d\lambda(\theta) + i\sigma.
    \end{equation}

    We say that $Q_\sigma[\lambda]$ is the $\sigma$-Cauchy transform of $\lambda$, and we define the $\sigma$-Hilbert transform to be its imaginary trace along $S^1$:
    \begin{equation}\label{eq:H_sigma}
        H_\sigma[\lambda](e^{i\theta}) = \lim_{r\to 1}\Im Q[\lambda](re^{i\theta}),
    \end{equation}
    well-defined almost everywhere in $S^1$~\cite{axler2013harmonic}. We set $H[\lambda] = H_0[\lambda]$.
\end{corollary}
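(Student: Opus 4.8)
The plan is to split the statement into an existence claim and a uniqueness claim; existence is a direct verification using the classical facts in \cref{prop:classic}, and uniqueness is where the Herglotz--Riesz representation carries the argument.

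For existence, I would simply take $Q_\sigma[\lambda]\doteq Q[\lambda]+i\sigma$ and check the four required properties. It is holomorphic on $\DD$ by the definition \cref{eq:cauchycirc} of the Cauchy transform. Its real part is the Poisson integral $P[\lambda]$ by \cref{eq:poissoncirc}; since the Poisson kernel is strictly positive for $r<1$ and $\lambda\in\cM_+(S^1)$ is a \emph{nonzero} non-negative measure (recall $\cM_+(S^1)$ excludes the zero measure), we get $P[\lambda]>0$ throughout $\DD$, so $\Re Q_\sigma[\lambda]>0$. Evaluating \cref{eq:cauchycirc} at $z=0$ gives $Q[\lambda](0)=\int d\lambda=\lambda(S^1)\in\RR$, hence $\Im Q_\sigma[\lambda](0)=\sigma$. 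Finally, $\lambda_r=(2\pi)^{-1}P[\lambda](re^{i\theta})\,d\theta\to\lambda$ weakly is precisely \cref{prop:classic3}. So $Q_\sigma[\lambda]=Q[\lambda]+i\sigma$ works, which also establishes the displayed formula \cref{eq:cauchygeneral} once uniqueness is known.

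For uniqueness, suppose $\widetilde Q$ is any holomorphic function on $\DD$ with $\Re\widetilde Q>0$, $\Im\widetilde Q(0)=\sigma$, and $(2\pi)^{-1}\Re\widetilde Q(re^{i\theta})\,d\theta\to\lambda$ weakly. Since $\Re\widetilde Q$ is a positive harmonic function, the mean value property gives $\frac1{2\pi}\int_0^{2\pi}|\Re\widetilde Q(re^{i\theta})|\,d\theta=\Re\widetilde Q(0)$ for every $r<1$, so $\Re\widetilde Q\in h^1(\DD)$. By the Herglotz--Riesz theorem \cref{prop:classic1} there is a unique $\mu\in\cM(S^1)$ with $\Re\widetilde Q=P[\mu]$, and $\mu$ is non-negative because, by \cref{prop:classic3}, it is the weak limit of the non-negative measures $(2\pi)^{-1}\Re\widetilde Q(re^{i\theta})\,d\theta$. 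But by hypothesis those same dilates converge weakly to $\lambda$, and weak limits of finite Borel measures on the compact space $S^1$ are unique, so $\mu=\lambda$ and hence $\Re\widetilde Q=P[\lambda]=\Re Q_\sigma[\lambda]$. Then $\widetilde Q-Q_\sigma[\lambda]$ is holomorphic on the connected set $\DD$ with identically vanishing real part, hence (Cauchy--Riemann) equals a purely imaginary constant $ic$; comparing imaginary parts at $z=0$ forces $c=0$, so $\widetilde Q=Q_\sigma[\lambda]$. The $\sigma$-Hilbert transform of \cref{eq:H_sigma} is then well-defined: $\Im Q[\lambda]$ is the conjugate Poisson integral \cref{eq:poissoncirc_conj}, whose non-tangential boundary limits exist almost everywhere by the classical theory of conjugate functions (Axler et al., cf.\ \cref{prop:classic4}).

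I do not expect a genuine obstacle here; the only point that needs care is the logical order in the uniqueness step — one must first invoke Herglotz--Riesz to realize $\Re\widetilde Q$ as a Poisson integral $P[\mu]$ before uniqueness of weak limits can be used to identify $\mu$ with $\lambda$ — together with the routine observations that a positive harmonic function automatically lies in $h^1(\DD)$ and that the hypothesis $\lambda\neq 0$ is what makes ``positive real part'' the correct normalization.
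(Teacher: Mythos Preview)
Your proof is correct and follows essentially the same approach as the paper's: verify that $Q[\lambda]+i\sigma$ works for existence, then for uniqueness use Herglotz--Riesz to realize the real part as a Poisson integral and identify the measure via \cref{prop:classic}.\ref{prop:classic3}. The only minor difference is that you establish $\Re\widetilde Q\in h^1(\DD)$ via the mean value property (exploiting positivity), whereas the paper deduces the uniform $L^1$ bound from the weak convergence hypothesis; both are valid and lead to the same conclusion.
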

\begin{remark}
    \dave{We make particular use of the case $\lambda\in\cM_+(S^1)$, in which case it is known \emph{a priori} that $\|\lambda_r\|=\|\lambda\|=\Re Q_\sigma[\lambda](0)$ for all $r<1$, and it follows from the maximum principle that $0<\Re Q_\sigma[\lambda](z)<\infty$ for all $z\in\DD$. The $\sigma$-Cauchy and $\sigma$-Hilbert transforms are illustrated in \cref{fig:H_Q_sigma}.}
\end{remark}
\begin{proof}
    \dave{\Cref{prop:classic} makes clear that the real part of $P[\lambda]=\Re Q_\sigma[\lambda]$ is uniquely defined; the lemma follows by noting that the harmonic conjugate $\Im Q_\sigma[\lambda]$ of $P[\lambda]$ is unique up to a constant term~\cite{conway1978functions}.}
\end{proof}

\begin{figure}
    \centering
    \includegraphics[width=\linewidth]{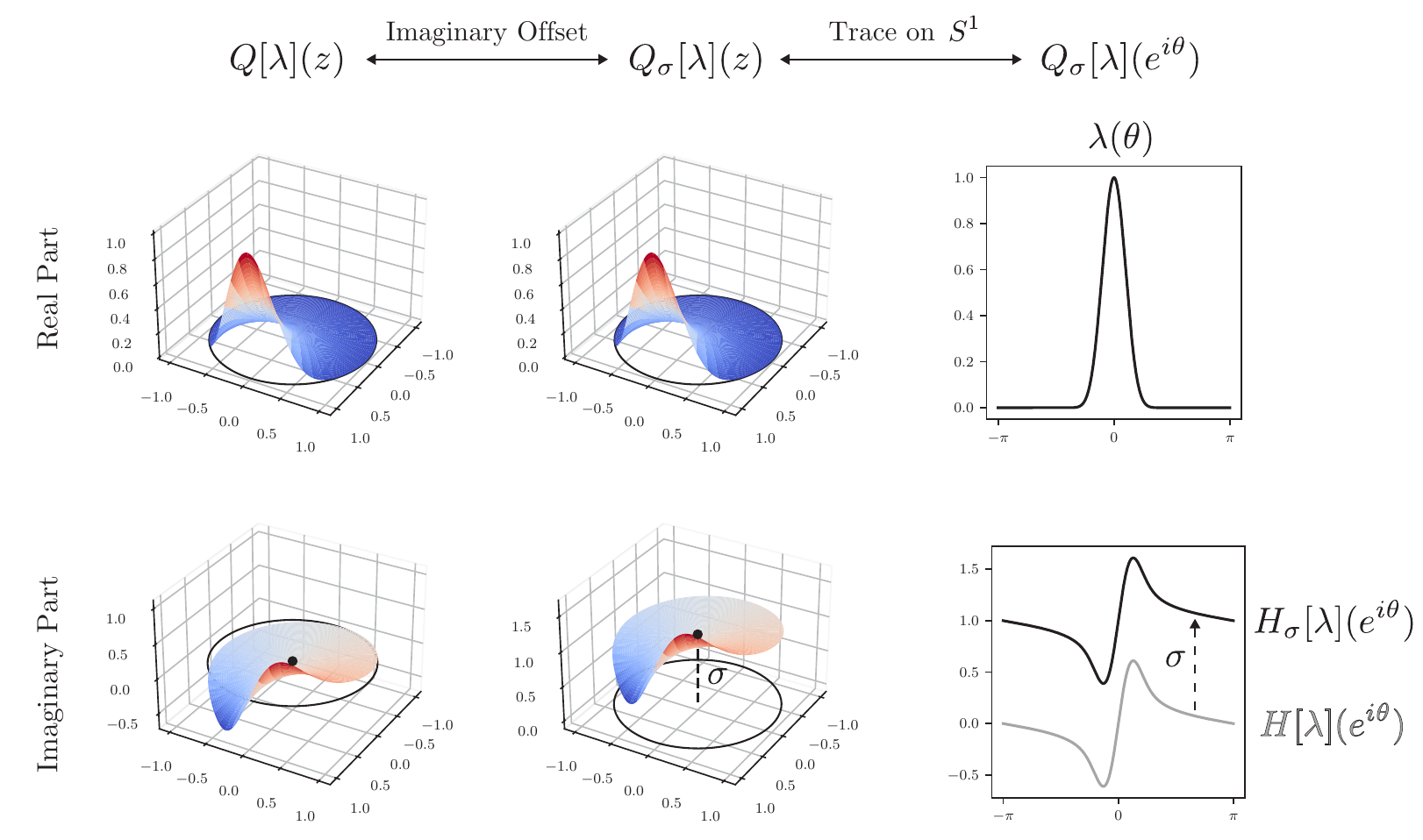}
    \caption[Visualization of the $\sigma$-Cauchy Transform]{Visualization of the Cauchy transform $Q$ given by \cref{eq:cauchycirc}. By adding an imaginary component to the Cauchy transform, we recover the one-parameter family of $\sigma$-Cauchy transforms $Q_\sigma$, given by \cref{eq:cauchygeneral}; these transforms allow us to capture the Cauchy transforms on the circle and real line (and in fact, any smooth Jordan curve) using the same theory. By taking the imaginary trace of $Q$ and $Q_\sigma$ along the unit circle, we recover the Hilbert and $\sigma$-Hilbert transforms, respectively.}
    \label{fig:H_Q_sigma}
\end{figure}

The utility of this one-parameter family of integral transforms is best seen by mapping our setting to the real line. The standard definition of the \emph{Cauchy transform} on the real line is as follows; for any $\lambda\in\cM^{(1)}(\RR)$, we set
\[Q_\RR[\lambda](z) \doteq \int\frac{i\,d\lambda(s)}{\pi (z-s)}.\]
This is a holomorphic function on the open half-plane $\HH = \{z\in\CC\;|\;\Im z > 0\}$. \dave{Its imaginary trace along the real line} is known as the \emph{Hilbert transform}:
\begin{equation}\label{eq:hilberttransform}
    H_\RR[\lambda](t) \doteq \lim_{\eps\to 0}\Im Q_\RR[\lambda](t+i\eps) = \pv\int\frac{d\lambda(s)}{\pi(t-s)},
\end{equation}
defined almost everywhere in $\RR$. Here, the (Cauchy) `principal value' of the integral is taken~\cite{kanwal1996linear}---in other words, the contour of integration is understood to travel above \dave{(i.e., in the $+i$ direction)} any singularities of $Q_\RR[\lambda]$.

Then we note that, for any $\lambda\in\cM_+^{(1)}(\RR)$, the embedding $\psi$ given by \cref{eq:embedding} yields the identity
\begin{equation}\label{eq:cauchytransform}
    Q[\psi[\lambda]](\phi^{-1}(z)) = \int\frac{i\,d\lambda(s)}{\pi (z-s)} + \int\frac{is\,d\lambda(s)}{\pi(1+s^2)} \doteq Q_\RR[\lambda](z) - i\sigma_\RR(\lambda), 
\end{equation}
where \[\sigma_\RR:\cM_+^{(1)}(\RR)\to\RR,\qquad \lambda\mapsto -\int\frac{s\,d\lambda(s)}{\pi(1+s^2)} \]
measures the imaginary part of $Q_\RR[\lambda]$ at $z=+i$. Intuitively, the imaginary offset of $Q_\RR[\lambda](z)$ is fixed by the requirement that $Q_\RR[\lambda](z)\to 0$ as $z\to\infty$, but the imaginary offset of $Q[\psi[\lambda]](\phi^{-1}(z))$ is fixed by the requirement that $\Re Q[\psi[\lambda]](0)\in\RR$. Since $0=\phi^{-1}(i)$, it is exactly the functional $\sigma_\RR$ that quantifies this difference. 

Combining the identity \cref{eq:cauchytransform} with the result of \cref{cor:uniqueness}, we deduce that $Q_\RR$ is defined (up to the addition of an imaginary constant) by the property\footnote{\dave{This convergence corresponds to the $W_{-2}$ topology that we will introduce in \cref{def:winf}.}} that
\[(1+t^2)^{-1}\Re Q_\RR[\lambda](t+i\eps)\,dt \rightharpoonup (1+t^2)^{-1}\,d\lambda(t)\]
weakly as $\eps\to 0$. In particular, we see that $\Re Q_\RR[\lambda](t+i\eps)\,dt \to \lambda$ \emph{locally} weakly\footnote{In other words, the restriction of the measures $\Re Q_\RR[\lambda](t+i\eps)\,dt$ to any compact set converges weakly to the same restriction of $\lambda$. This notion is sometimes known as \emph{vague} convergence.}. 

Critically, this insight implies that the Cauchy transform on the real line (and similarly for any smooth Jordan curve) can be seen as a special case of the one-parameter family of transforms furnished by \cref{cor:uniqueness}, with $\sigma = \sigma_\RR(\lambda)$. Of course, a similar statement for the Hilbert transform follows:
\begin{equation}\label{eq:H_real2circ}
    H[\psi[\lambda]](\phi^{-1}(t)) = H_\RR[\lambda](t) - \sigma_\RR(\lambda).
\end{equation}

Before proceeding \dave{to our main results}, we develop a quick result characterizing $H_\sigma[\lambda]$ outside the support of $\lambda$:
\begin{lemma}\label{lem:smooth}
    Fix $\sigma\in\RR$. If $\lambda\in\cM_+(S^1)$, then $H_\sigma[\lambda]$ is smooth and strictly decreasing (in the counterclockwise direction) on each component of $S^1\setminus\supp f$. As a consequence, if $\psi^{-1}[\lambda]\in \cM_+(\RR)$ is compactly supported, then $t\mapsto H_\RR[\psi^{-1}[\lambda]](-1/t)$ is smooth and strictly decreasing in an interval of $t=0$.
\end{lemma}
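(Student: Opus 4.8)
Since $H_\sigma[\lambda] = H[\lambda] + \sigma$ differs from $H[\lambda]$ only by an additive constant, it suffices to treat $\sigma = 0$. The plan for the first assertion is to show that $Q[\lambda]$, a priori holomorphic only on $\DD$, in fact extends holomorphically across each component $I$ of $S^1\setminus\supp\lambda$. The Cauchy kernel $z\mapsto (1+e^{-i\theta'}z)/(1-e^{-i\theta'}z)$ in \cref{eq:cauchycirc} is holomorphic on $\CC\setminus\{e^{i\theta'}\}$, and because $\supp\lambda$ is a compact subset of $S^1$ disjoint from $I$, any sufficiently small complex neighborhood $U$ of a compact subarc of $I$ has $\dist(\overline U,\supp\lambda)>0$; on $\overline U$ the kernel is bounded uniformly in $\theta'\in\supp\lambda$, since $|1-e^{-i\theta'}z| = |z - e^{i\theta'}| \ge \dist(\overline U,\supp\lambda)$. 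A Morera/Fubini argument then shows the integral \cref{eq:cauchycirc} defines a holomorphic function on $U$ agreeing with $Q[\lambda]$ on $U\cap\DD$; its boundary trace on $I$ is real-analytic, and as the non-tangential boundary value of a function holomorphic across $I$ coincides with the continuous extension there, $H[\lambda] = \Im Q[\lambda]$ is smooth on $I$.

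For strict monotonicity I would differentiate the kernel, $\tfrac{d}{dz}\frac{1+e^{-i\theta'}z}{1-e^{-i\theta'}z} = \frac{2e^{-i\theta'}}{(1-e^{-i\theta'}z)^2}$, so that along $S^1$ one gets $\tfrac{d}{d\theta}Q[\lambda](e^{i\theta}) = ie^{i\theta}Q[\lambda]'(e^{i\theta}) = \int_0^{2\pi} \tfrac{2i\,e^{i(\theta-\theta')}}{(1-e^{i(\theta-\theta')})^2}\,d\lambda(\theta')$. Using the half-angle identity $1-e^{i\alpha} = -2ie^{i\alpha/2}\sin(\alpha/2)$, the integrand collapses to $-\tfrac{i}{2}\csc^2\!\big(\tfrac{\theta-\theta'}{2}\big)$, hence $\tfrac{d}{d\theta}H[\lambda](e^{i\theta}) = -\tfrac12\int_0^{2\pi}\csc^2\!\big(\tfrac{\theta-\theta'}{2}\big)\,d\lambda(\theta')$, which is finite (as $e^{i\theta}\notin\supp\lambda$) and strictly negative (as $\lambda\in\cM_+(S^1)$ is a nonzero non-negative measure). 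Thus $H[\lambda]$ is strictly decreasing in the counterclockwise direction on $I$. Equivalently, one can note that the $r=1$ boundary value of the conjugate Poisson kernel \cref{eq:poissoncirc_conj} is $\cot\tfrac{\theta-\theta'}{2}$ and differentiate in $\theta$.

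For the ``in particular'' claim, set $\mu = \psi^{-1}[\lambda]\in\cM_+(\RR)$, compactly supported, so that by \cref{eq:embedding} $\supp\lambda = \phi^{-1}(\supp\mu)$ is a compact subset of $S^1\setminus\{-1\}$; in particular $-1$ lies in some component $I$ of $S^1\setminus\supp\lambda$. By \cref{eq:H_real2circ}, $H_\RR[\mu](t) = H[\lambda](\phi^{-1}(t)) + \sigma_\RR(\mu)$ for $t\in\RR$. A direct computation from \cref{eq:cayleymap} gives $\phi^{-1}(-1/t) = \frac{t^2-1-2it}{1+t^2}$, a unit-modulus point $e^{i\beta(t)}$ with $\beta(0)=\pi$ and $\beta'(0)=2>0$; hence $t\mapsto\phi^{-1}(-1/t)$ is a smooth curve through $-1$ traversing $S^1$ counterclockwise with nonvanishing speed, and it stays in $I$ for $t$ near $0$. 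Composing with the smooth, strictly-counterclockwise-decreasing function $H[\lambda]|_I$ shows $t\mapsto H_\RR[\psi^{-1}[\lambda]](-1/t)$ is smooth and strictly decreasing in a neighborhood of $t=0$.

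The only genuinely delicate points I anticipate are (i) justifying the holomorphic continuation and differentiation under the integral sign, together with identifying the principal-value Hilbert transform $H_\RR$ off $\supp\mu$ with the continuous extension there (so that \cref{eq:H_real2circ} may be used pointwise near $-1$), and (ii) bookkeeping the orientations through the two Möbius substitutions $z\mapsto\phi^{-1}(z)$ and $t\mapsto -1/t$, so as to conclude ``decreasing'' rather than ``increasing.'' Both are routine once set up carefully; the remainder is the short half-angle computation above.
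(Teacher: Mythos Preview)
Your proposal is correct and follows essentially the same approach as the paper: both arrive at the derivative formula $\tfrac{d}{d\theta}H[\lambda](e^{i\theta}) = -\tfrac12\int\csc^2\!\big(\tfrac{\theta-\theta'}{2}\big)\,d\lambda(\theta')$ by differentiating under the integral sign (the paper writes the boundary kernel as $\cot$ directly and applies Leibniz, which is exactly your ``equivalently'' remark), and both deduce the final claim by noting $-1\in I$ and tracking the Cayley map. Your extra scaffolding---the Morera/Fubini argument for holomorphic extension of $Q[\lambda]$ across $I$, and the explicit computation of $\phi^{-1}(-1/t)$ with $\beta'(0)=2$---is more detailed than the paper's treatment but not a different route.
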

\begin{proof}
    Fix a component $I\subset S^1\setminus\supp \lambda$, so that $\lambda\equiv 0$ uniformly on this interval. For any $\theta_0\in I$, it follows from \cref{eq:poissoncirc_conj} that
    \[
        H_\sigma[\lambda](\theta_0) = \sigma+\pv\int_{S^1}\cot\left(\tfrac{\theta_0-\theta}{2}\right)\,d\lambda(\theta) = \sigma+\int_{S^1\setminus I}\cot\left(\tfrac{\theta_0-\theta}{2}\right)\,d\lambda(\theta).
    \]
    Since the integrand has a partial derivative (with respect to $\theta$) defined almost everywhere, a strong version of the Leibniz rule~\cite{folland2013real} shows that
    \[
    \frac{d}{d\theta}H_\sigma[\lambda](\theta_0) = -\frac{1}{2}\int_{S^1\setminus I} \csc^2\left(\tfrac{\theta_0-\theta}{2}\right)\,d\lambda(\theta)\leq 0,
\]
    with equality if and only if $\lambda\equiv 0$. Since the cotangent is smooth with each derivative uniformly bounded in $S^1\setminus I$, we can deduce similarly that $H_\sigma[\lambda]$ is smooth in $I$. The final claim follows from choosing $I\ni -1$ and applying the Cayley transform.
\end{proof}
\section{Main Results}\label{sec:main}
As discussed above, much of our analysis is performed `two steps removed' from the topic of Volterra equations, in the setting of holomorphic functions on the disc. \Cref{sec:main_circ} is dedicated to understanding positive measures on the circle, which are related to holomorphic functions on the disc through \cref{cor:uniqueness}. We introduce a natural involution $\cB$ on the set $\cM_+(S^1)\times\RR$, we show it to be weakly continuous, and we develop a practical closed-form expression for $\cB$. We show how the map $\cB$ corresponds to the solution (or \emph{interconversion}) of the discrete-time equation \ref{eq:integrodiff_dPD}, giving a flavor of our subsequent results for continuous-time equations.

\dave{\Cref{sec:main_line} pulls the involution $\cB$ back to the real line, yielding a map $\cB_\RR$ well-defined on a large subset of $\cM_+^{(1)}(\RR)\times\RR\times\RR_+$. Before exploring how widely $\cB_\RR$ can be defined, we show how it corresponds to the interconversion of \emph{both} \ref{eq:integrodiff_CM} and \ref{eq:integrodiff_PD}. We then develop a closed-form expression for $\cB_\RR$ under mild hypotheses on the measure $\lambda\in\cM_+^{(1)}(\RR)$, along with two more-specialized results in this direction. First, we see how $\cB_\RR$ reduces to known interconversion formulas for Prony series~\cite{tschoegl2012phenomenological,bhattacharya2023learning} when $\lambda$ is a finite sum of atoms; second, we see how it can be modified to handle \ref{eq:integrodiff_PD} in the case $\Im c_0>0$. Finally, we show that $\cB_\RR$ is well-defined over several larger subsets of $\cM_+^{(1)}(\RR)$, and we show that $\cB_\RR$ is continuous on these subsets with respect to natural variants of the weak topology.}

Finally, \cref{sec:regularizedH} extends our theory on the real line by constructing a regularized Hilbert transform $H_\mathrm{reg}$ \dave{on} $\cM^{(2)}(\RR)$, \dave{an object first discovered in the context of Calder\'on--Zygmund theory~\cite{10.1007/BF02392130}. Corresponding to $H_\mathrm{reg}$ is a new involution $\cB_\mathrm{reg}$, which extends the involution $\cB_\RR$ to all of $\cM_+^{(2)}(\RR)\times\RR\times\RR_+$.} After proving similar closed-form expressions and continuity properties for $\cB_\mathrm{reg}$, we show how it yields interconversion formulas for both \ref{eq:integrodiff_rPD} and \ref{eq:integrodiff_rCM} in different limits.

\subsection{Measures on the Circle and Discrete-Time Volterra Equations}\label{sec:main_circ}
On the circle, our primary object of study is the following involution:
\begin{theorem}[Definition of $\cB$]\label{thm:main_circle}
    For any $c_0\in\RR$ and $\lambda\in\cM_+(S^1)$, there are unique $\zeta_0\in\RR$ and $\mu\in\cM_+(S^1)$ such that
    \[Q_{c_0}[\lambda](z)Q_{\zeta_0}[\mu](z)\equiv 1.\]
    In this context, we write
    \begin{equation}\label{eq:involution_s1}
        \cB[\lambda,c_0] = (\mu,\zeta_0).
    \end{equation}
    The map $\cB$ is an involution of $\cM_+(S^1)\times\RR$, continuous with respect to the product of the weak topology on $\cM_+(S^1)$ and the standard topology on $\RR$. By evaluating at the origin, we find
    \[(\|\lambda\| + ic_0)(\|\mu\|+i\zeta_0) = 1,\]
    with $\|\cdot\|$ the total variation norm on $\cM_+(S^1)$.
\end{theorem}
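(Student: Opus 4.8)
The plan is to reduce the whole statement to the geometry of the open right half-plane $\Pi=\{\,w\in\CC : \Re w>0\,\}$, using two facts. The first is the Herglotz--Riesz correspondence: $(\nu,\sigma)\mapsto Q_\sigma[\nu]$ is a bijection from $(\cM_+(S^1)\cup\{0\})\times\RR$ onto the holomorphic functions $\DD\to\overline{\Pi}$, with $\nu$ recovered via \cref{prop:classic}.\ref{prop:classic1} (positivity of $\nu$ coming from \cref{prop:classic}.\ref{prop:classic3}) and $\sigma$ as the imaginary part at the origin; this is exactly the content underlying \cref{cor:uniqueness}. The second is the elementary observation that $w\mapsto 1/w$ maps $\Pi$ into itself, since $\Re(1/w)=\Re(w)/|w|^2$.

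For existence and uniqueness I would argue as follows. Given $\lambda\in\cM_+(S^1)$, the function $F:=Q_{c_0}[\lambda]$ has $\Re F=P[\lambda]>0$ everywhere on $\DD$, because the Poisson kernel is strictly positive there and $\lambda$ is a nonzero nonnegative measure; hence $F$ is zero-free and $G:=1/F$ is holomorphic on $\DD$ with $\Re G>0$. Since $\Re G\ge 0$, the mean-value property gives $\|\Re G\|_{h^1}=\Re G(0)=\Re\big(1/(\|\lambda\|+ic_0)\big)<\infty$, so $\Re G\in h^1(\DD)$ and \cref{prop:classic}.\ref{prop:classic1} yields a measure $\mu$ with $P[\mu]=\Re G$, which is nonnegative by \cref{prop:classic}.\ref{prop:classic3}; comparing harmonic conjugates on the simply connected domain $\DD$ gives $\zeta_0=\Im G(0)\in\RR$ with $G=Q_{\zeta_0}[\mu]$. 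If $\mu$ were the zero measure, $G\equiv i\zeta_0$ would have vanishing real part, contradicting $\Re G>0$; hence $\mu\in\cM_+(S^1)$, and $Q_{c_0}[\lambda]\,Q_{\zeta_0}[\mu]=F\cdot G\equiv 1$. Uniqueness is then immediate, since any other pair $(\mu',\zeta_0')$ with $Q_{c_0}[\lambda]\,Q_{\zeta_0'}[\mu']\equiv 1$ satisfies $Q_{\zeta_0'}[\mu']=1/F=G$, and the bijectivity above forces $(\mu',\zeta_0')=(\mu,\zeta_0)$. The involution property is formal: $\cB[\mu,\zeta_0]$ is by definition the unique pair $(\nu,\beta)$ with $Q_{\zeta_0}[\mu]\,Q_\beta[\nu]\equiv1$, and $(\lambda,c_0)$ is such a pair, so $\cB\circ\cB=\mathrm{id}$. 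Finally, evaluating $Q_\sigma[\nu](0)=\|\nu\|+i\sigma$ in the identity $Q_{c_0}[\lambda]\,Q_{\zeta_0}[\mu]\equiv1$ at $z=0$ gives $(\|\lambda\|+ic_0)(\|\mu\|+i\zeta_0)=1$.

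For continuity I would work with nets. Suppose $\lambda_\alpha\to\lambda$ weakly and $c_0^{(\alpha)}\to c_0$, and write $(\mu_\alpha,\zeta_0^{(\alpha)})=\cB[\lambda_\alpha,c_0^{(\alpha)}]$. For each fixed $z\in\DD$ the kernel $\theta\mapsto(1+e^{-i\theta}z)/(1-e^{-i\theta}z)$ lies in $C(S^1)$, so $Q_{c_0^{(\alpha)}}[\lambda_\alpha](z)\to Q_{c_0}[\lambda](z)$; since this limit is nonzero, $Q_{\zeta_0^{(\alpha)}}[\mu_\alpha](z)=1/Q_{c_0^{(\alpha)}}[\lambda_\alpha](z)\to Q_{\zeta_0}[\mu](z)$ pointwise on $\DD$. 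Taking real and imaginary parts at $z=0$ gives $\|\mu_\alpha\|\to\|\mu\|$ and $\zeta_0^{(\alpha)}\to\zeta_0$, so it remains only to show $\mu_\alpha\to\mu$ weakly. Since $\|\mu_\alpha\|$ converges, the net eventually lies in a fixed closed ball of $\cM(S^1)$, which is weak-$*$ compact by Banach--Alaoglu, so every subnet of $(\mu_\alpha)$ has a further subnet converging weakly to some $\mu'$. Along such a subnet $P[\mu_\alpha](z)\to P[\mu'](z)$ for every $z\in\DD$ (pairing against the continuous Poisson kernel), while also $P[\mu_\alpha]=\Re Q_{\zeta_0^{(\alpha)}}[\mu_\alpha]\to\Re Q_{\zeta_0}[\mu]=P[\mu]$; hence $P[\mu']=P[\mu]$, and injectivity in \cref{prop:classic}.\ref{prop:classic1} gives $\mu'=\mu$. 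As every subnet has a subnet converging to $\mu$, the net $(\mu_\alpha)$ converges weakly to $\mu$, proving continuity of $\cB$.

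The algebraic half of the theorem (existence, uniqueness, the involution property, the origin relation) is bookkeeping once the right-half-plane picture is in place, so I expect the only genuine obstacle to be the continuity statement, and within it the step of upgrading \emph{pointwise} convergence of the reciprocal Cauchy transforms to \emph{weak} convergence of the measures $\mu_\alpha$. The two ingredients that make this step go through are the uniform mass bound (which falls out for free by evaluating at the origin) and the injectivity of the Poisson transform, which together let a weak-$*$ compactness argument pin down the limit; care is also needed to run this with nets rather than sequences, since the weak topology on $\cM_+(S^1)$ is not metrizable on unbounded sets.
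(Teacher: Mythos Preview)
Your proof is correct. The existence, uniqueness, involution, and origin-evaluation steps match the paper's reasoning essentially verbatim: both reduce to the Herglotz--Riesz bijection of \cref{cor:uniqueness} together with the observation that $w\mapsto 1/w$ preserves the open right half-plane.

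The continuity argument, however, follows a genuinely different route. The paper (in \cref{prop:weakcont}) works with sequences and proves a quantitative statement: for each fixed $r<1$, the restrictions $Q_{\sigma_n}[\lambda_n](re^{i\theta})$ converge \emph{uniformly} in $\theta$ (via a derivative bound on the Cauchy kernel), and this uniform convergence is pushed through $S$ and then fed into an $\eps/3$ triangle inequality comparing $\int g\,d\mu_n$, $\int g\,d\mu_{n,r}$, and $\int g\,d\mu$. Your argument is softer and more structural: you use only \emph{pointwise} convergence of the Cauchy transforms on $\DD$, extract the mass bound from the value at the origin, invoke Banach--Alaoglu to get weak-$*$ compactness, and identify every subnet limit via injectivity of the Poisson transform. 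Your approach is shorter and avoids the uniform-convergence estimates entirely; the paper's approach, by contrast, yields locally uniform convergence of the transforms as a byproduct and is phrased for a general admissible map $S$ rather than just $z\mapsto 1/z$ (though your argument extends to that generality with no extra work, since $S$ is continuous on $H_+$). Your care in using nets rather than sequences is also warranted: the paper argues with sequences, which is only fully rigorous after one has established the eventual norm bound and invoked metrizability of bounded sets in $C(S^1)^*$---a step the paper leaves implicit.
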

\Cref{thm:main_circle} is proved in \cref{sec:disc}, \dave{and the involution $\cB$ is illustrated in \cref{fig:circle_inversion}.} In fact, we prove a significant generalization of the theorem, encompassing a wide class of nonlinear functions applied to $Q[\lambda]$. We are interested in calculating $\cB$ explicitly, for which we introduce the following notation:
\begin{definition}\label{def:zeroset_circ}
    Suppose $\lambda\in\cM_+(S^1)$. Define the zero set of $\lambda$ as
    \[N_{0}(\lambda) = \bigcap_{\eps>0}\op{clos}\Big\{e^{i\theta}\in S^1\;\Big|\;\limsup_{\delta\to 0}\lambda(\exp i[\theta-\delta,\theta+\delta])/2\delta < \eps\Big\}.\]
\end{definition}

If $\lambda$ is a continuous measure with continuous density, for instance, the set $N_0(\lambda)$ corresponds exactly to the zeroes of this density. So long as $N_0(\lambda)$ is not too badly behaved, we can compute $\cB$ exactly: 
\begin{theorem}[Closed form of $\cB$]\label{thm:main_circ_formula}
    Let $\lambda\in\cM_+(S^1)$, and write $\supp\lambda\subset S^1$ for its closed, essential support. Fix $c_0\in\RR$ and suppose that
    \[Z'\doteq \left(N_0(\lambda)\cap\supp\lambda\right)\cup\{z\notin\supp\lambda\;|\;H[\lambda](z) + c_0 = 0\}\]
    is discrete\footnote{This definition allows $Z'$ to be infinite, so long as the limit points of $Z'$ do not themselves belong to $Z'$.}, i.e., if $z\in Z'$, there is an $\eps>0$ such that $|z-z'|>\eps$ for any $z'\neq z$ in $Z'$. Write $\lambda_c\in L^1(S^1)$ for the density of the continuous component of $\lambda$ with respect to the normalized Lebesgue measure $(2\pi)^{-1}\,d\theta$, and write $\cB[\lambda,c_0] = (\mu,\zeta_0)$. Then we find
    \begin{equation}\label{eq:mu_circle}
        d\mu(\theta) = (2\pi)^{-1}\mu_c(e^{i\theta})\,d\theta + \sum_{\alpha_i\in Z}\beta_i\delta(\theta-\theta_i)\,d\theta,
    \end{equation}
    where the continuous part is given by
    \begin{equation}\label{eq:mu_c_circle}
        \mu_c(e^{i\theta}) = \frac{\lambda_c(e^{i\theta})}{\lambda_c(e^{i\theta})^2 + \big(H[\lambda](e^{i\theta}) + c_0\big)^2} \in L^1( S^1),
    \end{equation}
    and the discrete part has weights
    \begin{equation}\label{eq:beta_i_circle}
        \beta_i = \left(\int \frac{d\lambda(\theta)}{\sin^2[(\theta-\theta_i)/2]}\,\right)^{-1},
    \end{equation}
    for all $e^{i\theta_i}\in Z$ in the discrete set
    \begin{equation}\label{eq:Z_circle}
        Z = N_0(\lambda)\cap\{z\in S^1\;|\;H[\lambda](z)+ c_0 = 0\} \dave{\subset Z'}.
    \end{equation}
    Finally, we have that
    \begin{equation}\label{eq:zeta_0_circle}
        \zeta_0 = \Im\big[(\|\lambda\| + ic_0)^{-1}\big].
    \end{equation}
\end{theorem}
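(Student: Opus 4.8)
The plan is to build on \cref{thm:main_circle}, which already supplies the existence of $(\mu,\zeta_0)$ and the defining identity $Q_{\zeta_0}[\mu](z)\,Q_{c_0}[\lambda](z)\equiv 1$ on $\DD$; what remains is to extract the Lebesgue decomposition of $\mu$, and the value of $\zeta_0$, from the boundary behaviour of $g\doteq 1/Q_{c_0}[\lambda]$. Since $\Re Q_{c_0}[\lambda]>0$ on $\DD$, the function $g$ is again holomorphic with positive real part, so by \cref{cor:uniqueness} it is the unique $\sigma$-Cauchy transform with $\sigma=\Im g(0)$ and representing measure $\mu$. Because $Q_{c_0}[\lambda](0)=\int d\lambda+ic_0=\|\lambda\|+ic_0$, evaluating at the origin gives $\zeta_0=\Im g(0)=\Im[(\|\lambda\|+ic_0)^{-1}]$, which is \eqref{eq:zeta_0_circle}.

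For the absolutely continuous part: by \cref{prop:classic}.\ref{prop:classic4} applied to $\mu$, the density $\mu_c$ is the a.e.\ non-tangential boundary limit of $P[\mu]=\Re g$; by the same result applied to $\lambda$, together with the definition of the Hilbert transform in \cref{cor:uniqueness}, the non-tangential limit of $Q_{c_0}[\lambda]$ is $\lambda_c+i(H[\lambda]+c_0)$ a.e. Taking $\Re(1/\cdot)$ at a.e.\ point where $\lambda_c^2+(H[\lambda]+c_0)^2\neq 0$ yields \eqref{eq:mu_c_circle}, and $\mu_c\in L^1(S^1)$ is automatic from $\mu\in\cM_+(S^1)$. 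One then checks that the exceptional set $\{\lambda_c=0\}\cap\{H[\lambda]+c_0=0\}$ is Lebesgue-null: off $\supp\lambda$ it sits inside $\{z\notin\supp\lambda:\,H[\lambda]+c_0=0\}\subseteq Z'$, which is discrete by hypothesis, while on $\supp\lambda$ it coincides, up to the countable atomic set of $\lambda$ and a null set carrying its singular part, with $N_0(\lambda)\cap\supp\lambda\subseteq Z'$.

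The atomic part is the heart of the matter. For any Herglotz function, $\mu(\{e^{i\theta_i}\})=\lim_{r\to1}\tfrac{1-r}{2}Q_{\zeta_0}[\mu](re^{i\theta_i})=\lim_{r\to1}\tfrac{1-r}{2Q_{c_0}[\lambda](re^{i\theta_i})}$, so a positive atom forces $Q_{c_0}[\lambda]$ to vanish to exactly first order at $e^{i\theta_i}$. Vanishing there requires $P[\lambda]\to0$, which (the Poisson kernel being an approximate identity concentrating on arcs of length $\sim 1-r$) holds precisely at points of $N_0(\lambda)$, and $H[\lambda]+c_0=0$; hence the atoms of $\mu$ lie in $Z=N_0(\lambda)\cap\{H[\lambda]+c_0=0\}\subseteq Z'$, a discrete set. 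At such a point the local density of $\lambda$ vanishes, which is exactly what is needed both to differentiate $Q[\lambda]$ under the integral at the boundary, giving the finite, nonzero angular derivative $Q[\lambda]'(e^{i\theta_i})=\int\tfrac{2e^{-i\theta}}{(1-e^{i(\theta_i-\theta)})^2}\,d\lambda(\theta)$, and to make the integral in \eqref{eq:beta_i_circle} converge. Writing $e^{i\theta}-e^{i\theta_i}=e^{i\theta}(1-e^{i(\theta_i-\theta)})$ shows $\int\tfrac{e^{i(\theta+\theta_i)}}{(e^{i\theta}-e^{i\theta_i})^2}\,d\lambda(\theta)=\tfrac12 e^{i\theta_i}Q[\lambda]'(e^{i\theta_i})$, and substituting into $\beta_i=-e^{-i\theta_i}/\big(2Q[\lambda]'(e^{i\theta_i})\big)$ (the value of the limit above, since the constant $ic_0$ has zero derivative) gives \eqref{eq:beta_i_circle}; non-negativity of $\beta_i$ is immediate from $\mu\geq 0$. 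To conclude the precise form \eqref{eq:mu_circle}, I would rule out a singular continuous part of $\mu$: it would be carried by the Lebesgue-null set $\{P[\mu]\to\infty\text{ non-tangentially}\}$, which by the discussion above lies in the countable set $Z$, and no continuous measure can charge a countable set.

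I expect the local analysis at points of $Z$ to be the main obstacle. Concretely: one must show that at an isolated point of $N_0(\lambda)\cap\supp\lambda$ the conjugate Poisson integral $\Im Q[\lambda]$ actually converges to $H[\lambda]$ (not merely a.e.), and that $Q[\lambda]$ has a finite, nonzero angular derivative there equal to the naive differentiation-under-the-integral expression --- a Julia--Carath\'eodory-type statement that must be established even though $Q[\lambda]$ need not extend holomorphically across $S^1$ near such a point. The remaining ingredients --- the reciprocal identity, the evaluation at the origin, the weak-convergence characterisation of $\mu$, and the measure-zero bookkeeping behind \eqref{eq:mu_c_circle} --- are routine once \cref{thm:main_circle}, \cref{cor:uniqueness}, and \cref{prop:classic} are in hand.
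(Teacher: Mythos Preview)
Your outline is sound, and for the absolutely continuous part of $\mu$ and the value of $\zeta_0$ it coincides with the paper's argument (this is \cref{thm:representation} with $S(z)=1/z$, and the identity $Q_{c_0}[\lambda](0)=\|\lambda\|+ic_0$ is already recorded in \cref{thm:main_circle}). The divergence is entirely in how the singular part of $\mu$ is isolated and weighed.

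The paper does not use the Herglotz point-mass formula or a Julia--Carath\'eodory angular derivative. Instead it builds, in \cref{sec:hardy}, a framework for general admissible maps $S$: \cref{lem:containment_refined} shows---by decomposing $\lambda$ into near and far pieces and applying the maximum principle to $P[\lambda]$---that for the lower-highly-admissible map $S(z)=1/z$ the singular part $\nu$ of $\mu$ is supported in $N_0(\lambda)$, and \cref{cor:countable_refined} combines this with the monotonicity from \cref{lem:smooth} to conclude that $\nu$ is purely atomic under the hypothesis on $Z'$. The weights $\beta_i$ are then computed, in the proof of \cref{thm:main_formula} (stated on $\RR$ but adapted to $S^1$), by a residue calculation: one integrates $Q[\lambda](z)/(z-e^{i\theta_i})^2$ over a keyhole contour inside $\DD$ and reads $1/\beta_i$ off the small-arc contribution. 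A separate contradiction argument, bounding $\partial_y\Im Q[\lambda]$ near the boundary, establishes the converse---that every point of $Z$ at which the integral in \eqref{eq:beta_i_circle} converges must carry a positive atom.

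Two steps in your sketch deserve more care than the approximate-identity and ``local density vanishes'' remarks suggest. First, the containment of the singular support of $\mu$ in $N_0(\lambda)$ is not immediate from radial limits alone: because of the closure in \cref{def:zeroset_circ}, what is actually needed is that $P[\lambda]$ stays bounded below in a full $\DD$-neighbourhood of any boundary point off $N_0(\lambda)$, which is precisely the content of \cref{lem:containment}. Second, membership in $N_0(\lambda)$ does \emph{not} by itself force $\int(e^{i\theta}-e^{i\theta_i})^{-2}\,d\lambda$ to converge (take density $1/|\log|\theta-\theta_i||$ near $\theta_i$), so the angular derivative need not be finite; the paper's formulation handles this by interpreting $\beta_i=0$ when that integral diverges and proving, via the bound on $\partial_y\Im Q$, that no atom sits there. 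The Julia--Carath\'eodory obstacle you flag is real, and the paper's contour-integral computation is one clean way to bypass it without analytically continuing $Q[\lambda]$ across $S^1$.
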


\begin{figure}
    \centering
    \includegraphics[width=\linewidth]{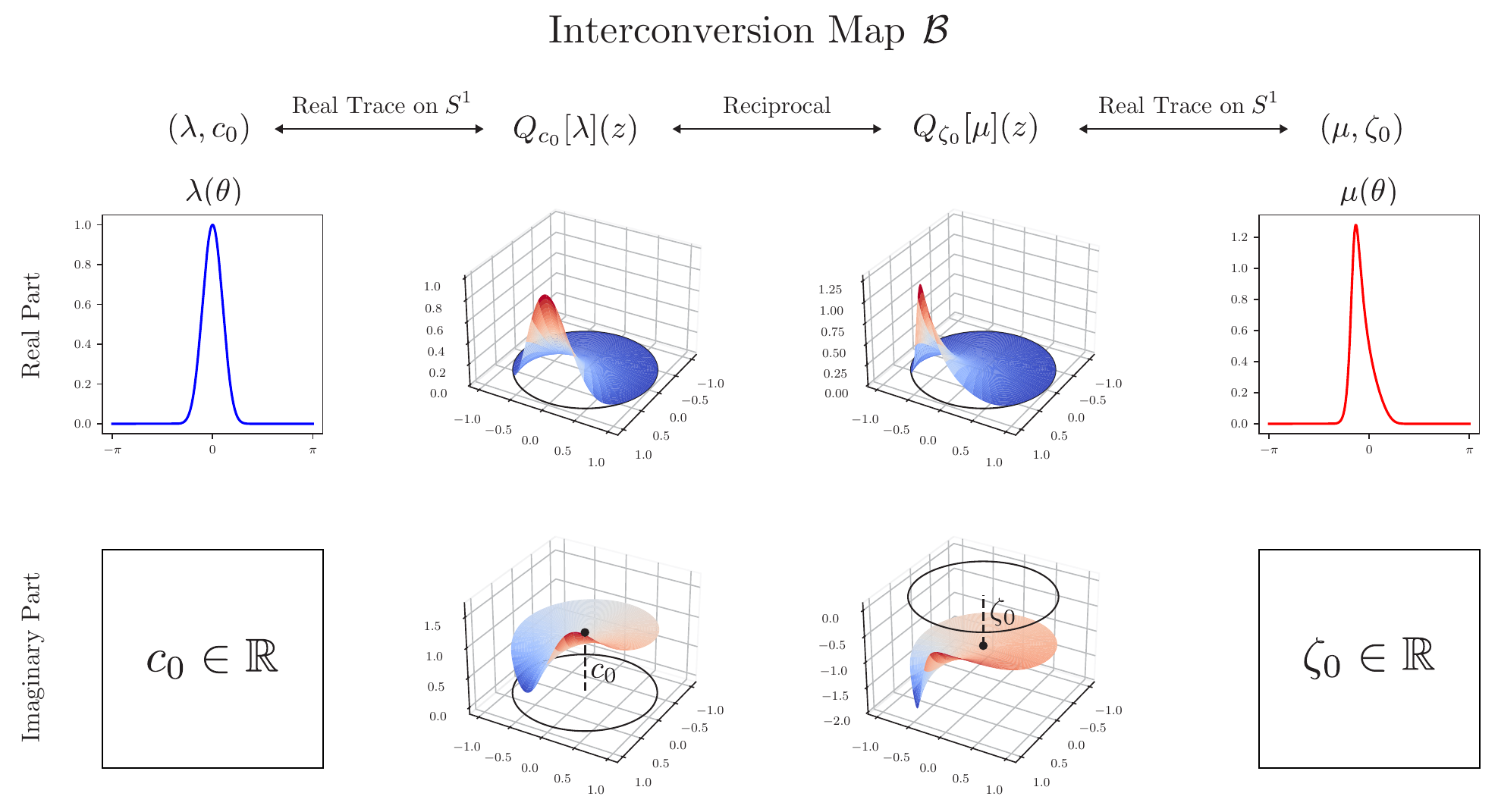}
    \caption[Visualization of the Interconversion Map $\cB$]{Visualization of the interconversion map $\cB$ of \cref{thm:main_circle}. This map directly allows for the interconversion of discrete-time Volterra equations of the form \ref{eq:integrodiff_dPD}, but can also be leveraged to solve integral, integro-differential, delay differential, and fractional differential equations.}
    \label{fig:circle_inversion}
\end{figure}

We prove the (substantially harder) case of the real line below, as \cref{thm:main_formula}; our proof can be adapted straightforwardly to the case of $S^1$.
Although our ultimate aim is to pull $\cB$ back to the real line to understand continuous-time Volterra equations, it is also directly useful for solving discrete-time Volterra equations. We prove the following proposition in \cref{sec:laplace}:
\begin{restatable}[Solution of \ref{eq:integrodiff_dPD}]{proposition}{propmaindPD}\label{prop:main_dPD} Consider the setting of \ref{eq:integrodiff_dPD}, and recall that $\Re c_0 \geq -\tfrac{1}{2}K(0)$ by hypothesis. Write
\[c'_0 = c_0 -2\Re c_0 - K(0),\qquad K'(n) = K(n) + \delta(n)\left(2\Re c_0 + K(0)\right),\]
where $\delta(n)$ is a discrete delta function. It is easy to verify that $K'(n)$ is positive definite, and that the pair $(c_0',K')$ give rise to the same discrete-time Volterra equation as $(c_0,K)$ but now satisfying the equality $\Re c_0' = -\tfrac{1}{2}K'(0)$. Write $\lambda\doteq\cF^{-1}[K']\in\cM_{+}(S^1)$, and define
\[(\mu,\zeta_0')=\cB[\lambda, 2\Im c_0'],\qquad J = 4\cF[\mu].\]
Setting $\zeta_0 = 2i\zeta_0' - \frac{1}{2}J(0)$, the equation \ref{eq:integrodiff_dPD} is satisfied by
    \[x(n) = \zeta_0y(n) + \sum_{j=0}^n J(n-j)y(j).\]
\end{restatable}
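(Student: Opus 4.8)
The plan is to pass to the spectral (transfer-function) domain via the bilateral $\cZ$-transform and reduce the claim to the algebraic identity $Q_{c_0}[\lambda]\cdot Q_{\zeta_0}[\mu]\equiv 1$ furnished by \cref{thm:main_circle}. First I would verify the two preliminary claims: that $K'(n)=K(n)+\delta(n)(2\Re c_0+K(0))$ is positive definite — this follows because adding $(2\Re c_0 + K(0))\geq 0$ times the identity to any PD Toeplitz matrix preserves non-negative definiteness — and that $(c_0',K')$ defines the same equation as $(c_0,K)$, which is immediate since $c_0 x(n) + \sum_j K(n-j)x(j) = c_0' x(n) + \sum_j K'(n-j)x(j)$ by construction (the extra diagonal mass cancels). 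The point of the renormalization is that it puts us in the regime $\Re c_0' = -\tfrac12 K'(0)$, where the operator $c_0' x(n) + \sum_{j}K'(n-j)x(j)$ has a clean transfer function: taking $\cZ_b$ of the causal kernel $K'$ gives $H(z) = \sum_{n\geq 0}K'(n)z^n$, and since $K'(n)=\int_0^{2\pi}e^{-in\theta}\,d\lambda(\theta)$ with $\lambda = \cF^{-1}[K']\in\cM_+(S^1)$, one computes
\[
\tfrac12 K'(0) + \sum_{n\geq 1}K'(n)z^n = \int_0^{2\pi}\Big(\tfrac12 + \sum_{n\geq 1}e^{-in\theta}z^n\Big)\,d\lambda(\theta) = \tfrac12\int_0^{2\pi}\frac{1+e^{-i\theta}z}{1-e^{-i\theta}z}\,d\lambda(\theta) = \tfrac12 Q[\lambda](z).
\]
Hence the transfer function of the whole left-hand operator $(c_0',K')$ is $c_0' + (H(z) - \tfrac12 K'(0)) + \tfrac12 K'(0) = \tfrac12\big(Q[\lambda](z) + 2c_0'\big)$; writing $c_0' = \Re c_0' + i\Im c_0' = -\tfrac12 K'(0) + i\Im c_0'$, and absorbing the real part $-\tfrac12 K'(0)$ into the $\tfrac12 K'(0)$ constant, the transfer function equals $\tfrac12\big(Q[\lambda](z) + 2i\Im c_0'\big) = \tfrac12 Q_{2\Im c_0'}[\lambda](z)$ by \cref{eq:cauchygeneral}.

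Next I would apply \cref{thm:main_circle} to the pair $(\lambda, 2\Im c_0')$: it produces $(\mu,\zeta_0')=\cB[\lambda,2\Im c_0']$ with $Q_{2\Im c_0'}[\lambda](z)\,Q_{\zeta_0'}[\mu](z)\equiv 1$. Therefore the reciprocal transfer function is $2/Q_{2\Im c_0'}[\lambda](z) = 2\,Q_{\zeta_0'}[\mu](z)$, and the same bookkeeping run backwards identifies a candidate inverse operator of the form $x(n) = \tilde\zeta_0 y(n) + \sum_{j=0}^n \tilde J(n-j)y(j)$: by the analogue of the computation above, the operator $\tilde\zeta_0\,\delta(n)\ast\,\cdot\, + \tilde J\ast\,\cdot\,$ has transfer function $\tilde\zeta_0 + \sum_{n\geq 0}\tilde J(n)z^n$. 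Matching this against $2\,Q_{\zeta_0'}[\mu](z) = 2\big(Q[\mu](z)+i\zeta_0'\big)$, and using $\tfrac12\sum_{n\geq 0}\tilde J(n)z^n = \tfrac14\tilde J(0) + \tfrac12\cdot\tfrac12(Q[\mu](z)\cdot 4/2)$... more carefully: we want $\sum_{n\geq 0}\tilde J(n)z^n = 2Q[\mu](z) - \tilde J(0) + \tilde J(0)$, and from the identity $\tfrac12 \sum_{n\geq 0} a(n) z^n - \tfrac14 a(0)\cdot 2 = \tfrac12 Q[\nu](z) - $ (constant) when $a=\cF[\nu]$ scaled appropriately, one reads off $\tilde J = 4\cF[\mu] = J$ and $\tilde\zeta_0 = 2i\zeta_0' - \tfrac12 J(0) = \zeta_0$, exactly as stated. (The $\tfrac12 J(0)$ correction is the discrete-time analogue of the fact that the causal part of $\cF[\mu]$ contributes only half of the zeroth Fourier mass to the transfer function, matching the $\tfrac12$ in $Q$.)

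Finally I would justify the inversion: since $\lambda\in\cM_+(S^1)$ is a finite positive measure, $Q[\lambda]$ is holomorphic on $\DD$ with $\Re Q[\lambda]>0$, so $Q_{2\Im c_0'}[\lambda]$ is nonvanishing on $\DD$; hence both transfer functions $\tfrac12 Q_{2\Im c_0'}[\lambda]$ and $2Q_{\zeta_0'}[\mu]$ are holomorphic and nonvanishing on $\DD$, their product is $\equiv 1$, and (with the mild decay/summability assumptions on $K$ noted in the signal-processing discussion, which hold here since $K$ is PD hence bounded) the power-series identity $Y = H\,X$, $X = G\,Y$ with $G=1/H$ transfers back to the convolution identity coefficient by coefficient. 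Because both kernels are causal, the convolution $x(n) = \zeta_0 y(n) + \sum_{j=0}^n J(n-j)y(j)$ is a finite sum for each $n$ and the formal manipulation is rigorous. The main obstacle I anticipate is not any single hard estimate but rather the careful bookkeeping of the factors of $\tfrac12$, $2$, and $4$ and the offsets $2\Re c_0$, $\tfrac12 J(0)$, $2i\zeta_0'$ — i.e., making sure the correspondence between \ql causal kernel plus diagonal term\qr{} and \ql $\sigma$-Cauchy transform\qr{} is applied with exactly the right normalization on both the forward and inverse sides; everything else reduces to \cref{thm:main_circle} and the elementary geometric-series identity for $Q$. This is precisely the content deferred to \cref{sec:laplace}, so I would organize the write-up as: (i) reduction to $\Re c_0' = -\tfrac12 K'(0)$; (ii) the geometric-series lemma computing the transfer function of a causal PD kernel as a $\sigma$-Cauchy transform; (iii) invoke \cref{thm:main_circle}; (iv) read off $J$, $\zeta_0$ and transfer the identity back to the time domain.
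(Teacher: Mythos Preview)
Your proposal is correct and follows essentially the same route as the paper: reduce to the normalization $\Re c_0'=-\tfrac12 K'(0)$, compute the $\cZ$-transform of the causal PD kernel via the geometric-series identity to obtain $\tfrac12 Q_{2\Im c_0'}[\lambda]$, invoke \cref{thm:main_circle}, and read off $(J,\zeta_0)$ by running the same computation backwards. The paper additionally assumes $y$ has finite support to make the formal power series rigorous (equivalent to your observation that each convolution is a finite sum); your step (iv) bookkeeping is muddled as written but lands on the right formulas, and the paper itself dispatches that step with ``working the same logic backwards proves the formula.''
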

We illustrate this result with a simple, analytical example, for which the above theorem reduces to classical power series techniques:
\begin{example}\label{ex:dPD}
    Fix $-1<a<1$, and consider the equation
    \[y(n) = \sum_{j=0}^n (j+1)a^jx(n-j).\]
    Following \cref{prop:main_PD}, we make the choice $c'_0=-1$, $K'(n) = (|n|+1)a^{|n|}+\delta(|n|)$, which corresponds to the measure
    \[d\lambda(\theta) = \Re\frac{2}{(1-ae^{i\theta})^2}\,\frac{d\theta}{2\pi},\qquad Q[\lambda](z) = \frac{2}{(1-az)^2}.\]
    By comparing against the statement of \cref{thm:main_circle}, we see that $\zeta_0'=0$ and $d\mu(\theta) = \Re[(1-ae^{i\theta})^2]\,d\theta/4\pi$, and thus that
    \[J = 2\delta(n) - 2a\delta(n-1) + a^2\delta(n-2),\qquad \zeta_0 = -1.\]
    Putting these ingredients together, we find
    \[x(n) = y(n) - 2ay(n-1) + a^2y(n-2).\]
    This inversion is shown in \cref{fig:main_inversion}.
\end{example}

\subsection{Measures on the Line and Continuous-Time Volterra Equations}\label{sec:main_line}
In treating integral and integro-differential equations, we are primarily interested in the pullback of the involution $\cB$ to $\RR$. Now, the embedding $\psi:\cM_+^{(2)}(\RR)\to\cM_+(S^1)$ defined by \cref{eq:embedding} nearly covers its entire codomain, with the only element in the cokernel being the Dirac measure $\delta_{-1}\in\cM_+(S^1)$ at $-1=\phi^{-1}(\infty)$. To understand how the latter `should' behave under our map, we calculate 
\[Q[\delta_{-1}](\phi^{-1}(z))=-iz.\]
We can combine this expression with that of \cref{eq:cauchytransform} to develop a slight extension of our embedding $\psi$, to account for both constant contributions to $\lambda$ as well as possible `poles at infinity'. In short, if $\psi\in\cM_+^{(1)}(\RR)$, $c_0\in\RR$, and $c_1\geq 0$, we know that there is a value $c'_0 = \pi(\sigma_\RR(\lambda) - c_0)\in\RR$ such that
\[Q[\psi[\lambda] + \pi c_1\delta_{-1}](\phi^{-1}(z)) + ic'_0 = Q_\RR[\lambda](z) - i\pi^{-1}(c_0 + c_1z),\]
with $\pi$ scalings chosen for later convenience. To codify this relationship, we write
\begin{equation}\label{eq:Psi_map}
\begin{gathered}
    \Psi:\cM_+^{(1)}(\RR)\times\RR\times\RR_+\to \cM_+(S^1)\times\RR,\\\Psi[\lambda,c_0,c_1] = \left(\psi[\lambda] + \pi^{-1} c_1\delta_{-1}, \sigma_\RR(\lambda) - \pi^{-1} c_0\right).
\end{gathered}
\end{equation}

\begin{figure}
    \centering
    \includegraphics[width=\linewidth]{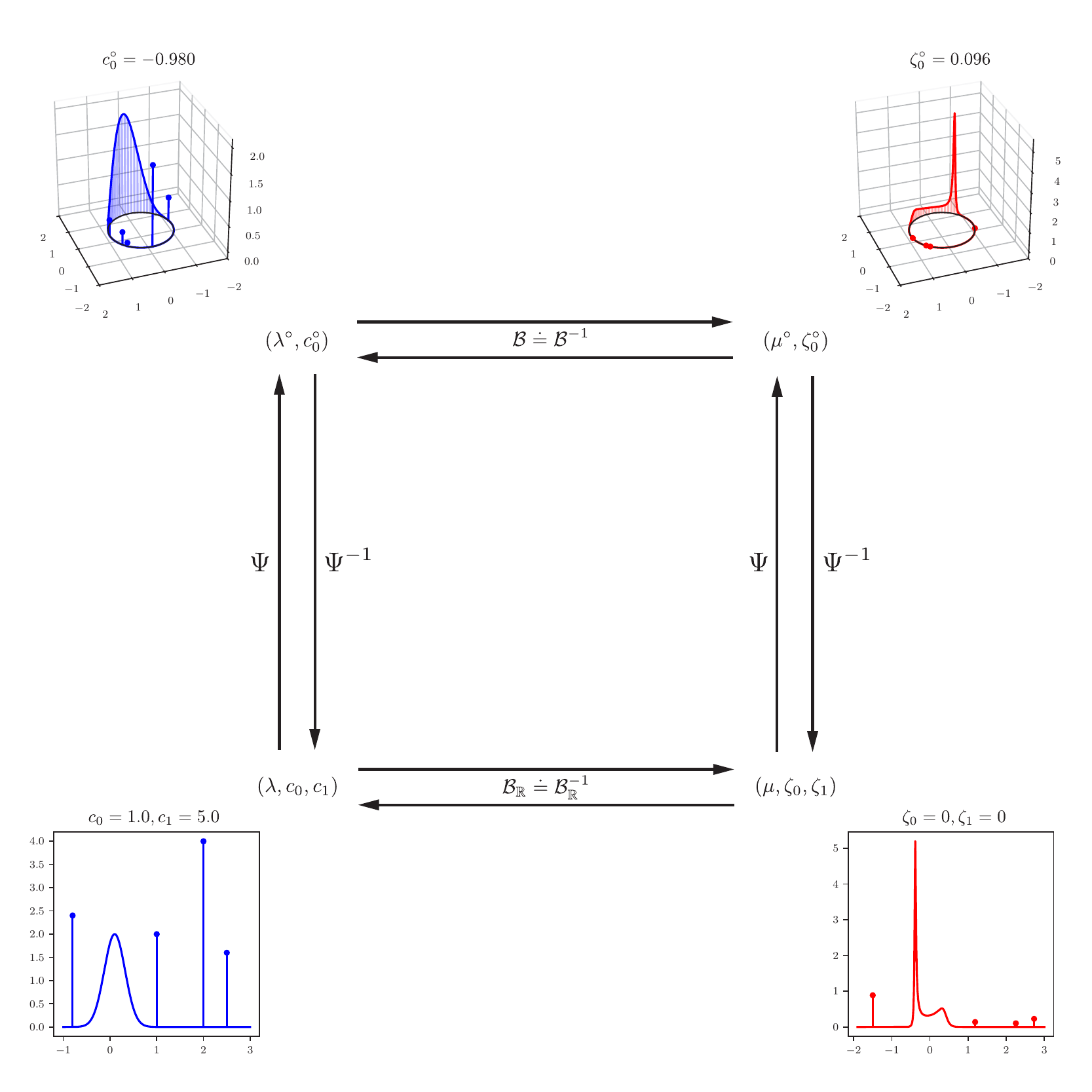}
    \vspace{-20pt}
    \caption[Commutativity Relation of Continuous- and Discrete-Time Interconversion]{Commutative diagram showing how Volterra integral and integro-differential equations, corresponding to triples $(\lambda,c_0,c_1)\in\cM_+^{(1)}(\RR)\times\RR\times\RR_+$, can be lifted to the circle by the map $\Psi$ defined in \cref{eq:Psi_map}. The interconversion maps $\cB$ and $\cB_\RR$, corresponding to discrete-time equations and integral (or integro-differential) equations, respectively, \dave{are related to each other by} the embedding $\Psi$.}
    \label{fig:commutative_diagram}
\end{figure}

The behavior of $\Psi$ is shown in \cref{fig:commutative_diagram}. In particular, we see that it allows us to pull the involution $\cB$ back to the line in a natural way, at the cost of introducing a second real parameter. More rigorously, \cref{thm:main_circle} implies that\footnote{We will formalize this \dave{particular} claim in \cref{thm:main_rPD}, below.}, for any measure $\lambda\in\cM_+^{(2)}(\RR)$ and parameters $c'_0\in\RR$ and $c_1\geq 0$, there is a unique measure $\mu\in\cM_+^{(2)}(\RR)$ and parameters $\zeta'_0\in\RR$ and $\zeta_1\geq 0$ such that
\[\left(Q[\psi[\lambda]](\phi^{-1}(z)) - i\pi^{-1}c'_0 - i\pi^{-1}c_1z\right)\left(Q[\psi[\mu]](\phi^{-1}(z)) - i\pi^{-1}\zeta'_0 - i\pi^{-1}\zeta_1z\right)\equiv 1.\]

\dave{For now, we are interested in the case that both $\lambda$ and $\mu$ are known to live in $\cM_+^{(1)}(\RR)$, corresponding to \emph{local integrability} of the kernel $K(t)$ in \cref{lem:bernstein,lem:bochner}.} If $\lambda,\mu\in\cM_+^{(1)}(\RR)$, then the values $\sigma_\RR(\lambda),\sigma_\RR(\mu)\in\RR$ are well-defined by \cref{eq:cauchytransform}, and we see that
\[\left(Q_\RR[\lambda](z) - i\pi^{-1}c_0 - i\pi^{-1}c_1z\right)\left(Q_\RR[\mu](z) - i\pi^{-1}\zeta_0 - i\pi^{-1}\zeta_1z\right)\equiv 1,\]
where $c_0 = \sigma_\RR(\lambda) - \pi^{-1}c'_0$ and $\zeta_0 = \sigma_\RR(\mu) - \pi^{-1}\zeta'_0$ are both real.
In parallel with \cref{thm:main_circle}, we write
\begin{equation}\label{eq:define_B}
	\cB_\RR[\lambda,c_0,c_1] = (\mu,\zeta_0,\zeta_1),
\end{equation}
\dave{though we note that $\cB_\RR$ is not well-defined for all $\lambda\in\cM_+^{(1)}(\RR)$; we discuss sufficient conditions for $\cB_\RR$ to be well-defined in \cref{thm:main_formula,thm:main_continuity} below. The utility of $\cB_\RR$ is highlighted by the following results, which we prove in \cref{sec:laplace}:}
\begin{restatable}[Solution of \ref{eq:integrodiff_CM}]{proposition}{propmainCM}\label{prop:main_CM}
    Suppose $K:\RR\to\CC$ is a gCM kernel for which $\lambda\doteq\cL_b^{-1}[K]\in\cM_\mathrm{exp}^{(1)}(\RR)$. In the setting of \ref{eq:integrodiff_CM}, suppose  $(\mu,\zeta_0,\zeta_1)=\cB_\RR[\lambda,c_0,c_1]$ is well-defined with $\mu\in\cM_\mathrm{exp}^{(1)}(\RR)$, and write $J = \cL_b[\mu]$. Then \ref{eq:integrodiff_CM} is satisfied by
    \[-\pi^{2}x(t) = \zeta_1\dot{y}(t) - \zeta_0y(t) - \int_0^t J(t-\tau)y(\tau)\,d\tau - c_1x_0 J(t).\]
\end{restatable}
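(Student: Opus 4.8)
The plan is to verify \cref{prop:main_CM} by passing to the Laplace domain, where both the integro-differential equation \cref{eq:integrodiff_CM} and its inverse become algebraic relations, and then checking that the algebraic identity defining $\cB_\RR$ is exactly what is needed. First I would apply the bilateral (or one-sided, given the initial data) Laplace transform to \cref{eq:integrodiff_CM}. Using $\cL[\dot x](s) = s\,\cL[x](s) - x_0$ when $c_1 \neq 0$, and $\cL_b[\lambda](s) = K(s)$ via \cref{lem:bernstein}, the equation \cref{eq:integrodiff_CM} becomes
\[
\cL[y](s) = \bigl(c_1 s - c_0 - K(-s)\bigr)\,\cL[x](s) - c_1 x_0,
\]
with the sign conventions tracked carefully (the kernel $K(t-\tau)$ contributes $\int_0^\infty K(u) e^{-su}\,du$, which under $\cL_b^{-1}[K]=\lambda$ is $\int (s+\sigma)^{-1}\,d\lambda(\sigma)$ up to the relevant constant). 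The key point is that the multiplier $c_1 s - c_0 - K(\cdot)$ should be identified, up to the affine change of variables $z = is$ and the $\pi^{-1}$ and $i$ scalings built into \cref{eq:Psi_map} and \cref{eq:cauchytransform}, with the function $Q_\RR[\lambda](z) - i\pi^{-1}c_0 - i\pi^{-1}c_1 z$ appearing in the definition \cref{eq:define_B} of $\cB_\RR$. I would state this correspondence as a short lemma (or cite the general correspondence proved in \cref{sec:laplace}, since \cref{prop:main_CM} is itself slated for that section) so that the bookkeeping of constants is isolated in one place.

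Next, given $(\mu,\zeta_0,\zeta_1) = \cB_\RR[\lambda,c_0,c_1]$, the defining identity says precisely that the two multipliers are reciprocal:
\[
\bigl(Q_\RR[\lambda](z) - i\pi^{-1}c_0 - i\pi^{-1}c_1 z\bigr)\bigl(Q_\RR[\mu](z) - i\pi^{-1}\zeta_0 - i\pi^{-1}\zeta_1 z\bigr) \equiv 1.
\]
Translating back through $z = is$, this is equivalent to $\bigl(c_1 s - c_0 - K(\cdot)\bigr)\cdot\pi^{-2}\bigl(\zeta_1 s - \zeta_0 - J(\cdot)\bigr) = 1$ (again modulo the careful sign/constant tracking), with $J = \cL_b[\mu]$. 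Substituting this reciprocal into the transformed equation gives
\[
\cL[x](s) = \pi^{-2}\bigl(\zeta_1 s - \zeta_0 - J(\cdot)\bigr)\bigl(\cL[y](s) + c_1 x_0\bigr),
\]
and I would then invert the Laplace transform term by term. The term $\zeta_1 s\,\cL[y](s)$ produces $\zeta_1\dot y(t)$ (with the appropriate boundary term, which must be checked to vanish or be absorbed — this is where the $-c_1 x_0 J(t)$ correction on the right-hand side of the claimed formula enters: the piece $\pi^{-2}\zeta_1 s \cdot c_1 x_0$ inverts to a derivative of $c_1 x_0 \cdot (\text{something})$, and the piece $-\pi^{-2} J(\cdot)\,c_1 x_0$ inverts to a convolution $-c_1 x_0 (J * \delta) $-type term; reconciling these against the stated $-\pi^2 x(t) = \cdots - c_1 x_0 J(t)$ requires matching exactly). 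The $-\zeta_0\,\cL[y](s)$ term gives $-\zeta_0 y(t)$, and the $-J(\cdot)\cL[y](s)$ term gives $-\int_0^t J(t-\tau)y(\tau)\,d\tau$ by the convolution theorem. Collecting everything and multiplying by $-\pi^2$ should reproduce exactly the asserted identity.

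The main obstacle I anticipate is not conceptual but is the careful reconciliation of constants, signs, and the Cayley-map scalings: the definition of gCM kernels in \cref{def:CMkernel} uses $K(t) = e^{\sigma t}F(t)$ so $\lambda$ may be supported on a half-line bounded below rather than on $\RR_+$, which affects the region of convergence of $\cL_b$ and forces $\lambda \in \cM_\mathrm{exp}^{(1)}(\RR)$; the map $\Psi$ in \cref{eq:Psi_map} inserts $\pi^{-1}$ factors and a sign flip $c_0 \mapsto \sigma_\RR(\lambda) - \pi^{-1}c_0$; and the relation \cref{eq:cauchytransform} between $Q[\psi[\lambda]]\circ\phi^{-1}$ and $Q_\RR[\lambda]$ carries the extra $-i\sigma_\RR(\lambda)$ offset. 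A secondary technical point is justifying the inverse Laplace transform rigorously — i.e., that $\mu\in\cM_\mathrm{exp}^{(1)}(\RR)$ guarantees $J = \cL_b[\mu]$ grows at most exponentially, so that the product $(\zeta_1 s - \zeta_0 - J(\cdot))(\cL[y](s)+c_1 x_0)$ lies in the image of the Laplace transform on the relevant half-plane and the termwise inversion (including the distributional $\delta$-contribution from the $\zeta_1 s$ term) is legitimate. I would handle this by restricting to $\Re s$ large, where all transforms converge absolutely, performing the algebra and inversion there, and invoking analytic continuation / uniqueness of Laplace inversion to conclude the identity of locally integrable functions on $\RR_+$. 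The hypotheses $\lambda,\mu\in\cM_\mathrm{exp}^{(1)}(\RR)$ and well-definedness of $\cB_\RR$ are exactly what make each of these steps go through.
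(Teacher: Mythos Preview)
Your approach is essentially identical to the paper's: both take the Laplace transform of \cref{eq:integrodiff_CM}, compute $\cL[K](s) = \int d\lambda(\sigma)/(s+\sigma)$ via Fubini, identify the resulting multiplier with the Cauchy-transform expression appearing in the definition of $\cB_\RR$, and invert. One small correction to your bookkeeping: the substitution relating the Laplace variable to the Cauchy-transform argument is $z = -s$ (so the multiplier is $-i\pi\bigl(Q_\RR[\lambda](-s) - i\pi^{-1}c_0 + i\pi^{-1}c_1 s\bigr)$), not $z = is$; you already flagged this sign-tracking as the main obstacle, and indeed the paper's proof simply records this identity and says ``the result follows,'' leaving the termwise inversion and the $c_1 x_0$ term (which you handle more carefully than the paper) to the reader.
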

\begin{restatable}[Solution of \ref{eq:integrodiff_PD}]{proposition}{propmainPD}\label{prop:main_PD} Suppose $K:\RR\to\CC$ is a gPD kernel for which $\lambda\doteq\cF^{-1}[K]\in\cM_{+}^{(1)}(\RR)$. In the setting of \ref{eq:integrodiff_PD}, suppose $(\mu,\zeta_0,\zeta_1)=\cB_\RR[\lambda,c_0,c_1]$ is well-defined, and write $J = \cF[\mu]$. If $\mu\in\cM_+^{(1)}(\RR)$, then \ref{eq:integrodiff_PD} is satisfied by
    \[\pi^{2}x(t) = \zeta_1\dot{y}(t) - i\zeta_0y(t) + \int_0^t J(t-\tau)y(\tau)\,ds + c_1x_0J(t).\]
\end{restatable}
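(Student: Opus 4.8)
The plan is to pass to the Laplace domain, where \cref{eq:integrodiff_PD} becomes an algebraic relation that is inverted \emph{exactly} by the reciprocal identity defining $\cB_\RR$. The one analytic input I would record first --- this is the substance of \cref{sec:laplace} --- is that for any $\nu\in\cM_+^{(1)}(\RR)$ and $\Re s>0$,
\[\cL[\cF[\nu]](s)=\int_0^\infty e^{-st}\!\int e^{-i\omega t}\,d\nu(\omega)\,dt=\int\frac{d\nu(\omega)}{s+i\omega}=\pi\,Q_\RR[\nu](is),\]
with the interchange legitimate because $\nu\in\cM_+^{(1)}(\RR)$. Applying this to $K=\cF[\lambda]$ and $J=\cF[\mu]$ and setting $z=is\in\HH$, the symbol of the left-hand operator of \cref{eq:integrodiff_PD} is $A(s)\doteq c_1s-ic_0+\cL[K](s)=\pi\bigl(Q_\RR[\lambda](z)-i\pi^{-1}c_0-i\pi^{-1}c_1z\bigr)$, while the symbol of the asserted resolvent operator is $B(s)\doteq\zeta_1s-i\zeta_0+\cL[J](s)=\pi\bigl(Q_\RR[\mu](z)-i\pi^{-1}\zeta_0-i\pi^{-1}\zeta_1z\bigr)$; by the defining relation of $\cB_\RR$, $A(s)B(s)\equiv\pi^2$ for $\Re s>0$.

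I would then transform the candidate solution $x_\star$ (the right-hand side of the asserted formula), using $\cL[\dot y](s)=s\cL[y](s)-y(0)$, the convolution theorem, and $B(s)-\cL[J](s)=\zeta_1s-i\zeta_0$:
\[\pi^2\cL[x_\star](s)=B(s)\bigl(\cL[y](s)+c_1x_0\bigr)-\bigl[c_1x_0(\zeta_1s-i\zeta_0)+\zeta_1y(0)\bigr].\]
The bracketed term is a polynomial in $s$ of degree at most one; granting for the moment that it vanishes, $A(s)B(s)\equiv\pi^2$ gives $A(s)\cL[x_\star](s)=\cL[y](s)+c_1x_0$, and expanding $A$ and using $\cL[\dot x_\star](s)=s\cL[x_\star](s)-x_\star(0)$ rewrites this as $\cL\bigl[c_1\dot x_\star-ic_0x_\star+K*x_\star-y\bigr](s)=c_1\bigl(x_0-x_\star(0)\bigr)$. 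Since the Laplace transform of a locally integrable function of exponential type tends to $0$ as $\Re s\to\infty$, the constant on the right must be zero; that forces $x_\star(0)=x_0$ when $c_1\neq0$ and, by injectivity of $\cL$, yields \cref{eq:integrodiff_PD} itself.

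The crux is thus to show the bracketed boundary term vanishes, which is where the structure of $\cB_\RR$ enters. If $c_1\neq0$ then $|A(s)|\to\infty$ as $s\to+\infty$ along the reals (since $\cL[K](s)=\pi Q_\RR[\lambda](is)\to0$), so $B(s)=\pi^2/A(s)\to0$; but $Q_\RR[\mu](z)\to0$ as $z\to\infty$ in $\HH$, which forces $\zeta_1s-i\zeta_0\to0$ and hence $\zeta_1=\zeta_0=0$. If $c_1=0$ but $c_0\neq0$, then $A(s)\to-ic_0\neq0$, so $B$ is bounded and $\zeta_1=0$. In the remaining case $c_1=c_0=0$ the equation is of the first kind, so $y(0)=0$ outright. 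In every case the bracket is identically $0$.

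I expect the main obstacle to be analytic bookkeeping rather than anything conceptual: justifying the Fubini interchange and the convergence of $\cL[K]$ on a right half-plane; controlling the exponential type of $x_\star$, and of $\dot x_\star$ when $c_1\neq0$, so that the convolution theorem and the injectivity/decay properties of $\cL$ genuinely apply; and the short asymptotic argument above that rules out the boundary term. All of this runs closely parallel to the proof of \cref{prop:main_CM}, with the Fourier transform and Bochner's theorem (\cref{lem:bochner}) replaced there by the bilateral Laplace transform and Bernstein's theorem (\cref{lem:bernstein}).
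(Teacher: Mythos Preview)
Your approach is essentially the paper's: Laplace-transform the equation, identify the symbol with $\pi\bigl(Q_\RR[\lambda](is)-i\pi^{-1}c_0-i\pi^{-1}c_1\cdot is\bigr)$, and invoke the reciprocal identity defining $\cB_\RR$. Your tracking of the initial-condition terms and the case analysis on $(c_0,c_1)$ is in fact more careful than the paper, which simply writes $Y(s)=A(s)X(s)$ and says ``the result follows.''

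The one place you underestimate the work is the identity $\cL[\cF[\nu]](s)=\int\frac{d\nu(\omega)}{s+i\omega}$. Your justification (``the interchange is legitimate because $\nu\in\cM_+^{(1)}(\RR)$'') is not correct: Fubini on the iterated integral requires $\int_0^\infty e^{-st}\int d\nu(\omega)\,dt<\infty$, i.e.\ $\nu\in\cM_+(\RR)$. More to the point, when $\nu\in\cM_+^{(1)}(\RR)\setminus\cM_+(\RR)$ the inner integral $\int e^{-i\omega t}\,d\nu(\omega)$ does not converge at all, so $\cF[\nu]$ is only defined as a tempered distribution; \cref{lem:bochner} explicitly restricts the integral formula for $K$ to the case $\lambda\in\cM_+(\RR)$. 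This is precisely the analytic content the paper isolates: it mollifies $u(t)e^{-st}$ by a Schwartz family $\eta_{\eps,s}$, pairs $\eta_{\eps,s}$ with $\cF[\lambda]$ via the distributional Plancherel identity to obtain $\int\frac{e^{-\eps\omega^2/2}}{s+i\omega}\,d\lambda(\omega)$, and then sends $\eps\to0$ by dominated convergence --- which \emph{is} now legitimate, since $(s+i\omega)^{-1}$ is $\lambda$-integrable for $\lambda\in\cM_+^{(1)}(\RR)$. So your outline is sound, but the ``analytic bookkeeping'' you flag at the end is essentially the entire proof.
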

\begin{remark}
    As discussed in \cref{sec:note_initialconds}, both \cref{prop:main_CM} and \cref{prop:main_PD} can be adapted to homogeneous initial data with an infinite time horizon---i.e., $x\to 0$ as $t\to-\infty$. For this, we need only to change the lower bound of each integral above, from $0$ to $-\infty$, and discard the term depending on $x_0$.
\end{remark}

\dave{We now aim to develop} a practical formula for $\cB_\RR$, for which we need the following analogue of \cref{def:zeroset_circ}:
\begin{definition}\label{def:zeroset}
    Suppose $\lambda\in\cM_{+,\mathrm{loc}}(\RR)$. Then we define the zero set
    \[N_{0}(\lambda) \doteq \bigcap_{\eps>0}\op{clos}\Big\{s\in\RR\;\Big|\;\limsup_{\delta\to 0}\lambda([s-\delta,s+\delta])/2\delta < \eps\Big\}.\]
\end{definition}
Equivalently, we could define the zero set as the pullback of the zero set of \cref{def:zeroset_circ} to $\RR$:
    \[N_0(\lambda)= \phi\big(N_0(\psi[\lambda])\setminus\{-1\}\big)\subset\RR.\]
Next, given a non-negative function $f\in L^1(\RR)$, we say that $f\in L^*(\RR)$ if
\[(1+s^2)^{1/2}f(s)\in L^1(\RR),\qquad \cF\left[s\mapsto s^2f(s)\right]\in L^1(\RR),\]
denoting by $\cF$ the Fourier transform on $L^1(\RR)$. These conditions ensure that $f$ is sufficiently smooth and decaying sufficiently quickly for our analysis to go through. For instance, it is sufficient that $f\in\cC^2(\RR)$ is second-differentiable with $(1+t^2)\,\frac{d^2}{dt^2}f(t)$ bounded. The following theorem is proved in \cref{sec:line}:
\begin{restatable}[Closed form of $\cB_\RR$]{theorem}{thmmainformula}\label{thm:main_formula}
    Let $\lambda\in L^*(\RR) +\cM_c(\RR)\subset\cM_+^{(-1)}(\RR)$, in the sense that $\lambda=\lambda_1+\lambda_2$ for a non-negative function $\lambda_1\in L^*(\RR)$ and measure $\lambda_2\in\cM_c(\RR)$. Fix $c_0\in\RR$ and $c_1\geq 0$, and suppose that
    \[Z'\doteq \left(N_0(\lambda)\cap\supp\lambda\right)\cup\{s\notin\supp\lambda\;|\;H[\lambda](s) - \pi^{-1}(c_1s + c_0) = 0\}\]
    is discrete (i.e., it does not contain any of its limit points). Write $\lambda_c$ for the density of the continuous component of $\lambda$. Then $\cB_\RR[\lambda,c_0,c_1] = (\mu,\zeta_0,\zeta_1)$ is well-defined, and we find
    \begin{equation}\label{eq:dmu}
        d\mu(s) = \mu_c(s)\,ds + \sum_{\alpha_i\in Z} \beta_i\delta(s-\alpha_i)\,ds,
    \end{equation}
    where the continuous part is given by
    \begin{equation}\label{eq:mu_c}
        \mu_c(s) = \frac{\lambda_c(s)}{\lambda_c(s)^2 + \big(H_\RR[\lambda](s) - \pi^{-1}(c_1s + c_0)\big)^2} \in L^1(\RR),
    \end{equation}
    and the discrete part has weights
    \begin{equation}\label{eq:betaeq}
        \beta_i = \pi^2\left(c_1 + \int\frac{d\lambda(\tau)}{(\tau-\alpha_i)^2}\right)^{-1},
    \end{equation}
    for all $\alpha_i \in Z$ in the discrete set
    \begin{equation}\label{eq:zero_set}
        Z = N_0(\lambda)\cap\big\{s\in\RR\;\big|\;H_\mathbb{R}[\lambda](s) - \pi^{-1}(c_1s + c_0) = 0\big\}.
    \end{equation}
    
    If $c_1\neq 0$, then we have $\zeta_0 = \zeta_1 = 0$. If $c_1 = 0$ but $c_0\neq 0$, then $\zeta_1 = 0$ and $\zeta_0 = -\pi^2/c_0$. Finally, if $c_0=c_1=0$, then we have
    \begin{equation}\label{eq:zetas}
        \zeta_0 = -\frac{\pi^2}{\|\lambda\|^2}\int\tau\,d\lambda(\tau),\qquad \zeta_1 = \frac{\pi^2}{\|\lambda\|},
    \end{equation}
    writing $\|\lambda\| = \int d\lambda$ for the variation norm of $\lambda$.
\end{restatable}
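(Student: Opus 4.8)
The plan is to recast the defining identity of $\cB_\RR$ as a reciprocal relation between Herglotz functions on the upper half-plane $\HH$, and to read off $(\mu,\zeta_0,\zeta_1)$ from the Nevanlinna representation of the reciprocal, using Fatou boundary values and Stieltjes inversion. Set $F(z):=Q_\RR[\lambda](z)-i\pi^{-1}(c_1z+c_0)$ on $\HH$. Since $\Re Q_\RR[\lambda]>0$ on $\HH$, $c_1\geq 0$, and $c_0\in\RR$, one has $\Re F(z)=\Re Q_\RR[\lambda](z)+\pi^{-1}c_1\Im z>0$, so $F$ is zero-free on $\HH$, the reciprocal $1/F$ again maps $\HH$ into the open right half-plane, and $G:=i/F$ is a Herglotz function. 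The Herglotz--Nevanlinna theorem gives $G(z)=az+b+\int_{\RR}\big(\tfrac{1}{s-z}-\tfrac{s}{1+s^2}\big)\,d\nu(s)$ for some $a\geq 0$, $b\in\RR$, and a positive Borel measure $\nu$ with $\int(1+s^2)^{-1}\,d\nu<\infty$. Once $\nu\in\cM_+^{(1)}(\RR)$ is established, this rearranges to $1/F(z)=Q_\RR[\mu](z)-i\pi^{-1}(\zeta_1z+\zeta_0)$ with $\mu:=\pi\nu$, $\zeta_1:=\pi a\geq 0$, and $\zeta_0:=\pi\big(b-\int\tfrac{s}{1+s^2}\,d\nu\big)$, so that $F\cdot\big(Q_\RR[\mu]-i\pi^{-1}(\zeta_1z+\zeta_0)\big)\equiv 1$; this is exactly the defining identity of $\cB_\RR$, and uniqueness of the triple follows from \cref{thm:main_circle} pulled back along $\Psi$. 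It then remains to compute $\mu,\zeta_0,\zeta_1$ explicitly and to verify $\mu\in\cM_+^{(1)}(\RR)$.

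For the explicit form of $\mu$, by Fatou's theorem $\Re Q_\RR[\lambda](s+i\eps)\to\lambda_c(s)$ and $\Im Q_\RR[\lambda](s+i\eps)\to H_\RR[\lambda](s)$ for a.e.\ $s$, so $F(s+i\eps)\to\lambda_c(s)+i\big(H_\RR[\lambda](s)-\pi^{-1}(c_1s+c_0)\big)$. Stieltjes inversion identifies the absolutely continuous density of $\nu$ at $s$ with $\tfrac1\pi\lim_{\eps\downarrow0}\Im G(s+i\eps)=\tfrac1\pi\lim_{\eps\downarrow0}\Re(1/F(s+i\eps))$, which is $\tfrac1\pi\mu_c(s)$ with $\mu_c$ exactly as in \eqref{eq:mu_c}. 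For the singular part, standard Herglotz theory puts $\nu_{\mathrm{sing}}$ on the Lebesgue-null set where the nontangential boundary value of $\Im G=\Re F/|F|^2$ is $+\infty$; since $\Re F(s+i\eps)=\Re Q_\RR[\lambda](s+i\eps)+\pi^{-1}c_1\eps\to+\infty$ wherever the symmetric derivative of $\lambda$ is infinite (in particular $\lambda_{\mathrm{sing}}$-a.e.), that set lies in $\{s:\text{the symmetric derivative of }\lambda\text{ at }s\text{ vanishes}\}\cap\{s: H_\RR[\lambda](s)=\pi^{-1}(c_1s+c_0)\}\subseteq N_0(\lambda)\cap\{s: H_\RR[\lambda](s)=\pi^{-1}(c_1s+c_0)\}=Z\subseteq Z'$. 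As $Z'$ is discrete by hypothesis, $\nu_{\mathrm{sing}}$ is purely atomic and supported on $Z$; at $\alpha\in Z$ one extracts $\nu(\{\alpha\})=\lim_{\eps\downarrow0}\eps\,\Im G(\alpha+i\eps)$ from the local expansion $F(\alpha+i\eps)=i\eps\,F'(\alpha)+o(\eps)$ with $F'(\alpha)=-\tfrac{i}{\pi}\big(c_1+\int\tfrac{d\lambda(\tau)}{(\tau-\alpha)^2}\big)$ (differentiating $Q_\RR[\lambda]$ under the integral), giving $\mu(\{\alpha\})=\pi\nu(\{\alpha\})=\beta_i$ as in \eqref{eq:betaeq}, with the degenerate case $\int\tfrac{d\lambda}{(\tau-\alpha)^2}=\infty$ yielding $\beta_i=0$ (no atom) and $\beta_i>0$ otherwise. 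This produces the stated form \eqref{eq:dmu} of $\mu$.

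For $\mu\in\cM_+^{(1)}(\RR)$ and the affine coefficients, note that $L^*(\RR)\subseteq L^1(\RR)$ and $\cM_c(\RR)$-measures are finite, so $\lambda\in\cM_+(\RR)$ with $\int(1+|s|)\,d\lambda<\infty$; in particular $sH_\RR[\lambda](s)\to\pi^{-1}\|\lambda\|\neq0$, so $H_\RR[\lambda](s)-\pi^{-1}(c_1s+c_0)$ is nonzero for $|s|$ large and $Z$ is finite. Combining this with $(1+|s|)\lambda_c(s)\in L^1(\RR)$ (from $\lambda_1\in L^*(\RR)$ and compactness of $\supp\lambda_2$) and the pointwise bound $\mu_c\leq\min\{\lambda_c/h^2,\,1/\lambda_c\}$, with $h:=H_\RR[\lambda]-\pi^{-1}(c_1s+c_0)$, a short computation yields $\int(1+|s|)^{-1}\,d\mu<\infty$, completing the well-definedness claim. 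Finally $a=\lim_{y\to\infty}(yF(iy))^{-1}$ and $\pi^{-1}\zeta_0=\lim_{y\to\infty}(G(iy)-iay)$; inserting the large-$y$ expansion $F(iy)=\pi^{-1}c_1y-i\pi^{-1}c_0+\pi^{-1}\|\lambda\|y^{-1}-i\pi^{-1}\big(\int\tau\,d\lambda(\tau)\big)y^{-2}+o(y^{-2})$ (legitimate using $\int|\tau|\,d\lambda<\infty$) gives $\zeta_1=\zeta_0=0$ when $c_1\neq0$; $\zeta_1=0$, $\zeta_0=-\pi^2/c_0$ when $c_1=0\neq c_0$; and $\zeta_1=\pi^2/\|\lambda\|$, $\zeta_0=-\pi^2\|\lambda\|^{-2}\int\tau\,d\lambda(\tau)$ when $c_0=c_1=0$, matching \eqref{eq:zetas}.

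The main obstacle is the singular-part analysis: extracting enough boundary regularity of $Q_\RR[\lambda]$ from the structural hypothesis $\lambda\in L^*(\RR)+\cM_c(\RR)$ --- continuous extension across $\RR\setminus\supp\lambda_2$ and controlled behaviour on $\supp\lambda_2$ --- so that the zero set of $F$ is genuinely captured by $Z'$ and hence discrete, and then justifying the local expansion $F(\alpha+i\eps)=i\eps\,F'(\alpha)+o(\eps)$ together with differentiation under the integral at points of $Z$, which is the only place the precise rate of decay of $\lambda$ near $Z$ enters. Everything else is bookkeeping with the Nevanlinna representation, Fatou's theorem, and Stieltjes inversion; the circle version \cref{thm:main_circ_formula} then follows by the identical argument with the Poisson kernel on $S^1$ in place of that on $\RR$.
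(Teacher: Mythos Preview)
Your strategy is sound and closely parallels the paper's, though you work directly on $\HH$ via the Herglotz--Nevanlinna representation, whereas the paper first pulls back its circle theory (their Cor.~6.6, on the support of the singular part under LHA maps) through the embedding $\Psi$, obtaining the decomposition $1/Q_\RR[\lambda]=Q_\RR[\mu_c]+\sum_{\alpha_j\in Z'}\beta_jQ_\RR[\delta_{\alpha_j}]-i\pi^{-1}(\zeta_0+\zeta_1z)$ essentially for free. Your route is arguably cleaner and more self-contained, avoiding the machinery of Sections~6--7; the paper's route buys a unified treatment of all admissible nonlinear maps $S$, not just inversion.

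For the atom weights $\beta_i$, the paper uses a contour integral of $Q_\RR[\lambda](z)/(z-\alpha_j)^2$ around a keyhole at $\alpha_j$ (extracting the residue), rather than your $\lim_{\eps\downarrow0}\eps\,\Im G(\alpha+i\eps)$ formula. These are equivalent in spirit, but the contour argument sidesteps the need to justify differentiability of $Q_\RR[\lambda]$ at boundary points in $\supp\lambda$, replacing it with weak convergence of $\Re Q_\RR[\lambda](s+i\eps)\,ds\to d\lambda$. For the converse (an atom \emph{must} appear at each $\alpha_j\in Z$ with $\int(\tau-\alpha_j)^{-2}\,d\lambda<\infty$), the paper gives the detailed argument your ``main obstacle'' paragraph flags: decomposing $\lambda$ into its odd part about $\alpha_j$ and bounding $\partial_y\Im Q_\RR[\lambda](\alpha_j+iy)$ to force $Q_\RR[\lambda](\alpha_j+iy)=O(y)$, contradicting $\omega(y)$.

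One point you under-justify: the claim $sH_\RR[\lambda](s)\to\pi^{-1}\|\lambda\|$ along the \emph{real} axis, used to bound $Z$. Your justification $\int|\tau|\,d\lambda<\infty$ suffices for the expansion of $F(iy)$ along the imaginary axis (by dominated convergence), but not for the real-line asymptotic when $\supp\lambda_1$ is unbounded. This is precisely where the $L^*$ hypothesis enters: the paper's Lemma~8.1 uses $\cF[s\mapsto s^2\lambda_1(s)]\in L^1$ together with Riemann--Lebesgue to get $H_\RR[t\mapsto t^2\,d\lambda(t)](s)=o(1)$, hence $H_\RR[\lambda](s)=\pi^{-1}\|\lambda\|s^{-1}+\pi^{-1}(\int\tau\,d\lambda)s^{-2}+o(s^{-2})$. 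The same lemma is what underlies your ``short computation'' that $\mu_c\in L^1$.
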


\begin{example}\label{ex:gCM}
    \dave{Consider the equation
    \[y(t) = x(t) + \int_0^t (1-e^{-\tau})\,x(t-\tau)\,\frac{d\tau}{\tau},\qquad x(0) = 0.\]
    This is an integral equation of the form \ref{eq:integrodiff_CM}, with $c_1=0$, $c_0=1$, and integral kernel
    \[K(t) = \frac{1}{t}(1-e^{-t}) = \cL[\lambda](t),\]
    where $d\lambda(s) = \chi_{[0,1]}(s)\,ds$ is the restriction of the Lebesgue measure to the unit interval. Since $c_1=0$ but $c_0\neq 0$, \cref{thm:main_formula} yields $\zeta_1=0$ and $\zeta_0=-\pi^2/c_0$. Next, we find
    \[H_\RR[\lambda](t) = \frac{1}{\pi}\int_0^1\frac{ds}{t-s} = -\frac{1}{\pi}\log\left|1-t^{-1}\right|.\]
    The set $Z$ has one element, $\alpha_1=(1-e^{-1})^{-1}$, with corresponding weight
    \[\beta_1 = \frac{\pi^2}{e + e^{-1} - 2}.\]
    In all, we find
    \[\mu(s) = \beta_1\delta(s-\alpha_1)\,ds + \frac{\chi_{[0, 1]}(s)\,ds}{1 + (1 + \log|1-s^{-1}|)^2/\pi^2},\]
    so we have
    \begin{gather*}
        -\pi^2x(t) = (\pi^2/c_0)y(t) - \int_0^t \beta_1e^{-\alpha_1(t-\tau)}y(\tau)\,d\tau - \int_0^t J_c(t-\tau)y(\tau)\,d\tau,\\
        J_c(t) \doteq \int_0^1e^{-st}\left(1 + (1 + \log|1-s^{-1}|)^2/\pi^2\right)^{-1}\,ds.
    \end{gather*}
    This example is depicted in \cref{fig:main_inversion}.}
\end{example}

In numerical applications, a key case of interest is that of a \emph{discrete} $\lambda$ with a finite number of atoms. This case is already well-understood in the context of Prony series~\cite{gross1968mathematical}, but it is instructive to see how \cref{thm:main_formula} reduces in this limit:
\begin{corollary}[$\cB_\RR$ on discrete measures]\label{cor:discrete_formula}
    Let $\lambda \in \cM_{c}(\RR)$ be a discrete measure
    \begin{equation}
        \lambda(s) = \sum_{i=1}^N b_i\delta(s - a_i)
    \end{equation}
    where $a_i \in \RR$ are distinct and $b_i > 0$. Fix values $c_0\in\RR$ and $c_1\geq 0$, and write $\cB_\RR[\lambda,c_0,c_1] = (\mu,\zeta_0,\zeta_1)$. Then we have that
    \begin{equation}
        \mu(s) = \sum_{i=1}^M \beta_i\delta(s - \alpha_i), \quad M = \begin{cases}N+1 & c_1 \neq 0\\N & c_0 \neq 0, c_1 = 0\\N-1 & c_0=c_1=0\end{cases}
    \end{equation}
    where the positions of the atoms $\alpha_i$ are the $M$ roots of $H_\RR[\lambda](s) - \pi^{-1}(c_1s + c_0) = 0$. These values interleave with the $a_i$ such that exactly one $\alpha_i$ lies in each interval $(a_i, a_{i+1})$. If $c_0 < 0$ or $c_1 > 0$, then one root will also lie in $(-\infty, a_1)$, and if $c_0 > 0$ or $c_1 > 0$, then one root will lie in $(a_N, \infty)$. As before, the weights are given by
    \begin{equation}
        \beta_i = \pi^2\Big(c_1 + \sum_{j=1}^N\frac{b_j}{(a_j - \alpha_i)^2}\Big)^{-1}
    \end{equation}
    and the formulas for the constants $\zeta_0, \zeta_1$ are the same as in \cref{thm:main_formula}.
\end{corollary}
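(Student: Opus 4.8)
The plan is to verify that the hypotheses of \cref{thm:main_formula} hold for a finitely-atomic measure, and then to read off the conclusion after replacing the abstract objects $N_0(\lambda)$, $Z$, $\mu_c$, and the weights of \cref{eq:betaeq} by their concrete values.

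First I would note that $\lambda=\sum_{i=1}^N b_i\delta(s-a_i)\in\cM_c(\RR)\subset L^*(\RR)+\cM_c(\RR)$ (taking the $L^*$-summand to be $0$), so the only hypothesis of \cref{thm:main_formula} left to check is discreteness of the set $Z'$. Here $\supp\lambda=\{a_1,\dots,a_N\}$, and I claim $N_0(\lambda)=\RR$: for $s\notin\{a_i\}$ the measure vanishes on a neighborhood of $s$, so $\limsup_{\delta\to0}\lambda([s-\delta,s+\delta])/2\delta=0<\eps$ for every $\eps>0$, whereas at each $a_i$ the ratio $\lambda([a_i-\delta,a_i+\delta])/2\delta\ge b_i/2\delta$ diverges; hence for each $\eps$ the set inside the closure in \cref{def:zeroset} is exactly $\RR\setminus\{a_1,\dots,a_N\}$, whose closure is $\RR$. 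By \cref{eq:hilberttransform}, for $s\notin\{a_i\}$ one has $H_\RR[\lambda](s)=\tfrac1\pi\sum_{i=1}^N\tfrac{b_i}{s-a_i}$ (no principal value is needed), and $H_\RR[\lambda]$ blows up at each $a_i$, so none of the $a_i$ satisfies the zero condition; therefore $Z'=\{a_1,\dots,a_N\}\cup\{s\notin\{a_i\}:H_\RR[\lambda](s)=\pi^{-1}(c_1s+c_0)\}$ and $Z=N_0(\lambda)\cap\{H_\RR[\lambda](s)=\pi^{-1}(c_1s+c_0)\}$ is precisely this set of roots.

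The heart of the argument is then an elementary root count for $g(s)\doteq\sum_{i=1}^N\tfrac{b_i}{s-a_i}-(c_1s+c_0)$, which is $\pi$ times the defining function. On each component of $\RR\setminus\{a_i\}$ one has $g'(s)=-\sum_{i=1}^N b_i/(s-a_i)^2-c_1<0$, so $g$ is strictly decreasing there. Ordering $a_1<\dots<a_N$: on each gap $(a_i,a_{i+1})$ the function $g$ runs from $+\infty$ to $-\infty$, contributing exactly one simple root, giving $N-1$ roots that interlace the $a_i$; on $(-\infty,a_1)$ we have $g\to-\infty$ as $s\to a_1^-$ and $g\to-c_0$ (if $c_1=0$) or $g\to+\infty$ (if $c_1>0$) as $s\to-\infty$, so there is a root there iff $c_0<0$ or $c_1>0$; symmetrically on $(a_N,\infty)$ there is a root iff $c_0>0$ or $c_1>0$. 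Tallying the three cases $c_1\neq0$; $c_1=0\neq c_0$; $c_0=c_1=0$ yields $M=N+1$, $N$, $N-1$ respectively, which matches the stated count, the interlacing, and the placement of the extra roots. In particular $Z'$ is finite, hence discrete, so \cref{thm:main_formula} applies.

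Finally I would read off $\mu,\zeta_0,\zeta_1$. Since $\lambda$ is purely atomic its continuous density $\lambda_c$ is zero, so $\mu_c\equiv0$ by \cref{eq:mu_c} and $\mu=\sum_{\alpha_i\in Z}\beta_i\delta(s-\alpha_i)$ is supported exactly on the $M$ roots found above; the weight formula \cref{eq:betaeq} becomes $\beta_i=\pi^2\big(c_1+\int d\lambda(\tau)/(\tau-\alpha_i)^2\big)^{-1}=\pi^2\big(c_1+\sum_{j=1}^N b_j/(a_j-\alpha_i)^2\big)^{-1}$, and $\zeta_0,\zeta_1$ are inherited verbatim from the three cases of \cref{thm:main_formula}. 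I expect the main obstacle to be purely bookkeeping: getting the sign conditions on $c_0$ at the two unbounded intervals exactly right, together with the (slightly counterintuitive) identification $N_0(\lambda)=\RR$ for a finitely-atomic measure; everything else is direct substitution.
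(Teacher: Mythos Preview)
Your proposal is correct and follows exactly the approach implicit in the paper: the corollary is stated without proof as a direct specialization of \cref{thm:main_formula}, and you have supplied precisely the verification one needs---identifying $N_0(\lambda)=\RR$, checking discreteness of $Z'$, and carrying out the elementary monotonicity-based root count for $g(s)=\sum_i b_i/(s-a_i)-(c_1s+c_0)$ on each component of $\RR\setminus\{a_i\}$. The only content beyond substitution is the root count, which you handle cleanly.
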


\begin{example}\label{ex:gPD}
    Consider the equation
    \[y(t) = \dot{x}(t) + 2\int_0^t\cos(t-\tau)x(\tau)\,d\tau,\qquad x(0) = 1.\]
    This is an integro-differential equation of the type \ref{eq:integrodiff_PD}, with $c_1=1$, $c_0=0$, and integral kernel
    \[K(t) = 2\cos(t) = \cF[\lambda](t),\]
    where $d\lambda(t) = \delta(t-1)\,dt + \delta(t+1)\,dt$. From \cref{cor:discrete_formula}, we see that there are three atoms in the measure $\mu$:
    \[\alpha_1 = -\sqrt{3},\qquad \alpha_2 = 0,\qquad \alpha_3 = \sqrt{3},\]
    with corresponding weights $\beta_1 = \beta_2 =\beta_3 = \pi^2/3$. We thus deduce that $\mu(s) = \sum_i \beta_i\delta(s - \alpha_i)$ and obtain the following solution:
    \[\pi^2x(t) = \int_0^t J(t-\tau)y(\tau)\,d\tau, \quad J(t) = \cF[\mu](t) = \frac{\pi^2}{3}\left(1+2\cos(\sqrt{3}t)\right).\]
    This example is depicted in \cref{fig:main_inversion}.
\end{example}

For completeness' sake, we offer a similar result in the case where the measure is perturbed by a \emph{positive, real} parameter\footnote{\dave{Since $\lambda + c_0\in\cM_+^{(2)}(\RR)$ for any $\lambda\in\cM_+^{(1)}(\RR)$ and $c_0\in\HH$, this result can be seen to form a special case of \cref{thm:main_regularizedH_formula} below.}}, or equivalently, $\Im c_0>0$ in \ref{eq:integrodiff_PD}:
\begin{restatable}[$\cB_\RR$ with complex $c_0$]{proposition}{propeasyone}\label{prop:easyone}
    Suppose $\lambda\in L^*(\RR) + \cM_c(\RR)\subset\cM_+^{(1)}(\RR)$, as in \cref{thm:main_formula}. For any $c_0 \in \HH$ (that is, with $\Im c_0 >0$), there is a unique signed measure $\mu\in\cM^{(1)}(\RR)$ such that
    \begin{equation}
        \left(Q_\RR[\lambda](z) - i\pi^{-1}c_0\right)\left(Q_\RR[\mu](z) - i\pi^{-1}\zeta_0\right)\equiv 1,
    \end{equation}
    where $\zeta_0 = -\pi^2/c_0\in\HH$. Moreover, $\mu$ is absolutely continuous, and its continuous density $\mu_c$ is given by
    \begin{equation}
        \mu_c(s) = \frac{\lambda_c(s) + \pi^{-1}\Im{c_0}}{\big(\lambda_c(s)+\pi^{-1}\Im{c_0}\big)^2 + \big(H_\RR[\lambda](s) - \pi^{-1}\Re{c_0}\big)^2} - \pi\frac{\Im c_0}{|c_0|^2}.
    \end{equation}
    Similarly, for any $c_1>0$ and $c_0\in\HH$, there is a unique $\mu'\in\cM_+^{(1)}(\RR)$ such that
    \begin{equation}
        \left(Q_\RR[\lambda](z) - i\pi^{-1}c_0 - i\pi^{-1}c_1z\right)Q_\RR[\mu'](z)\equiv 1.
    \end{equation}
    It is again absolutely continuous, with density
    \begin{equation}
        \mu_c'(s) = \frac{\lambda_c(s) + \pi^{-1}\Im{c_0}}{\big(\lambda_c(s)+\pi^{-1}\Im{c_0}\big)^2 + \big(H_\RR[\lambda](s) - \pi^{-1}\Re{c_0} - \pi^{-1}c_1s\big)^2}.
    \end{equation}
\end{restatable}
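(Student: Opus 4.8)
The plan is to solve for the reciprocal in the upper half plane $\HH=\{\Im z>0\}$ and then read off the measure from its boundary trace. Write $a_\infty:=\pi^{-1}\Im c_0>0$ and $M:=\pi/\Im c_0$. The first move is to set $F(z):=Q_\RR[\lambda](z)-i\pi^{-1}c_0$ on $\HH$ and note that $\Re Q_\RR[\lambda](z)=\tfrac1\pi\int\frac{\Im z}{|z-s|^2}\,d\lambda(s)\ge 0$ while $\Re(-i\pi^{-1}c_0)=a_\infty$, so $\Re F\ge a_\infty>0$ throughout $\HH$. Hence $F$ is zero-free, $G:=1/F$ is holomorphic on $\HH$, and $0<\Re G=\Re F/|F|^2\le 1/\Re F\le M$; in other words $G$ maps $\HH$ into the disc $\{|w-\tfrac M2|\le\tfrac M2\}$ and is a \emph{bounded} holomorphic function with bounded, strictly positive real part. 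This uniform boundedness — which fails when $\Im c_0=0$ — is the structural fact that makes the rest routine: it is exactly what forces $G$ to be the Cauchy transform of an honest absolutely continuous signed measure, with no atoms and no zero set. For the second assertion I would repeat this with $F_1(z):=Q_\RR[\lambda](z)-i\pi^{-1}c_0-i\pi^{-1}c_1z$, for which $\Re F_1=\Re Q_\RR[\lambda]+a_\infty+\pi^{-1}c_1\Im z\ge a_\infty>0$, so $G_1:=1/F_1$ is again bounded holomorphic on $\HH$.

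Next I would compute boundary traces. By the a.e.\ boundary behaviour of $Q_\RR$ on the line (that is, \cref{eq:hilberttransform} together with the Poisson-limit statement, the $\HH$-analogue of \cref{prop:classic}), $F(t+i\eps)\to a(t)+ib(t)$ for a.e.\ $t$, with $a(t)=\lambda_c(t)+a_\infty\ge a_\infty$ and $b(t)=H_\RR[\lambda](t)-\pi^{-1}\Re c_0$. Since $|F^*|\ge a\ge a_\infty>0$ \emph{everywhere}, the bounded function $G$ has boundary trace $G^*=1/F^*$ a.e., and $\Re G^*(t)=a(t)/(a(t)^2+b(t)^2)\in L^\infty(\RR)$. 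On the imaginary axis, $Q_\RR[\lambda](iy)\to0$ as $y\to\infty$ by dominated convergence ($|\tfrac{i}{\pi(iy-s)}|\le\tfrac1{\pi(1+s^2)^{1/2}}$ for $y\ge1$, using $\lambda\in\cM^{(1)}(\RR)$), so $F(iy)\to -i\pi^{-1}c_0$ and $G(iy)\to i\pi/c_0=-i\pi^{-1}\zeta_0$ with $\zeta_0:=-\pi^2/c_0$; one checks $\Im\zeta_0=\pi^2\Im c_0/|c_0|^2>0$, so $\zeta_0\in\HH$. With $\kappa:=\Re(i\pi/c_0)=\pi\Im c_0/|c_0|^2$, the quantity $\mu_c:=\Re G^*-\kappa=\tfrac{a}{a^2+b^2}-\pi\tfrac{\Im c_0}{|c_0|^2}$ is precisely the claimed density. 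I would then verify $\mu_c\in L^\infty$ and, using that the hypothesis $\lambda\in L^*(\RR)+\cM_c(\RR)$ forces $\int(1+|s|)\,d\lambda<\infty$ and hence $H_\RR[\lambda](t)=O(t^{-1})$ (so, by Taylor-expanding $a/(a^2+b^2)$ about $(a_\infty,-\pi^{-1}\Re c_0)$, $\mu_c(t)=O(t^{-1})$), conclude $\mu_c\in L^1((1+s^2)^{-1/2}\,ds)$; thus $\mu:=\mu_c\,ds$ is a bona fide signed measure in $\cM^{(1)}(\RR)$, absolutely continuous by construction.

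To close the first half I would set $G_0:=G+i\pi^{-1}\zeta_0$, which is bounded holomorphic on $\HH$, tends to $0$ along the imaginary axis, and has boundary trace with real part $\mu_c$ a.e. Both $\Re G_0$ and $\Re Q_\RR[\mu]$ are bounded harmonic functions on $\HH$, and a bounded harmonic function there equals the Poisson integral of its $L^\infty$ boundary trace (no singular or linear part survives boundedness); since the traces agree a.e., $\Re G_0=\Re Q_\RR[\mu]$, so $G_0-Q_\RR[\mu]$ is a purely imaginary constant, which must vanish because both sides tend to $0$ as $z=iy\to i\infty$ (the latter by dominated convergence, $\mu\in\cM^{(1)}$). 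Hence $Q_\RR[\mu]=G_0$, i.e.\ $(Q_\RR[\lambda]-i\pi^{-1}c_0)(Q_\RR[\mu]-i\pi^{-1}\zeta_0)=FG\equiv1$. Uniqueness is immediate: any $\widetilde\mu\in\cM^{(1)}(\RR)$ satisfying the identity with some constant $\widetilde\zeta_0$ has $Q_\RR[\widetilde\mu]=G+i\pi^{-1}\widetilde\zeta_0$; matching behaviour at $i\infty$ pins $\widetilde\zeta_0=\zeta_0$, whence $Q_\RR[\widetilde\mu]=Q_\RR[\mu]$ and $\widetilde\mu=\mu$ since $Q_\RR$ is injective on $\cM^{(1)}(\RR)$ ($\Re Q_\RR[\nu](\cdot+i\eps)\,ds\to\nu$ weakly). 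The $c_1>0$ case is the same but cleaner: $F_1^*(t)=a(t)+ib_1(t)$ with $b_1(t)=H_\RR[\lambda](t)-\pi^{-1}\Re c_0-\pi^{-1}c_1t$, so $\Re G_1^*=a/(a^2+b_1^2)\ge0$ is the stated $\mu_c'$, which now decays like $O(t^{-2})$ (since $b_1(t)\sim-\pi^{-1}c_1t$), giving $\mu':=\mu_c'\,ds\in\cM_+(\RR)\subset\cM_+^{(1)}(\RR)$; and since $F_1(iy)\to\infty$ we get $G_1(iy)\to0$ with \emph{no} additive constant, so the same Poisson-uniqueness argument yields $Q_\RR[\mu']=G_1$ directly.

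I expect the only real friction to be bookkeeping rather than conceptual: invoking cleanly the a.e.\ boundary behaviour of $Q_\RR$ on the line and the Poisson representation of bounded harmonic functions on $\HH$ (the line analogues of \cref{prop:classic}, standard but needing to be stated), and extracting enough decay of $\mu_c$ (resp.\ $\mu_c'$) to land in $\cM^{(1)}$ — this last estimate is the sole place the hypothesis $\lambda\in L^*(\RR)+\cM_c(\RR)$ is genuinely used. The new feature relative to \cref{thm:main_formula} — that $\Im c_0>0$ makes $\Re F$ bounded \emph{below}, hence $G=1/F$ globally bounded with no boundary zeros — is exactly what kills the zero set, eliminates all atoms, and gives absolute continuity for free.
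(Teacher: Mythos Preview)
Your proof is correct. The core insight---that $\Im c_0>0$ forces $\Re F\ge a_\infty>0$ uniformly, making $G=1/F$ globally bounded and thereby killing any singular part---is exactly the one the paper exploits, and the asymptotic expansion you use to land $\mu$ in $\cM^{(1)}(\RR)$ is the same as theirs (the paper writes it as $(\omega+Q_\RR[\lambda])^{-1}=\omega^{-1}(1-\omega^{-1}Q_\RR[\lambda])+O(s^{-2})$ and takes real parts, invoking \cref{lem:Lstar} for the $O(s^{-1})$ decay of $Q_\RR[\lambda]$).

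The difference is architectural. The paper pulls back \cref{thm:main_circle} under $\psi$ to get, for free, a measure $\widetilde\mu\in\cM_+^{(2)}(\RR)$ satisfying the product identity, and then uses the bound $|F|\ge\pi^{-1}\Im c_0$ only to read off that $\widetilde\mu$ is absolutely continuous with the stated density and that $\zeta_1=0$. You bypass the circle entirely and work from first principles on $\HH$: boundedness of $G$ lets you invoke the Poisson representation of bounded harmonic functions directly, recover the density from the boundary trace, and match constants at $i\infty$. Your route is more self-contained and makes the role of $\Im c_0>0$ more transparent (it is exactly what gives boundedness of $G$, which is exactly what gives the Fatou-type representation with no singular residue); the paper's route is shorter on the page because the heavy lifting was already done on $S^1$. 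Both are legitimate, and in fact your argument is closer in spirit to how one would prove the circle result itself.
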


Notably, this result does \emph{not} guarantee that $\mu$ or $\mu'$ lies in $\cM_\mathrm{exp}^{(1)}(\RR)$. Thus, while it can safely be employed in conjunction with \cref{prop:main_PD} to solve equations of the form \ref{eq:integrodiff_PD}, it generically cannot be used with \cref{prop:main_CM} to solve equations of the form \ref{eq:integrodiff_CM}.
\begin{example}\label{ex:gPD_complex}
    Consider the equation
    \[y(t) = x(t) + \int_{-\infty}^t e^{-(t-\tau)^2}x(\tau)\,d\tau,\qquad \lim_{t\to-\infty}x(t) = 0.\]
    This is an integral equation of the form \ref{eq:integrodiff_PD}, with $c_0 = i$ and integral kernel
    \[K(t) = e^{-t^2} = \cF[\lambda](t),\]
    where $\lambda = \tfrac{1}{2\sqrt{\pi}}e^{-t^2/4}\,dt$. Now we use the fact that
    \begin{equation}
        H_\RR[e^{-t^2/a}] = \frac{2}{\sqrt{\pi}}D(t/\sqrt{a})
    \end{equation}
    where $D(x) = e^{-x^2}\int_0^xe^{t^2}dt$ is the Dawson function. \Cref{prop:easyone} thus implies that
    \[\zeta_0=\pi^2i,\qquad \mu_c(t) = \frac{\tfrac{1}{2\sqrt{\pi}}e^{-t^2/4} + \pi^{-1}}{(\tfrac{1}{2\sqrt{\pi}}e^{-t^2/4} + \pi^{-1})^2 + \frac{1}{\pi^2}D(t/2)^2}- \pi,\]
    and hence we obtain
    \[x(t) = y(t) + \frac{1}{\pi^2}\int_{-\infty}^tJ(t-s)y(s)\,ds, \quad J = \cF[\mu].\]
    Note in this example that $J$ is not a PD kernel, but $-J$ is; this is allowed by the stipulation in \cref{prop:easyone} that $\mu$ is signed. \dave{This example} is shown in \cref{fig:main_inversion}.
\end{example}

Next, we prove important continuity properties of the map $\cB_\RR$, \dave{mirroring the weak continuity of the map $\cB$ on the circle}. We show, for one, that $\cB_\RR$ is well-defined on a wider class of measures than allowed by \cref{thm:main_formula}, and that it is continuous \dave{on this class} with respect to natural variants of the weak topology. For this, we define the following topologies:

\begin{restatable}[Variants of the weak topology]{definition}{defwinf}\label{def:winf}
	We say that $\lambda_j\in\cM_+^{(n)}(\RR)$ converges to $\lambda\in\cM_+^{(n)}(\RR)$ in the \emph{$W_{-n}$-topology} if
	\[(1+s^2)^{-n/2}\,d\lambda_j(s)\rightharpoonup (1+s^2)^{-n/2}\,d\lambda(s)\]
	weakly. Likewise, we say that $\lambda_j\in\cM_c(\RR)$ converges to $\lambda\in\cM_c(\RR)$ in the \emph{$W_\infty$-topology} if
	\[\int f\,d\lambda_j\to\int f\,d\lambda\]
	for all continuous (but not necessarily bounded) functions $f\in\cC(\RR)$. 
\end{restatable}
\begin{remark}\label{rem:winf}
    Restricted to the set of probability measures with finite $n^\mathit{th}$ moments, the $W_{+n}$ topology agrees with the classical Wasserstein-$n$ topology~\cite[Ch.~5]{santambrogio2015optimal}. On the other hand, the $W_\infty$ topology is strictly weaker than the Wasserstein-$\infty$ topology, but strictly stronger than the limit of the $W_{+n}$ topologies as $n\to +\infty$.
    
    To compare the $W_\infty$ topology against the Wasserstein-$\infty$ topology, consider the measures $\mu_j = (1-e^{-j})\delta_0 + e^{-j}\delta_j$, where we write $\delta_x$ for the Dirac measure at $x\in\RR$. From \cref{prop:topology}, we will see that $\mu_j\to\delta_0$ in the $W_\infty$ topology. On the other hand, the Wasserstein-$\infty$ distance between $\mu_j$ and $\delta_0$ is always $1$, so the sequence does not converge.

    To compare the $W_\infty$ topology against the limit of the $W_{+n}$ topologies, consider the measures $\lambda_j = \delta_0 + e^{-j}\delta_j$, where we write $\delta_x$ for the Dirac measure at $x\in\RR$. It is clear that $(1+s^2)^{n/2}\,d\lambda_j(s)\rightharpoonup (1+s^2)^{n/2}\,d\lambda(s)$ weakly for any fixed $n\in\RR$, so we see that $\lambda_j\to\lambda$ in $W_{+n}$. On the other hand, it will follow from \cref{prop:topology} below that $\lambda_j$ does not converge in $W_\infty$.

    Classical Wasserstein-$n$ topologies will always be denoted by the script notation $\mathcal{W}_n, \mathcal{W}_\infty$ to distinguish from the weak topologies $W_{+n}, W_\infty$ defined above.
\end{remark}

We discuss these topologies further in \cref{sec:topologies}, and we characterize them in both the spectral domain and the time domain. In one direction, we see that convergence in $W_{-n}$ corresponds to pointwise convergence of mollifications of the Fourier and Laplace transforms, and implies locally uniform convergence of the same (\cref{prop:time_cont_PD}); it also implies locally uniform convergence of the Laplace transform and all of its derivatives (\cref{prop:time_cont_CM}). In another direction, we see that, if $\lambda_j$ are uniformly supported in a fixed compact interval $I\subset\RR$, convergence in $W_\infty$ is equivalent to pointwise convergence of either the Fourier and Laplace transforms, and it implies locally uniform convergence of both transforms and all of their derivatives (\cref{lem:time_cont_infty}). Finally, we also see that convergence of integral kernels in weighted $L^p$ spaces can be controlled by reweighted Wasserstein-$p$ metrics in the spectral domain (\cref{prop:wasserstein1}). These metrics are equivalent to the $W_\infty$ topology on a fixed compact interval, so this result helps make our notion of continuity in that setting more quantitative.

Our primary topological result for gCM and gPD equations is the following, which we prove in \cref{sec:line}:
\begin{restatable}[Existence and weak continuity of $\cB_\RR$]{theorem}{thmmaincontinuity}\label{thm:main_continuity}
	Write $U^{0}=\{0\}\times\{0\}$, $U^1=(\RR\setminus\{0\})\times\{0\}$, and $U^2=\RR\times\RR_+$; these sets form a disjoint partition of $\RR\times\RR_+$. Respectively, the set $U^0$ corresponds to the choice $c_0=c_1=0$, the set $U^1$ to the choice $c_1=0$ but $c_0\neq 0$, and $U^2$ to the choice $c_1>0$. Then $\cB_\RR$ is well-defined on the following spaces:
 	\[\cB_\RR:\cM_\mathrm{exp}^{(1)}(\RR)\times U^1\to\cM_\mathrm{exp}^{(1)}\times U^1,\qquad \cB_\RR:\cM_\mathrm{exp}^{(1)}(\RR)\times U^2\to\cM_\mathrm{exp}^{(1)}\times U^0,\]
    applicable to gCM equations, and
    \[\cB_\RR:\cM_c(\RR)\times U^i\to\cM_c(\RR)\times U^{2-i},\qquad i \in \{0,1,2\},\]
    applicable to both gCM and gPD equations. The restriction to $\cM_\mathrm{exp}^{(1)}(\RR)\times U^2$ is continuous from the $W_{-2}$ topology on $\cM_+^{(1)}(\RR)$ and the standard topology on $U^2$ to the $W_{-r}$ topology on $\cM_+^{(1)}(\RR)$, for any $r>2$. The restriction to $\cM_c(\RR)\times U^i$ is continuous in product of the $W_\infty$-topology on $\cM_c(\RR)$ and the standard topology on each $U^j$. 
\end{restatable}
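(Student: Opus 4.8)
The plan is to deduce \cref{thm:main_continuity} from the circle theory, since $\cB_\RR$ is by construction the pullback of the involution $\cB$ of \cref{thm:main_circle} along the embedding $\Psi$ of \cref{eq:Psi_map}; the content is therefore (a) that $\cB\circ\Psi$ lands back in the image of $\Psi$, with the pulled-back measure belonging to $\cM_\mathrm{exp}^{(1)}(\RR)$ or $\cM_c(\RR)$ on the stated domains, and (b) that the weak continuity of $\cB$ on $S^1$ survives both embeddings in the refined topologies. Throughout I would work with the function $F(z)\doteq Q_\RR[\lambda](z)-i\pi^{-1}(c_0+c_1z)$ on $\HH$, which — by the defining relation of $\cB_\RR$ — must satisfy $F^{-1}=Q_\RR[\mu]-i\pi^{-1}(\zeta_0+\zeta_1z)$ whenever $\cB_\RR[\lambda,c_0,c_1]=(\mu,\zeta_0,\zeta_1)$; all structural information about $\mu$, including its support and the parameters $\zeta_0,\zeta_1$, is then extracted from $F$.

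For well-definedness I would first locate $\supp\mu$. If $\lambda\in\cM_\mathrm{exp}^{(1)}(\RR)$, then $Q_\RR[\lambda]$ continues holomorphically across $(-\infty,\inf\supp\lambda)$ and is purely imaginary there, with $\Im F(t)=H_\RR[\lambda](t)-\pi^{-1}(c_1t+c_0)\to-\pi^{-1}(c_1t+c_0)$ as $t\to-\infty$; since $c_1>0$ or $c_0\neq 0$ on $U^1\cup U^2$, $F$ is nonvanishing on some half-line $(-\infty,-M]$, so $F^{-1}$ — hence $Q_\RR[\mu]$ — is holomorphic there and $\supp\mu\subseteq[-M,\infty)$. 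The two-sided version of the same estimate (using compactness of $\supp\lambda$ and the linear growth of the $-i\pi^{-1}c_1z$ term, or, when $c_0=c_1=0$, the strict sign of $H_\RR[\lambda]$ outside a bounded interval) gives $\supp\mu$ compact when $\lambda\in\cM_c(\RR)$. To upgrade $\mu$ from its automatic membership in $\cM_+^{(2)}(\RR)$ to $\cM_+^{(1)}(\RR)$ I would invoke the elementary fact that $\mu\in\cM_+^{(1)}(\RR)$ iff $Q_\RR[\mu](iy)\to 0$ as $y\to\infty$ (one direction by dominated convergence; the other by a dyadic lower bound for $\Im Q_\RR[\mu](iy)=\tfrac1\pi\int\tfrac{y\,d\mu(s)}{y^2+s^2}$), and then read off $F(iy)$: for $c_1>0$, $F(iy)\sim\pi^{-1}c_1y$, so $Q_\RR[\mu]=F^{-1}\to 0$ and $\zeta_0=\zeta_1=0$; for $c_1=0$, $c_0\neq 0$, $F(iy)\to-i\pi^{-1}c_0$, so $Q_\RR[\mu](iy)=F(iy)^{-1}+i\pi^{-1}\zeta_0\to 0$ with $\zeta_0=-\pi^2/c_0$ forced and $\zeta_1=0$; and for $c_0=c_1=0$ with $\lambda\in\cM_c(\RR)$, $F(iy)\sim i\|\lambda\|/(\pi y)$, matched exactly by the linear term $\zeta_1=\pi^2/\|\lambda\|$ of \cref{eq:zetas}, with $\mu$ compactly supported. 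This simultaneously settles the $U^i\to U^{2-i}$ bookkeeping — a nonzero $\zeta_1$ is precisely a linear pole of $Q_\RR[\mu]$ at infinity, which occurs exactly when $c_1=0$ and $\lambda$ is finite, so only the $\cM_c$ sectors touch $U^0$ on the input side — while nonnegativity of $\mu$ is free from $\cB$ landing in $\cM_+(S^1)$. Where $Z'$ is discrete this reproduces \cref{thm:main_formula}; the present argument is the robust substitute when it is not.

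For continuity I would transfer the weak continuity of $\cB$ through $\Psi$ and $\Psi^{-1}$. On the $\cM_c(\RR)$ sectors, $W_\infty$-convergence is, by \cref{prop:topology}, weak convergence together with a uniform support bound; the support bound (and the ensuing uniform mass bound) propagates through $\cB$ by the uniform form of the estimate above — the last zero of $\Im F$ outside $\supp\lambda$ is controlled by $\sup|\supp\lambda|$, $\|\lambda\|$, $c_0$, $c_1$ alone — so $\{\cB_\RR[\lambda_j]\}$ lies in a $W_\infty$-precompact set, and any subsequential limit equals $\cB_\RR[\lambda]$ by uniqueness of the defining relation; combined with the obvious $W_\infty\Rightarrow{}$weak this yields $W_\infty$-to-$W_\infty$ continuity, with $\zeta_0,\zeta_1$ continuous by their closed forms. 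On $\cM_\mathrm{exp}^{(1)}(\RR)\times U^2$, $W_{-2}$-convergence of $\lambda_j$ translates via \cref{eq:embedding} into weak convergence of $\psi[\lambda_j]+\pi^{-1}c_{1,j}\delta_{-1}$ on $S^1$ (no mass escaping to $-1$, since the $(1+s^2)^{-1}$-weighted total mass converges); applying $\cB$ gives $\tilde\mu_j\to\tilde\mu$ weakly on $S^1$, and as the output sits in $U^0$ these carry no atom at $-1$, so $\mu_j=\psi^{-1}[\tilde\mu_j]$ and $(1+s^2)^{-r/2}\,d\mu_j=\pi(1+s^2)^{1-r/2}\,\phi_*d\tilde\mu_j$. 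For $r>2$ the weight $s\mapsto(1+s^2)^{1-r/2}$ extends to a continuous function on $S^1$ \emph{vanishing} at $-1$, so multiplication by it upgrades the weak-$S^1$ convergence of $\tilde\mu_j$ to weak-$\RR$ convergence in $W_{-r}$ — the vanishing at $-1$ being precisely what blocks mass from leaking to infinity, and precisely why $r=2$ must be excluded.

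The main obstacle I anticipate is exactly this bookkeeping near $-1$ (equivalently, near infinity on $\RR$): showing the output truly lands in $\cM_\mathrm{exp}^{(1)}(\RR)$ rather than merely in $\cM_+^{(2)}(\RR)$ — the $\cM^{(1)}$-versus-$\cM^{(2)}$ Tauberian step and its uniform analogue — together with tracking the $U^i\to U^{2-i}$ shift of the linear and constant parameters through the reciprocal. Once the behaviour of $F$ along the real axis and toward infinity has been catalogued, the support estimates and the transfer of weak continuity are comparatively routine.
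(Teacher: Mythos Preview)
Your proposal is essentially the same strategy as the paper's own proof: reduce to the circle involution $\cB$ via $\Psi$, locate $\supp\mu$ by the nonvanishing of $F$ on the real axis outside $\supp\lambda$, read off the $U^i\to U^{2-i}$ shift from the growth of $F(z)$ at infinity, and then transfer the weak continuity of $\cB$ back through $\Psi$ and $\psi^{-1}$ (the paper does the $\cM_c$ case with an explicit uniform bound on the outermost zero of $H_\RR[\lambda_n]-\pi^{-1}(c_{1,n}s+c_{0,n})$, exactly your ``last zero of $\Im F$'' estimate, and the $\cM_\mathrm{exp}^{(1)}$ case by the same $r>2$ weight-vanishing-at-$-1$ observation you give).

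One small caution on your Tauberian step. As stated, ``$\mu\in\cM_+^{(1)}(\RR)$ iff $Q_\RR[\mu](iy)\to 0$'' is not an equivalence for general $\mu\in\cM_+^{(2)}(\RR)$: the quantity $\tfrac1\pi\int\tfrac{y\,d\mu(s)}{y^2+s^2}$ you write is the \emph{real} part of $Q_\mathrm{reg}[\mu](iy)$, and symmetric measures such as $d\mu(s)=ds/\log(2+|s|)$ lie in $\cM_+^{(2)}\setminus\cM_+^{(1)}$ yet have this Poisson extension tending to zero. The argument is rescued precisely by the $\cM_\mathrm{exp}$ structure you have already established (support bounded below): for such $\mu$ the \emph{imaginary} part $\tfrac1\pi\int\tfrac{s(y^2-1)}{(1+s^2)(y^2+s^2)}\,d\mu(s)$ increases monotonically to $\tfrac1\pi\int\tfrac{s\,d\mu(s)}{1+s^2}$, which diverges exactly when $\mu\notin\cM_+^{(1)}$. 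The paper runs the same Fatou/monotone-convergence argument but along the negative real axis $z\to-\infty$ (where $Q_\mathrm{reg}[\mu]$ is purely imaginary and the one-sided support makes the integrand eventually of one sign), which avoids the Re/Im bookkeeping; either route works once the one-sidedness of $\supp\mu$ is used explicitly.
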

\begin{remark}\label{rem:Br_failure}
    Notably, this result does not make any claims about the application of $\cB_\RR$ to $\cM_\mathrm{exp}^{(1)}(\RR)\times U^0$. In brief, the obstacle to such a result is that the interconverted equation can pick up a term corresponding to a fractional derivative. Such equations are handled neatly by our `regularized' theory in \cref{sec:regularizedH}, and we see there how fractional derivatives naturally complete the definition of $\cB_\RR$; a striking example of this form arises in Abel-type equations (see \cref{ex:abeltype}).
\end{remark}
Although written in an abstract form, \cref{thm:main_continuity} has practical applications in solving Volterra equations. For one, it guarantees that gCM Volterra equations are closed under interconversion whenever either (a) the measure $\lambda$ is compactly supported or (b) the measure $\lambda$ has support bounded below and either $c_0$ or $c_1$ is nonzero. Its statement of continuity justifies, for instance, the approximation of \ref{eq:integrodiff_CM} using {Prony series}~\cite{tschoegl2012phenomenological, serra2019viscoelastic, polyanin2017integral, gross1968mathematical}. We refer the reader to \cref{fig:inverse_cont} for a numerical illustration of the continuity of the map $\cB_\RR$.

The limitations of \cref{thm:main_continuity} also reflect important principles of the conditioning of Volterra equations, as we can see through the following example:
\begin{example}\label{ex:easyvolterra}
    Consider the simple Volterra equation
    \begin{equation}\label{eq:easyvolterra}
        y(t) = \int_0^t x(\tau)\,d\tau,
    \end{equation}
    which fits into the class \ref{eq:integrodiff_PD} with $c_0=c_1=0$ and $\lambda = \delta_0\in\cM_+(\RR)$. On one hand, the results of \cref{prop:main_CM} yield the familiar closed-form solution $x(t) = \dot{y}(t)$; since the interconversion formula is exact, any numerical error in solving \eqref{eq:easyvolterra} is folded into computing the time derivative of $y$. On the other hand, one could attempt to solve the equation using an appropriate quadrature scheme. For instance, the discretized equation
    \begin{equation}\label{eq:baddiscretization}
        y(t) = \eps\left(\tfrac{1}{2}x(t) + x(t-\eps) + \cdots + x(t-\eps\lfloor t/\eps\rfloor)\right)
    \end{equation}
    is solved in closed form (up to rescaling) in \cref{sec:diraccomb}, with solution
    \[x(t) = \frac{2}{\eps}y(t) - \frac{4}{\eps}y(t-\eps) + \frac{4}{\eps}y(t-2\eps)\pm\cdots\pm \frac{4}{\eps}y(t-\eps\lfloor t/\eps\rfloor).\]
    This solution converges pointwise to the true limit in certain cases, and the local mean of the solution converges more generally. In any case, this approximation is far from the $W_{-2}$ convergence guarantee of \cref{thm:main_continuity}, in either the spectral domain or the time domain.

    That such a discretization fails to converge is well-known, and related to the ill-posedness of Volterra equations of the first kind. \Cref{thm:main_continuity} provides two hints as to why this discretization might fail. For one, even though \cref{eq:easyvolterra} has leading coefficient $c_0 = 2/\eps\neq 0$, the limit has $c_0=0$; the sequence thus tends to the boundary of $U^1$, outside the continuity guarantees of \cref{thm:main_continuity}. Secondly, even though the kernels of \cref{eq:baddiscretization} are positive definite (in the `regularized' sense of the following section), their Fourier transforms are not compactly supported, so the sequence cannot converge in $\cM_c(\RR)$. We will be able to make sense of this weaker form of convergence in the following subsection, using the regularized Hilbert transform (see \cref{rem:coolremark}).
\end{example}

\subsection{Generalized Delay and Fractional Differential Equations}\label{sec:regularizedH}
Finally, we \dave{treat} the fully general case of measures $\lambda\in\cM_+^{(2)}(\RR)$, for which the interconversion map $\cB_\RR$ is not necessarily defined. In this case, we can still apply the map $\cB$ of \cref{thm:main_circ_formula} to recover a interconversion formula in $\cM_+^{(2)}(\RR)$, but we can no longer guarantee that the result lies in $\cM_+^{(1)}(\RR)$. As such, we cannot make use of the standard Hilbert transform \cref{eq:hilberttransform} on the real line, so the application to Volterra equations requires more care.

As a first step, we note a critical \dave{element} of our circle theory: the relation \cref{eq:H_real2circ} \dave{indicates} how the Hilbert transform can be regularized to apply to functions $f\in L^\infty(\RR)$, a result \dave{first discovered in the singular integral operator theory of Calder\'on and Zygmund}~\cite{10.1007/BF02392130}. Namely, for any bounded $f\in L^\infty(\RR)$, the image of $\lambda = f(s)\,ds$ under $\psi$ is simply
    \[\psi[\lambda] = (2\pi)^{-1}f(\phi(e^{i\theta}))\,d\theta .\]
    This is a continuous measure with bounded density, so it must lie in $\cM(S^1)$. Pulling back the Hilbert transform $H[\psi[\lambda]]$ yields (in fact, for any $\lambda\in\cM_+^{(2)}(\RR)$)
    \begin{equation}\label{eq:regularizedH}
        H_\mathrm{reg}[\lambda](t)\doteq H[\psi[\lambda]](\phi^{-1}(t)) = \pv\int \frac{1}{\pi}\left(\frac{1}{t-s} + \frac{s}{1+s^2}\right)\,d\lambda(s),
    \end{equation}
    refraining now from splitting the integral because we generically have $\lambda\notin\cM_+^{(1)}(\RR)$. \dave{This notion agrees (up to an additive constant) with the standard Hilbert transform where the latter is defined, and it extends to a regularized Cauchy transform}
    \begin{equation}\label{eq:regularizedQ}
        Q_\mathrm{reg}[\lambda](z)\doteq Q[\psi[\lambda]](\phi^{-1}(z)) = \int \frac{i}{\pi}\left(\frac{1}{z-s} + \frac{s}{1+s^2}\right)\,d\lambda(s)
    \end{equation}
    on the upper half-plane. By \cref{cor:uniqueness}, $Q_\mathrm{reg}[\lambda](z)$ is uniquely defined within the family $Q_\sigma[\psi[\lambda]](\phi^{-1}(z))$, $\sigma\in\RR$, by the property that $\Im Q_S[\lambda](i) = 0$.

    At present, we aim to understand how the regularized Hilbert transform can extend the class of Volterra equations covered by our theory. There are two directions we can take this investigation, which correspond to (generalized classes of) delay differential equations and fractional differential equations, respectively. 

First, we develop an analogue of \cref{thm:main_circ_formula} for the regularized transform $Q_\mathrm{reg}$. To state this result, we make use of the following, regularized form of~\cref{eq:Psi_map}:
\begin{equation}\label{eq:embedding_reg}
\begin{gathered}
\Psi_\mathrm{reg}:\cM_+^{(2)}\times\RR\times\RR_+\to \cM_+(S^1)\times\RR,\\
    \Psi_\mathrm{reg}[\lambda,c_0,c_1] = (\psi[\lambda] + \pi^{-1}c_1\delta_{-1}, -\pi^{-1}c_0).
    \end{gathered}
\end{equation}
The following result can be deduced straightforwardly from \cref{thm:main_circ_formula}; we discussed such a result at the beginning of \cref{sec:main_line}, but it is instructive to formalize it in terms of $Q_\mathrm{reg}$:
\begin{theorem}[Definition of $\cB_\mathrm{reg}$]\label{thm:main_rPD}
    Suppose $\lambda\in\cM_+^{(2)}(\RR)$, and fix $c_1\geq 0$ and $c_0\in\RR$. There is a unique measure $\mu\in\cM_+^{(2)}(\RR)$ and unique values $\zeta_1\geq 0$ and $\zeta_0\in\RR$ such that
    \[(Q_\mathrm{reg}[\lambda](z) - i\pi^{-1}(c_0+c_1z))(Q_\mathrm{reg}[\mu](z) - i\pi^{-1}(\zeta_0+\zeta_1z))\equiv 1\]
    for $z\in\HH$. In this context, we write $\cB_\mathrm{reg}[\lambda,c_0,c_1] = (\mu,\zeta_0,\zeta_1)$. The map $\cB_\mathrm{reg}$ is continuous in the pullback of the weak topology on $\cM_+(S^1)$ under $\Psi_\mathrm{reg}$.

    If $\lambda$ is even and $c_0=0$, then $\mu$ is even and $\zeta_0=0$.
\end{theorem}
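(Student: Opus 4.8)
The plan is to deduce the symmetry from the \emph{uniqueness} clause of the theorem by exhibiting a reflected solution. Write $\rho\colon s\mapsto -s$ for the reflection of $\RR$ and, for a measure $\nu$, let $\rho_*\nu$ denote its pushforward, so that ``$\lambda$ even'' means $\rho_*\lambda=\lambda$. Given the solution $(\mu,\zeta_0,\zeta_1)=\cB_\mathrm{reg}[\lambda,0,c_1]$, I would show that the triple $(\rho_*\mu,-\zeta_0,\zeta_1)$ \emph{also} solves the defining equation. Since $\rho_*\mu$ again lies in $\cM_+^{(2)}(\RR)$ (pushforward preserves positivity, and $s^2$ is even) and $\zeta_1\geq 0$ is unchanged, the triple is an admissible competitor, so uniqueness forces $\rho_*\mu=\mu$ and $-\zeta_0=\zeta_0$ — that is, $\mu$ is even and $\zeta_0=0$.

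The computational heart is a reflection identity for $Q_\mathrm{reg}$ (the holomorphic function on $\HH$ whose boundary trace is $H_\mathrm{reg}$, and which is what appears in the displayed equation for $z\in\HH$). Directly from \cref{eq:regularizedQ} and the substitution $s\mapsto -s$, using that $s$ and $\nu$ are real and that the correction term $s/(1+s^2)$ is odd, one obtains
\[
    Q_\mathrm{reg}[\rho_*\nu](z)=\overline{Q_\mathrm{reg}[\nu](-\bar z)},\qquad z\in\HH,
\]
for every $\nu\in\cM_+^{(2)}(\RR)$ (note $-\bar z\in\HH$ whenever $z\in\HH$). Crucially, because $c_0=0$ the linear correction transforms the same way, $\overline{-i\pi^{-1}c_1(-\bar z)}=-i\pi^{-1}c_1z$; a nonzero $c_0$ would break this, which is exactly where the hypothesis enters. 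Consequently $f(z)\doteq Q_\mathrm{reg}[\lambda](z)-i\pi^{-1}c_1z$ satisfies $f(-\bar z)=\overline{f(z)}$ (since $\rho_*\lambda=\lambda$), and a short bookkeeping check — tracking the sign flip on $\zeta_0$ — shows that $g'(z)\doteq Q_\mathrm{reg}[\rho_*\mu](z)-i\pi^{-1}(-\zeta_0+\zeta_1z)$ equals $\overline{g(-\bar z)}$, where $g(z)\doteq Q_\mathrm{reg}[\mu](z)-i\pi^{-1}(\zeta_0+\zeta_1z)$.

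Now invoke the defining relation $f g\equiv 1$. From $f(-\bar z)=\overline{f(z)}$ we get $g(-\bar z)=1/f(-\bar z)=\overline{1/f(z)}=\overline{g(z)}$, hence $g'(z)=\overline{g(-\bar z)}=g(z)$, and therefore $f(z)g'(z)=f(z)g(z)\equiv 1$. This is precisely the assertion that $(\rho_*\mu,-\zeta_0,\zeta_1)$ solves the equation of the theorem, so uniqueness closes the argument. As an alternative, one can run the same symmetry on the circle: an even $\lambda$ with $c_0=0$ lifts under $\Psi$ — using $\sigma_\RR(\lambda)=0$ and that $-1\in S^1$ is fixed by $\theta\mapsto-\theta$ — to a conjugation-symmetric measure with vanishing $\sigma$-parameter, and the analogous conjugation symmetry of $Q$ on $\DD$ combined with \cref{thm:main_circle} gives the same conclusion; but the direct computation above is cleaner.

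I do not anticipate a real obstacle here: the content is just a uniqueness argument, and the only points to get right are (i) keeping the two reflections — $s\mapsto -s$ on the line and $z\mapsto-\bar z$ on $\HH$ — consistent with the complex conjugation, in particular the sign on $\zeta_0$, and (ii) confirming that the reflected triple is admissible in the uniqueness class, which is immediate.
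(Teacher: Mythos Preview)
Your argument is correct. The reflection identity $Q_\mathrm{reg}[\rho_*\nu](z)=\overline{Q_\mathrm{reg}[\nu](-\bar z)}$ checks out directly from \cref{eq:regularizedQ}, and the uniqueness clause then forces $(\rho_*\mu,-\zeta_0,\zeta_1)=(\mu,\zeta_0,\zeta_1)$ exactly as you say.

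The paper does not spell out a proof here; it only remarks that the theorem is a straightforward consequence of the circle theory (\cref{thm:main_circle} for existence, uniqueness, and continuity, with \cref{thm:main_circ_formula} for the explicit form). Your direct argument on the line is the pullback under Cayley of the conjugation symmetry $\theta\mapsto-\theta$ on $S^1$, so the two routes are equivalent in content; yours is arguably cleaner because it avoids transporting the symmetry through $\phi$.

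One small correction to your aside: when you lift to the circle you invoke $\Psi$ and write $\sigma_\RR(\lambda)=0$, but for general $\lambda\in\cM_+^{(2)}(\RR)$ the functional $\sigma_\RR$ need not be defined (it requires $\lambda\in\cM_+^{(1)}$). The correct embedding in this setting is $\Psi_\mathrm{reg}$ of \cref{eq:embedding_reg}, which bypasses $\sigma_\RR$ entirely and sends $(\lambda,0,c_1)$ to a conjugation-symmetric pair $(\psi[\lambda]+\pi^{-1}c_1\delta_{-1},0)$ on $S^1$ directly. This does not affect your main argument, which stays on the line and is correct as written.
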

\begin{remark}\label{rem:coolremark}
    The topological statement of this theorem is distinct from the $W_{-2}$ topology of \cref{def:winf} in the following way. Consider a sequence $\theta_j\in [0,\pi)$ converging to $\pi$, and consider the Dirac measures $\delta_{e^{i\theta_j}}\in\cM_+(S^1)$ converging weakly to $\delta_{-1}$. These atoms pull back under $\Psi_\mathrm{reg}$ to the measures
    \[\pi \sec^2(\theta_j/2)\,\delta\left(s - \tan(\theta_j/2)\right)\,ds\in\cM_+^{(2)}(\RR),\]
    which do not converge in $W_{-2}$. In the pullback of the weak topology on $\cM_+(S^1)$, however, these measures converge to the pair $\lambda=0$, $c_1=\pi$.

    Another interesting example is that of \cref{ex:easyvolterra}. In the pullback of the spectral domain to $S^1$, the approximations \cref{eq:baddiscretization} converge weakly to $\lambda\doteq\delta_1\in\cM_+(S^1)$, exactly corresponding to the equation \cref{eq:easyvolterra}. In turn, $\lambda$ maps to $\mu\doteq\delta_{-1}\in\cM_+(S^1)$ under $\cB$, yielding the interconverted equation $\dot{y} = x$. We thus see that the regularized interconversion map allows us to make sense of limits that \dave{previously appeared singular}.

    As a final note, we could alternatively state the theorem in terms of convergence in the $W_{-r}$ topology for $r>2$, as we did in \cref{thm:main_continuity}, but this choice no longer illustrates the asymptotic behavior of \dave{our involution}.
\end{remark}

Likewise, we can recover a closed-form formula for $\cB_\mathrm{reg}$ over a wide class of measures $\lambda$. Pulling back the proof of \cref{thm:main_circ_formula}, we find the following result:
\begin{theorem}[Closed form of $\cB_\mathrm{reg}$]\label{thm:main_regularizedH_formula}
    Suppose $\lambda\in\cM_+^{(2)}(\RR)$, fix $c_0\in\RR$ and $c_1\geq 0$, and write 
    \[\Psi_\mathrm{reg}[\lambda,c_0,c_1] = (\widetilde{\lambda},\widetilde{c}_0),\qquad\cB_\mathrm{reg}[\lambda,c_0,c_1] = (\mu,\zeta_0,\zeta_1).\]
    Suppose that
    \[Z'\doteq \left(N_0(\widetilde{\lambda})\cap\supp\widetilde{\lambda}\right)\cup\{z\notin\supp\lambda\;|\;H[\widetilde{\lambda}](z) + i\widetilde{c}_0= 0\}\]
    is discrete (i.e., it does not contain any of its limit points), and write $\lambda_c$ for the continuous density of $\lambda$. Then we find
    \[
        d\mu(s) = \mu_c(s)\,ds + \sum_{\alpha_i\in Z} \beta_i\delta(s-\alpha_i)\,ds,\]
    with the following identities:
    \[
        \mu_c(s) = \frac{\lambda_c(s)}{\lambda_c(s)^2 + \big(H_\mathrm{reg}[\lambda](s) - \pi^{-1}(c_1s+c_0)\big)^2},\]
        \[\beta_i = \pi^2\left(c_1 + \int\frac{d\lambda(\tau)}{(\tau-\alpha_i)^2}\right)^{-1},
    \]
    \[
        Z = N_0(\lambda)\cap\big\{s\in\RR\;\big|\;H_\mathrm{reg}[\lambda](s) - \pi^{-1}(c_1s + c_0)= 0\big\}.
    \]
    Furthermore, we have
    \[\zeta_0 = -\pi^2\Im\left(\int\frac{d\lambda(s)}{1+s^2} + c_1 - ic_0\right)^{-1}.\]
    Finally, if $c_0=c_1=0$, then we have
    \[\zeta_1 = \frac{\pi^2}{\|\lambda\|},\]
    taking $\zeta_1=0$ if $\|\lambda\|=\infty$. If either of $c_0$ or $c_1$ is nonzero, then $\zeta_1=0$.
\end{theorem}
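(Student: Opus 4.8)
The plan is to obtain this as the pullback of the circle formula \cref{thm:main_circ_formula} under the embedding $\Psi_\mathrm{reg}$. The first step is to make this precise. Writing $(\widetilde\lambda,\widetilde c_0) = \Psi_\mathrm{reg}[\lambda,c_0,c_1]$, I would combine the definition $Q_\mathrm{reg}[\lambda](z) = Q[\psi[\lambda]](\phi^{-1}(z))$ from \cref{eq:regularizedQ} with the computation $Q[\delta_{-1}](\phi^{-1}(z)) = -iz$ from \cref{sec:main_line} to get the identity
\[ Q_\mathrm{reg}[\lambda](z) - i\pi^{-1}(c_0 + c_1 z) \;=\; Q_{\widetilde c_0}[\widetilde\lambda](\phi^{-1}(z)), \]
together with the analogous statement on the $\mu$-side. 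Comparing with the relation $(Q_\mathrm{reg}[\lambda](z) - i\pi^{-1}(c_0+c_1z))(Q_\mathrm{reg}[\mu](z) - i\pi^{-1}(\zeta_0+\zeta_1 z))\equiv 1$ that defines $\cB_\mathrm{reg}$, this shows $\cB_\mathrm{reg} = \Psi_\mathrm{reg}^{-1}\circ\cB\circ\Psi_\mathrm{reg}$ (the map $\Psi_\mathrm{reg}$ being a bijection since $\psi$ has cokernel $\{\delta_{-1}\}$), so that $\cB[\widetilde\lambda,\widetilde c_0] = (\widetilde\mu,\widetilde\zeta_0)$ where $(\mu,\zeta_0,\zeta_1) = \Psi_\mathrm{reg}^{-1}(\widetilde\mu,\widetilde\zeta_0)$. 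Since the set $Z'$ in the statement is literally the circle zero set of \cref{def:zeroset_circ} attached to $(\widetilde\lambda,\widetilde c_0)$, its discreteness is exactly the hypothesis of \cref{thm:main_circ_formula}; that theorem therefore applies to $(\widetilde\lambda,\widetilde c_0)$, and the whole problem reduces to transporting its conclusion back to $\RR$ along $\Psi_\mathrm{reg}^{-1}$.

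The bulk of the work is then a Cayley-transform change of variables under $\phi(e^{i\theta}) = \tan(\theta/2)$. Taking imaginary traces of the identity above gives $H[\widetilde\lambda](\phi^{-1}(s)) + \widetilde c_0 = H_\mathrm{reg}[\lambda](s) - \pi^{-1}(c_1 s + c_0)$; moreover the continuous density of $\psi[d\mu]$ (respectively of $\psi[d\lambda]$, hence of $\widetilde\lambda$) with respect to $(2\pi)^{-1}\,d\theta$ at $e^{i\theta}$ is $\mu_c(\phi(e^{i\theta}))$ (respectively $\lambda_c(\phi(e^{i\theta}))$). Substituting these into the circle formulas of \cref{thm:main_circ_formula} yields the stated $\mu_c(s)$ and identifies $Z$ with the image under $\phi$ of the finite part of the circle zero set $N_0(\widetilde\lambda)\cap\{H[\widetilde\lambda]+\widetilde c_0 = 0\}$, i.e.\ $Z = N_0(\lambda)\cap\{s : H_\mathrm{reg}[\lambda](s) - \pi^{-1}(c_1 s + c_0) = 0\}$. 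For the atom weights I would use $\phi^{-1}(s) - \phi^{-1}(\alpha) = 2i(\alpha - s)/((i+s)(i+\alpha))$ to evaluate the circle integral, splitting $\widetilde\lambda = \psi[\lambda] + \pi^{-1}c_1\delta_{-1}$:
\[ \int \frac{e^{i(\theta+\theta_i)}}{(e^{i\theta} - e^{i\theta_i})^2}\,d\widetilde\lambda(\theta) \;=\; -\frac{1+\alpha_i^2}{4\pi}\Big(c_1 + \int\frac{d\lambda(s)}{(s-\alpha_i)^2}\Big), \]
the $c_1$ contribution coming from the $\delta_{-1}$ atom. Since $\psi^{-1}$ sends an atom of mass $m$ at $e^{i\theta_i}\neq -1$ to an atom of mass $\pi(1+\alpha_i^2)m$ at $\alpha_i = \phi(e^{i\theta_i})$, the circle weight $\beta_i^{S^1} = -\tfrac14(\,\cdots\,)^{-1}$ becomes $\beta_i = \pi^2(c_1 + \int (s-\alpha_i)^{-2}\,d\lambda)^{-1}$. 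Finally, evaluating the involution at the disc origin $w=0$ gives $(\|\widetilde\lambda\| + i\widetilde c_0)(\|\widetilde\mu\| + i\widetilde\zeta_0) = 1$, and inserting $\|\widetilde\lambda\| = \pi^{-1}\big(\int(1+s^2)^{-1}\,d\lambda + c_1\big)$, $\widetilde c_0 = -\pi^{-1}c_0$, and $\zeta_0 = -\pi\widetilde\zeta_0$ produces the claimed formula for $\zeta_0$.

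It remains to determine $\zeta_1$, which by $\Psi_\mathrm{reg}^{-1}$ equals $\pi$ times the mass that $\widetilde\mu$ places at $-1$. By \cref{thm:main_circ_formula}, $\widetilde\mu$ carries an atom at $-1$ precisely when $-1 \in N_0(\widetilde\lambda)\cap\{H[\widetilde\lambda] + \widetilde c_0 = 0\}$, and in that case the same weight computation (now with $e^{i\theta_i} = -1$, using $\phi^{-1}(s) + 1 = 2i/(i+s)$) evaluates the mass as $\pi/\|\lambda\|$. One then checks case by case: if $c_1 > 0$ the Dirac mass $\pi^{-1}c_1\delta_{-1}$ in $\widetilde\lambda$ forces $-1\notin N_0(\widetilde\lambda)$, so $\zeta_1 = 0$; if $c_0 \neq 0$ (and $c_1 = 0$) one shows the membership condition at $-1$ again fails, so $\zeta_1 = 0$; and if $c_0 = c_1 = 0$ the condition holds whenever $\lambda$ is finite, giving $\zeta_1 = \pi^2/\|\lambda\|$, with $\zeta_1 = 0$ in the degenerate case $\|\lambda\| = \infty$. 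The main obstacle, and essentially the only delicate point in the argument, is exactly this bookkeeping at the ``point at infinity'' $-1\in S^1$: one must keep careful track of the several factors of $\pi$ introduced by $\psi$, confirm that the discreteness hypothesis of \cref{thm:main_circ_formula} is not secretly violated at $-1$, and correctly apportion the behaviour there among the finite atoms of $\mu$, the constant $\zeta_0$, and the linear coefficient $\zeta_1$. Everything else is a routine, if lengthy, substitution.
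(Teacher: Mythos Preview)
Your proposal is correct and follows exactly the paper's own approach: the paper states only that the result is obtained by ``pulling back the proof of \cref{thm:main_circ_formula} under $\Psi_\mathrm{reg}$,'' and you have carried out precisely this pullback, supplying the Cayley change-of-variables computations (for $\mu_c$, $Z$, $\beta_i$, $\zeta_0$, and the mass at $-1$ giving $\zeta_1$) that the paper omits. Your bookkeeping of the atom at $-1\in S^1$ matches the paper's conclusions in each of the three cases.
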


We split now into two cases. First, we study the setting \ref{eq:integrodiff_rPD}, which generalizes \ref{eq:integrodiff_CM} to account for delay terms. Indeed, it is easy to see that rPD kernels correspond to inverse Laplace transforms in $\cM_+^{(2)}(\RR)$:
\begin{remark}
        From Bochner's theorem (\cref{lem:bochner}), a kernel $K$ is \emph{rPD} if and only if $K = \cF[\lambda]$ for some $\lambda\in\cM_+^{(2)}(\RR)$.
    \end{remark}
For the sake of clarity, we have phrased \ref{eq:integrodiff_rPD} only in the case that $\lambda$ is even, corresponding to a {real} rPD kernel $K=\cF[\lambda]$. We note that the class \ref{eq:integrodiff_dPD} can be extended more broadly---for instance, our analysis works equally well when $\lambda = \lambda_e + \lambda_o$ for an even $\lambda_e\in\cM_+^{(2)}(\RR)$ and an odd $\lambda_o\in\cM_+^{(1)}(\RR)$. One could consider an even broader class of measures, where $\sigma_\RR(\lambda)$ diverges, though we do not treat it here.

With only mild regularity requirements on $K$, the map $\cB_\mathrm{reg}$ allows us to solve \ref{eq:integrodiff_rPD} in much the same way as our other classes of integro-differential equations. \dave{We prove the following in \cref{sec:laplace}:}
\begin{restatable}[Solution of \ref{eq:integrodiff_rPD}]{proposition}{propmainrPD}\label{prop:rPD}
Suppose $K:\RR\to\CC$ is a rPD kernel for which $\lambda\doteq\cF^{-1}[K]$ is even, and fix $c_1\geq 0$. Write $\cB_\mathrm{reg}[\lambda,0,c_1] = (\mu,0,\zeta_1)$ and $J = \cF[\mu]$. If $K$ and $J$ both restrict to measures in a neighborhood of the origin, then \ref{eq:integrodiff_rPD} is satisfied by
    \[\pi^{2}x(t) = \zeta_1\dot{y}(t)  + \frac{1}{2}\int_{-t}^t J(\tau)y(|t-\tau|)\,d\tau + c_1x_0J(t).\]
\end{restatable}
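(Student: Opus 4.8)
The plan is to reduce \cref{eq:integrodiff_rPD} to a multiplicative identity in the Laplace domain, exactly as in the proofs of \cref{prop:main_CM} and \cref{prop:main_PD}; the only genuinely new feature is that the convolution kernels here need only be tempered distributions of order at most one. First I would simplify the integral term in \cref{eq:integrodiff_rPD}. Since $K$ is even, the symmetric integral collapses to the one-sided Volterra convolution $\int_0^t K(t-\tau)x(\tau)\,d\tau$ against the kernel $K_+$ — namely $K$ restricted to $(0,\infty)$, together with half of any atom of $K$ at the origin — and the hypothesis that $K$ restricts to a measure near $0$ is what makes this (and the term built from $J$) meaningful. Thus \cref{eq:integrodiff_rPD} is equivalent to $y=c_1\dot x+K_+*x$ with $x(0)=x_0$, and the proposed solution formula becomes $\pi^2 x=\zeta_1\dot y+J_+*y+c_1 x_0 J$, where $J=\cF[\mu]$ and $J_+$ is formed in the same way (with $\mu$ even, hence $J$ real, by \cref{thm:main_rPD}).

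Next I would identify the Laplace symbol. Writing $K=\cF[\lambda]$ for the even measure $\lambda\in\cM_+^{(2)}(\RR)$ (\cref{lem:bochner}) and exchanging the order of integration, for $\Re s>0$ one obtains
\[
\cL[K_+](s)=\int\frac{d\lambda(\omega)}{s+i\omega}=\pi\,Q_{\mathrm{reg}}[\lambda](is),
\]
the integral converging as a symmetric principal value because $\lambda$ is even (so the regularizing term $\tfrac{\omega}{1+\omega^2}$ in \cref{eq:regularizedQ} integrates to zero). Laplace-transforming $y=c_1\dot x+K_+*x$ and using $\cL[\dot x](s)=s\hat x(s)-x_0$ gives $\hat y(s)+c_1 x_0 = M_\lambda(is)\hat x(s)$, where $M_\lambda(z)\doteq\pi Q_{\mathrm{reg}}[\lambda](z)-ic_1 z$. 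By the definition of $\cB_{\mathrm{reg}}$ in \cref{thm:main_rPD}, the triple $\cB_{\mathrm{reg}}[\lambda,0,c_1]=(\mu,0,\zeta_1)$ obeys $M_\lambda(z)M_\mu(z)\equiv\pi^2$ on $\HH$ with $M_\mu(z)=\pi Q_{\mathrm{reg}}[\mu](z)-i\zeta_1 z$, so that $M_\mu(is)=\cL[J_+](s)+\zeta_1 s$. Dividing, $\pi^2\hat x(s)=(\cL[J_+](s)+\zeta_1 s)(\hat y(s)+c_1 x_0)$, and inverting term by term recovers $\pi^2 x(t)=\zeta_1\dot y(t)+(J_+*y)(t)+c_1 x_0 J(t)$ for $t>0$; the leftover contributions $\zeta_1 s\cdot c_1 x_0$ and the $\delta(t)$-part of $\zeta_1 s\hat y(s)$ are supported at the origin, and in fact vanish whenever $c_1\ne0$ since then $\zeta_1=0$ by \cref{thm:main_regularizedH_formula}. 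Finally, for the initial condition when $c_1>0$: letting $z\to\infty$ in $M_\lambda M_\mu\equiv\pi^2$, with $Q_{\mathrm{reg}}[\lambda],Q_{\mathrm{reg}}[\mu]\to0$ for even measures, forces $\|\mu\|=\pi^2/c_1<\infty$, so $J$ is continuous with $J(0)=\pi^2/c_1$, $(J_+*y)(0)=0$, and $\pi^2 x(0)=c_1 x_0 J(0)=\pi^2 x_0$.

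The hardest part will be making these manipulations rigorous in the truly regularized regime $\lambda\in\cM_+^{(2)}(\RR)\setminus\cM_+^{(1)}(\RR)$, where $K$ and $J$ are not locally integrable functions: one must give meaning to the convolutions $K_+*x$ and $J_+*y$ and to their Laplace transforms without absolute convergence, control the behaviour as $t\to0^+$ — this is precisely where the assumption that $K$ and $J$ restrict to measures near the origin enters, guaranteeing that both \cref{eq:integrodiff_rPD} and the solution formula are classically meaningful — and justify both the identity $\cL[K_+](s)=\pi Q_{\mathrm{reg}}[\lambda](is)$ and the folding/half-atom bookkeeping of the first step under only these hypotheses. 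Once these analytic points are secured, the remaining content is the same involution $M_\lambda M_\mu=\pi^2$ that drives \cref{prop:main_CM} and \cref{prop:main_PD}.
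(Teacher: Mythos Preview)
Your proposal is essentially the paper's approach: reduce the symmetric integral to a one-sided Volterra convolution using evenness of $K$ and the atom at the origin, pass to the Laplace domain, and identify the symbol with $\pi Q_{\mathrm{reg}}[\lambda](is)$ so that the involution $M_\lambda M_\mu\equiv\pi^2$ from \cref{thm:main_rPD} gives the claimed formula.

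The one place the paper adds substance beyond your outline is precisely the step you flag as hardest. To make sense of $\int_{-\infty}^\infty e^{-s|t|}K(t)\,dt$ when $K$ is only a tempered distribution (i.e.\ $\lambda\in\cM_+^{(2)}(\RR)\setminus\cM_+^{(1)}(\RR)$), the paper does \emph{not} exchange order of integration directly; instead it approximates $e^{-s|t|}$ by a family of Schwartz functions $\eta_{\eps,s}(t)=(2\pi\eps^2)^{-1/2}\int_{-\infty}^\infty e^{-|t'|s-(t-t')^2/2\eps^2}\,dt'$, applies the distributional Plancherel theorem to rewrite $\int\eta_{\eps,s}K$ as $\int\cF[\eta_{\eps,s}](-t)\,d\lambda(t)=\int\frac{2se^{-\eps t^2/2}}{s^2+t^2}\,d\lambda(t)$, and then takes $\eps\to0$ by dominated convergence (the majorant $\frac{2s}{s^2+t^2}$ being $\lambda$-integrable exactly because $\lambda\in\cM_+^{(2)}(\RR)$). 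This yields $\int\frac{s\,d\lambda(t)}{s^2+t^2}$, which the evenness of $\lambda$ then identifies with $\pi Q_{\mathrm{reg}}[\lambda](is)$ as you note. So your plan is correct; the missing ingredient is this mollifier--Plancherel device, which is also how the paper handles the analogous step in the proof of \cref{prop:main_PD}.
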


As discussed in \cref{sec:history}, the class \ref{eq:integrodiff_rPD} contains a wide variety of delay differential equations:
\begin{example}\label{ex:rPD}
    Consider the equation
    \[y(t) = c_1\dot{x}(t) + x(t) + x(t-1)\]
    with $c_1>0$. This falls into the class \ref{eq:integrodiff_rPD} with
    \[K(t) = 2\delta(t) + \delta(t-1),\qquad d\lambda(s) = \pi^{-1}(1+\cos s)\,ds\in\cM_+^{(2)}(\RR).\]
    We can calculate $H_\mathrm{reg}[\lambda](s) = \pi^{-1}\sin s$, and thus
    \[d\mu(s) = \frac{\pi(1+\cos s)\,ds}{(1+\cos s)^2 + (\sin s - c_1s)^2}.\]
    This expression is $L^1$-integrable, so we can define the Fourier transform as $J(t) = \int e^{-ist}d\mu(s)$. This example is shown in \cref{fig:main_inversion}.
\end{example}

It also allows us to solve \emph{negative} CM equations\footnote{Although the negative CM class is a strict subset of the gPD class of \cref{sec:main_line}, such equations do not generally satisfy the hypotheses of Theorems~\ref{thm:main_formula} or \ref{thm:main_continuity}, so they must be treated with our more general rPD theory.}---i.e., equations of the form
\[y(t) = c_1\dot{x}(t) + \int_0^t K(t-\tau)x(\tau)\,d\tau,\]
where $c_1\geq 0$ and $K$ is CM (but not gCM). To see how, consider how the Fourier transform acts on a Cauchy distribution:
\[\cF\left[t\mapsto \frac{a}{a^2+t^2}\right](s) = \pi e^{-a|s|},\]
where $a>0$. \dave{Given $\lambda_\mathrm{CM}\in\cM_+^{(1)}(\RR)$ supported on $\RR_+$, one can show that}
\[\cF\left[t\mapsto \frac{1}{\pi}\int \frac{a\,d\lambda_\mathrm{CM}(a)}{a^2+t^2}\right](s) = \int e^{-a|s|}\,d\lambda_\mathrm{CM}(a) = \cL[\lambda_\mathrm{CM}](s)\]
for $s>0$. More simply, we can write
\[\cF[t\mapsto \Im Q_\RR[\lambda_\mathrm{CM}](it)] = \cL[\lambda_\mathrm{CM}],\]
allowing us to represent generic CM kernels as Fourier transforms of non-negative functions (i.e., \dave{as} PD kernels).

Although `negative' CM equations represent only a sign flip from the \ref{eq:integrodiff_CM} class, we see now that they are best understood within the class of PD kernels. In particular, we see from \cref{prop:rPD} that the interconversions of such equations are themselves in the rPD class, but do \emph{not} necessarily feature CM kernels themselves. This result explains why the program of Hannsgen and Wheeler~\cite{doi:10.1137/0513067} fails to find a CM resolvent to such equations, and---at least in the scalar case---it characterizes the resolvents that \emph{can} arise.

\begin{example}\label{ex:negativeCM}
    Consider the equation
    \[y(t) = -\sum_{i=1}^n b_i\int_0^t e^{-a_i(t-\tau)}x(\tau)\,d\tau,\]
    where $a_i,b_i>0$. This equation can easily be recast in the form \ref{eq:integrodiff_CM}, but we treat it now as an equation of the form \ref{eq:integrodiff_PD} in order to understand how rPD kernels can arise in the resolvent equation.
    
    From the argument above, a finite sum of exponentials in the time domain corresponds to a weighted sum of Cauchy distributions in the spectral domain:
    \begin{equation}
        d\lambda(s) = \sum_{i=1}^n\frac{1}{\pi}\frac{b_ia_i}{s^2 + a_i^2}\,ds,\qquad H_\mathrm{reg}[\lambda](s) = H_\RR[\lambda](s) = \sum_{i=1}^n\frac{1}{\pi}\frac{b_ia_i s}{s^2 + a_i^2}.
    \end{equation}
    Now, it is important to note that this kernel does \emph{not} satisfy the hypotheses of \cref{thm:main_formula}, as it decays too slowly to lie in $L^*(\RR)$. As such, we need to use the more general theory of rPD kernels to handle it. From \cref{thm:main_regularizedH_formula}, we find
    \[\zeta_1 = \frac{\pi^2}{\sum_{i=1}^n b_i},\qquad \zeta_0 =0,\qquad d\mu(s) = \frac{\pi}{1+s^2}\,\bigg(\sum_{i=1}^n\frac{b_ia_i}{s^2+a_i^2}\bigg)^{-1}\,ds.\]
    In particular, we have
    \[d\mu(s) = \frac{\pi}{\sum_{i=1}^n a_ib_i}\,ds - \widetilde{\mu}_c(s)\,ds,\]
    where $\mu_c(s) = O(s^{-2})$. Applying \cref{prop:rPD} to map these expressions back to the time domain, we find
    \[-\pi^2x(t) = \zeta_1\dot{y}(t) + \widetilde{\zeta}_0y(t) - \int_0^t \widetilde{J}(t-\tau)y(\tau)\,d\tau,\]
    where $\widetilde{\zeta}_0 = \pi^2\big(\sum_{i=1}^n a_ib_i\big)^{-1}$ and $\widetilde{J} = \cF[\widetilde{\mu}_c]$. Already, we can see that the expressions for $\zeta_1$ and $\widetilde{\zeta}_0$ agree with the results of \cref{prop:main_CM}. The same is true of $\widetilde{J}$, of course, though we do not investigate the matter further at present.
\end{example}

In another direction, we can extend the class of CM equations to incorporate a generalized class of \emph{fractional differential equations}. For this, we define the following subset of $\cM_\mathrm{exp}^{(2)}(\RR)$:
\begin{definition}
    Given $\lambda\in\cM_\mathrm{exp}^{(2)}(\RR)$, we say that $\lambda\in\cM_\mathrm{frac}(\RR)$ if \dave{$\supp\lambda\subset\RR_+$} and if $t^{-1}\,d\lambda(t)\in\cM_{+,\mathrm{loc}}(\RR)$, or equivalently, if the restriction of $t^{-1}\,d\lambda(t)$ to a neighborhood of $t=0$ is a finite measure. If $\lambda\in\cM_\mathrm{frac}(\RR)$, we define
    \[\xi_\mathrm{frac}(\lambda) = \frac{1}{\pi}\int\frac{d\lambda(s)}{s(1+s^2)}\in\RR.\]
\end{definition}

We prove the following result in \cref{sec:laplace}:
\begin{restatable}[Solution of \ref{eq:integrodiff_rCM}]{proposition}{propmainrCM}\label{prop:rCM}
Suppose $K_1=\cL_b[\lambda_1]$ is a gCM kernel with $\lambda_1\in\cM_\mathrm{exp}^{(1)}(\RR)$, and
\[K_2(t) = \cL[s^{-1}\,d\lambda(s)](t) = \int e^{-ts}s^{-1}\,d\lambda_2(s)\]
for some $\lambda_2\in\cM_\mathrm{frac}(\RR)$. Fix $c_0\in\RR$ and $c_1\geq 0$, and write 
\[\cB_\mathrm{reg}[\lambda_1+\lambda_2,c_0-\pi\sigma_\RR(\lambda_1) - \pi\xi_\mathrm{frac}(\lambda_2),c_1] = (\mu, \zeta_0',\zeta_1).\]
The measure $\mu\in\cM_\mathrm{exp}^{(2)}(\RR)$ can be decomposed as $\mu=\mu_1+\mu_2$, where $\mu_1\in\cM_\mathrm{exp}^{(1)}(\RR)$ and $\mu_2\in\cM_\mathrm{frac}(\RR)$. Given any such decomposition, let $J_1 = \cL_b[\mu_1]$ and $J_2=\cL[s^{-1}\,d\mu_2(s)]$, and write $\zeta_0 = \zeta'_0 + \pi\sigma_\RR(\mu_1) + \pi\xi_\mathrm{frac}(\mu_2)$. Then \ref{eq:integrodiff_rCM} is satisfied by
\begin{align*}-\pi^2x(t) &= \zeta_1\dot{y}(t) - \zeta_0y(t) - \int_0^t J_1(t-\tau)y(\tau)\,d\tau + \frac{d}{dt}\int_0^t J_2(t-\tau)y(\tau)\,d\tau\\
&\qquad -c_1x_0(J_1(t) - \dot{J}_2(t)).
\end{align*}
\end{restatable}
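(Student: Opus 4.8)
The plan is to pass to the Laplace domain, where \cref{eq:integrodiff_rCM} becomes an algebraic identity between transfer functions, and then to recognize that identity as the interconversion relation defining $\cB_\mathrm{reg}$ in \cref{thm:main_rPD}. First I would check that $K_1 = \cL_b[\lambda_1]$ and $K_2 = \cL_b[\sigma^{-1}\,d\lambda_2]$ are locally integrable on $\RR_+$ --- the former because $\lambda_1 \in \cM_\mathrm{exp}^{(1)}(\RR)$, the latter because $\int_0^\eps K_2(t)\,dt = \int \frac{1 - e^{-s\eps}}{s^2}\,d\lambda_2(s) < \infty$, using $\lambda_2 \in \cM_\mathrm{exp}^{(2)}(\RR)$ near $\infty$ and the frac condition near $0$ --- so that $K_2 * x$ is continuous with $(K_2 * x)(0) = 0$. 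Applying $\cL$ to \cref{eq:integrodiff_rCM} then gives $\cL[y](s) = A(s)\cL[x](s) - c_1 x_0$, where
\[A(s) = c_1 s - c_0 - \widehat{K_1}(s) + s\,\widehat{K_2}(s),\qquad \widehat{K_1}(s) = \int\frac{d\lambda_1(\sigma)}{s+\sigma},\qquad \widehat{K_2}(s) = \int\frac{d\lambda_2(\sigma)}{\sigma(s+\sigma)}.\]

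The heart of the argument is the elementary identity
\[\frac{s}{\sigma(s+\sigma)} \;=\; \frac{1}{\sigma(1+\sigma^2)} \;+\; \left(\frac{\sigma}{1+\sigma^2} - \frac{1}{s+\sigma}\right),\]
where each summand on the right is $O(\sigma^{-1})$ near $\sigma = 0$ and $O(\sigma^{-2})$ near $\sigma = \infty$, hence integrable against any $\nu \in \cM_\mathrm{exp}^{(2)}(\RR)$ for which $\sigma^{-1}\,d\nu$ is finite near $0$. Integrating against $\lambda_2$ gives
\[s\,\widehat{K_2}(s) \;=\; \pi\,\xi_\mathrm{frac}(\lambda_2) \;-\; \int\left(\frac{1}{s+\sigma} - \frac{\sigma}{1+\sigma^2}\right)d\lambda_2(\sigma),\]
so that, recombining with $\widehat{K_1}$, one sees that $A(s)$ is precisely the transfer function associated --- via the Laplace-domain dictionary of \cref{sec:laplace}, the same one underlying the proofs of \cref{prop:main_CM} and \cref{prop:main_PD} --- to the \emph{regularized} spectral triple $(\lambda_1 + \lambda_2,\; c_0 - \pi\xi_\mathrm{frac}(\lambda_2),\; c_1)$; equivalently, under the substitution relating the Laplace variable to $z \in \HH$ and up to the $\pi$-normalization fixed by $\Psi_\mathrm{reg}$ (\cref{eq:embedding_reg}), $A(s)$ is the first factor $Q_\mathrm{reg}[\lambda_1+\lambda_2](z) - i\pi^{-1}\big((c_0 - \pi\xi_\mathrm{frac}(\lambda_2)) + c_1 z\big)$ in the interconversion identity of \cref{thm:main_rPD}. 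In words: adjoining a fractional term $\frac{d}{dt}\big(\cL_b[\sigma^{-1}\,d\lambda_2] * \cdot\big)$ to a Volterra operator is the same as adding $\lambda_2$ to its spectral measure and lowering $c_0$ by $\pi\xi_\mathrm{frac}(\lambda_2)$ --- exactly what the $\xi_\mathrm{frac}$ correction in the hypothesis encodes.

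Now let $(\mu, \zeta_0', \zeta_1) = \cB_\mathrm{reg}[\lambda_1 + \lambda_2,\, c_0 - \pi\xi_\mathrm{frac}(\lambda_2),\, c_1]$. By \cref{thm:main_rPD} the transfer function $B(s)$ associated by the same dictionary to $(\mu, \zeta_0', \zeta_1)$ satisfies $A(s)\,B(s) \equiv -\pi^2$ (the constant being forced by the $\pi$-normalizations, exactly as in \cref{prop:main_CM}), so $\cL[x](s) = -\pi^{-2}B(s)\big(\cL[y](s) + c_1 x_0\big)$. To read $B(s)$ back into the time domain, note that $\mu \in \cM_\mathrm{exp}^{(2)}(\RR)$ has $\inf\supp\mu > -\infty$, so for any threshold $R$ above the bottom of $\supp\mu$ the restrictions $\mu_1 = \mu|_{(-\infty,R)}$ (a finite measure) and $\mu_2 = \mu|_{[R,\infty)}$ give a decomposition $\mu = \mu_1 + \mu_2$ with $\mu_1 \in \cM_\mathrm{exp}^{(1)}$ and $\mu_2 \in \cM_\mathrm{frac}$; any two such decompositions differ by a signed measure supported in $[0,\infty)$ to which the identity above applies, which shows the final formula does not depend on the choice. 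With $J_1 = \cL_b[\mu_1]$ (well-defined since $\mu_1 \in \cM_\mathrm{exp}^{(1)}$) and $J_2 = \cL_b[\sigma^{-1}\,d\mu_2]$, applying the identity in reverse to $\mu_2$ gives $B(s) = \zeta_1 s - \zeta_0 - \widehat{J_1}(s) + s\,\widehat{J_2}(s)$ with $\zeta_0 = \zeta_0' + \pi\xi_\mathrm{frac}(\mu_2)$. Inverting term by term, the $\cL[y]$-part of $\cL[x]$ contributes $-\pi^{-2}\big(\zeta_1\dot y - \zeta_0 y - J_1 * y + \frac{d}{dt}(J_2 * y)\big)$, and the $c_1 x_0$-part contributes $-\pi^{-2}c_1 x_0\big(-J_1(t) + \dot J_2(t)\big)$ (the $\delta$- and $\delta'$-terms in the inverse transform of $B(s)$ being supported at $t = 0$, hence irrelevant for $t > 0$); multiplying by $-\pi^2$ yields the stated solution. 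The formal manipulations are justified by the classical existence--uniqueness theory for Volterra equations, with $\frac{d}{dt}(J_2 * y)$ and $\dot J_2$ read distributionally where $\mu_2 \notin \cM_+^{(1)}(\RR)$.

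I expect the main obstacle to be the fractional-term identity together with its regularization bookkeeping: making precise that all the integrals above converge on $\cM_\mathrm{exp}^{(2)}(\RR)$ (and never on $\cM_+^{(1)}$ individually), that $\pi\xi_\mathrm{frac}$ is exactly the right shift in $c_0$, and that the \cref{sec:laplace} dictionary carries over verbatim with $Q_\mathrm{reg}$ and $\Psi_\mathrm{reg}$ replacing $Q_\RR$ and $\Psi$. A secondary point is verifying rigorously, when $\mu_2 \notin \cM_+^{(1)}(\RR)$ --- so that $\dot J_2$ and $\frac{d}{dt}(J_2 * y)$ need not be locally integrable near $t = 0$ --- that the distribution produced in the Laplace domain genuinely satisfies \cref{eq:integrodiff_rCM}.
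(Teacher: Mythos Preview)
Your approach is essentially the same as the paper's: take the Laplace transform, use the partial-fraction identity $\frac{s}{\sigma(s+\sigma)} = \frac{1}{\sigma} - \frac{1}{s+\sigma}$ to recognize the transfer function as $Q_\mathrm{reg}[\lambda_1+\lambda_2](-s)$ shifted by $\pi\xi_\mathrm{frac}(\lambda_2)$, apply the defining relation of $\cB_\mathrm{reg}$, and invert. Your version of the identity is actually more careful than the paper's, which writes the intermediate split $\int\frac{1}{\sigma}\,d\lambda_2 - \int\frac{1}{s+\sigma}\,d\lambda_2$ without noting that each piece may diverge when $\lambda_2\notin\cM_+^{(1)}(\RR)$; your regrouping into $\frac{1}{\sigma(1+\sigma^2)}$ and $\frac{\sigma}{1+\sigma^2}-\frac{1}{s+\sigma}$ avoids this.

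One small gap: you assume $\mu\in\cM_\mathrm{exp}^{(2)}(\RR)$ (i.e.\ $\inf\supp\mu>-\infty$) as given, but \cref{thm:main_rPD} only yields $\mu\in\cM_+^{(2)}(\RR)$. The paper closes this by invoking \cref{prop:support}, which shows that the support of the interconverted measure is contained in the support of the original (plus isolated atoms in the gaps), so $\supp(\lambda_1+\lambda_2)$ bounded below forces $\supp\mu$ bounded below. You should cite this explicitly.
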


This result clarifies that the two kernels $K_1$ and $K_2$ in \ref{eq:integrodiff_rCM} should be seen as two components of the same object, corresponding to $\lambda = \lambda_1+\lambda_2$ in the spectral domain. The decomposition itself is generally non-unique, so only the sum of the two objects is fundamental to the equation.

\begin{example}\label{ex:fractional}
    Consider the fractional differential equation
    \[y(t) = \dot{x}(t) + D^{1/2}x(t)= \dot{x}(t) + \frac{1}{\sqrt{\pi}}\frac{d}{dt}\int_0^t \frac{x(\tau)}{\sqrt{t-\tau}}\,d\tau,\]
    defining the Riemann--Liouville fractional derivative as in \cref{eq:frac_deriv}. This is of the form \ref{eq:integrodiff_rCM} with $\lambda_1=0$ and 
    \[d\lambda_2(s) = \pi^{-1}\chi_{[0,\infty)}(s)\sqrt{s}\,ds,\]
    and we can verify from \cref{eq:regularizedQ} that
    \[Q_\mathrm{reg}[\lambda_2](z) = \pi^{-1}\sqrt{z} - \pi^{-1}2^{-1/2}i,\]
    with $\sqrt{z}$ denoting the principal value of the square root. Similarly, we find $\xi_\mathrm{frac}(\lambda_2) = \pi^{-1}2^{-1/2}$, so \cref{thm:main_regularizedH_formula} yields 
    \[\zeta_1 = \zeta_0 = 0,\qquad d\mu(s) = \frac{\pi}{s^{1/2}+s^{3/2}}\,\chi_{[0,\infty)}(s)\,ds.\]
    The Laplace transform of $\mu$ is the Mittag--Leffler kernel~\cite{haubold2011mittag}
    \begin{equation}\label{eq:mittagleffler}
        \cL[\mu](s) = \pi^2 E_{1/2}(-t^{1/2}),\qquad E_\alpha(z) \doteq \sum_{k=0}^\infty \frac{z^k}{\Gamma(\alpha k + 1)},
    \end{equation}
    which gives the classical result~\cite[Sec.~7]{haubold2011mittag}
    \[x(t) = \int_0^t E_{1/2}(-(t-\tau)^{1/2})y(\tau)\,d\tau.\]
    It has been previously noted that the Mittag--Leffler kernel is completely monotone~\cite{miller1999note}, but the example presented here highlights the critical importance of that property. We solve this example numerically in \cref{sec:fractional}.
\end{example}

As a final note, one can also solve \emph{Abel-type} integral equations using the same procedure:
\begin{example}[Abel's integral equation]\label{ex:abeltype}
    \dave{Consider Abel's integral equation}~\cite{BRUNNER199783}:
    \begin{equation}\label{eq:abeltypeeq}
        y(t) = \frac{1}{\Gamma(\alpha)}\int_0^t\frac{x(\tau)}{(t-\tau)^{\alpha}}\,d\tau,
    \end{equation}
    where $0<\alpha<1$. This equation is of the class \ref{eq:integrodiff_CM}, with $c_0=c_1=0$ and integral kernel
    \[K(t) = \Gamma(\alpha)^{-1}t^{-\alpha} = \cL[\lambda](t),\qquad \lambda = \Gamma(\alpha)^{-2}s^{\alpha-1}\chi_{[0,\infty)}(s)\,ds \doteq \lambda_c(s)\,ds.\]
    Even though $\lambda\in\cM_+^{(1)}(\RR)$, it does not satisfy the hypotheses of either \cref{thm:main_formula} or \cref{thm:main_continuity}, and we must use the theory of the present section to solve it.

    In the language of \cref{prop:rCM}, we have $\lambda_2=0$ and $c_0=c_1=0$, so we look to compute
    \[(\mu,\zeta_0',\zeta_1) = \cB_\mathrm{reg}[\lambda,-\pi\sigma_\RR(\lambda),0].\]
    As a first step, we note that the Cauchy transform of $\lambda$ is
    \[Q_\RR[\lambda](z) = \frac{e^{-i\alpha\pi} z^{\alpha-1}}{i\sin(\alpha\pi)\Gamma(\alpha)^2}.\]
    Indeed, this function pulls back to a holomorphic function on $\DD$ with the appropriate radial limit and with $Q_\RR[\lambda](\infty) = Q_\RR[\lambda](\phi(-1)) = 0$, so \dave{the expression for $Q_\RR[\lambda]$} follows from \cref{cor:uniqueness}. Taking the imaginary trace along $\RR$, we find
    \[H_\mathrm{reg}[\lambda](s) +\sigma_\RR(\lambda) = H_\RR[\lambda](s) = \begin{cases}
        s>0: & \Gamma(\alpha)^{-2}\cot(\alpha\pi)|s|^{\alpha-1}\\
        s<0: & -\Gamma(\alpha)^{-2}\csc(\alpha\pi)|s|^{\alpha-1}
    \end{cases}.\]
    From \cref{thm:main_regularizedH_formula}, we see that $\mu = \mu_c(s)\,ds$ has no singular terms, and its continuous density is
    \[\mu_c(s) = \frac{\lambda_c(s)}{\lambda_c(s)^2 + H_\mathrm{reg}[\lambda](s)^2} = \Gamma(\alpha)^2\sin^2(\alpha\pi)\chi_{[0,\infty)}(s) s^{1-\alpha}\,ds.\]
    It is easy to confirm that $\zeta_0=\zeta_1=0$, and we evaluate
    \[J(t) = \cL[s^{-1}\,d\mu(s)](t) = \Gamma(1-\alpha)\Gamma(\alpha)^2\sin^2(\alpha\pi) t^{\alpha-1} = \frac{\pi^2}{\Gamma(1-\alpha)t^{1-\alpha}}.\]
    Comparing against \cref{prop:rCM}, we recover the classical solution
    \[x(t) = \frac{1}{\Gamma(1-\alpha)}\frac{d}{dt}\int_0^t\frac{y(\tau)}{(t-\tau)^{1-\alpha}}\,d\tau.\]
\end{example}

\Cref{ex:abeltype} is particularly striking in light of the gCM theory of \cref{sec:main_line}. The gCM theory offers a natural stratification of the three core subclasses of \ref{eq:integrodiff_CM}: first-kind integral equations (with $c_0=c_1=0$), second-kind integral equations (with $c_1=0$ but $c_0\neq 0$), and integro-differential equations (with $c_1\neq 0$). Under the hypotheses laid out in Theorems~\ref{thm:main_formula} and~\ref{thm:main_continuity}, we saw how $\cB_\RR$ interchanges these three strata: it pairs second-kind integral equations with other second-kind integral equations (reducing to the results of Loy \& Anderssen~\cite{loy_anderssen}) and pairs first-kind integral equations with integro-differential equations.

The regularized theory of the present section blurs the lines between these strata. Although the Abel-type equation \cref{eq:abeltypeeq} is \dave{a gCM equation} of the first kind, \dave{its spectrum} $\lambda\in\cM_+^{(1)}(\RR)$ carries substantial mass near infinity. As such, its interconversion \emph{cannot} carry a derivative term (corresponding to an `atom at infinity'), and cannot be a proper integro-differential equation. As we saw above, it instead picks up a fractional derivative; just as we anticipated in \cref{rem:Br_failure}, (generalized) fractional derivatives `complete' the definition of $\cB_\RR$ in a natural way, but discard the neat stratification of gCM equations in the process.

\section{Volterra Equations in the Spectral Domain}\label{sec:laplace}
\cref{prop:main_dPD,prop:main_CM,prop:main_PD,prop:rPD,prop:rCM} provide the connecting link between our harmonic analysis in later sections and the Volterra equations of interest. We prove all five in the present section.

The first of these five results relates measures on the circle to discrete-time Volterra equations, using power series expansions. This equivalence is otherwise known as the \emph{Z-transform} in signal processing~\cite{bhattacharyya2018handbook}; if $y= \{y_0,y_1,...\}\subset\CC$ is a discrete signal, the Z-transform $Y(z)$ of $x$ can be defined as
\[Y(z) = \cZ[y](z) \doteq \sum_{j\geq 0} y_j z^j,\]
as a formal power series\footnote{The usual convention for the Z-transform constructs a power series in $z^{-1}$ rather than $z$. Our convention ensures that time series are mapped to holomorphic functions on the disc, rather than its exterior.}. That $Y$ converges for any non-zero $z$ is not guaranteed, of course. Fortunately, the equation \ref{eq:integrodiff_dPD} is \emph{causal}, so the value $x(n)$ depends only on the finite set $\{y(0),...,y(n)\}$. As such, we can safely restrict to cases where $y$ is a finite time series, and thus $Y(z)$ is a polynomial in $z$. We recall the statement of \cref{prop:main_dPD}:
\propmaindPD*
\begin{proof}
    We assume without loss of generality that the prescribed change of parameters $c_0\mapsto c'_0$, $K(n)\mapsto K'(n)$ has already been performed, so that $\Re c_0 = -\frac{1}{2}K(0)$. We assume also that $y(j)$ has only finitely many nonzero values; since $x(n)$ depends only on $y(j)$ for $j\leq n$, the general formula follows directly.
    
    Let $Y(z)$ and $X(z)$ be the Z-transforms of $y(n)$ and $x(n)$, respectively. Then we find
    \begin{equation}\label{eq:Ztransform_proof}
        Y(z) = \left(c_0 + \cZ[K](z)\right)X(z)
    \end{equation}
    formally. In turn, since $K = \cF[\lambda]$ for $\lambda\in\cM_+(S^1)$, we note that
    \[|K(n)| = \left|\int_0^{2\pi} e^{-in\theta}\,d\lambda(\theta)\right| \leq \|\lambda\|,\]
    so that, in particular, $\cZ[K](z)$ converges absolutely for each $z$ in the open unit disc $\DD$. We thus find
    \[\cZ[K](z) = \sum_{j\geq 0}\int_0^{2\pi} e^{-ij\theta}z^j\,d\lambda(\theta) = \int_0^{2\pi}\frac{1}{1-ze^{-i\theta}}\,d\lambda(\theta) = \frac{1}{2}Q[\lambda](z) + \frac{1}{2}\|\lambda\|,\]
    employing \dave{a partial fraction decomposition} in the last step. Since $\Re c_0 = -\frac{1}{2}K(0) = -\frac{1}{2}\|\lambda\|$, this reduces \cref{eq:Ztransform_proof} to
    \[Y(z) = \frac{1}{2}\left(Q[\lambda](z) + i\widetilde{c}_0\right)X(z),\]
    where $\widetilde{c}_0 = 2\Im c_0 = -2ic_0 - iK(0)$. If $\cB[\lambda,\widetilde{c}_0] = (\mu,\widetilde{\zeta}_0)$ for some $\mu\in\cM_+(S^1)$ and $\widetilde{\zeta}_0\in\RR$, then
    \[\left(Q[\mu](z) + i\widetilde{\zeta}_0\right) 2Y(z) = X(z),\]
    implying as well that $X(z)$ converges in $\DD$. Working the same logic backwards proves the formula.
\end{proof}

The continuous-time results follow a similar line of reasoning, but using the (bilateral) Laplace transform in place of the Z-transform. We prove both of these results in the case $x_0=0$; the general case follows by considering forcing terms $y(t)$ with Dirac delta functions at $t=0$.
\propmainCM*
\begin{proof}
    Let $Y(s)$ and $X(s)$ be the Laplace transforms of $y(t)$ and $x(t)$, respectively; we suppose that $y(t)$ is growing at most exponentially in $t$. Applying a Laplace transform to~\ref{eq:integrodiff_CM} yields
    \[Y(s) = \left(c_1s - c_0 - \cL[K](s)\right)X(s),\]
    and, applying Fubini's theorem, we calculate
    \[\cL[K](s) = \int_0^\infty e^{-t s}\int e^{-\sigma t}\,d\lambda(\sigma)dt = \int \int_0^\infty e^{-(\sigma+s) t}\,dt  d\lambda(\sigma) = \int\frac{d\lambda(\sigma)}{s + \sigma}\]
    for $\Re s> -\inf\supp\lambda$; the latter integral converges by our hypothesis that $\lambda\in\cM_+^{(1)}(\RR)$. Noting that
    \[c_1s - c_0 - \cL[K](s) = -i\pi\left(Q_\RR[\lambda](-s) - i\pi^{-1}c_0 + i\pi^{-1}c_1s\right),\]
    the result follows\footnote{More precisely, we deduce that the correct formula holds in a quadrant of the Laplace domain, where $\Re s>-\inf\supp\lambda$ and $\Im s\leq 0$. Standard uniqueness results for the Laplace transform yield the full proposition.}.
\end{proof}

The proof of \cref{prop:main_PD} is complicated only by the fact that the Fourier transform might not exist classically when $\lambda$ is not a finite measure. By interpreting the transform weakly, we push the result through similarly.
\propmainPD*
\begin{proof}
    As before, we find
    \[Y(s) = \left(c_1s -ic_0 + \cL[K](s)\right)X(s),\]
    but now,
    \[\cL[K](s) = \int_0^\infty e^{-t s}\cF[\lambda](t)\,dt = \int_{-\infty}^{\infty}u(t)e^{-ts}\cF[\lambda](t)\,dt,\]
    where $u(t)$ is Heaviside's step function. If we knew that $\lambda$ was finite (i.e., $\lambda\in\cM_+(\RR)$), we could complete the proof in much the same way as that of \cref{prop:main_CM}, using an integral form of $\cF[\lambda]$; as it stands, however, we need to interpret $\lambda$ as a tempered distribution and employ the Plancherel theorem. Consider the family of Schwartz functions 
    \[\eta_{\eps,s}(t) = \frac{1}{\sqrt{2\pi}\eps}\int_{0}^\infty e^{-t's -(t-t')^2/2\eps^2}\,dt'\]
    converging to $u(t)e^{-ts}$ pointwise; for any $s$ with $\Re s > 0$, we find that
    \[\int_{-\infty}^{\infty}\eta_{\eps,s}(t)\cF[\lambda](t)\,dt = \int \cF[\eta_{\eps,s}](-t)\,d\lambda(t) = \int \frac{e^{-\eps t^2/2}}{s+it}\,d\lambda(t),\]
    and thus, by dominated convergence, that
    \[\cL[K](s) = \int \frac{d\lambda(t)}{s+it}.\]
    The remainder of the proof follows as before.
\end{proof}

We turn now to our two results relating the solution of \ref{eq:integrodiff_rPD} and \ref{eq:integrodiff_rCM} to the \emph{regularized} Hilbert transform, as discussed in \cref{sec:regularizedH}. The first of these employs the distributional Fourier transform, so it requires a similar convergence argument as used in the proof of \cref{prop:main_PD}:
\propmainrPD*
\begin{proof}
    Since $K$ restricts to a measure in the neighborhood of $t=0$, we can define $\alpha = K(\{0\})$ as the measure of $K$ at $0$. Then we can rewrite \ref{eq:integrodiff_rPD} as
    \[y(t) = c_1\dot{x}(t) - \frac{\alpha}{2} x(t)  + \int_{0}^t K(\tau)x(t-\tau)\,d\tau,\]
    with the integral taken over the closed interval $[0,t]$. In the Laplace domain, we thus find
    \[Y(s) = (c_1s - \alpha/2 + \cL[K](s))\,X(s),\]
where
\[\cL[K](s) = \int_0^\infty e^{-st}K(t)\,dt = \frac{1}{2}\int_{-\infty}^\infty e^{-s|t|}K(t)\,dt + \frac{\alpha}{2},\]
    using the fact that $K$ is even. Now define the family of Schwartz functions 
    \[\eta_{\eps,s}(t) = \frac{1}{\sqrt{2\pi}\eps}\int_{-\infty}^\infty e^{-|t'|s -(t-t')^2/2\eps^2}\,dt',\]
    converging to $e^{-|t|s}$ pointwise. As before, for any $s$ with $\Re s>0$, we find that
    \[\int_{-\infty}^{\infty}\eta_{\eps,s}(t)K(t)\,dt = \int \cF[\eta_{\eps,s}](-t)\,d\lambda(t) = \int \frac{2se^{-\eps t^2/2}}{s^2+t^2}\,d\lambda(t),\]
    and again by dominated convergence that
    \[\cL[K](s) = \int \frac{s\,d\lambda(t)}{s^2+t^2} + \frac{\alpha}{2}.\]
    Since $\lambda$ is even, however, we find\begin{align*}\int\frac{s\,d\lambda(t)}{s^2+t^2}& = \frac{i}{2}\int \left(\frac{1}{is-t} + \frac{1}{is+t}\right)\,d\lambda(t)\\
    &= \frac{i}{2}\int\left(\frac{1}{is-t}-\frac{t}{1+t^2} + \frac{1}{is+t}+\frac{t}{1+t^2}\right)\,d\lambda(t)\\
    &= iQ_\mathrm{reg}[\lambda](is),
    \end{align*}
    and the proof follows as before.
\end{proof}

Finally, we prove \cref{prop:rCM}, which involves two, distinct integral kernels. This proof makes non-trivial use of the spectral theory developed in later sections---this does not cause a conflict, however, as the following result is not used to develop any of the theory that follows.
\propmainrCM*
\begin{proof}
Taking the Laplace transform of \ref{eq:integrodiff_rCM}, we find
\[Y(s) = \left(c_1s - c_0 - \cL[K_1](s) + s\cL[K_2](s)\right)X(s).\]
The expression $\cL[K_1]$ has been calculated in the proof of \cref{prop:main_CM}, and we similarly find
\[s\cL[K_2](s) = \int\frac{s\sigma^{-1}}{s+\sigma}\,d\lambda(\sigma) = \int \left(\frac{1}{\sigma} - \frac{1}{s+\sigma}\right)\,d\lambda(\sigma) = \pi\xi_\mathrm{reg}(\lambda) - i \pi Q_\mathrm{reg}[\lambda](-s).\]
The remainder of the proof goes through as before. The only statement to verify is that $\mu\in\cM_\mathrm{exp}^{(2)}(\RR)$, which will follow from our spectral theory (which does not depend upon the present result); indeed, \cref{thm:main_regularizedH_formula} implies that $\mu\in\cM_+^{(2)}(\RR)$, and \cref{prop:support} implies that $\supp\mu$ is bounded below if $\supp\lambda$ is.
\end{proof}
\section{Topologies on Spaces of Volterra Equations}\label{sec:topologies}
By placing appropriate topologies on \dave{each of our classes of Volterra equations}, we can understand how Volterra equations can be continuously mapped to one another and interconverted. In the present section, we introduce the topologies we employ for each class of equations, discuss how they relate to one another, and show how these topologies can be understood in both the time and spectral domains.

The most basic example is that of discrete-time Volterra equations with positive definite kernels~\ref{eq:integrodiff_dPD}, \dave{which} can be identified with pairs $(\lambda,c_0)\in\cM_+(S^1)\times\RR$. In \cref{thm:main_circle}, we show that interconversion of such equations corresponds to a map $\cB:(\lambda,c_0)\mapsto (\mu,\zeta_0)$, continuous with respect to the weak topology on $\cM_+(S^1)$ and the standard topology on $\zeta_0$. \dave{We can understand this continuity in the time domain using classical Fourier analysis}~\cite[Sec.~26]{billingsley2017probability}:
\begin{proposition}\label{prop:time_cont_circ}
    A sequence $\lambda_j\in\cM_+(S^1)$ converges weakly to $\lambda\in\cM_+(S^1)$ if and only if the discrete kernels $K_j = \cF[\lambda_j]$ converge pointwise to $K=\cF[\lambda]$.
\end{proposition}
The continuity statement of \cref{thm:main_circle} thus takes the following, perhaps obvious form: if the discrete PD kernels $K_j$ converge pointwise to $K$ and the real parameters $c_{0,j}$ converge to $c_0$, then the interconverted kernels $J_j = \cF[\mu_j]$ and parameters $\zeta_{0,j}$ converge pointwise to $(J,\zeta_0)$. \dave{Formalized properly, this argument would prove the continuity of $\cB$, but we give a direct proof in \cref{sec:disc} nonetheless.}

Inspired by this success, we attempt to carry out the same for integral and integro-differential equations. Recall from \cref{sec:main_line} that the class \ref{eq:integrodiff_CM} can be identified with triples $(\lambda,c_0,c_1)\in\cM_\mathrm{exp}^{(1)}(\RR)\times\RR\times\RR_+$. Our remaining classes of equations can be identified likewise, but with looser restrictions on $\lambda$: for \ref{eq:integrodiff_PD}, we require only that $\lambda\in\cM_+^{(1)}(\RR)$; for \ref{eq:integrodiff_rCM}, we require that $\lambda\in\cM_\mathrm{exp}^{(2)}(\RR)$; and for \ref{eq:integrodiff_rPD}, we require that $\lambda\in\cM_+^{(2)}(\RR)$ be \emph{even} and that $c_0=0$. As discussed in \cref{sec:regularizedH}, the latter class can be extended straightforwardly to \dave{all} of $\cM_+^{(2)}(\RR)$.

In all cases, we are interested in attaching topologies to the sets $\cM_+^{(n)}(\RR)$, which can be done as in \cref{def:winf}; \dave{we repeat it below for convenience:}
\defwinf*

We discuss these topologies in turn. For one, L\'evy's continuity theorem~\cite[Thm.~26.3]{billingsley2017probability} allows us to relate $W_{-n}$ naturally to pointwise convergence of (mollifications of) integral kernels. The case of \ref{eq:integrodiff_PD} and \ref{eq:integrodiff_rPD} goes as follows:
\begin{proposition}\label{prop:time_cont_PD}
    Suppose $\lambda_j,\lambda\in\cM_+^{(n)}(\RR)$, and write $K_j=\cF[\lambda_j]$ and $K=\cF[\lambda]$ for their (weak) Fourier transforms. Then $\lambda_j\to\lambda$ in the $W_{-n}$ topology if and only if $(1-d_t^2)^{-n/2}K_j\to (1-d_t^2)^{-n/2}K$ pointwise. In this case, the convergence $(1-d_t^2)^{-n/2}K_j\to (1-d_t^2)^{-n/2}K$ is locally uniform on $\RR$.
\end{proposition}

\begin{proof}
    \dave{The first claim follows from L\'evy's Continuity Theorem, noting that $\widetilde{\lambda}_j\doteq (1+s^2)^{-n/2}\,d\lambda_j(s)$ are finite measures converging weakly to $\widetilde{\lambda}\doteq(1+s^2)^{-n/2}\,d\lambda(s)$}.
    
    \dave{Next, let $\widetilde{K}_j = \cF[\widetilde{\lambda}_j] = (1-d_t^2)^{-n/2}K_j$ and $\widetilde{K} = \cF[\widetilde{\lambda}]$, and write $M = \sup\{|\widetilde{\lambda}_j|\}$. Fix $\eps>0$, choose an interval $I\subset\RR$ such that $\widetilde{\lambda}(\RR\setminus I)<\eps/13M$, and choose an $N\geq 1$ such that $\widetilde{\lambda}_j(\RR\setminus I)<\eps/12M$ for all $j\geq N$. Let $c=\sup\{|\omega|\;|\;\omega\in I\}$ and $\delta = \eps/6cM$. Fix $t\in\RR$. Increasing $N$ if necessary, we can assume that $|\widetilde{K}_j(t)-\widetilde{K}(t)|<\eps/3$ for all $j\geq N$. Then, for any $t'\in\RR$ with $|t-t'|<\delta$, we find
    \begin{align*}|\widetilde{K}_j(t') - \widetilde{K}_j(t)| &\leq \int_{I} |e^{-i\omega t'} - e^{-i\omega t}|\,d\widetilde{\lambda}_j(\omega) + \int_{\RR\setminus I} |e^{-i\omega t'} - e^{-i\omega t}|\,d\widetilde{\lambda}_j(\omega)\\
    &\leq cM|t-t'| + 2M(\eps/12M) \leq \eps/3,
    \end{align*}
    and similarly for $\widetilde{K}$. The proposition follows by noting that, for any $j\geq N$ and any $t'\in\RR$ with $|t-t'|<\delta$, we have 
    \begin{align*}|\widetilde{K}_j(t') - \widetilde{K}(t')|&\leq |\widetilde{K}_j(t') - \widetilde{K}_j(t)| + |\widetilde{K}_j(t) - \widetilde{K}(t)| + |\widetilde{K}(t) - \widetilde{K}(t')|\leq \eps.
    \end{align*}}
\end{proof}

\dave{An equivalent} statement for \ref{eq:integrodiff_CM} and \ref{eq:integrodiff_rCM} follows similarly:
\begin{proposition}\label{prop:time_cont_CM}
    Suppose $\lambda_j,\lambda\in\cM_\mathrm{exp}^{(n)}(\RR)$ have support uniformly bounded below, i.e., that there is a $M>0$ such that $\inf\supp\lambda_j,\inf\supp\lambda > -M$. Write $K_j=\cL_b[\lambda_j]$ and $K=\cL_b[\lambda]$ for their bilateral Laplace transforms. Then $\lambda_j\to\lambda$ in the $W_{-n}$ topology if and only if $(1+d_t^2)^{-n/2}K_j\to (1+d_t^2)^{-n/2}K$ pointwise on $[0,\infty)$. In this case, we also have locally uniform convergence $(1+d_t^2)^{k-n/2}K_j\to (1+d_t^2)^{k-n/2}K$ on $(0,\infty)$ for any $k\geq 0$. In particular, if $n\geq 0$, we have locally uniform convergence $d_t^kK_j\to d_t^k K$ on $(0,\infty)$ for any $k\geq 0$.
\end{proposition}
\begin{proof}
    The equivalence between $W_{-n}$ convergence and pointwise convergence follows as before, applying a continuity theorem for the Laplace transform~\cite[Ex.~5.5]{billingsley2013convergence} in place of L\'evy's continuity theorem. From the weak convergence of $\widetilde{\lambda}_j = (1+s^2)^{-n/2}\,d\lambda_j(s)$ to $\widetilde{\lambda} = (1+s^2)^{-n/2}\,d\lambda(s)$, it also follows that $(1+d_t^2)^{k-n/2}K_j\to (1+d_t^2)^{k-n/2}K$ pointwise on $(0,\infty)$ for any $k\geq 0$. Indeed, this corresponds to the convergence of $\int f\,d\widetilde{\lambda}_j$ to $\int f\,d\widetilde{\lambda}$, where $f(s) = (1+s^2)^{k}e^{-st}$ is a continuous, bounded function on $[-M,\infty)$ for each $t>0$. Uniform convergence can be proven as before.
\end{proof}

This result allows us to re-interpret half of \cref{thm:main_continuity} in terms of the time domain. Namely, suppose the triples $(K_j,c_{0,j},c_{1,j})$ are such that $K_j(t) = O(e^{Mt})$ for some $M>0$ and all $j$. Suppose that $c_{0,j}\to c_0$ in $\RR$ and that $c_{1,j}\to c_1$ in $(0,\infty)$, and that there is a kernel $K(t)=O(e^{Mt})$ such that $(1+d_t^2)^{-1/2}K_j\to (1+d_t^2)^{-1/2}K$ pointwise. Then \cref{prop:time_cont_CM} tells us that, if the interconverted kernels $J_j$ and $J$ satisfy $J_j,J = O(e^{Mt})$ for a possibly-increased value $M>0$, the triples $(J_j,\zeta_{0,j},\zeta_{1,j})$ converge to $(J,\zeta_0,\zeta_1)$ in the same way. Moreover, we can deduce that $d_t^kJ_j$ converges locally uniformly on $(0,\infty)$ to $d_t^k J$ for any $k\geq 0$.

Moving on now to the $W_\infty$ topology for compactly supported measures, we note that it admits a slightly more practical characterization in terms of the size of this compact support:
\begin{proposition}\label{prop:topology}
    Let $\mu_n,\mu\in\cM_c(\RR)$. Then $\mu_n\to\mu$ in $W_\infty$ if and only if $\mu_n\rightharpoonup\mu$ weakly and the sets $\supp\mu_n$ are uniformly bounded.
\end{proposition}
\begin{remark}
    Connecting back to \cref{rem:winf}, this result shows that the $W_\infty$ topology is strictly stronger than the limit of $W_{+n}$ as $n\to +\infty$.
\end{remark}
\begin{proof}
    In one direction, suppose that $\mu_n\rightharpoonup\mu$ weakly and $\supp\mu_n,\supp\mu\subset I$ for a fixed interval $I\subset\RR$. For any continuous $f\in\cC(\RR)$, define a \emph{bounded} continuous function $\widetilde{f}\in\cC(\RR)$ such that $\widetilde{f}|_I\equiv f|_I$; for instance, we can extend $f$ by its values on the endpoints of $I$. Then we know that
    \[\int f\,d\mu_n = \int \widetilde{f}\,d\mu_n\to \int \widetilde{f}\,d\mu = \int f\,d\mu,\]
    so that $\mu_n\to\mu$ in $W_\infty$.

    Conversely, suppose that $\mu_n\to\mu$ in $W_\infty$, but that the sets $\supp\mu_n$ are \emph{not} uniformly bounded. For each integer $N\geq 1$, choose $n_N\geq 1$ such that $\supp \mu_{n_N}\not\subset [-N,N]$, and let
    \[\eps'_N \doteq \int_{\RR\setminus [-N,N]}d\mu_{n_N} > 0.\]
    \dave{Inductively, we define $\eps_N = \min(\eps'_N, \eps_{m<N})$, so that $\eps_N$ is non-increasing with $N$.} Then, define the function $f$ as follows; set $f(\pm N) = N/\eps_N$ for any positive integer $N$, set $f(0)=0$, and let $f(s)$ linearly interpolate between its values at adjacent integers. Then we find
    \[\int f\,d\mu_{n_N} \geq \int_{\RR\setminus[-N,N]} f\,d\mu_{n_N} \geq \frac{N}{\eps_N}\int_{\RR\setminus[-N,N]} d\mu_{n_N} = N\eps'_N/\eps_N\geq N.\]
    This contradicts the $W_\infty$-convergence of $\mu_n$, and the proposition follows.
\end{proof}

A practical case of interest occurs when we know the size of the compact support \emph{a priori}---for instance, if we are attempting to approximate $\lambda$ with discrete measures over a fixed, compact domain $I\subset\RR$. In this case, we recover strong control over our integral kernels in the time domain. For this, we recall the definition of the Wasserstein-$p$ metric between probability measures:
\begin{definition}[Wasserstein metrics]\label{def:wasserstein_metric}
    Write $\cM_1(I)$ for the space of Borel probability measures on a metric space $I$. If $\mu, \nu \in \cM_1(I)$, a \emph{coupling} between $\mu$ and $\nu$ is a probability measure $\pi \in \cM_1(I\times I)$ such that the marginal distribution of $\pi$ along the first copy of $I$ is $\mu$ and that along the second copy of $I$ is $\nu$. With $p \geq 1$, the \emph{Wasserstein-$p$} metric~\cite{santambrogio2015optimal} between $\mu$ and $\nu$ is
    \[\mathcal{W}_p(\mu, \nu) = \inf_\pi\left(\int d(x,y)^p\,d\pi(x,y)\right)^{1/p},\]
    where the infimum is taken over all couplings $\pi$ of $\mu$ and $\nu$.
\end{definition}

Then the following lemma is clear:
\begin{lemma}\label{lem:time_cont_infty}
    Suppose $\lambda_n,\lambda\in\cM_+(I)$ are non-negative measures supported in the compact domain $I\subset\RR$. Then the following statements are equivalent:
    \begin{enumerate}
        \item $\lambda_n\to\lambda$ weakly, or equivalently, in the $W_{\infty}$ topology.
        \item $\lambda_n\to\lambda$ in the $W_{-n}$ topology for any $n\in\RR$.
        \item $\cF[\lambda_n]\to\cF[\lambda]$ pointwise.
        \item For any $k\geq 0$, $d_t^k\cF[\lambda_n]\to d_t^k\cF[\lambda]$ locally uniformly.
        \item $\cL_b[\lambda_n]\to\cL_b[\lambda]$ pointwise.
        \item For any $k\geq 0$, $d_t^k\cL_b[\lambda_n](t)\to d_t^k\cL_b[\lambda](t)$ locally uniformly.
        \item $\|\lambda_n\|\to\|\lambda\|$, and $\lambda_n/\|\lambda_n\|\to\lambda/\|\lambda\|$ in the Wasserstein-$p$ metric for any $p\geq 1$.
        \item $\|\lambda_n\|\to\|\lambda\|$, and, for any monotonic, bounded, continuous $f:I\to\RR$ and any $p\geq 1$, we have $f_*(\lambda_n/\|\lambda_n\|)\to f_*(\lambda/\|\lambda\|)$ in the Wasserstein-$p$ metric.
    \end{enumerate}
\end{lemma}

Finally, we turn to a quantitative result, relating a reweighted Wasserstein-$p$ metric on $\cM_+(\RR_+)$ to a weighted $L^p$ convergence of integral kernels. By \cref{lem:time_cont_infty}, the restriction of this result to $\cM_+(I)$ for any compact $I\subset\RR_+$ provides a metric on the $W_\infty$ topology. Suppose we have two CM integral kernels
\begin{align*}
    K_{\mu}(t) = \cL[\mu](t) = \int e^{-\alpha t}\,d\mu(\alpha), \qquad K_{\nu}(t) = \cL[\nu](t) = \int e^{-\alpha t}\,d\nu(\alpha),
\end{align*}
where $\mu, \nu \in \cM_1(\RR_+)$ are probability measures on $\RR_+=[0,\infty)$. We study the following \emph{$\varepsilon$-regularized} $L^p$ distance between these kernels:
\begin{align*}
    \|K_{\mu} - K_{\nu}\|_{L_{\varepsilon}^p} \doteq \Big(\int_0^\infty e^{-\varepsilon pt}\big|K_{\mu}(t) - K_{\nu}(t)\big|^p\,dt\Big)^\frac{1}{p},
\end{align*}
for any $\varepsilon > 0$. Define the function $f^\eps(\alpha) = \tfrac{1}{\alpha + \varepsilon}$ on $\RR_+$; consistent with the final statement of \cref{lem:time_cont_infty}, $f$ is monotonic, bounded, and continuous. For $\varepsilon = 0$, we denote $L^p \doteq  L_0^p$ and $f \doteq f^0$. The following theorem shows that we can control the $L_{\varepsilon}^p$ distance by the Wasserstein-1 metric between $f^\epsilon_*\mu$ and $f^\varepsilon_*\nu$.
\begin{proposition}[Wasserstein-1 bound on CM kernels]\label{prop:wasserstein1}
    Let $f^\varepsilon(\alpha) = \tfrac{1}{\alpha + \varepsilon}$, and fix $\eps>0$ and $p\geq 1$. Then we have
    \begin{align*}
        \|K_{\mu} - K_{\nu}\|_{L_{\varepsilon}^p} \leq c\mathcal{W}_1(f^\varepsilon_*\mu, f^\varepsilon_*\nu)^\frac{1}{p}
    \end{align*}
    for any $\mu, \nu \in \cM_1(\RR_+)$, and with $c=2$ in the general case. This result holds with $c=2(\tfrac{1}{2p})^\frac{1}{p}$ for $p$ odd, and in particular, is equal to one for $p = 1$. For positive measures $\mu, \nu \in \cM_+(\RR_+)$ with equal mass $m>0$, the result continues to hold with $c=2m$ or $c = 2(\tfrac{1}{2p})^\frac{1}{p}m$, respectively. If $\supp\mu$ and $\supp\nu$ are both bounded away from zero, the result holds with $\eps=0$.
\end{proposition}
\begin{proof}
    Without loss of generality, translate $\mu, \nu \in \cM_1(\RR_+)$ by $+\eps$ (so both are supported in $[\eps,\infty)$) and set $\varepsilon = 0$. For any coupling $\pi \in \cM_1(\RR_+\times\RR_+)$ of $\mu$ and $\nu$, we have
    \begin{align*}
        \|K_\mu - K_\nu\|_{L^p} &= \Big\|\int_0^\infty\int_0^\infty e^{-\alpha t}\,d\pi(\alpha, \beta) - \int_0^\infty\int_0^\infty e^{-\beta t}\,d\pi(\alpha, \beta)\Big\|_{L^p}\\
        &\leq \int_0^\infty\int_0^\infty \|e^{-\alpha t} - e^{-\beta t}\|_{L^p}\,d\pi(\alpha, \beta)\\
        &\leq \Big(\int_0^\infty\int_0^\infty \|e^{-\alpha t} - e^{-\beta t}\|_{L^p}^p\,d\pi(\alpha, \beta)\Big)^\frac{1}{p}
    \end{align*}
    where the second line follows from the triangle inequality and the third from Jensen's inequality. For $\alpha < \beta$, we can write
    \begin{align*}
        \|e^{-\alpha t} - e^{-\beta t}\|_{L^p}^p = \int_0^\infty\big|e^{-\alpha t} - e^{-\beta t}\big|^p\,dt = \sum_{k=0}^p \binom{p}{k} \frac{(-1)^{p-k}}{k\alpha + (p-k)\beta}.
    \end{align*}
    If $p$ is odd, we can pair the terms in this summation to bound
    \begin{align*}
        \sum_{k=0}^p \binom{p}{k} \frac{(-1)^{p-k}}{k\alpha + (p-k)\beta} &\leq \sum_{k=0}^\frac{p-1}{2} \binom{p}{k} \Big|\frac{1}{k\alpha + (p-k)\beta} - \frac{1}{(p-k)\alpha + k\beta}\Big|\\
        &\leq \frac{1}{p}\sum_{k=0}^\frac{p-1}{2} \binom{p}{k}\Big|\frac{1}{\alpha} - \frac{1}{\beta}\Big| = \frac{2^{p-1}}{p}\Big|\frac{1}{\alpha} - \frac{1}{\beta}\Big|,
    \end{align*}
    and similarly for $\beta<\alpha$. Substituting this into the bound for $\|K_\mu - K_\nu\|_{L^p}$ above yields
    \begin{align*}
        \|K_\mu - K_\nu\|_{L^p} &\leq 2\big(\tfrac{1}{2p}\big)^\frac{1}{p} \Big(\int_0^\infty\int_0^\infty\Big|\frac{1}{\alpha} - \frac{1}{\beta}\Big|\,d\pi(\alpha, \beta)\Big)^\frac{1}{p}.
    \end{align*}
    Since this bound holds for all couplings $\pi$ of $\mu, \nu$, then taking the infimum over couplings proves that
    \begin{align*}
        \|K_\mu - K_\nu\|_{L^p} \leq 2\big(\tfrac{1}{2p}\big)^\frac{1}{p} \mathcal{W}_1(f_*\mu, f_*\nu)^\frac{1}{p},
    \end{align*}
    where $f(\alpha) = \tfrac{1}{\alpha}$. 
    
    Now, consider a general $p\geq 1$, and let $p_0 \leq p \leq p_1$ be odd integers. Using the log-convexity of $L^p$ norms, we find
    \begin{align*}
        \|K_\mu - K_\nu\|_{L^p} \leq \|K_\mu - K_\nu\|_{L^{p_0}}^{1-\theta}\|K_\mu - K_\nu\|_{L^{p_1}}^{\theta}
    \end{align*}
    where $\theta$ satisfies $\tfrac{1}{p} = \tfrac{1-\theta}{p_0} + \tfrac{\theta}{p_1}$. Since $p_0$ and $p_1$ are odd, \dave{the result follows}.
\end{proof}
This bound is tight for $p = 1$; for $\mu = \delta_\alpha$ and $\nu = \delta_\beta$, we have
\begin{align*}
    \|K_\mu - K_\nu\|_{L^1} = \|e^{-\alpha t} - e^{-\beta t}\|_{L^1} = \Big|\frac{1}{\alpha} - \frac{1}{\beta}\Big| = \mathcal{W}_1(f_*\mu, f_*\nu).
\end{align*}
On the other hand, this bound can likely be improved for $p > 1$. In this direction, it is possible to show that
\begin{equation}\label{eq:wasserstein_L2}
    \|K_\mu - K_\nu\|_{L_\varepsilon^2} \leq C\mathcal{W}_2(g^\varepsilon_{*}\mu, g^\varepsilon_{*}\nu),
\end{equation}
where $g^\varepsilon(\alpha) = \tfrac{1}{2}(\alpha + \varepsilon)^{-\frac{1}{2}}$ and $C>0$ is independent of $\mu$ and $\nu$.

Given the Wasserstein bounds derived above, a natural problem to study is the approximation of CM kernels by discrete sums of exponentials, also known as Prony series. We prove the following result:
\begin{corollary}[Approximation of CM kernel by Prony series]\label{cor:quantization}
    Let $\cD_n\subset\cM_+(\RR_+)$ be the set of discrete measures on $\RR_+$ with $n$ atoms. Given $\mu\in\cM_+(\RR_+)$ and an associated CM kernel $K_\mu=\cL[\mu]$, the following bound holds:
    \begin{align*}
        \inf_{\mu_n \in \mathcal{D}_n}\|K_{\mu} - K_{\mu_n}\|_{L_\varepsilon^1} \leq \frac{\|\mu\|}{2(\inf\supp\mu + \varepsilon)}\,\frac{1}{n}.
    \end{align*}
    If $\supp\mu$ is bounded away from zero, the result holds with $\eps=0$.
\end{corollary}
\begin{remark}
    Approximating $\mu \in \cM(\RR_+)$ in the Wasserstein metric by discrete measures is the classical problem of optimal quantization~\cite{bourne2018semi}, and can be solved in practice through Lloyd's algorithm (or k-means clustering). 
    
    Secondly, although the result is stated in terms of the $L^1$ norm, a similar $O(1/n)$ bound can be proven for other values of $p\geq 1$ using results of the form \cref{eq:wasserstein_L2}. 
    
\end{remark}
\begin{proof}
    Without loss of generality, suppose $\mu\in\cM_1(\RR_+)$ is a probability measure. \Cref{prop:wasserstein1} implies
    \begin{align*}
        \inf_{\mu_n \in \mathcal{D}_n}\|K_{\mu} - K_{\mu_n}\|_{L_\varepsilon^1} \leq \inf_{\rho \in \mathcal{D}_n}\mathcal{W}_1(f^\varepsilon_*\mu, \rho),
    \end{align*}
    where \dave{$f^\eps(\alpha) = (\alpha+\eps)^{-1}$. Define $b=(\inf\supp\mu + \varepsilon)^{-1}=\sup\supp f_*^\eps\mu$, fix a discrete measure $\rho \in \mathcal{D}_n$ of the form
    \begin{align*}
        \rho = \sum_{k=1}^n \rho_k\delta(\alpha - \alpha_k)\,d\alpha, \quad \rho_k = \int_{b\frac{k-1}{n}}^{b\frac{k}{n}} d(f^\varepsilon_*\mu), \quad \alpha_k = \frac{b}{2n}(2k-1),
    \end{align*}
    and define} a coupling between $f_*^\eps\mu$ and $\rho$ by
    \begin{align*}
        \pi(\alpha, \beta) = \chi_{[b\frac{k-1}{n}, b\frac{k}{n}]}(\alpha)\delta(\beta - \alpha_k)\,d(f^\varepsilon_*\mu)(\alpha)d\beta.
    \end{align*}
    The corollary then follows from the following bound:
    \begin{align*}
        \cW_1(f^\varepsilon_*\mu, \rho) \leq \int_0^\infty\int_0^\infty |\alpha - \beta|\,d\pi(\alpha, \beta) \leq \frac{b}{2n} \int_0^\infty df^\varepsilon_*\mu(\alpha) = \frac{b}{2n}.
    \end{align*}
\end{proof}

These results give quantitative bounds on the modulus of continuity of CM integral and integro-differential operators:
\begin{corollary}[Approximation of CM equations]\label{cor:conv_diff}
    Assume that $x: \RR_+ \to \RR$ is a locally-bounded trajectory and that $\mu, \nu \in \cM_1(\RR_+)$. For any $\eps>0$, we have
    \begin{align*}
        |(K_\mu * x)(t) - (K_\nu * x)(t)| \leq \big(\sup_{\tau<t}|x(\tau)|\big)e^{\varepsilon t}\, \mathcal{W}_1(f^\varepsilon_*\mu, f^\varepsilon_*\nu)
    \end{align*}
    for all $t\geq 0$, where $f^\varepsilon(\alpha) = (\alpha + \varepsilon)^{-1}$. Furthermore, if $\cD_n\subset\cM_+(\RR_+)$ is the set of discrete measures with $n$ atoms, we have
    \begin{align*}
        \inf_{\mu_n \in \mathcal{D}_n}|(K_\mu * x)(t) - (K_{\mu_n} * x)(t)| \leq \frac{\sup_{\tau<t}|x(\tau)|}{2(\inf\supp \mu + \varepsilon)}\,\frac{e^{\varepsilon t} }{n}
    \end{align*}
    for all $t \geq 0$. We can set $\varepsilon = 0$ in the above bounds if $\mu$ and $\nu$ are bounded away from zero. Furthermore, the bounds above hold even when $\mu, \nu$ have equal mass $m \neq 1$, in which case both bounds must be rescaled by this constant.

    Hence, considering the CM Volterra equations
    \begin{align*}
        y_\mu(t) &= c_1\dot{x}(t) - c_0x(t) - \int_0^t K_\mu(t-\tau)x(\tau)\,d\tau,\\
        y_\nu(t) &= c_1\dot{x}(t) - c_0x(t) - \int_0^t K_\nu(t-\tau)x(\tau)\,d\tau,
    \end{align*}
    with the same input $x$, we can bound
    \begin{align*}
        |y_\mu(t) - y_\nu(t)| \leq |(K_\mu * x)(t) - (K_\nu * x)(t)|
    \end{align*}
    for all $t \geq 0$, and the bounds above apply.
\end{corollary}
\begin{remark}
    This result can \dave{also be translated} to the language of linear time-invariant systems. Choose discrete approximations $\mu_n =\tsum_{i=1}^n \beta_i\delta(\alpha - \alpha_i)\in\cD_n$ to $\mu\in\cM_+(\RR_+)$. Fixing an input trajectory $x$, we can rewrite\footnote{See \cref{sec:LTI} for more details on \dave{this construction}.} the corresponding CM Volterra equation for the output trajectory $y_n$ as
    \begin{align*}
        y_n(t) &= c_1\dot{x}(t) - c_0x(t) - \sum_{i=1}^n\xi_i(t)\\
        \dot{\xi}_i(t) &= -\alpha_i\xi_i(t) + \beta_ix(t), \quad \xi_i(0) = 0.
    \end{align*}
    \dave{The dynamics of $y_n$ converge to those of $y_\mu$} at a rate $O(1/n)$ if $\mu_n$ is chosen as a `quantizer'~\cite{bourne2018semi} of $\mu$, as constructed in \cref{cor:quantization}. Approximation of Volterra equations by Markovian differential equations arises in the simulation of material deformations~\cite{bhattacharya2023learning}, and the appearance of strong memory effects in high-dimensional dynamical systems is central in the study of multiscale physical processes~\cite{givon2004extracting}.
\end{remark}

\Cref{cor:conv_diff} shows that the output trajectories $y$ of Volterra equations can be approximated by Volterra equations with finite spectra. By applying the same logic after interconverting, one can show that the solutions $x$ of Volterra equations can be approximated similarly. Consider the equation
\[y(t) = c_1\dot{x}(t) - c_0x(t) - \int_0^t K(t-\tau)x(\tau)\,d\tau,\]
where $K = \cL[\lambda]$ for some $\lambda\in\cM_+(\RR_+)$. Write $(\mu,\zeta_0,\zeta_1)=\cB_\RR[\lambda,c_0,c_1]$. Fix discrete approximations $\mu_n\in\cD_n$ to $\mu$, as in \cref{cor:quantization}, and write $(\lambda_n,c_0^n,c_1^n) \doteq \cB_\RR[\mu_n,\zeta_0,\zeta_1]$ for the interconverted triples and $K_n=\cL[\lambda_n]$ for the corresponding integral kernels; note that there is no guarantee that $(c_0^n, c_1^n) = (c_0, c_1)$. From \cref{cor:discrete_formula}, the measures $\lambda_n$ lie in either $\cD_{n-1}$, $\cD_n$, or $\cD_{n+1}$, depending on the values of $c_0$ and $c_1$. In any case, \cref{cor:conv_diff} implies that, for any sufficiently well-behaved forcing $y$, the solutions $x_{n}$ to the approximate gCM equations 
\[y(t) = c_1\dot{x}_n(t) - c_0x_n(t) - \int_0^t K_n(t-\tau)x_n(\tau)\,d\tau\]
converge locally uniformly to their limit $x$ at a rate $O(1/n)$.

\Cref{prop:wasserstein1} and Corollaries~\ref{cor:quantization} and~\ref{cor:conv_diff} can certainly be extended to the case of gCM Volterra equations, which correspond to finite measures $\lambda\in\cM_+([-R,\infty))$ for $R>0$; so long as $\eps> R$, the above results hold as stated. These results can also likely be extended to the other classes of Volterra equations under consideration, \dave{pulling back to the circle when necessary}. 
\section{Hardy Spaces and Integral Transforms on the Circle}\label{sec:disc}
In the present section, we work to develop our spectral theory on the circle; as we saw in \cref{sec:main_circ}, the set $\cM_+(S^1)$ of positive Borel measures on the unit circle offers a natural language with which to study difference equations of the form \ref{eq:integrodiff_dPD}. Although we are primarily interested in understanding the involution $\cB$ of \cref{thm:main_circle}, we proceed by studying how the Cauchy transform behaves under a wide class of nonlinear maps. Interconversion will then follow as a special case.

Let $\HH_+ = -i\HH$ be the open right half-plane, and $\ovl{\HH}_+$ be its closure. Below, we say that a map $S:\ovl{\HH}_+\to \ovl{\HH}_+\cup\{\infty\}$ is \emph{admissible} if $S|_{\HH_+}$ is holomorphic, if $\op{clos}S^{-1}(\infty)\subset\partial \HH_+$ is countable (if non-empty), and if $S$ is continuous on the complement $\ovl{\HH}_+\setminus \op{clos}S^{-1}(\infty)$. Examples of these maps include affine transformations and circular inversions:
\[\qquad z\mapsto az + z_0,\qquad z\mapsto \frac{a}{z-i\zeta},\]
where $a>0$, $z_0\in \ovl{\HH}_+$, and $\zeta\in\RR$. Interconversion corresponds to $S:z\mapsto 1/z$.

\begin{remark}\label{rem:map_algebra}
If the singular component of $\lambda\in\cM_+(S^1)$ has countable support, then the composition $z\mapsto(Q_\sigma[\lambda]\circ\phi^{-1})(iz)$ is an admissible map. One can \dave{thus consider `composing' multiple non-negative measures on the circle, though we do not discuss the topic further at present.}
\end{remark}

Consider the nonlinearly-transformed data $S\circ Q_\sigma[\lambda]$, for $\lambda\in\cM_+(S^1)$ and $\sigma\in\RR$. By construction, this data forms a holomorphic function in $\DD$ with positive real part, so \cref{prop:classic}.\ref{prop:classic1} guarantees that
\[S\circ Q_\sigma[\lambda] = Q_{\sigma'}[\mu]\]
for some $\sigma'\in\RR$ and $\mu\in\cM_+(S^1)$. Our first goal is to understand what form $\sigma'$ and $\lambda'$ take, and in particular, to show how $S$ can be seen to ``commute'' with the Cauchy and Hilbert transforms. As a first step, we show how admissible maps preserve integrability, in an appropriate sense:

\begin{lemma}\label{lem:firstone}
    Suppose $\lambda\in\cM_+(S^1)$, and write $\lambda_c\in L^1(S^1)$ for the density of its continuous part, furnished by the Lebesgue decomposition~\cite{rudin1974real}. Fix an admissible map $S$, and let $S^{\Re}=\Re S$. Then $S^{\Re}\circ(\lambda_c + iH_\sigma[\lambda])\in L^1(S^1)$, and moreover,
    \begin{equation}\label{eq:L1_bound}
    \|S^{\Re}\circ(\lambda_c+iH_\sigma[\lambda])\|_{S^1}\leq S^{\Re}(\|\lambda\|_{S^1}+i\sigma)
\end{equation}
for any $\sigma\in\RR$.
\end{lemma}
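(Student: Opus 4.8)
The plan is to reduce the $L^1$ bound on $S^{\Re}\circ(\lambda_c + iH_\sigma[\lambda])$ to a statement about the boundary behavior of the holomorphic function $S\circ Q_\sigma[\lambda]$ on the disc, then invoke the Herglotz--Riesz representation (Proposition~\ref{prop:classic}.\ref{prop:classic1}) to control the resulting measure's variation norm. First I would observe that $Q_\sigma[\lambda]$ maps $\DD$ into $\ovl{H}_+$ (it has non-negative real part), and by the admissibility of $S$, the composition $g\doteq S\circ Q_\sigma[\lambda]$ is holomorphic on $\DD$ with non-negative real part, except possibly where $Q_\sigma[\lambda]$ hits the countable set $\op{clos}S^{-1}(\infty)$ on the boundary. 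Since $Q_\sigma[\lambda]$ is a fixed holomorphic function, the preimage of this countable set inside the open disc is at most countable and discrete, and since $g$ maps into $\ovl H_+$ near these points, they are removable singularities; hence $g$ extends to a holomorphic function on $\DD$ with $\Re g\geq 0$.

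Next, $\Re g$ is a non-negative harmonic function on $\DD$, so by Herglotz--Riesz it is the Poisson integral of a non-negative measure $\nu\in\cM_+(S^1)$, and moreover $\|\Re g\|_{h^1} = \|\nu\|_{S^1}$. The key evaluation is $\|\nu\|_{S^1} = \Re g(0) = S^{\Re}(Q_\sigma[\lambda](0)) = S^{\Re}(\|\lambda\|_{S^1} + i\sigma)$, using $Q[\lambda](0) = \|\lambda\|_{S^1}$ and $\Im Q_\sigma[\lambda](0) = \sigma$ from \cref{cor:uniqueness}. So the right-hand side of \cref{eq:L1_bound} is exactly the total variation of $\nu$. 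It then remains to identify the boundary function $S^{\Re}\circ(\lambda_c + iH_\sigma[\lambda])$ with the continuous density of $\nu$ (i.e., the density of its absolutely continuous part), since the $L^1$-norm of the continuous part is bounded by the total variation $\|\nu\|_{S^1}$. For this I would use that $Q_\sigma[\lambda]$ has non-tangential boundary limits almost everywhere equal to $\lambda_c + iH_\sigma[\lambda]$ (Proposition~\ref{prop:classic}.\ref{prop:classic4} for the real part, and \cref{eq:H_sigma} for the imaginary part), and that $S$ is continuous on $\ovl H_+\setminus\op{clos}S^{-1}(\infty)$; the troublesome set $\op{clos}S^{-1}(\infty)$ is countable, hence measure zero on $S^1$ once one knows the boundary values avoid it a.e. So a.e.\ on $S^1$, $g$ has non-tangential limit $S(\lambda_c + iH_\sigma[\lambda])$, and its real part is $S^{\Re}(\lambda_c+iH_\sigma[\lambda])$, which by Proposition~\ref{prop:classic}.\ref{prop:classic4} applied to $\Re g = P[\nu]$ equals $\nu_c$ a.e.

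The main obstacle I anticipate is the removable-singularity / boundary-measure-zero bookkeeping around $S^{-1}(\infty)$: one must check that the non-tangential boundary values of $Q_\sigma[\lambda]$ land in $\op{clos}S^{-1}(\infty)$ only on a set of measure zero in $S^1$ (so that $S$ applied to these boundary values makes sense a.e.), and that the interior poles are genuinely removable rather than poles of $g$. The interior case is clean because $g$ is bounded-below-in-real-part near such points; the boundary case uses that $Q_\sigma[\lambda]$ has finite non-tangential limits a.e.\ and a Lusin-type argument that a holomorphic function of bounded characteristic cannot have boundary values confined to a fixed countable set on a positive-measure set. Once that is settled, everything else is a direct application of the classical results already quoted. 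A minor subtlety is that $S$ need not be real-valued, but we only ever use $S^{\Re} = \Re S$, which is continuous wherever $S$ is, so this causes no difficulty.
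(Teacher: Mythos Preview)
Your proposal is correct and essentially follows the paper's strategy, with two minor differences worth noting.

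First, your worry about interior removable singularities is unnecessary: since $\lambda\in\cM_+(S^1)$ is nonzero, the maximum principle gives $\Re Q_\sigma[\lambda]>0$ \emph{strictly} on the open disc $\DD$, so $Q_\sigma[\lambda](\DD)\subset H_+$. As $S|_{H_+}$ is holomorphic by admissibility, the composition $g=S\circ Q_\sigma[\lambda]$ is automatically holomorphic on $\DD$ with no singularities to remove. The paper makes this observation explicitly.

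Second, the paper finishes the $L^1$ bound more directly than you do: once one knows $S^{\Re}\circ Q_\sigma[\lambda]\to S^{\Re}\circ(\lambda_c+iH_\sigma[\lambda])$ non-tangentially a.e., Fatou's lemma applied to the nonnegative radial traces and the mean value property give
\[
\|S^{\Re}\circ(\lambda_c+iH_\sigma[\lambda])\|_{L^1}\leq \lim_{r\to 1^-}\frac{1}{2\pi}\int_0^{2\pi}(S^{\Re}\circ Q_\sigma[\lambda])(re^{i\theta})\,d\theta = (S^{\Re}\circ Q_\sigma[\lambda])(0)
\]
in one line. Your route via the Herglotz--Riesz measure $\nu$ and the identification $\nu_c=S^{\Re}\circ(\lambda_c+iH_\sigma[\lambda])$ is perfectly valid---and in fact anticipates the paper's next result (\cref{thm:representation}), which carries out exactly this decomposition---but it is slightly more than is needed for the bare $L^1$ bound.

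For the boundary measure-zero issue you flag, the paper's device is to fix $y\in\Sigma=\op{clos}S^{-1}(\infty)\subset\partial H_+$, observe that $q_y\doteq\exp(y-Q_\sigma[\lambda])-1$ is a bounded holomorphic function on $\DD$, and invoke the F.\ and M.\ Riesz theorem to conclude its boundary zero set (which contains $\{Q_\sigma[\lambda]=y\}$) has measure zero; a countable union over $y\in\Sigma$ finishes it. This is exactly the ``bounded characteristic'' uniqueness statement you alluded to, made precise.
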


\begin{proof}
    \dave{It follows from the mean value property that $Q_\sigma[\lambda](0)=\|\lambda\|_{S^1}+i\sigma$, and from the maximum principle that $\Re Q_\sigma[\lambda]>0$ everywhere in $\DD$. Since $S(\HH_+)\subset \HH_+$, it follows that $S^{\Re}\circ Q_\sigma[\lambda]>0$ everywhere in $\DD$}.

    \dave{Let $\Sigma=\op{clos}S^{-1}(\infty)\subset\partial\HH_+$ be the \dave{(countable)} set of singularities of $S$}. Fix $y\in\Sigma$, and consider the function $q_y \doteq \exp(y-Q_\sigma[\lambda])-1$. This is a bounded holomorphic function on $\DD$, so it follows from a theorem of F.~and M.~Riesz~\cite{lars1988collected,e8589594-ae28-3f64-a921-def172de1f4c} that its zero set $\{q_y^{-1}(0)\}\supset\{Q_\sigma[\lambda]=y\}$ forms a set of measure zero in $S^1$; taking the union over $y\in\Sigma$, we see that $S\circ Q_\sigma[\lambda]$ is finite almost everywhere on $S^1$.
    
    Thus, since $S$ is continuous away from its singularities and $Q_\sigma[\lambda]\to \lambda_c+iH_\sigma[\lambda]$ almost everywhere in $S^1$ (along non-tangential paths), we know that 
    \[S^{\Re}\circ Q_\sigma[\lambda]\to S^{\Re}\circ(\lambda_c+iH_\sigma[\lambda])\]
    (along non-tangential paths) almost everywhere in $S^1$. \dave{Fatou's lemma thus implies}
    \begin{align*}\|S^{\Re}\circ(\lambda_c+iH_\sigma[\lambda])\|_{S^1} &\leq \lim_{r\to 1^-}\frac{1}{2\pi}\int_0^{2\pi} (S^{\Re}\circ Q_\sigma[\lambda])(re^{i\theta})\,d\theta \\
    &= (S^{\Re}\circ Q_\sigma[\lambda])(0) \\
    &= S^{\Re}(\|\lambda\|_{S^1}+i\sigma).
    \end{align*}
\end{proof}

We can derive a stronger result by leveraging \cref{prop:classic}.\ref{prop:classic4}; in short, if a positive harmonic function in $\DD$ has a known non-tangential limit almost everywhere in $S^1$, the remaining (measure zero) set \emph{must} carve out a unique, singular measure:

\begin{theorem}\label{thm:representation}
    Let $\lambda$ and $S$ be as in \cref{lem:firstone}, and fix $\sigma\in\RR$. There is a unique singular measure $\nu\in\cM_+(S^1)$ such that
    \begin{equation}\label{eq:thmrep0}
        S\circ Q_\sigma[\lambda] = Q[\nu] + Q\big[S^{\Re}\circ(\lambda_c + iH_\sigma[\lambda])\big]  + iS^{\Im}(\|\lambda\|_{S^1}+i\sigma),
    \end{equation}
    and equivalently,
    \begin{equation}\label{eq:thmrep1}
    S^{\Im}\circ(\lambda_c+iH_\sigma[\lambda]) = H[\nu] + H[S^{\Re}\circ(\lambda_c+iH_\sigma[\lambda])] + S^{\Im}(\|\lambda\|_{S^1}+i\sigma).
    \end{equation}
\end{theorem}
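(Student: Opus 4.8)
The plan is to build the singular measure $\nu$ directly from $S\circ Q_\sigma[\lambda]$ and then use the uniqueness part of the Herglotz--Riesz theorem (\cref{prop:classic}.\ref{prop:classic1}) together with the boundary-value result \cref{prop:classic}.\ref{prop:classic4} to pin everything down. First I would observe that, as established in the proof of \cref{lem:firstone}, the map $g\doteq S\circ Q_\sigma[\lambda]$ is holomorphic on $\DD$ with $\Re g>0$, so by \cref{prop:classic}.\ref{prop:classic1} there is a unique $\rho\in\cM_+(S^1)$ with $\Re g=P[\rho]$, hence $g=Q[\rho]+ic$ for the constant $c=\Im g(0)=S^{\Im}(\|\lambda\|_{S^1}+i\sigma)$ (using $Q_\sigma[\lambda](0)=\|\lambda\|_{S^1}+i\sigma$ from the mean value property, exactly as in \cref{lem:firstone}). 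The task is then to identify the Lebesgue decomposition of $\rho$: I claim its continuous part has density $S^{\Re}\circ(\lambda_c+iH_\sigma[\lambda])$, which lies in $L^1(S^1)$ by \cref{lem:firstone}.

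The key step is the following. By \cref{prop:classic}.\ref{prop:classic4}, $P[\rho]=\Re g$ has non-tangential limit equal to $\rho_c$ (the density of the continuous part of $\rho$) almost everywhere on $S^1$. On the other hand, we showed in \cref{lem:firstone} that $g=S\circ Q_\sigma[\lambda]$ has non-tangential limit $S\circ(\lambda_c+iH_\sigma[\lambda])$ almost everywhere on $S^1$ (this used the F.\ and M.\ Riesz theorem to see that $Q_\sigma[\lambda]$ avoids the singular set of $S$ a.e., and the a.e.\ non-tangential convergence $Q_\sigma[\lambda]\to\lambda_c+iH_\sigma[\lambda]$). Taking real parts, $\rho_c=S^{\Re}\circ(\lambda_c+iH_\sigma[\lambda])$ a.e. Therefore, writing $\nu$ for the singular part of $\rho$, we get
\[
\rho = S^{\Re}\circ(\lambda_c+iH_\sigma[\lambda])\,\tfrac{d\theta}{2\pi} + \nu,
\]
and substituting back into $g=Q[\rho]+ic$ gives exactly \cref{eq:thmrep0}, since $Q$ is additive and $Q\big[S^{\Re}\circ(\lambda_c+iH_\sigma[\lambda])\,\tfrac{d\theta}{2\pi}\big]=Q\big[S^{\Re}\circ(\lambda_c+iH_\sigma[\lambda])\big]$ in the notation of the statement (where a function argument denotes the associated absolutely continuous measure). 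Uniqueness of $\nu$ is immediate: any singular $\nu'$ satisfying \cref{eq:thmrep0} forces $\Re g=P[\nu'+S^{\Re}\circ(\lambda_c+iH_\sigma[\lambda])\tfrac{d\theta}{2\pi}]$, and the Herglotz--Riesz uniqueness (\cref{prop:classic}.\ref{prop:classic1}) together with uniqueness of the Lebesgue decomposition gives $\nu'=\nu$.

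Finally, \cref{eq:thmrep1} is obtained from \cref{eq:thmrep0} simply by taking the imaginary non-tangential boundary trace of both sides: $\Im Q[\nu]$ has boundary trace $H[\nu]$ and $\Im Q[S^{\Re}\circ(\lambda_c+iH_\sigma[\lambda])]$ has boundary trace $H[S^{\Re}\circ(\lambda_c+iH_\sigma[\lambda])]$ (both a.e., by definition of the Hilbert transform and \cref{cor:uniqueness}), while the left side $\Im g$ has boundary trace $S^{\Im}\circ(\lambda_c+iH_\sigma[\lambda])$ a.e.\ as noted above; the constant $iS^{\Im}(\|\lambda\|_{S^1}+i\sigma)$ contributes its imaginary part $S^{\Im}(\|\lambda\|_{S^1}+i\sigma)$. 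The main obstacle I anticipate is purely bookkeeping about boundary values: one must make sure all the relevant non-tangential limits exist simultaneously a.e.\ (for $Q[\nu]$, for $Q$ of an $L^1$ density, and for $g$ itself) and that the F.\ and M.\ Riesz argument genuinely rules out $Q_\sigma[\lambda]$ hitting the countable singular set of $S$ on a positive-measure subset of $S^1$ — but all of these pieces are already assembled in \cref{lem:firstone} and \cref{prop:classic}, so the argument should be short.
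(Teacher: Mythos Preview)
Your proposal is correct and follows essentially the same approach as the paper: apply Herglotz--Riesz to $\Re(S\circ Q_\sigma[\lambda])$ to obtain a measure, identify its absolutely continuous density via \cref{prop:classic}.\ref{prop:classic4} and the a.e.\ boundary convergence established in \cref{lem:firstone}, let $\nu$ be the leftover singular part, fix the imaginary constant at the origin, and then pass to boundary traces for \cref{eq:thmrep1}. Your write-up is slightly more explicit than the paper's on the uniqueness of $\nu$ (via uniqueness of the Lebesgue decomposition), but otherwise the two arguments coincide.
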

\begin{proof}
    Recall from the proof of \cref{lem:firstone} that 
    \[S^{\Re}\circ Q_\sigma[\lambda]\to S^{\Re}\circ (\lambda_c+iH_\sigma[\lambda])\]
    almost everywhere (along non-tangential directions) in $S^1$. Suppose that $\mu\in\mathcal{M}_+(S^1)$ is the unique finite (positive) Borel measure such that 
    \begin{equation}\label{eq:repthm_foo1}
        S^{\Re}\circ Q_\sigma[\lambda] = P[\mu] = \Re Q[\mu],
    \end{equation}
    furnished by \cref{prop:classic}, and let $\mu_c$ be the density of its continuous component with respect to the normalized Lebesgue measure $(2\pi)^{-1}\,d\theta$. From \cref{prop:classic}.\ref{prop:classic4}, then, \dave{we know that $\Re Q[\mu]\to \mu_c$ pointwise along non-tangential directions}, almost everywhere in $S^1$; we can thus identify
    \[S^{\Re}\circ (\lambda_c+iH_\sigma[\lambda]) = \mu_c,\]
    and define $\nu$ to be the (leftover) singular component of $\mu$.

    Now, recall from \cref{cor:uniqueness} that the conjugate harmonic function of $P[\mu]$ in $\DD$ is uniquely defined up to addition of imaginary constants; in particular, \cref{eq:repthm_foo1} shows that $S^{\Im}\circ Q_\sigma[\lambda]$ and $\Im Q[\mu]$ differ by a real constant. Identifying this constant by evaluating each at the origin, we deduce \cref{eq:thmrep0}, and taking the non-tangential limit at $r=1$, we deduce \cref{eq:thmrep1}.
\end{proof}

Our next goal is to understand the singular measure $\nu$ more concretely; if one could calculate $\nu$ from the base measure $\lambda$, then \cref{thm:representation} would yield an explicit formula for the Cauchy and Hilbert transforms of the nonlinearly-transformed data $S^{\Re}\circ(\lambda_c + iH_\sigma[\lambda])\in L^1(S^1)$. In this direction, we now investigate the \emph{support} of $\nu$; if we know the support to be countable, we can deduce that $\nu$ is discrete.

If $S$ is an admissible map, we further say that $S$ is \emph{highly admissible} if, for each $\eps>0$, the real part $\Re S(z)$ is uniformly bounded over the set
\[H_\eps\doteq \{z\in \HH_+\;|\;\eps < \Re z < 1/\eps\},\]
that is, $\sup_{z\in H_\eps} \Re S(z) < C_\eps$ for a fixed $C_\eps>0$. For instance, the maps $z\mapsto z$ and $z\mapsto 1/z$ are both highly admissible, but
\[S_0:z\mapsto \sum_{n\in\ZZ}\frac{n}{z-in^3}\]
is not; indeed, at the point $z=\eps+in_0^3$, we have
\[\Re S_0(z) = \sum_{n\in\ZZ}\frac{n\eps}{\eps^2+(n^3-n_0^3)^2}\geq n_0/\eps.\]
Choosing sequentially larger $n_0$ shows that $\Re S_0$ is not uniformly bounded on $H_\eps$.

We generalize \dave{the statement of} \cref{def:zeroset_circ} as follows:
\begin{definition}\label{def:problematic_circ}
    Suppose $\lambda\in\cM_+(S^1)$. Define the critical set of $\lambda$ as follows:
    \[N_{\infty}(\lambda) = \bigcap_{\eps>0}\op{clos}\Big\{e^{i\theta}\in S^1\;\Big|\;\liminf_{\delta\to 0}\lambda(\exp i[\theta-\delta,\theta+\delta])/2\delta > 1/\eps\Big\},\]
    \dave{and the \emph{problematic} set of $\lambda$ to be 
    \[N(\lambda)\doteq N_0(\lambda)\cup N_\infty(\lambda)\subset S^1,\]
    with $N_0(\lambda)$ the zero set of \cref{def:zeroset_circ} and $\supp\lambda\subset S^1$ the closed, essential support of $\lambda$}.
\end{definition}
The sets $N_\infty(\lambda)$ and $N(\lambda)$ allow us to treat general highly admissible maps, rather than just the particular case $S:z\mapsto 1/z$ corresponding to interconversion; we will see shortly that only $N_0(\lambda)$ plays a role for the latter.
\begin{lemma}\label{lem:containment}
    In the setting of \cref{lem:firstone}, suppose now that $S$ is highly admissible. Then the singular measure $\nu$ furnished by \cref{thm:representation} satisfies
    \[\supp \nu\subset N_0(\lambda)\cup N_\infty(\lambda),\]
    with $N_0(\lambda)$ and $N_\infty(\lambda)$ as defined in \cref{def:problematic_circ}.
\end{lemma}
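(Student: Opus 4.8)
The plan is to show that $\nu$ assigns zero mass to any open arc $I \subset S^1$ on which $\lambda$ is "neither too small nor too large" — precisely, any arc avoiding $N_0(\lambda) \cup N_\infty(\lambda)$. Fix such an arc $I$; by \cref{def:problematic_circ}, shrinking $I$ slightly if necessary, there are constants $0 < c < C < \infty$ such that $c \le \liminf_{\delta\to 0}\lambda(\exp i[\theta-\delta,\theta+\delta])/2\delta$ and $\limsup_{\delta\to 0}\lambda(\exp i[\theta-\delta,\theta+\delta])/2\delta \le C$ uniformly for $e^{i\theta}\in I$. The first step is to translate these density bounds into two-sided control of the Poisson integral: standard estimates (e.g., from the Hardy--Littlewood maximal function bounds on the Poisson kernel, as in Axler et al.~\cite{axler2013harmonic}) give $c' \le P[\lambda](re^{i\theta}) \le C'$ for all $re^{i\theta}$ in a neighborhood $V\subset\DD$ of the interior arc $I' \Subset I$, with $c',C'$ depending only on $c,C$. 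In other words, $Q_\sigma[\lambda]$ maps $V$ into the strip $H_{\eps_0} = \{z\in H_+ : \eps_0 < \Re z < 1/\eps_0\}$ for a suitable $\eps_0>0$.

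The second step uses high admissibility of $S$. Since $S$ is highly admissible, $\Re S$ is bounded by some $C_{\eps_0}$ on $H_{\eps_0}$; therefore $\Re\big(S\circ Q_\sigma[\lambda]\big) = S^{\Re}\circ Q_\sigma[\lambda]$ is a \emph{bounded}, positive harmonic function on $V$. Now recall from the proof of \cref{thm:representation} that $S^{\Re}\circ Q_\sigma[\lambda] = P[\mu]$ where $\mu = \mu_c + \nu$ and $\nu$ is the singular part; equivalently, $S^{\Re}\circ Q_\sigma[\lambda]$ is the Poisson integral of $\mu$. The third step is the key local-to-boundary argument: a positive harmonic function on $\DD$ whose Poisson-extending measure $\mu$ has a singular component charging some point $e^{i\theta_0} \in I'$ cannot remain bounded as $r \to 1$ along the radius to $e^{i\theta_0}$ — indeed $P[\nu](re^{i\theta_0}) \to \infty$ at a singular point (more carefully, one uses that $\limsup_{r\to1} P[\mu](re^{i\theta})/(\text{something}) = +\infty$ at $\mu$-singular points, or simply that a bounded harmonic function on $\DD$ is the Poisson integral of an $L^\infty$ density by \cref{prop:classic}.\ref{prop:classic2}, hence its representing measure is absolutely continuous). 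Applying this with the restriction of $\mu$ to the arc $I'$: boundedness of $P[\mu]$ near $I'$ forces $\mu|_{I'}$ to be absolutely continuous, so $\nu(I') = 0$. Since $I'\Subset I$ was arbitrary, $\nu(I) = 0$.

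Finally, covering $S^1 \setminus (N_0(\lambda)\cup N_\infty(\lambda))$ by countably many such arcs $I$ (this complement is open) and using $\sigma$-additivity of $\nu$ yields $\nu\big(S^1\setminus(N_0(\lambda)\cup N_\infty(\lambda))\big) = 0$, i.e., $\supp\nu \subset N_0(\lambda)\cup N_\infty(\lambda)$, as claimed. The main obstacle I expect is the third step: making rigorous the assertion that a locally-bounded positive harmonic function forces local absolute continuity of its representing measure. The cleanest route is probably to localize via a conformal map or a cutoff — write $P[\mu] = P[\mu|_{I'}] + P[\mu|_{S^1\setminus I'}]$, note the second term extends harmonically across $I'$ and is bounded there, so $P[\mu|_{I'}]$ is bounded near $I'$; then $\mu|_{I'}$ is a finite measure whose Poisson integral is bounded, hence (by \cref{prop:classic}.\ref{prop:classic4}, identifying the non-tangential limit with the continuous density, together with the boundedness upgrading $L^1$ to $L^\infty$ control and Fatou-type monotonicity for the singular part) $\mu|_{I'}$ has no singular part. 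Care is needed to ensure the density bounds genuinely persist on a full neighborhood of $I'$ in $\DD$ rather than just along radii, but this is exactly what the maximal-function estimate for the Poisson kernel provides.
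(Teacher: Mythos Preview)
Your approach matches the paper's, but your first step misreads \cref{def:problematic_circ}. Being outside $N_0(\lambda)\cup N_\infty(\lambda)$ gives, on a neighborhood $I$, the bounds $\limsup \ge c$ and $\liminf \le C$ --- \emph{not} the reversed bounds $\liminf \ge c$ and $\limsup \le C$ that you assert. Your stronger bounds do in fact hold, but deriving them requires a step you omit: from $\liminf \le C$ at every point of $I$ one argues that $\lambda|_I$ has no singular part (a singular measure has infinite symmetric derivative a.e.\ with respect to itself), and then the two-sided density bounds $c\le\lambda_c\le C$ follow almost everywhere. The paper makes this absolute-continuity step explicit before bounding $P[\lambda]$. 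Once you insert it, both your maximal-function route and the paper's maximum-principle decomposition $P[\lambda]=P[\lambda|_I]+P[\lambda|_{S^1\setminus I}]$ yield the two-sided Poisson bound, and the remaining steps --- high admissibility bounding $S^{\Re}\circ Q_\sigma[\lambda]$, then local boundedness of $P[\mu]$ forcing $\mu|_{I'}$ to be absolutely continuous --- go through as you describe; your localization argument for that last step is in fact more detailed than the paper's single-sentence conclusion.
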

\begin{proof}
    Write
    \begin{equation}\label{eq:define_eps_problematic}
        \begin{aligned}N_{\eps}(\lambda) &= \op{clos}\Big\{e^{i\theta}\in S^1\;\Big|\;\liminf_{\delta\to 0}\lambda(\exp i[\theta-\delta,\theta+\delta])/2\delta > 1/\eps\Big\}\\
    &\qquad\cup\op{clos}\Big\{e^{i\theta}\in S^1\;\Big|\;\limsup_{\delta\to 0}\lambda(\exp i[\theta-\delta,\theta+\delta])/2\delta < \eps\Big\},
        \end{aligned}
    \end{equation}
    so that $N_0(\lambda)\cup N_\infty(\lambda) = \bigcap_{\eps>0} N_\eps(\lambda)$. Suppose that $z\notin N_0(\lambda)\cup N_\infty(\lambda)$, so that, in particular, there is an $\eps>0$ such that $z\notin N_\eps(\lambda)$; since $N_\eps(\lambda)$ is closed, we can fix a closed interval $I\ni z$ in $S^1$ such that 
    \[I\cap N_\eps(\lambda) = \emptyset.\]
    In particular, the restriction $\lambda|_{I}$ is absolutely continuous, with density $\eps<\lambda_c<1/\eps$. As in the proof of \cref{thm:representation}, let $\mu\in\mathcal{M}_+(S^1)$ be the unique measure such that
    \[P[\mu] = S^{\Re}\circ Q_\sigma[\lambda]\]
    in $\DD$. Now, we decompose $\lambda$ as
    \[\lambda = \lambda_1 + \lambda_2,\]
    where $\supp\lambda_1\subset I$ and $\supp\lambda_2\subset S^1\setminus I$. 

    Since $\lambda_1$ is absolutely continuous with density $\lambda_c\chi_I$ (where $\chi_I$ is the characteristic function of $I$), our choice of $I$ guarantees that
    \[\eps\chi_I<\lambda_c\chi_I<(1/\eps)\chi_I\]
    almost everywhere. The maximum principle thus shows that
    \[\eps P[\chi_I] \leq P[\lambda_1] \leq (1/\eps)P[\chi_I]\]
    everywhere in $\ovl{\DD}$. Fix a small interval $I'\subset I$ containing $z$ and a small $\delta>0$, such that $P[\chi_I]$ is uniformly continuous in the neighborhood
    \[B_\delta \doteq \{z\in \ovl{\DD}\;|\;\|z-I'\|<\delta\}.\]
    For sufficiently small $\delta>0$, then, we can guarantee that $2\eps/3<P[\lambda_1]<1/\eps$ in $B_\delta$. Next, inspecting the Poisson kernel, we can see that---potentially shrinking $I'$ and $\delta$---the harmonic function $P[\lambda_2]$ is uniformly bounded in the neighborhood
    \[B_\delta \doteq \{z\in \ovl{\DD}\;|\;\|z-I'\|<\delta\}\]
    by $C\delta$, where $C>0$ is a constant independent of $\delta$. Fixing $\delta$ such that $C\delta <\eps/3$ and combining with our control on $P[\lambda_1]$, we find that
    \[\eps/3 < P[\lambda](z) = \Re Q_\sigma[\lambda](z) < 3/\eps\]
    for $z\in B_\delta$. Since $S$ is highly admissible, then, we find that
    \[(S^{\Re}\circ Q_\sigma[\lambda])(z) < C_{\eps/3}\]
    for $z\in B_\delta$; since it is uniformly bounded, there cannot be a singular component of $\mu$ in $I'$. But $z\notin N(\lambda)$ was general, so the claim follows.
\end{proof}
The above lemma can be refined slightly, in fact, if one knows more information about the singularities of $S$. The following lemma follows from a similar argument as above:
\begin{lemma}\label{lem:containment_refined}
    If $\Re S(z)$ is uniformly bounded over the set $\Re z> \eps$ for each $\eps>0$, we say it is \emph{lower highly admissible} (LHA), and a similar argument shows that
    \[\supp\nu\subset N_0(\lambda).\]
    Likewise, if $\Re S(z)$ is uniformly bounded over the set $\Re z < 1/\eps$ for each $\eps>0$, we say it is \emph{upper highly admissible} (UHA), and we find
    \[\supp\nu\subset N_\infty(\lambda).\]
\end{lemma}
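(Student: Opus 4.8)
The plan is to re-run the argument of \cref{lem:containment} almost verbatim, keeping track of which of the two one-sided local density conditions is actually used. Recall that $\mu\in\cM_+(S^1)$ is the measure with $P[\mu]=S^\Re\circ Q_\sigma[\lambda]$ and $\nu$ is its singular part, and that (as at the end of the proof of \cref{lem:containment}) to conclude $z\notin\supp\nu$ it suffices to exhibit a neighborhood $B_\delta$ of $z$ in $\ovl{\DD}$ on which $S^\Re\circ Q_\sigma[\lambda]$ is uniformly bounded; a Poisson integral that is uniformly bounded on a one-sided neighborhood of an arc cannot arise from a measure carrying singular mass on that arc. The key observation is that in \cref{lem:containment} the highly-admissible hypothesis was used to bound $\Re S$ on the two-sided strip $H_\eps=\{\eps<\Re w<1/\eps\}$, and accordingly one produced the two-sided bound $\eps/3<\Re Q_\sigma[\lambda]<3/\eps$ on $B_\delta$; the LHA and UHA hypotheses each supply one half of this, and so we only need the matching half of the density argument.

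\emph{LHA case.} Suppose $z\notin N_0(\lambda)$. By \cref{def:problematic_circ} there is an $\eps>0$ and a closed arc $I$ containing $z$ in its interior such that $\limsup_{\delta\to0}\lambda(\exp i[\theta-\delta,\theta+\delta])/2\delta\geq\eps$ for every $e^{i\theta}\in I$; by the density estimates used in \cref{lem:containment} this forces the continuous density of $\lambda|_I$ to be at least $\eps$ almost everywhere on $I$. Decomposing $\lambda=\lambda_1+\lambda_2$ with $\supp\lambda_1\subset I$ and $\supp\lambda_2\subset S^1\setminus I$ as there, positivity and monotonicity of the Poisson integral give $P[\lambda_1]\geq\eps\,P[\chi_I]$ on $\ovl{\DD}$; choosing a smaller arc $I'\ni z$ and $\delta>0$ exactly as in \cref{lem:containment} so that $P[\chi_I]$ is uniformly close to $1$ on $B_\delta$, we obtain the \emph{one-sided} estimate $\Re Q_\sigma[\lambda]=P[\lambda]\geq P[\lambda_1]\geq 2\eps/3$ on $B_\delta$ (the nonnegative term $P[\lambda_2]$ is simply dropped). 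Since $S$ is LHA, $\Re S$ is bounded on $\{\Re w>\eps/3\}$, so $S^\Re\circ Q_\sigma[\lambda]$ is uniformly bounded on $B_\delta$; hence $z\notin\supp\nu$, and as $z\notin N_0(\lambda)$ was arbitrary we conclude $\supp\nu\subset N_0(\lambda)$.

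\emph{UHA case.} Suppose $z\notin N_\infty(\lambda)$. By \cref{def:problematic_circ} there is an $\eps>0$ and a closed arc $I$ containing $z$ in its interior on which $\liminf_{\delta\to0}\lambda(\exp i[\theta-\delta,\theta+\delta])/2\delta\leq1/\eps$ at every point; by the estimates of \cref{lem:containment} this forces $\lambda|_I$ to be absolutely continuous with density at most $1/\eps$ almost everywhere, so $P[\lambda_1]\leq(1/\eps)P[\chi_I]\leq1/\eps$ on $\ovl{\DD}$ for $\lambda_1=\lambda|_I$. With $\lambda_2$ supported away from $I$, the Poisson-kernel estimate from \cref{lem:containment} bounds $P[\lambda_2]$ by a fixed constant on a neighborhood $B_\delta$ of a small arc $I'\ni z$, so here we get the \emph{one-sided} estimate $\Re Q_\sigma[\lambda]=P[\lambda]=P[\lambda_1]+P[\lambda_2]\leq C(\eps)$ on $B_\delta$ (no lower bound is needed, and $\Re Q_\sigma[\lambda]>0$ automatically). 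Since $S$ is UHA, $\Re S$ is bounded on every half-plane $\{\Re w<M\}$, in particular on one containing the range of $Q_\sigma[\lambda]$ over $B_\delta$, so $S^\Re\circ Q_\sigma[\lambda]$ is uniformly bounded on $B_\delta$, whence $z\notin\supp\nu$ and $\supp\nu\subset N_\infty(\lambda)$.

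The routine Poisson-integral bookkeeping (the choices of $I$, $I'$, $\delta$, the estimate on $P[\lambda_2]$, and the deduction that a bounded Poisson integral near an arc precludes singular mass there) is imported unchanged from \cref{lem:containment}. The one step I would want to pin down carefully is the passage from the one-sided pointwise density condition on $I$ to the one-sided bound on $\lambda|_I$: the lower bound (continuous density $\geq\eps$ from the upper-derivative condition) is the classical density theorem and is unproblematic, whereas the upper bound (absolute continuity with density $\leq1/\eps$ from the lower-derivative condition) is precisely the more delicate estimate already invoked inside the proof of \cref{lem:containment}, and so should be cited from there rather than re-derived here.
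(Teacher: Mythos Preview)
Your proof is correct and is exactly what the paper intends: it simply says ``a similar argument'' and gives no further detail, and you have correctly identified that the two-sided density estimate $\eps<\lambda_c<1/\eps$ and the two-sided strip bound on $\Re S$ in the proof of \cref{lem:containment} decouple into a lower half (used for LHA) and an upper half (used for UHA). Your observation that in the LHA case the nonnegative term $P[\lambda_2]$ can simply be dropped, while in the UHA case only a uniform (not $O(\delta)$) bound on $P[\lambda_2]$ is needed, is the right simplification in each direction; your final caveat about the passage from the pointwise liminf bound to absolute continuity with bounded density is well-placed, and the paper itself asserts this step without proof in \cref{lem:containment}.
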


Under appropriate conditions on $\lambda$, these lemmas allows us to deduce further structure on the measure $\nu$:

\begin{corollary}\label{cor:countable}
    Suppose $S$ is highly admissible. If $N(\lambda)\cap\supp\lambda$ is countable, then the measure $\nu$ furnished by \cref{thm:representation} is discrete, and its closed support is countable. Alternatively, if $N(\lambda)$ is finite, $\supp\lambda$ has finitely many components, and $S^{-1}(\infty)\subset S^1$ is finite, then $\nu$ is discrete, and its support is finite. In either case, define the set
    \begin{equation}\label{eq:poles}
        Z(\lambda) = N(\lambda)\cup\{z\notin\supp\lambda\;|\;iH_\sigma[\lambda](z)\in\Sigma\}.
    \end{equation}
    For any choice of $\sigma\in\RR$, we have
    \[S\circ Q_\sigma[\lambda](z) = Q[S^{\Re}\circ(\lambda_c + iH_\sigma[\lambda])](z) + \sum_{\alpha_j\in Z(\lambda)} \beta_j Q[\delta_{\alpha_j}](z) + i\zeta\]
    for unique values $\zeta\in\RR$ and $\beta_j>0$, where $\delta_{\alpha_j}$ is a unit Dirac measure at $\alpha_j\in S^1$. Equivalently,
    \[S^{\Im}\circ Q_\sigma[\lambda](z) = H[S^{\Re}\circ(\lambda_c + iH_\sigma[\lambda])](z) + \sum_{\alpha_j\in Z(\lambda)} \beta_j H[\delta_{\alpha_j}](z) + \zeta.\]
\end{corollary}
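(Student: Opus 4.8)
The plan is to deduce everything from \cref{thm:representation}, \cref{lem:containment}, \cref{lem:firstone}, and \cref{lem:smooth}, the only genuine work being to cut the support bound for $\nu$ down to a countable (or finite) set. \Cref{thm:representation} already gives the decomposition
\[S\circ Q_\sigma[\lambda] = Q[\nu] + Q\big[S^{\Re}\circ(\lambda_c+iH_\sigma[\lambda])\big] + iS^{\Im}(\|\lambda\|_{S^1}+i\sigma)\]
with $\nu\in\cM_+(S^1)$ the \emph{unique} singular measure making it hold, with $S^{\Re}\circ(\lambda_c+iH_\sigma[\lambda])\in L^1(S^1)$ by \cref{lem:firstone}, and --- since $S$ is highly admissible --- with $\supp\nu\subset N(\lambda)$ by \cref{lem:containment}. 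So once I show $\nu$ is discrete with $\supp\nu\subset Z(\lambda)$, I can write $\nu=\sum_j\beta_j\delta_{\alpha_j}$ (with $\beta_j>0$, $\alpha_j\in Z(\lambda)$) and substitute it into the display to get the claimed formula with $\zeta=S^{\Im}(\|\lambda\|_{S^1}+i\sigma)$; the equivalent identity for $S^{\Im}\circ Q_\sigma[\lambda]$ then comes from taking non-tangential boundary limits as $r\to1$, using that $\Im Q[\cdot]\to H[\cdot]$ a.e.\ (\cref{eq:H_sigma}). Uniqueness of $\zeta$ is read off by evaluating imaginary parts at the origin, and uniqueness of the $\beta_j$ is the uniqueness of $\nu$ in \cref{thm:representation}.

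The heart of the matter is sharpening $\supp\nu\subset N(\lambda)$, and the catch is that $N(\lambda)$ swallows the whole open set $S^1\setminus\supp\lambda$ (points off the support have zero upper density, hence sit in $N_0(\lambda)$), which is typically uncountable --- the hypotheses only restrict $N(\lambda)\cap\supp\lambda$. I would therefore split $\supp\nu=(\supp\nu\cap\supp\lambda)\cup(\supp\nu\setminus\supp\lambda)$. The first piece lies in $N(\lambda)\cap\supp\lambda$, which is countable (resp.\ finite) by hypothesis. For the second piece, I would fix $e^{i\phi}\notin\supp\lambda$ with $iH_\sigma[\lambda](e^{i\phi})\notin\Sigma\doteq\op{clos}S^{-1}(\infty)$ and show $\nu$ does not charge it: near such a point the kernel $(1+e^{-i\theta}z)/(1-e^{-i\theta}z)$ has denominator bounded away from zero for $\theta\in\supp\lambda$, so $Q_\sigma[\lambda]$ extends continuously up to $S^1$ there with vanishing real part, hence with boundary trace $iH_\sigma[\lambda]$ on a short closed arc $J$ around $e^{i\phi}$; shrinking $J$ keeps this trace in $\partial H_+\setminus\Sigma$. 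Since $S$ is continuous on $\ovl H_+\setminus\Sigma$, the composite $S\circ Q_\sigma[\lambda]$ extends continuously from $\DD$ to a neighborhood of $J$ in $\ovl\DD$, so its real part $P[\mu]$ --- for $\mu=\nu+(2\pi)^{-1}S^{\Re}\circ(\lambda_c+iH_\sigma[\lambda])\,d\theta$ --- converges uniformly on compact subarcs of $\op{int}J$; comparing with the weak convergence $(2\pi)^{-1}P[\mu](re^{i\theta})\,d\theta\to\mu$ from \cref{prop:classic}.\ref{prop:classic3} shows $\mu$, and hence $\nu$, is absolutely continuous on $\op{int}J$. Thus $\nu$ vanishes near $e^{i\phi}$, and I obtain
\[\supp\nu\subset\big(N(\lambda)\cap\supp\lambda\big)\cup\big\{z\notin\supp\lambda : iH_\sigma[\lambda](z)\in\Sigma\big\}\subset Z(\lambda).\]

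Finally I would observe that the second set above is countable (resp.\ finite): $S^1\setminus\supp\lambda$ has at most countably many components --- only finitely many when $\supp\lambda$ does --- and on each one \cref{lem:smooth} makes $H_\sigma[\lambda]$ smooth and strictly monotone, hence it attains each of the countably many (resp.\ finitely many, when $S^{-1}(\infty)$ is finite) values in $\{s\in\RR:is\in\Sigma\}$ at most once. So $\supp\nu$ is countable (resp.\ finite); a measure carried by a countable set is purely atomic, giving $\nu=\sum_j\beta_j\delta_{\alpha_j}$ with $\beta_j>0$ and $\{\alpha_j\}\subset Z(\lambda)$, which closes the argument. The obstacle I anticipate is exactly the middle paragraph: \cref{lem:containment} alone cannot bound $\supp\nu$ well enough, and one must exploit the extra regularity of $Q_\sigma[\lambda]$ up to $S^1\setminus\supp\lambda$ together with the continuity of $S$ off $\Sigma$ to trim $N(\lambda)$ down to $Z(\lambda)$; the remaining steps are routine assembly of \cref{thm:representation}, \cref{lem:containment}, \cref{lem:smooth}, and \cref{lem:firstone}.
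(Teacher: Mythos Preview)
Your proposal is correct and follows the same approach as the paper: split $\supp\nu$ into its intersection with $\supp\lambda$ (contained in $N(\lambda)\cap\supp\lambda$ by \cref{lem:containment}, hence countable or finite by hypothesis) and its complement (controlled by the strict monotonicity of $H_\sigma[\lambda]$ from \cref{lem:smooth}, so that it meets the singular set $\Sigma$ only countably or finitely often). Your middle paragraph --- arguing that $\nu$ vanishes near any $e^{i\phi}\notin\supp\lambda$ with $iH_\sigma[\lambda](e^{i\phi})\notin\Sigma$ by extending $S\circ Q_\sigma[\lambda]$ continuously to the boundary there --- supplies a justification the paper's terse proof actually elides; the paper simply asserts that off $\supp\lambda$ the singular support reduces to the intersection points with $S^{-1}(\infty)$, so your version is, if anything, more complete.
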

\begin{proof}
    From \cref{lem:containment}, we deduce that $\supp\nu\cap \supp\lambda$ is countable [resp., finite] and contained in $N(\lambda)\cap\supp\lambda$. That $\supp\nu\setminus\supp\lambda$ is countable [resp.,~finite] follows from \cref{lem:smooth}; indeed, since $H_\sigma[\lambda]$ is smooth and strictly decreasing outside of $\supp\lambda$, it can only intersect the singular region $S^{-1}(\infty)$ countably [resp., finitely] many times.
\end{proof}

Once again, the LHA condition of \cref{lem:containment_refined} allows for a refinement of this statement, with much the same argument:
\begin{corollary}\label{cor:countable_refined}
    If $S$ is LHA and $N_0(\lambda)\cap\supp\lambda$ is countable, then \cref{cor:countable} holds with $Z(\lambda)$ replaced by
    \[Z_0(\lambda) = \left(N_0(\lambda)\cap \supp\lambda\right)\cup\{z\notin\supp\lambda\;|\;iH_\sigma[\lambda](z)\in\Sigma\}.\]
\end{corollary}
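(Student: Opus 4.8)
The plan is to rerun the proof of \cref{cor:countable} essentially verbatim, making the single substitution of \cref{lem:containment_refined} for \cref{lem:containment}. Since $S$ is assumed LHA, \cref{lem:containment_refined} gives the sharper containment $\supp\nu\subset N_0(\lambda)$ for the singular measure $\nu$ furnished by \cref{thm:representation}, in place of the weaker $\supp\nu\subset N_0(\lambda)\cup N_\infty(\lambda)$ used in \cref{cor:countable}. Nothing else in the argument changes, so the task is just to check that this stronger containment propagates through to the conclusion with $Z(\lambda)$ replaced by $Z_0(\lambda)$.

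First I would split $\supp\nu=(\supp\nu\cap\supp\lambda)\cup(\supp\nu\setminus\supp\lambda)$. The first piece lies in $N_0(\lambda)\cap\supp\lambda$, which is countable [resp.\ finite] by hypothesis. For the second piece, on each connected component $I$ of the open set $S^1\setminus\supp\lambda$ the boundary trace of $Q_\sigma[\lambda]$ equals $iH_\sigma[\lambda]$, and by \cref{lem:smooth} $H_\sigma[\lambda]$ is smooth and strictly monotone on $I$, hence injective there. Consequently the set of $z\in I$ with $iH_\sigma[\lambda](z)\in\Sigma\doteq\op{clos}S^{-1}(\infty)$ is in bijection with a subset of the countable set $\Sigma$, hence countable; and at every other $z\in I$ the map $S$ is continuous at $iH_\sigma[\lambda](z)$, so $S\circ Q_\sigma[\lambda]$ extends continuously and boundedly across $z$, forbidding an atom of $\nu$ there. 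Since $S^1\setminus\supp\lambda$ has at most countably many components, $\supp\nu\setminus\supp\lambda$ is countable and contained in $\{z\notin\supp\lambda : iH_\sigma[\lambda](z)\in\Sigma\}$. In the ``finite'' regime one instead uses that $\supp\lambda$ has finitely many components and that $S^{-1}(\infty)$ is finite to replace ``countable'' by ``finite''. In all cases $\supp\nu$ has countable [resp.\ finite] closure contained in $Z_0(\lambda)$, so $\nu$ is discrete.

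Finally I would close exactly as in \cref{cor:countable}: write $\nu=\sum_{\alpha_j\in Z_0(\lambda)}\beta_j\delta_{\alpha_j}$ with $\beta_j>0$, positivity being inherited from $\nu\in\cM_+(S^1)$; substitute into \eqref{eq:thmrep0} and evaluate at the origin, where $Q[\delta_{\alpha_j}]$ and $Q[S^{\Re}\circ(\lambda_c+iH_\sigma[\lambda])]$ are real, to identify the additive imaginary constant as $\zeta=S^{\Im}(\|\lambda\|_{S^1}+i\sigma)$; this yields the stated Cauchy-transform identity, and passing to non-tangential boundary traces yields the Hilbert-transform identity. The only delicate point — and it is the one already met in \cref{cor:countable}, not a new one — is the off-support boundary argument of the second paragraph, where \cref{lem:smooth} together with the countability (resp.\ finiteness) of $\Sigma$ is what rules out spurious atoms of $\nu$ away from $\supp\lambda$. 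Apart from this bookkeeping the corollary is an immediate consequence of \cref{lem:containment_refined} and the structure already established for \cref{cor:countable}.
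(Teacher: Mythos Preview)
Your proposal is correct and follows exactly the approach the paper indicates: rerun the proof of \cref{cor:countable} with \cref{lem:containment_refined} in place of \cref{lem:containment}, so that the on-support containment becomes $\supp\nu\cap\supp\lambda\subset N_0(\lambda)\cap\supp\lambda$, while the off-support argument via \cref{lem:smooth} is unchanged. The paper itself only says ``with much the same argument,'' and your write-up simply spells this out in greater detail than the paper does.
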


We now study the support of $S\circ Q_\sigma[\lambda]$. Although the following two results are not used in the proof of \cref{thm:main_circle}, they will be necessary to understand the pullback of the theorem to the real line in \cref{sec:main_line}. For any function $g$ on $S^1$ defined only up to sets of measure zero, we write 
\[\supp g\doteq S^1\setminus\bigcup\{I\subset S^1\;\text{open}\;|\;g(z)=0\;\text{for almost all}\;z\in I\}\]
for its closed, essential support.

\begin{lemma}\label{lem:support}
    Suppose $\lambda\in\cM_+(S^1)$, and write $\lambda_c\in L^1(S^1)$ for the density of its continuous part with respect to the normalized Lebesgue measure $(2\pi)^{-1}\,d\theta$. Fix an admissible map $S$, and let $S^{\Re} = \Re(S)$; note that $S$ need not be \emph{highly} admissible. Then we find
    \[\supp \left[S^{\Re}\circ(\lambda_c+iH_\sigma[\lambda])\right] \supset\supp \lambda_c\]
    for any $\sigma\in\RR$.
\end{lemma}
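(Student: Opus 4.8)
The claim is that nonlinear transformation by an admissible map $S$ cannot shrink the support of the continuous density. The plan is to prove the contrapositive locally: if $S^{\Re}\circ(\lambda_c + iH_\sigma[\lambda])$ vanishes on some open arc $I\subset S^1$, then $\lambda_c$ must also vanish (almost everywhere) on $I$. So fix an open arc $I$ on which $S^{\Re}\circ(\lambda_c + iH_\sigma[\lambda]) = 0$ a.e. First I would recall from the proof of \cref{thm:representation} that there is a finite positive measure $\mu\in\cM_+(S^1)$ with $P[\mu] = S^{\Re}\circ Q_\sigma[\lambda]$ on $\DD$, and whose continuous density is exactly $\mu_c = S^{\Re}\circ(\lambda_c + iH_\sigma[\lambda])$. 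Thus $\mu_c \equiv 0$ a.e. on $I$.

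The key analytic input is that a positive harmonic function whose nontangential boundary limit vanishes on a set of positive measure — here, whose full continuous density vanishes on the arc $I$ — forces the corresponding holomorphic function $S\circ Q_\sigma[\lambda]$ to behave rigidly near $I$. Concretely, $S\circ Q_\sigma[\lambda] = Q[\mu]$ up to an imaginary constant, and since $\mu$ restricted to $I$ is purely singular, $\Re(S\circ Q_\sigma[\lambda])$ has nontangential limit $0$ a.e. on $I$. By a theorem of F.\ and M.\ Riesz (as already invoked in the proof of \cref{lem:firstone}), a bounded holomorphic function cannot vanish on a set of positive measure on $S^1$ unless it is identically zero; applying this idea to $\exp(-S\circ Q_\sigma[\lambda])$ (bounded since $\Re(S\circ Q_\sigma[\lambda])>0$ on $\DD$), whose boundary modulus is $1$ a.e.\ on $I$, shows that the inner–outer factorization of this bounded function has no outer part ``seeing'' $I$ — so in fact $S\circ Q_\sigma[\lambda]$ cannot take finite boundary values with zero real part on a positive-measure subset of $I$ unless the singular measure $\mu|_I$ is responsible. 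Turning this around: on $I$, $Q_\sigma[\lambda]$ must take boundary values $\lambda_c + iH_\sigma[\lambda]$ lying in the set $(S^{\Re})^{-1}(0)\cap\ovl{H}_+$, which (since $S$ is holomorphic on $H_+$ with $S(H_+)\subset H_+$, hence $S^{\Re}>0$ on the open right half-plane) is contained in the boundary line $\partial H_+ = i\RR$. Therefore $\lambda_c = \Re Q_\sigma[\lambda] = 0$ a.e.\ on $I$, which is exactly what we need.

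More carefully, to make the last step rigorous I would argue as follows. Let $g = S\circ Q_\sigma[\lambda]$, a holomorphic map $\DD\to H_+\cup\{\infty\}$, and consider $h = (1-g)/(1+g)$, a bounded holomorphic self-map of $\DD$ (Cayley transform of $H_+$ to $\DD$). Wherever $g$ has a finite boundary value with $\Re g = 0$, the boundary value of $h$ has modulus $1$. We know $\Re g \to \mu_c = 0$ a.e.\ on $I$ and $g$ is finite a.e.\ on $S^1$ (shown in the proof of \cref{lem:firstone}), so $|h| = 1$ a.e.\ on $I$. Since $|h|\le 1$ throughout $\DD$, $h$ is an inner function times an outer function whose boundary modulus is $1$ a.e.\ on $I$; hence $\log|h|$ is a negative harmonic function (Poisson integral of a negative measure plus the log of an inner function's modulus) that vanishes a.e.\ on $I$ — this does not immediately force $h$ constant, but it does let me pass back to $g$. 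The cleaner route: $\mu_c = 0$ a.e.\ on $I$ means (by \cref{prop:classic}.\ref{prop:classic4} applied to $\mu$) that $P[\mu]$ has nontangential limit $0$ a.e.\ on $I$; since $P[\mu] = S^{\Re}\circ Q_\sigma[\lambda] \ge 0$ and $Q_\sigma[\lambda]$ has finite nontangential limits $\lambda_c + iH_\sigma[\lambda]$ a.e.\ on $I$, continuity of $S^{\Re}$ off its singular set gives $S^{\Re}(\lambda_c(\theta) + iH_\sigma[\lambda](\theta)) = 0$ for a.e.\ $e^{i\theta}\in I$ away from $Z(\lambda)$-type bad points, and since $S^{\Re}>0$ on the open half-plane, the argument $\lambda_c(\theta) + iH_\sigma[\lambda](\theta)$ must lie on $\partial H_+$, forcing $\lambda_c(\theta)=0$. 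This gives $\supp\lambda_c \cap I = \emptyset$, completing the contrapositive.

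I expect the main obstacle to be the endgame: ensuring that ``$S^{\Re}$ vanishes only on $\partial H_+$'' genuinely applies at the boundary values. The subtlety is that $S$ is only assumed continuous on $\ovl{H}_+$ away from $\op{clos}S^{-1}(\infty)$, so I must rule out that $\lambda_c + iH_\sigma[\lambda]$ spends positive measure in $\partial H_+$ \emph{by virtue of $\lambda_c$ being positive there} — but that is a contradiction in terms ($\Re$ of a point on $i\RR$ is $0$), so the real content is just that $\Re Q_\sigma[\lambda] = \lambda_c$ a.e.\ on the continuous part, a fact already established in \cref{prop:classic}.\ref{prop:classic4}. The genuinely delicate point is handling the null set where $g$ might blow up or $S$ is discontinuous; this is absorbed by noting that $\op{clos}S^{-1}(\infty)$ is countable, hence measure zero, and that $g$ is finite a.e., both already in hand from \cref{lem:firstone}. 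So in the end no new hard analysis is needed — only a careful bookkeeping of boundary-value statements already proved.
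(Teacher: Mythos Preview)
Your proof is correct and ultimately hinges on exactly the same pointwise observation as the paper's: since $S$ is holomorphic on $H_+$ with values in $\ovl{H}_+$, the open mapping theorem gives $S^{\Re}>0$ on the open half-plane, so $S^{\Re}(\lambda_c(\theta)+iH_\sigma[\lambda](\theta))=0$ forces $\lambda_c(\theta)=0$. You arrive at this in your ``cleaner route'' paragraph.

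However, the paper's proof is a single paragraph that applies this observation \emph{directly}, with none of the machinery you assemble first. The detours through the representing measure $\mu$, the F.\ and M.\ Riesz theorem, inner--outer factorization, and the Cayley transform $h=(1-g)/(1+g)$ are all unnecessary. In particular, your ``cleaner route'' is circular: you start from the hypothesis $S^{\Re}\circ(\lambda_c+iH_\sigma[\lambda])=0$ a.e.\ on $I$, identify this as $\mu_c=0$ on $I$, pass to $P[\mu]\to 0$ nontangentially, and then use continuity of $S$ to recover $S^{\Re}(\lambda_c+iH_\sigma[\lambda])=0$ --- which was the starting hypothesis. The paper simply observes that wherever $H_\sigma[\lambda](\theta)$ is finite (a.e.\ in $S^1$) and $\lambda_c(\theta)>0$, the point $\lambda_c(\theta)+iH_\sigma[\lambda](\theta)$ lies in the open $H_+$, hence $S^{\Re}$ is strictly positive there; the contrapositive is immediate. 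No boundary-value theory of $S\circ Q_\sigma[\lambda]$ is invoked at all.
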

\begin{proof}
    Suppose $z\in S^1$ satisfies $\lambda_c(z)>0$. Since $S^{\Re}(\HH_+)\subset \HH_+$, we know that $S^{\Re}(\lambda_c(z) + iH_\sigma[\lambda](z))>0$ wherever $iH_\sigma[\lambda](z)$ is finite; of course, this holds for almost all $z\in S^1$. Thus, if $S^{\Re}\circ(\lambda_c+iH_\sigma[\lambda]) \equiv 0$ almost everywhere on an open set $I\subset S^1$, the same must be true of $\lambda_c$; the claim follows.
\end{proof}

The converse of \cref{lem:support} requires a stronger hypothesis on $S$, i.e., that it restricts to a map $S:\partial \HH_+\to\partial \HH_+\cup\{\infty\}$. This hypothesis is independent of the highly admissible hypothesis used in \cref{cor:countable}. Examples of this form include
\[z\mapsto az,\qquad z\mapsto\frac{a}{z-i\zeta},\]
where $a>0$ and $\zeta\in\partial \HH_+$. Connecting to \cref{rem:map_algebra}, the map $z\mapsto (Q_\sigma[\lambda]\circ\phi^{-1})(iz)$ only satisfies this hypothesis if $\lambda$ is a discrete measure.
\begin{proposition}[Support of transformed data]\label{prop:support}
    In the setting of \cref{lem:support}, suppose that $S$ restricts to a function $S:\partial \HH_+\to\partial \HH_+\cup\{\infty\}$. Then
    \[\supp \left[S^{\Re}\circ(\lambda_c+iH_\sigma[\lambda])\right] =\supp \lambda_c.\]
\end{proposition}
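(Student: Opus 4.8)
The plan is to combine \cref{lem:support}, which already supplies the inclusion $\supp\lambda_c\subset\supp\left[S^{\Re}\circ(\lambda_c+iH_\sigma[\lambda])\right]$, with a proof of the reverse inclusion $\supp\left[S^{\Re}\circ(\lambda_c+iH_\sigma[\lambda])\right]\subset\supp\lambda_c$. Since the essential support of a function is the complement of the largest open set on which it vanishes almost everywhere, it suffices to fix an open interval $I\subset S^1$ with $I\cap\supp\lambda_c=\emptyset$ — so that $\lambda_c=0$ almost everywhere on $I$ — and to show $S^{\Re}\circ(\lambda_c+iH_\sigma[\lambda])=0$ almost everywhere on $I$. (Note that $\lambda$ itself may still carry singular mass on $I$, so one cannot simply invoke \cref{lem:smooth}; the extra hypothesis that $S$ restricts to a map $\partial H_+\to\partial H_+\cup\{\infty\}$ is what does the work instead.)

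The heart of the argument is that on such an $I$ the argument fed into $S$ lies on the imaginary axis $\partial H_+$ almost everywhere, and $S$ sends $\partial H_+$ — away from a negligible set — into $\partial H_+$, which has vanishing real part. Concretely: for almost every $z\in I$ the non-tangential boundary value of $Q_\sigma[\lambda]$ is $\lambda_c(z)+iH_\sigma[\lambda](z)$ (as recorded in the proof of \cref{lem:firstone}), which equals $iH_\sigma[\lambda](z)$ since $\lambda_c=0$ a.e.\ on $I$, and $H_\sigma[\lambda](z)$ is finite; hence $iH_\sigma[\lambda](z)\in\partial H_+$. By hypothesis $S$ restricts to $\partial H_+\to\partial H_+\cup\{\infty\}$, so provided $iH_\sigma[\lambda](z)\notin\op{clos}S^{-1}(\infty)$ we get $S\!\left(iH_\sigma[\lambda](z)\right)\in\partial H_+$, and therefore $S^{\Re}\circ(\lambda_c+iH_\sigma[\lambda])(z)=\Re S\!\left(iH_\sigma[\lambda](z)\right)=0$.

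The single remaining point — and the main obstacle — is to verify that $\{z\in I:\ iH_\sigma[\lambda](z)\in\op{clos}S^{-1}(\infty)\}$ is a null set. Here I would reuse the F.~and M.~Riesz argument from \cref{lem:firstone}: admissibility of $S$ makes $\op{clos}S^{-1}(\infty)$ a countable subset of $\partial H_+$, say $\{it_k\}_k$, so it is enough to show each level set $\{z\in I:\ H_\sigma[\lambda](z)=t_k\}$ has measure zero. For fixed $t$, the function $q_t\doteq\exp(it-Q_\sigma[\lambda])-1$ is bounded and holomorphic on $\DD$, and it is not identically zero because $\Re Q_\sigma[\lambda]=P[\lambda]>0$ forces $Q_\sigma[\lambda]$ off the imaginary axis; by the theorem of F.~and M.~Riesz its boundary zero set is null. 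Since $\lambda_c=0$ a.e.\ on $I$, for almost every $z\in I$ with $H_\sigma[\lambda](z)=t$ the boundary value of $Q_\sigma[\lambda]$ at $z$ equals $it$, so $z$ lies in this null set; hence $\{z\in I:\ H_\sigma[\lambda](z)=t\}$ is null, and a countable union over $k$ gives the claim. Assembling the pieces, $S^{\Re}\circ(\lambda_c+iH_\sigma[\lambda])=0$ a.e.\ on $I$, so $I$ is disjoint from $\supp\left[S^{\Re}\circ(\lambda_c+iH_\sigma[\lambda])\right]$; as $I$ was an arbitrary open set disjoint from $\supp\lambda_c$, the reverse inclusion — and with it the proposition — follows.
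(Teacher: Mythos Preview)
Your proof is correct and follows essentially the same route as the paper. The one economy you miss is that the paper does not re-run the F.~and M.~Riesz argument: it simply cites from the proof of \cref{lem:firstone} that $S^{\Re}\circ(\lambda_c+iH_\sigma[\lambda])$ is finite almost everywhere, so at a.e.\ $z\in I$ the value $S(iH_\sigma[\lambda](z))$ lies in $\partial H_+\cup\{\infty\}$ by hypothesis and is finite, hence lies in $\partial H_+$. Your detour through $\op{clos}S^{-1}(\infty)$ and the level-set argument is harmless (indeed it reproduces exactly what \cref{lem:firstone} already established), just slightly longer than needed.
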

\begin{proof}
    One direction of the proof is furnished by \cref{lem:support}. Conversely, suppose that $\lambda_c\equiv 0$ almost everywhere on an open $I\subset S^1$. Recall from the proof of \cref{lem:firstone} that $S^{\Re}\circ(\lambda_c+iH_\sigma[\lambda])$ is finite almost everywhere in $S^1$; fix a $z\in I$ for which this is true (and for which $\lambda_c(z)=0$), so that our hypothesis on $S$ ensures 
    \[S(\lambda_c(z) + iH_\sigma[f](z)) = S(iH_\sigma[f](z)) \in \partial \HH_+,\]
    and thus $S^{\Re}(\lambda_c(z) + iH_\sigma[f](z)) = 0$. Since $z$ was generic, the claim follows.
\end{proof}

Finally, we prove a generalized version of \cref{thm:main_circle}. Much of the result follows from the theory developed so far; for instance, \cref{prop:classic}.\ref{prop:classic1} and \cref{cor:uniqueness} together imply that the map $\cB$ is a well-defined involution of $\cM_+(S^1)\times\RR$, and \cref{thm:representation} gives an explicit representation of $\zeta_0$ and of the continuous component of $\mu$. What remains to be shown is the \emph{topological} claim of the theorem---i.e., that $\cB$ is weakly continuous---which we show here in generalized form.

Given an admissible $S:\ovl{\HH}_+\to\ovl{\HH}_+\cup\{\infty\}$, define the map $\mathcal{B}_{S}: \cM_+(S^1)\times\RR \to \cM_+(S^1)\times\RR$ by
\begin{equation}\label{eq:QBL}
    \cB_S[\lambda,\sigma] = (\mu,\xi),\qquad S\circ Q_\sigma[\lambda] = Q_\xi[\mu].
\end{equation}
By \cref{thm:representation}, we can express the map more explicitly as
\begin{equation*}
    \mu = S^{\Re}\circ(\lambda_c + iH_\sigma[\lambda]) + \nu_{S,\sigma}[\lambda],\qquad \xi = S^{\Im}(\|\lambda\|_{S^1}+i\sigma),
\end{equation*}
where $\nu_{S,\sigma}[\lambda]$ is the singular measure furnished by the theorem. \Cref{thm:main_circle} follows as a special case of the following proposition:
\begin{proposition}[Weak continuity of admissible maps]\label{prop:weakcont}
    If $S$ is admissible, then $\cB_{S}:\cM_+(S^1)\times\RR\to\cM_+(S^1)\times\RR$ is continuous with respect to the product of the weak topology on $\cM_+(S^1)$ and the standard topology on $\RR$.
\end{proposition}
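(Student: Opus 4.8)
The plan is to prove that $\cB_S$ is sequentially continuous, which is enough: the map $\nu\mapsto\|\nu\|$ is weakly continuous, so the norm-balls $\{\nu\in\cM_+(S^1):\|\nu\|<M\}$ are weakly open and cover $\cM_+(S^1)$, and on each of them the weak topology is metrizable, hence first countable, so continuity may be checked sequentially there. Accordingly, fix $(\lambda,\sigma)$ and a sequence $(\lambda_j,\sigma_j)\to(\lambda,\sigma)$; testing weak convergence against $1\in C(S^1)$ gives $\|\lambda_j\|\to\|\lambda\|$, so $M\doteq\sup_j\|\lambda_j\|<\infty$. Write $\cB_S[\lambda_j,\sigma_j]=(\mu_j,\xi_j)$ and $\cB_S[\lambda,\sigma]=(\mu,\xi)$, so that $S\circ Q_{\sigma_j}[\lambda_j]=Q_{\xi_j}[\mu_j]$ and $S\circ Q_\sigma[\lambda]=Q_\xi[\mu]$ on $\DD$ (these have positive real part since $S(H_+)\subseteq H_+$, as noted in the proof of \cref{lem:firstone}). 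The first step is to show $Q_{\sigma_j}[\lambda_j]\to Q_\sigma[\lambda]$ locally uniformly on $\DD$: for each fixed $z\in\DD$ the Cauchy kernel $\theta\mapsto(1+e^{-i\theta}z)/(1-e^{-i\theta}z)$ lies in $C(S^1)$, so weak convergence of $\lambda_j$ together with $\sigma_j\to\sigma$ yields $Q_{\sigma_j}[\lambda_j](z)\to Q_\sigma[\lambda](z)$, while $|Q_{\sigma_j}[\lambda_j](z)|\le M(1+|z|)/(1-|z|)+\sup_j|\sigma_j|$ is locally bounded, so $\{Q_{\sigma_j}[\lambda_j]\}$ is normal by Montel's theorem and a normal family converging pointwise converges locally uniformly.

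The crux is to push this convergence through $S$, which requires confining the values to a compact subset of the open right half-plane, away from the boundary singularities $\op{clos}S^{-1}(\infty)$ of $S$. Fix a compact $K\subset\DD$. Since $\Re Q_\sigma[\lambda]=P[\lambda]$ is the Poisson integral of a nonzero nonnegative measure, Harnack's inequality gives $P[\lambda]\ge c>0$ on $K$; combining with the locally uniform convergence above, for all large $j$ we have $c/2\le\Re Q_{\sigma_j}[\lambda_j]\le C$ and $|\Im Q_{\sigma_j}[\lambda_j]|\le C'$ on $K$, so $Q_{\sigma_j}[\lambda_j](K)$ and $Q_\sigma[\lambda](K)$ all lie in the fixed compact set $L=\{w: c/2\le\Re w\le C,\ |\Im w|\le C'\}\subset H_+$. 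As $S$ is holomorphic, hence uniformly continuous, on $L$, locally uniform convergence is preserved under composition: $S\circ Q_{\sigma_j}[\lambda_j]\to S\circ Q_\sigma[\lambda]$ locally uniformly on $\DD$.

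Finally I would read off the conclusion. Evaluating imaginary parts at the origin gives $\xi_j=\Im(S\circ Q_{\sigma_j}[\lambda_j])(0)\to\Im(S\circ Q_\sigma[\lambda])(0)=\xi$. Taking real parts, $P[\mu_j]\to P[\mu]$ locally uniformly on $\DD$, so in particular $\|\mu_j\|=P[\mu_j](0)\to P[\mu](0)=\|\mu\|$ and the $\mu_j$ are norm-bounded. By weak-$*$ sequential compactness of norm-bounded subsets of $\cM(S^1)$ (separability of $C(S^1)$), any subsequence of $(\mu_j)$ has a further subsequence converging weakly to a nonnegative $\mu'$; along it $P[\mu_j]\to P[\mu']$ pointwise, forcing $P[\mu']=P[\mu]$, hence $\mu'=\mu$ by injectivity of the Poisson integral on $\cM(S^1)$ (\cref{prop:classic}.\ref{prop:classic1}), and $\|\mu'\|=\lim\|\mu_j\|=\|\mu\|>0$ ensures $\mu'\in\cM_+(S^1)$. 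Since every subsequence of $(\mu_j)$ has a sub-subsequence converging weakly to $\mu$, we conclude $\mu_j\to\mu$ weakly; thus $(\mu_j,\xi_j)\to(\mu,\xi)$, and $\cB_S$ is sequentially, hence topologically, continuous.

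The one genuinely delicate point is the compactness step of the second paragraph: pinning the values $Q_{\sigma_j}[\lambda_j](K)$ into a compact subset of the \emph{open} half-plane, uniformly in $j$, so that the possibly-singular map $S$ acts continuously — this needs the Harnack lower bound for the limit measure transported along the sequence via the locally uniform convergence. Everything else is routine harmonic analysis on the disc.
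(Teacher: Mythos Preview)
Your proof is correct and, in several places, cleaner than the paper's. Both arguments rest on the same core idea---locally uniform convergence of $Q_{\sigma_j}[\lambda_j]$ on $\DD$, then pushing through $S$ by confining values to a compact subset of $H_+$---but the implementations differ.

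First, you explicitly justify the reduction to sequential continuity via metrizability of the weak topology on norm-balls; the paper simply works with sequences without comment. Second, you obtain locally uniform convergence of the Cauchy transforms directly from Montel's theorem (pointwise convergence plus local boundedness of a normal family), whereas the paper works circle-by-circle at each radius $r<1$ and uses a derivative bound to upgrade pointwise to uniform convergence on each circle. Third, and most notably, your passage from $P[\mu_j]\to P[\mu]$ to $\mu_j\to\mu$ weakly is a clean subsequence-of-subsequence argument using weak-$*$ compactness and injectivity of the Poisson integral; the paper instead constructs the dilated measures $\mu_{r,n}=(2\pi)^{-1}(S^{\Re}\circ Q_{\sigma_n}[\lambda_n])(re^{i\theta})\,d\theta$ and runs a three-$\eps$ estimate against a test function $g$, interleaving convergence in $r$ and in $n$. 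Your route avoids this double limit entirely. The paper's approach, on the other hand, makes the approximation by smooth densities on circles explicit, which it later reuses in the real-line setting (\cref{lem:compactcontinuity}); your compactness argument is more portable to abstract settings but slightly less hands-on. Your remark that the Harnack lower bound is the ``genuinely delicate point'' is accurate---the paper handles the same issue implicitly by noting that $f_r(S^1)$ is a compact subset of $H_+$ (since $\Re Q_\sigma[\lambda]>0$ on $\DD$ by the maximum principle) and fixing a neighborhood $U$ of it.
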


\begin{proof}
    Fix a nonzero $\lambda\in\cM_+(S^1)$ and $\sigma\in\RR$, and suppose $\lambda_n\in\cM_+(S^1)$ converges weakly to $\lambda$ and $\sigma_n$ converges to $\sigma$. For any $r<1$, define the following complex functions on $S^1$:
    \[f_{r,n}(e^{i\theta}) = Q_{\sigma_n}[\lambda_n](re^{i\theta}) = Q[\lambda_n](re^{i\theta}) + i\sigma_n.\]
    Notably, $f_{r,n}$ is smooth, and 
    \[\lim_{n\to\infty} f_{r,n}(e^{i\theta}) = f_r(e^{i\theta})\doteq Q_\sigma[\lambda](re^{i\theta})\]
    pointwise in $S^1$; this follows from the weak convergence of $\lambda_n$, as the Cauchy kernel is smooth and uniformly bounded along each fixed $r$. Suppose $\|\lambda\|_{S^1} = M>0$, and fix $N\geq 1$ such that $\|\lambda_{n}\|_{S^1}\leq 2M$ for all $n\geq N$. We then know that $\partial_\theta f_{r,n}$ is uniformly bounded in $n$, since
    \[|\partial_\theta f_{r,n}(e^{i\theta})| = \frac{1}{2\pi}\left|\int_0^{2\pi}\frac{1+ire^{i(\theta-\theta')}}{1-ire^{i(\theta-\theta')}}\,d\lambda_n(\theta')\right| \leq \frac{2M}{2\pi}\,\frac{1+r}{1-r},\]
    \dave{and so a standard argument shows that $\lim_{n\to\infty} f_{r,n}=f_r$
    uniformly, for fixed $r$}. Fix a neighborhood $U\supset f_r(S^1)$ in $\HH_+$; by increasing $N$, we can guarantee that 
    \[f_{r,n}(S^1)\subset U\]
    for $n\geq N$. However, $S$ is smooth on $\ovl{U}$, so in particular, it is uniformly Lipschitz on $U$; as such,
    \[\lim_{n\to\infty} S\circ f_{r,n} = S\circ f_r\]
    uniformly, for fixed $r$.

    Let $(\mu,\xi) = \cB_{S}[\lambda,\sigma]$ and $(\mu_n,\xi_n) = \cB_{S,\sigma}[\lambda_n,\sigma_n]$. By applying the convergence of $S\circ f_{r,n}$ to the case $r=0$, we see that $\xi_n\to\xi$ and $\|\lambda_n\|\to\|\lambda\|$ as $n\to\infty$. As such, if $\|\mu\|_{S^1} = M'$, we can increase $N$ to ensure that $\|\mu_n\|_{S^1}\leq 2M'$ for $n\geq N$. Next, define the following measures in $\cM_+(S^1)$:
    \[\mu_{r,n} = (2\pi)^{-1} (S^{\Re}\circ f_{r,n})(e^{i\theta})\,d\theta,\qquad \mu_r = (2\pi)^{-1} (S^{\Re}\circ f_r)(e^{i\theta})\,d\theta,\]
    and fix a bounded, continuous function $g:S^1\to\RR$. Since $P[g]$ is continuous on the compact set $\ovl{\DD}$, it is necessarily uniformly continuous. For any $\eps>0$, then, we can fix $r_\eps<1$ such that
    \[\sup\nolimits_\theta\left|g(e^{i\theta}) - P[g](re^{i\theta})\right|<\eps\]
    for $r\geq r_\eps$. Define $\widetilde{g}(e^{i\theta})=g(e^{-i\theta})$. Then we find
    \begin{align*}
        \int g\,d\mu_{r,n} &= (2\pi)^{-1}\int_0^{2\pi} g(\theta) P[\mu_n](re^{i\theta})\,d\theta\\
        &= (2\pi)^{-1}\int_0^{2\pi} g(\theta) \int_0^{2\pi} \Re\left(\frac{1+re^{i(\theta-\theta')}}{1-re^{i(\theta-\theta')}}\right)\,d\mu_n(\theta')\,d\theta\\
        &= \int_0^{2\pi} P[\widetilde{g}](re^{-i\theta'})\,d\mu_n(\theta')\\
        &= \int_0^{2\pi} P[g](re^{i\theta'})\,d\mu_n(\theta'),
    \end{align*}
    and similarly for $\mu_r$ and $\mu$; this implies
    \begin{align*}\left|\int g\,d(\mu_{n} - \mu) \right| &\leq \left|\int g\,d(\mu_{n,r_\eps} - \mu_n) \right| + \left|\int g\,d(\mu_{n,r_\eps}-\mu_{r_\eps}) \right| + \left|\int g \,d(\mu_{r_\eps} - \mu) \right|\\
    &\leq \left|\int_0^{2\pi} \left(g(e^{i\theta})-P[g](r_\eps e^{i\theta})\right)\,d\mu_n(\theta) \right| + \left|\int g\,d(\mu_{n,r_\eps}-\mu_{r_\eps}) \right| \\
    &\qquad+ \left|\int_0^{2\pi} \left(g(e^{i\theta})-P[g](r_\eps e^{i\theta})\right)\,d\mu(\theta) \right|\\
    &\leq \left|\int g\,d(\mu_{n,r_\eps}-\mu_{r_\eps}) \right| + 2\eps M'.
    \end{align*}
    Since $\eps$ was arbitrary and $\mu_{n,r_\eps}\rightharpoonup\mu_{r_\eps}$ weakly, we deduce that
    \[\int g\,d\mu_n\to \int g\,d\mu\]
    for any bounded, continuous function $g$. The proposition follows.
\end{proof}

\section{Integral Transforms on the Real Line}\label{sec:line}
We now develop our spectral theory on the real line, which provides a natural setting in which to study Volterra integral, integro-differential, delay differential, and fractional differential equations (see \cref{sec:main_line,sec:regularizedH}). As a starting point, we work to derive the explicit interconversion formula given by \cref{thm:main_formula}. One \emph{could} follow a similar logic as the preceding sections to derive a formula for \dave{general nonlinear maps of data on the real line}; for simplicity, however, we focus on the map $\cB_\RR$, which provides a solution to the convolution equations \ref{eq:integrodiff_CM} and \ref{eq:integrodiff_PD}. The results for the regularized map $\cB_\mathrm{reg}$ of \cref{sec:regularizedH} can be proven likewise\footnote{In fact, one can read the results for $\cB_\mathrm{reg}$ almost directly from \dave{the circle theory of \cref{sec:disc}}, pushing it forward to the line as necessary.}.

We first establish the following lemma:
\begin{lemma}\label{lem:Lstar}
    Let $\lambda\in L^*(\RR) +\cM_c(\RR)\subset\cM_+^{(1)}(\RR)$, in the sense that $\lambda=\lambda_1+\lambda_2$ for (possibly non-unique) $\lambda_1\in L^*(\RR)$ and $\lambda_2\in\cM_c(\RR)$. Then the Hilbert transform of $\lambda$ has the following asymptotic behavior:
    \begin{equation}\label{eq:asymptotic_Htransform}
        H_\RR[\lambda](s) = \frac{1}{\pi s} \int d\lambda + \frac{1}{\pi s^2}\int s'\,d\lambda(s') + o(s^{-2}).
    \end{equation}
    Moreover, if $\lambda_c\in L^1(\RR)$ is the continuous density of $\lambda$, then for any $c_1\geq 0$ and $c_0\in\RR$, we find
    \begin{equation}\label{eq:Lstar_in_L1}
        \frac{(1+s^2)^{-1/2}\lambda_c(s)}{\lambda_c(s)^2 + (H_\RR[\lambda] - \pi^{-1}c_0 - \pi^{-1}c_1s)^2}\in L^1(\RR).
    \end{equation}
\end{lemma}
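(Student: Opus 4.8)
The plan is to prove the two assertions in turn; since the asymptotic expansion \cref{eq:asymptotic_Htransform} is used in the proof of \cref{eq:Lstar_in_L1}, I establish it first. Fix once and for all a decomposition $\lambda=\lambda_1+\lambda_2$ with $\lambda_1=f\in L^*(\RR)$ and $\lambda_2\in\cM_c(\RR)$, noting that $\int d\lambda$ and $\int s\,d\lambda(s)$ converge absolutely because $\int(1+s^2)^{1/2}f(s)\,ds<\infty$ and $\lambda_2$ is compactly supported; the non-uniqueness of the decomposition is harmless since the final formulas depend on $\lambda$ alone. For the compactly supported part, if $\supp\lambda_2\subset[-R,R]$ then for $|s|>2R$ there is no principal value, and expanding $(s-\tau)^{-1}=s^{-1}\sum_{k\geq0}(\tau/s)^k$ (all moments of $\lambda_2$ being finite) gives $H_\RR[\lambda_2](s)=\tfrac{1}{\pi s}\int d\lambda_2+\tfrac{1}{\pi s^2}\int\tau\,d\lambda_2(\tau)+O(s^{-3})$. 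For $\lambda_1=f$, the exact identity $\tfrac{1}{s-\tau}=\tfrac1s+\tfrac{\tau}{s^2}+\tfrac{\tau^2}{s^2(s-\tau)}$, multiplied by $f(\tau)$ and integrated (the first two moments converging absolutely, so the principal value affects only the last term), yields
\[H_\RR[f](s)-\frac{1}{\pi s}\int f-\frac{1}{\pi s^2}\int\tau f(\tau)\,d\tau=\frac{1}{s^2}H_\RR[g](s),\qquad g(\tau)\doteq\tau^2 f(\tau).\]
By definition of $L^*(\RR)$ one has $g\in L^1(\RR)$ and $\cF[g]\in L^1(\RR)$, so $g\in L^1\cap L^\infty\subset L^2$ and the multiplier formula $H_\RR[g]=\cF^{-1}[-i\,\op{sgn}(\cdot)\,\cF[g]]$ applies; since $-i\,\op{sgn}(\cdot)\cF[g]\in L^1(\RR)$, Riemann--Lebesgue gives $H_\RR[g]\in C_0(\RR)$, whence the right-hand side above is $o(s^{-2})$. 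Adding the two contributions proves \cref{eq:asymptotic_Htransform}.

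Turning to \cref{eq:Lstar_in_L1}, write $a(s)=\lambda_c(s)\geq0$ and $b(s)=H_\RR[\lambda](s)-\pi^{-1}(c_1 s+c_0)$, so the integrand equals $(1+s^2)^{-1/2}a/(a^2+b^2)$, and split the integral into a far field $|s|>R$ and a near field $|s|\leq R$. On the far field I use \cref{eq:asymptotic_Htransform} to bound $|b|$ from below: if $c_1>0$ then $b(s)=-\pi^{-1}c_1 s+O(1)$; if $c_1=0$ and $c_0\neq0$ then $b(s)\to-\pi^{-1}c_0\neq0$; and if $c_0=c_1=0$ then $b(s)=H_\RR[\lambda](s)=\pi^{-1}\|\lambda\|\,s^{-1}+o(s^{-1})$ with $\|\lambda\|>0$. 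In every case there are $R,c>0$ with $b(s)^2\geq c\,(1+s^2)^{-1}$ for $|s|>R$, and after enlarging $R$ so that $\lambda_c=f$ on $|s|>R$, the bound $a/(a^2+b^2)\leq a/b^2$ gives $(1+s^2)^{-1/2}a(s)/(a^2+b^2)\leq c^{-1}(1+s^2)^{1/2}f(s)$ there, which is integrable since $(1+s^2)^{1/2}f\in L^1(\RR)$ (when $c_1>0$ or $c_0\neq0$ one even gets domination by a constant multiple of $f\in L^1$). Hence the far-field integral is finite.

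On the near field $(1+s^2)^{-1/2}\leq1$, so it suffices to show $\mu_c\doteq a/(a^2+b^2)\in L^1_{\mathrm{loc}}(\RR)$. Set $G(z)\doteq Q_\RR[\lambda](z)-i\pi^{-1}c_0-i\pi^{-1}c_1 z$ on $\HH$; its real part is $P_\RR[\lambda](z)+\pi^{-1}c_1\Im z>0$ (the Poisson integral of the nonzero positive measure $\lambda$ being strictly positive), so $G$ is zero-free on $\HH$ and $1/G$ is holomorphic there with positive real part. Thus $\Re(1/G)$ is a positive harmonic function on $\HH$ and admits the classical representation $\Re(1/G)(x+iy)=\beta y+\tfrac{y}{\pi}\int_\RR\tfrac{d\rho(t)}{(x-t)^2+y^2}$ for some $\beta\geq0$ and some positive measure $\rho$ with $\int(1+t^2)^{-1}d\rho(t)<\infty$. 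Its nontangential boundary limit is a.e. the density of the absolutely continuous part of $\rho$; on the other hand, since $Q_\RR[\lambda]$ has nontangential boundary values $\lambda_c(x)+iH_\RR[\lambda](x)$ a.e. (the real-line analogue of \cref{prop:classic}), $1/G$ has boundary values $(a-ib)/(a^2+b^2)$ wherever $a+ib\neq0$, which by the F.\ and M.\ Riesz theorem (the zero set of a nonzero boundary function has measure zero) is a.e. Therefore $\mu_c=\Re(1/G)$ a.e.\ equals the a.c.\ density of $\rho$, so $\mu_c\geq0$ and $\int_{-R}^R\mu_c\leq\rho([-R,R])\leq(1+R^2)\int(1+t^2)^{-1}d\rho<\infty$. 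Combining with the far-field bound yields \cref{eq:Lstar_in_L1}.

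The main obstacle is the decay claim $H_\RR[g](s)\to0$ inside \cref{eq:asymptotic_Htransform}: one must verify that the principal-value Hilbert transform of $g=s^2f$ coincides almost everywhere with the $L^2$/multiplier transform, so that membership of $\cF[g]$ in $L^1$ can be converted into uniform decay via Riemann--Lebesgue; the hypothesis $g\in L^1$ makes this a routine approximation, but it should not be skipped. A secondary subtlety is the boundary-value identity $1/G=(a-ib)/(a^2+b^2)$ a.e., which rests on the a.e.\ existence of nontangential limits of $Q_\RR[\lambda]$ together with the F.\ and M.\ Riesz theorem to discard the measure-zero set where $G$ vanishes on the boundary.
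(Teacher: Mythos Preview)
Your proof is correct and follows essentially the same route as the paper. For \cref{eq:asymptotic_Htransform} both arguments reduce to showing $H_\RR[t^2\,d\lambda(t)]\to 0$ via Riemann--Lebesgue on the $L^*$ part and a direct decay estimate on the compactly supported part; your algebraic identity $\tfrac{1}{s-\tau}=\tfrac1s+\tfrac{\tau}{s^2}+\tfrac{\tau^2}{s^2(s-\tau)}$ is exactly the two-step recursion $s\pi H_\RR[\lambda]=\pi H_\RR[t\,d\lambda]+\int d\lambda$ that the paper writes out. For \cref{eq:Lstar_in_L1} the paper obtains local integrability by pulling back its circle-level \cref{lem:firstone} under $\psi$, whereas you give the self-contained half-plane argument (Herglotz representation of $\Re(1/G)$ plus F.~and M.~Riesz), which is precisely the content of that lemma transported to $\HH$; the far-field treatment via the asymptotic lower bound on $|b|$ is likewise the same, with your explicit case split on $(c_0,c_1)$ spelling out what the paper summarizes in one line.
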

\begin{proof}
    \dave{We prove these statements in turn. First, for any $s\in\RR$, we find
    \begin{align*}
        s\pi H_\RR[\lambda](s) = \pv\int\frac{t}{s-t}\,d\lambda(t) +  \int d\lambda= \pi H_\RR[t\mapsto t\,d\lambda(t)](s) +  \int d\lambda,
    \end{align*}
    and likewise}
    \[s\pi H_\RR[t\mapsto t\,d\lambda(t)](s) = \pi H_\RR[t\mapsto t^2\,d\lambda(t)](s) + \int t\,d\lambda(t).\]
    Now, $\lambda = \lambda_1 + \lambda_2$ for (possibly non-unique) $\lambda_1\in L^*(\RR)$ and $\lambda_2\in\cM_c(\RR)$. But since $\cF[t\mapsto t^2\,\lambda_1(t)] \in L^1(\RR)$ by hypothesis, it follows that $\cF[H_\RR[t\mapsto t^2\,\lambda_1(t)]]\in L^1(\RR)$, and the Riemann--Lebesgue lemma~\cite{royden2010real} shows that 
    \[H_\RR[t\mapsto t^2\,\lambda_1(t)](s)\in\cC_0(\RR)\]
    is a continuous function decaying to zero at infinity; since $\lambda_2$ is compactly supported, \cref{lem:smooth} likewise shows that 
    \[H_\RR[t\mapsto t^2\,\lambda_2(t)](s) = O(s^{-1})\]
    for large $s$. The asymptotic formula \cref{eq:asymptotic_Htransform} follows.

    By pulling \cref{lem:firstone} back under $\psi$, we see that the expression in \cref{eq:Lstar_in_L1} is \emph{locally} $L^1$; it remains only to check its behavior at infinity. But this follows from our asymptotic formula \cref{eq:asymptotic_Htransform}; if $H_\RR[\lambda](s)=O(s^{-1})$, then the full expression is of order $O(s\lambda_c(s))$; since $\lambda_1\in L^*(\RR)$ and $\lambda_2$ is compactly supported, this expression must be globally $L^1$.
\end{proof}

We are now in a place to prove our closed-form expression for $\cB_\RR$. Recall the statement of \cref{thm:main_formula}:
\thmmainformula*
\begin{proof}
    We prove the theorem in the case $c_0=c_1=0$, which is the most involved; the argument carries forward straightforwardly to cases where one or both parameters are nonzero.
    
    By pulling \cref{cor:countable_refined} back under $\psi$, we see that
    \[\frac{1}{Q_\RR[\lambda](z)} = Q_{\RR}\left[\frac{\lambda_c}{\lambda_c^2 + H_\sigma[\lambda]^2}\right](z) - i\pi^{-1}\zeta_0' - i\pi^{-1}\zeta_1 z +  \sum_{\alpha_j\in Z'} \beta_j Q[\psi[\delta_{\alpha_j}]](\phi^{-1}(z))\]
    for unique $\zeta_1,\beta_j\geq 0$ and $\zeta_0'\in\RR$. The term $i\pi^{-1}\zeta_1 z$ in the above equation arises from a pole at $-1 = \phi^{-1}(\infty)$ in the unit circle, as discussed in \cref{sec:main_line}. Now, although each atom $\delta_{\alpha_j}$ lies in $\cM_+^{(1)}(\RR)$, it is not necessarily clear that their sum does as well. To see that it does, note from \cref{eq:asymptotic_Htransform} that the zero set of $H_\RR[\lambda](s)$ must be bounded, and so $\sum_{\alpha_j}\delta_{\alpha_j}$ is compactly supported. Since we know it to lie in $\cM_+^{(2)}(\RR)$ (from pulling back the case of $S^1$ under $\psi$), we see that it is locally of bounded variation, and thus that $\sum_{\alpha_j}\delta_{\alpha_j}\in\cM_c(\RR)$. For a value $\zeta_0$ generally distinct from $\zeta_0'$, then, we find
    \begin{equation}\label{eq:genericform_real}
    \begin{aligned}
        \frac{1}{Q_\RR[\lambda](z)} &= Q_{\RR}\left[\frac{\lambda_c}{\lambda_c^2 + H_\sigma[\lambda]^2}\right](z) - i\pi^{-1}\zeta_0 - i\pi^{-1}\zeta_1 z +  \sum_{\alpha_j\in Z'} \beta_j Q_\RR[\delta_{\alpha_j}](z)\\
        &= Q_{\RR}\left[\frac{\lambda_c}{\lambda_c^2 + H_\sigma[\lambda]^2}\right](z) - i\pi^{-1}\zeta_0 - i\pi^{-1}\zeta_1 z +  \frac{i}{\pi}\sum_{\alpha_j\in Z'} \frac{\beta_j}{z-\alpha_j}.
    \end{aligned}
    \end{equation}
    The values $\zeta_0$ and $\zeta_1$ can be identified by studying the large-$s$ limit of $H_{\RR}[\lambda](s)$. Indeed, since the $\RR$-Hilbert transform must vanish at $\infty$, from \cref{lem:Lstar}, we know that $\zeta_0$ and $\zeta_1$ must be chosen to exactly cancel the asymptotic behavior of the imaginary component of $1/Q_\RR[\lambda](z)$. Inverting the leading terms on the right-hand side of \cref{eq:genericform_real}, we find
    \[\frac{\pi}{\zeta_1 s + \zeta_0} = \frac{\pi}{s}\left(\frac{1}{\zeta_1} - \frac{\zeta_0}{\zeta_1^2s}\right) + o(s^{-2}).\]
    By comparing against the asymptotic formula \cref{eq:asymptotic_Htransform}, we thus identify
    \[\frac{1}{\zeta_1} = \pi^{-2}\|\lambda\|,\qquad \frac{\zeta_0}{\zeta_1^2} = -\pi^{-2}\int \tau\,d\lambda(\tau),\]
    or more succinctly,
    \[\zeta_0 + \zeta_1s = \frac{\pi^2 s}{\|\lambda\|} - \frac{\pi^2}{\|\lambda\|^2}\int \tau\,d\lambda(\tau) = \frac{\pi^2}{\|\lambda\|^2}\int(s-\tau)\,d\lambda(\tau).\]
    
    We deal now with the singular contribution. Fix a value $\alpha_j\in Z'$ for which a nonzero pole exists in $1/Q_\RR[\lambda]$. For $z\in\ovl{\HH}$ in a sufficiently small neighborhood of $\alpha_j$, since $Z'$ is discrete, we have
    \[\frac{1}{Q_{\RR}[\lambda](z)} = \frac{i\beta_j/\pi}{z-\alpha_j} + o(\|z-\alpha_j\|^{-1}),\]
    and so
    \[Q_{\RR}[\lambda](z) = \frac{\pi}{i\beta_j}(z-\alpha_j) + o(\|z-\alpha_j\|).\]
    In particular, we see that
    \[H_{\RR}[\lambda](\alpha_j) \doteq \lim_{y\to 0} Q_{\RR}[\lambda](\alpha_j+iy) = 0,\]
    so any nonzero poles of $1/Q_{\RR}[\lambda]$ are contained in
    \[Z = N_0(\lambda) \cap \{s\in\RR\;|\;H_{\RR}[\lambda](s)=0\}\subset Z'.\]
    In any case, the residue theorem provides
    \[0 = \frac{1}{2\pi i}\int_{\Gamma}\frac{Q_{\RR}[\lambda](z)}{(z-\alpha_j)^2}\,dz,\]
    where $\Gamma= \Gamma_1\cup \Gamma_2\cup\Gamma_3$ is a union of (a) the horizontal line segment(s) $\{i\eps+s\;|\;\delta<|s-\alpha_j|<R\}$, (b) the intersection of $\{z\in\HH\;|\;\Im(z)>\eps\}$ with a semicircle of radius $\delta$ above $\alpha_j$, and (c) a semicircle of radius $R$ connecting the two end-points of $\Gamma_1$. The full contour is shown in \cref{fig:contour}.
    
    \begin{figure}
        \centering
        \includegraphics[width=0.75\linewidth]{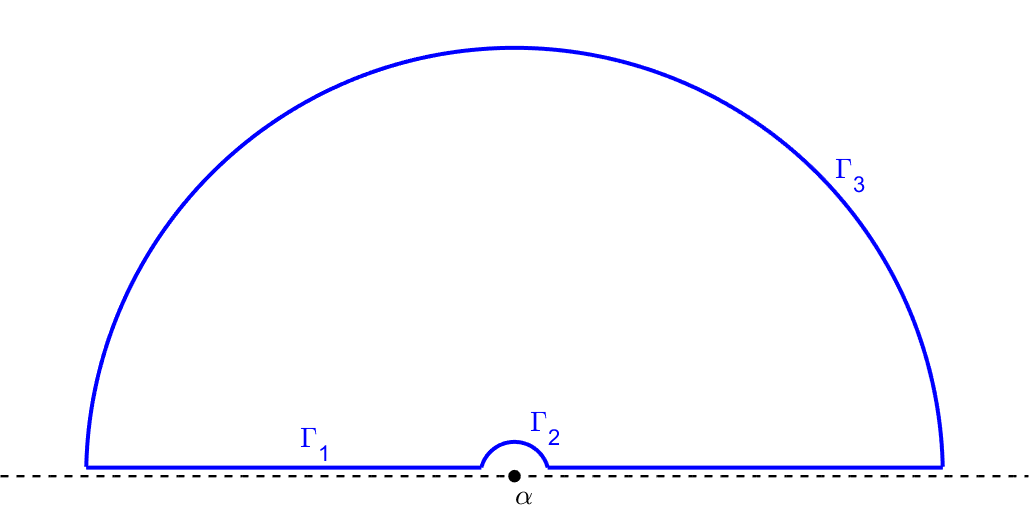}
        \caption[]{The contour $\Gamma=\Gamma_1\cup\Gamma_2\cup\Gamma_3$ applied in the proof of \cref{thm:main_formula}.}
        \label{fig:contour}
    \end{figure}
    
    As $R\to\infty$, the integral about $\Gamma_3$ tends to zero, since $Q_{\RR}[\lambda](z)$ is uniformly bounded as $z\to\infty$; as such, we discard $\Gamma_3$ and suppose that $R=\infty$. Next, taking $\eps\to 0$ with a fixed $\delta$, the $W_{-2}$ convergence (see \cref{def:winf}) of $\Re Q_{\RR}[\lambda](s+i\eps)\,ds$ to $\lambda$ shows that
    \[\lim_{\eps\to 0}\int_{\Gamma_1}\frac{\Re Q_{\RR}[\lambda](z)}{(z-\alpha_j)^2}\,dz = \int_{|s-\alpha_j|>\delta}\frac{d\lambda(s)}{(s-\alpha_j)^2}.\]
    Finally, looking at the integral about $\Gamma_2$ (which must be taken in the \emph{clockwise} direction), we see
    \[\frac{1}{2\pi i}\int_{\Gamma_2} \frac{Q_{\RR}[\lambda](z)}{(z-\alpha_j)^2}\,dz = -\frac{1}{2\pi i}\int_{0}^\pi \frac{(\pi/i\beta_j) \delta e^{i\theta} + o(\delta)}{\delta^2e^{2i\theta}}\,i\delta e^{i\theta}\,d\theta = -\frac{\pi}{2 i\beta_j} + o_\delta(1).\]
    Taking the $\eps\to 0$ limit of the (imaginary part of the) residue theorem, we find
    \[\frac{1}{2\pi}\int_{|s-\alpha_j|>\delta}\frac{d\lambda(s)}{(s-\alpha_j)^2} = \frac{\pi}{2\beta_j} + o_\delta(1),\]
    and thus
    \[\pi^{-2}\int\frac{d\lambda(s)}{(s-\alpha_j)^2} = \frac{1}{\beta_j}.\]
    Conversely, suppose that, for some $\alpha_j\in Z$, there is \emph{no} singular contribution at $\alpha_j$. Then we find $1/Q_{\RR}[\lambda](z) = o(\|z-\alpha_j\|^{-1})$ in a sufficiently small neighborhood of $\alpha_j$, and so
    \begin{equation}\label{eq:lowerboundproof}
        Q_{\RR}[\lambda](z) = \omega(\|z-\alpha_j\|).
    \end{equation}
    Suppose also that $\int(s-\alpha_j)^{-2}\,d\lambda(s)<\infty$; otherwise, we can self-consistently define $\beta_j=0$. On one hand, we find for $y>0$ that
    \begin{equation}\label{eq:boundreal}
        \Re Q_\RR[\lambda](\alpha_j+iy) =\frac{1}{\pi}\int \frac{y\,d\lambda(s)}{(s-\alpha_j)^2+y^2}\leq y\int\frac{d\lambda(s)}{(s-\alpha_j)^2} = O(y).
    \end{equation}
    On the other, consider the derivative
    \begin{equation}\label{eq:deriv_H}
        \partial_y\Im Q_{\RR}[\lambda](\alpha_j+iy) = \frac{2}{\pi}\int \frac{y(s-\alpha_j)\,d\lambda(s)}{((s-\alpha_j)^2+y^2)^2}.
    \end{equation}
    Define $\lambda_\mathrm{odd}\in\cM(\RR)$ by $d\lambda_\mathrm{odd}(s) = \frac{1}{2}\left(d\lambda(s) - d\lambda(2\alpha_j-s)\right)$, and define the sets
    \[\Lambda_+\doteq\Big\{s>\alpha_j\;\Big|\;\limsup_{\delta\to 0}\lambda_\mathrm{odd}([s-\delta,s+\delta])/2\delta > 0\Big\},\qquad \Lambda = \Lambda_+\cup\{2\alpha_j - \Lambda_+\},\]
    and the two measures
    \[\lambda_1'\doteq \lambda_\mathrm{odd}|_{\Lambda},\qquad \lambda_2'\doteq -\lambda_\mathrm{odd}|_{\RR\setminus\Lambda}.\]
    By construction, $\lambda_\mathrm{odd} = \lambda_1'-\lambda_2'$, and each of $\lambda_i'$ is non-negative over the set $[\alpha_j,\infty)\subset\RR$. Moreover, we know that
    \[\int\frac{|d\lambda_i'(s)|}{(s-\alpha_j)^2}<\infty\]
    is absolutely convergent, by our assumption of the same on $\lambda$, so that the measures $\widetilde{\lambda}_i$ defined by $d\widetilde{\lambda}_i(s) \doteq d\lambda_i'(s)/(s-\alpha_j)^2$ are each in $\cM(\RR)$. Then \cref{eq:deriv_H} can be reduced as follows:
    \[\partial_y\Im Q_{\RR}[\lambda](\alpha_j+iy) = \frac{4}{\pi}\int_{\alpha_j}^\infty \frac{y(s-\alpha_j)\,d\lambda_1'(s)}{((s-\alpha_j)^2+y^2)^2} - \frac{4}{\pi}\int_{\alpha_j}^\infty \frac{y(s-\alpha_j)\,d\lambda_2'(s)}{((s-\alpha_j)^2+y^2)^2},\]
    so that
    \begin{align*}\left|\partial_y\Im Q_{\RR}[\lambda](\alpha_j+iy)\right| &\leq \frac{4}{\pi}\int_{\alpha_j}^\infty \frac{y(s-\alpha_j)\,d\lambda_1'(s)}{((s-\alpha_j)^2+y^2)^2} + \frac{4}{\pi}\int_{\alpha_j}^\infty \frac{y(s-\alpha_j)\,d\lambda_2'(s)}{((s-\alpha_j)^2+y^2)^2}\\
    &\leq \frac{4y}{\pi}\int_{\alpha_j}^\infty \frac{(s-\alpha_j)\,d\widetilde{\lambda}_1(s)}{(s-\alpha_j)^2+y^2} + \frac{4y}{\pi}\int_{\alpha_j}^\infty \frac{(s-\alpha_j)\,d\widetilde{\lambda}_2(s)}{(s-\alpha_j)^2+y^2}\\
    &=2y\Im Q_{\RR}[\widetilde{\lambda}_1](\alpha_j+iy) - 2y\Im Q_{\RR}[\widetilde{\lambda}_2](\alpha_j+iy).
    \end{align*}
    But $Q_{\RR}[\widetilde{\lambda}_i](\alpha_j+iy)$ can approach the real line no faster than $O(1/y)$, so we see that the derivative of $\Im Q_{\RR}[\lambda](\alpha_j+iy)$ is uniformly bounded for small $y$. Since $H_\RR[\lambda](\alpha_j)=\lim_{y\to 0} \Im Q_\RR[\widetilde{\lambda}_i](\alpha_j+iy) = 0$ by hypothesis, we thus find that
    \[\Im Q_{\RR}[\lambda](\alpha_j+iy) = O(y).\]
    Together with \cref{eq:boundreal}, this violates the bound \cref{eq:lowerboundproof}, and we come to a contradiction. It follows that, if there is no singular component at $\alpha_j$, the integral defining $\beta_j^{-1}$ necessarily diverges.
\end{proof}

\Cref{prop:easyone} follows using \dave{a simpler version of the same argument}:
\propeasyone*
\begin{proof}
    Pulling \cref{thm:main_circle} back under $\psi$, we see that there is a unique $\widetilde{\mu}\in\cM_+^{(2)}(\RR)$ (along with $\zeta_0'$ and $\zeta_1$) satisfying
    \[\left(Q_\RR[\lambda](z) -i\pi^{-1}c_0 - i\pi^{-1}c_1z\right)\left(Q[\psi[\widetilde{\mu}]](\phi^{-1}(z)) - i\pi^{-1}\zeta_0' - i\pi^{-1}\zeta_1z\right)\equiv 1\]
    for any $c_1\geq 0$. Moreover, because 
    \[\left|Q_\RR[\lambda](z) -i\pi^{-1}c_0 - i\pi^{-1}c_1z\right| \geq \pi^{-1}\Im c_0 > 0\]
    everywhere in $\ovl{\HH}$, it is clear that $\widetilde{\mu}$ is absolutely continuous and that $\zeta_1 = 0$, and it must have density
    \[\widetilde{\mu}_c(s) = \frac{\lambda_c(s) + \pi^{-1}\Im c_0}{\big(\lambda_c(s) + \pi^{-1}\Im c_0\big)^2 + \big(H_\RR[\lambda](s) - \pi^{-1}(c_1s + \Re c_0)\big)^2}.\]

    Next, write $\lambda = \lambda_1+\lambda_2$ for some $\lambda_1\in L^*(\RR)$ and $\lambda_2\in\cM_c(\RR)$; since $\cF[t\mapsto t^2\lambda_1(t)]\in L^1(\RR)$, the Riemann--Lebesgue lemma~\cite{royden2010real} shows that $\lambda_1(s) = o(s^{-2})$ as $s\to\pm\infty$; combining with \cref{lem:Lstar}, we find that
    \[Q_\RR[\lambda](s) = O(s^{-1})\]
    as $s\to\pm\infty$ along the real line. Suppose first that $c_1 = 0$. Writing $\omega = -i\pi^{-1}c_0$, we expand
    \[\left(\omega + Q_\RR[\lambda](s)\right)^{-1} = \omega^{-1}\left(1 - \omega^{-1}Q_\RR[\lambda](s)\right) + O(s^{-2}),\]
    Taking real parts, we see that $\mu\doteq\Re\omega^{-1}-\widetilde{\mu}_c=O(s^{-1})$, and thus, as we know that $\widetilde{\mu}_c$ is bounded, that $\mu\in\cM_+^{(1)}(\RR)$. The $c_1>0$ case follows similarly.
\end{proof}

We now turn to \cref{thm:main_continuity}, which establishes the existence and continuity of $\cB_\RR$ on several sets of real measures. In \cref{fig:inverse_cont}, we show how this continuity can be applied to random samplings of a probability measure $\lambda$. In this example, $\lambda\in\cM_c(\RR)$ is a standard normal distribution, cropped to the set $\{t\in\RR\;|\;d\lambda(t)/dt \geq 10^{-15}\}$, and $\lambda^{(n)}$ are empirical distributions corresponding to $n$ i.i.d.~samples from $\lambda$. As $n$ increases, we see that
\[\mu^{(n)}\doteq \cB_\RR[\lambda^{(n)},0,1]\to\cB_\RR[\lambda,0,1] \doteq \mu.\]

We focus first on proving the continuity of $\cB_\RR$ on the space $\cM_c(\RR)$ of compactly-supported, non-negative measures on the real line. In this setting, we make use of the $W_\infty$ topology introduced in \cref{def:winf} and characterized by \cref{prop:topology}. 
The second half of \cref{thm:main_continuity} (relating to compactly-supported measures) can be proved as follows:

\begin{figure}
    \centering
    \includegraphics[width=\linewidth]{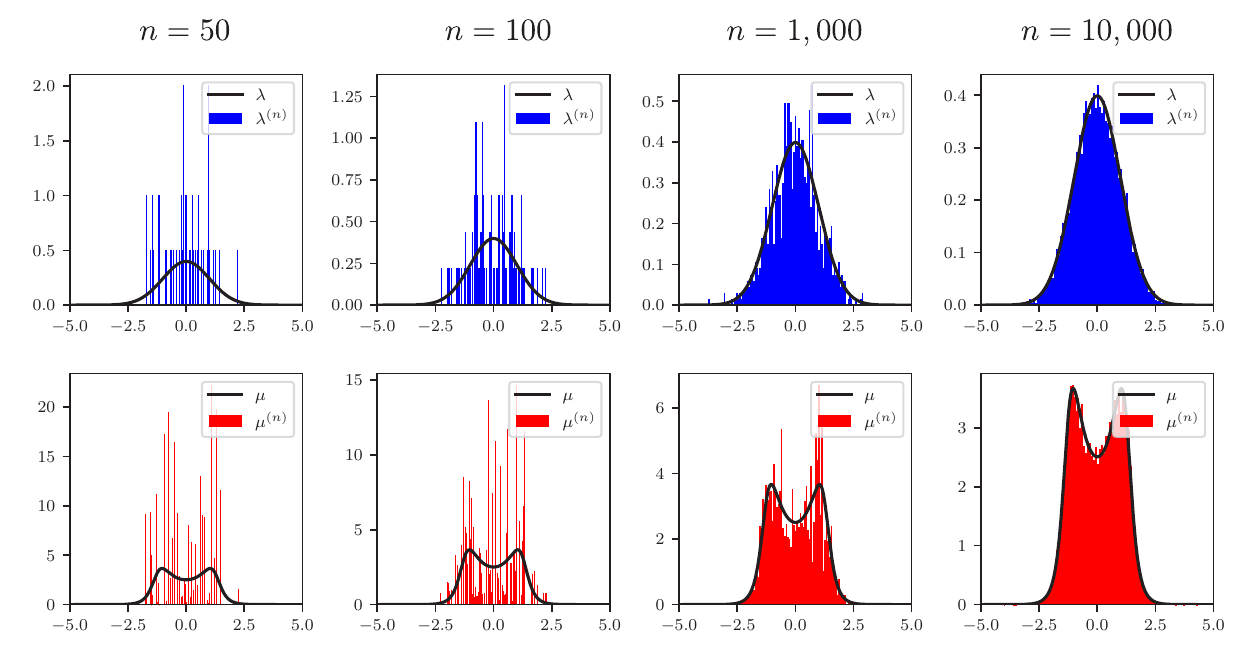}
    \caption[Continuity of the Interconversion Map]{Continuity of the map $\cB_\RR$, where $\lambda$ is the standard normal distribution (cropped such that $d\lambda(t)/dt > 10^{-15}$) and $\mu \simeq (\mu,0,0) = \cB_\RR[\lambda, 0, 1]$. Here, $\lambda^{(n)}$ is an empirical distribution of $n$ i.i.d.~samples from $\lambda$, and we define $\mu^{(n)} \simeq (\mu^{(n)},0,0)= \cB_\RR[\lambda^{(n)}, 0, 1]$. We see that $\mu^{(n)}$ converges to $\mu$ as $n \to \infty$, as predicted from \cref{thm:main_continuity}.}
    \label{fig:inverse_cont}
\end{figure}

\begin{lemma}\label{lem:compactcontinuity}
    Write $U^{0},U^1,U^2$ as in \cref{thm:main_continuity}. For each $i\in\{0,1,2\}$, the map $\cB_\RR:\cM_c(\RR)\times U^i\to\cM_c(\RR)\times U^{2-i}$ is well-defined and continuous in the product of the $W_\infty$-topology on $\cM_c$ and the standard topology on $U^i,U^{2-i}$.
\end{lemma}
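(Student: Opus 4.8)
The plan is to obtain the lemma by transporting the problem to $S^1$ through the embedding $\Psi$ of \cref{eq:Psi_map} and invoking \cref{prop:weakcont} — the weak continuity of the circle involution $\cB_S$ for the admissible map $S:z\mapsto 1/z$, which is exactly $\cB$. For $\lambda\in\cM_c(\RR)\subset\cM_+^{(1)}(\RR)$ the pair $\Psi[\lambda,c_0,c_1]$ lies in $\cM_+(S^1)\times\RR$, so $\cB$ applies there; write $(\nu,\xi)=\cB[\Psi[\lambda,c_0,c_1]]$. First I would establish well-definedness, namely that $\nu=\psi[\mu]+\pi^{-1}\zeta_1\delta_{-1}$ with $\mu\doteq\psi^{-1}[\nu|_{S^1\setminus\{-1\}}]\in\cM_c(\RR)$ and the reconstructed constants $(\zeta_0,\zeta_1)\in U^{2-i}$. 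Since $\lambda$ is compactly supported, $\psi[\lambda]$ is supported in a compact subset of $S^1\setminus\{-1\}$ and $\Psi[\lambda,c_0,c_1]$ adds only the atom $\pi^{-1}c_1\delta_{-1}$ (present exactly when $(c_0,c_1)\in U^2$). By \cref{prop:support} (note $z\mapsto 1/z$ maps $\partial H_+$ into $\partial H_+\cup\{\infty\}$) the continuous part of the output measure is supported on $\supp\lambda_c$, away from $-1$; by \cref{lem:containment_refined} and \cref{lem:smooth} its singular part is supported in $\supp\lambda$ together with a discrete set of points outside $\supp\lambda$ where the transform vanishes. Pulling back to $\RR$, these are the real zeros of $H_\RR[\lambda](s)-\pi^{-1}(c_1s+c_0)$; since $H_\RR[\lambda](s)=O(|s|^{-1})$ for $|s|$ beyond $\supp\lambda$ (Laurent expansion of the Cauchy transform), they form a bounded set whenever $c_1>0$, $c_0\neq 0$, or $c_0=c_1=0$ (using $\|\lambda\|>0$), so the corresponding atoms of $\nu$ stay away from $-1$. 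Moreover the input atom $\pi^{-1}c_1\delta_{-1}$ sits in $N_\infty$, not $N_0$, so by \cref{lem:containment_refined} it produces no atom of $\nu$ at $-1$. Reading off the constants as in \cref{thm:main_formula} (their values, being fixed by evaluation at the origin, do not require the discreteness hypothesis there) places $(\zeta_0,\zeta_1)$ in $U^{2-i}$.

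For continuity, suppose $\lambda_n\to\lambda$ in $W_\infty$ and $(c_0^{(n)},c_1^{(n)})\to(c_0,c_1)$ in $U^i$; by \cref{prop:topology}, $\lambda_n\to\lambda$ weakly with $\supp\lambda_n\subset[-R,R]$ for a fixed $R$. Because $s\mapsto g(\phi^{-1}(s))/(\pi(1+s^2))$ (for $g\in\cC(S^1)$) and $s\mapsto s/(\pi(1+s^2))$ are bounded and continuous on $\RR$, weak convergence gives $\psi[\lambda_n]\to\psi[\lambda]$ weakly on $S^1$ and $\sigma_\RR(\lambda_n)\to\sigma_\RR(\lambda)$; together with $c_j^{(n)}\to c_j$ this yields $\Psi[\lambda_n,c_0^{(n)},c_1^{(n)}]\to\Psi[\lambda,c_0,c_1]$ in the product of the weak topology on $\cM_+(S^1)$ and the standard topology on $\RR$. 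By \cref{prop:weakcont}, $(\nu_n,\xi_n)\doteq\cB[\Psi[\lambda_n,c_0^{(n)},c_1^{(n)}]]$ converges to $(\nu,\xi)=\cB[\Psi[\lambda,c_0,c_1]]$ in the same topology, and it remains to pull this back through $\Psi^{-1}$.

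Here lies the main obstacle, and the one point where the $W_\infty$ hypothesis is essential rather than plain weak convergence: a \emph{uniform} support bound $\supp\mu_n\subset[-R',R']$ with $R'$ independent of $n$. Running the support analysis of the first paragraph uniformly in $n$, the continuous parts of $\nu_n$ lie in $\phi^{-1}([-R,R])$, while the atoms of $\nu_n$ outside $\supp\lambda_n$ are real zeros of $H_\RR[\lambda_n](s)-\pi^{-1}(c_1^{(n)}s+c_0^{(n)})$; for $|s|>R$ one has $|H_\RR[\lambda_n](s)|\le C(R)\|\lambda_n\|/|s|$ with $\|\lambda_n\|\to\|\lambda\|\in(0,\infty)$, and (depending on $i$) one of $c_1^{(n)}$, $c_0^{(n)}$, $\|\lambda_n\|$ is eventually bounded below, so no such zero occurs for $|s|>R'$ with $R'$ uniform. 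With this in hand, write $\nu_n=\psi[\mu_n]+\pi^{-1}\zeta_1^{(n)}\delta_{-1}$ where $\psi[\mu_n]$ is supported in the fixed compact $\phi^{-1}([-R',R'])\subset S^1\setminus\{-1\}$; then the weak limit $\nu_n\to\nu$ decouples. Testing against bump functions near $-1$ yields $\zeta_1^{(n)}\to\zeta_1$ — an inference illegitimate for generic weak limits but valid because the rest of $\nu_n$ avoids a fixed neighborhood of $-1$ — and testing against functions supported on $\phi^{-1}([-R',R'])$, which extend to $\cC(S^1)$ without changing the integrals, yields $\psi[\mu_n]\to\psi[\mu]$ weakly, hence $\mu_n\to\mu$ weakly on $\RR$; combined with the uniform support bound and \cref{prop:topology}, $\mu_n\to\mu$ in $W_\infty$. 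Finally $\sigma_\RR(\mu_n)\to\sigma_\RR(\mu)$ (bounded continuous integrand) and $\xi_n\to\xi$ give $\zeta_0^{(n)}\to\zeta_0$, completing the argument. Without the uniform support estimate, weak convergence on $S^1$ would allow mass of $\psi[\mu_n]$ to migrate to $-1$, destroying both the $\cM_c$-valuedness and the $W_\infty$-continuity, so the Laurent-tail bound for $H_\RR[\lambda_n]$ must be carried out with constants uniform in $n$ — this is the crux of the proof.
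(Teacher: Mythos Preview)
Your proposal is correct and follows the same overall architecture as the paper---transport to $S^1$ via $\Psi$, appeal to the circle involution, and use the Hilbert-transform tail $H_\RR[\lambda_n](s)=O(|s|^{-1})$ (uniformly in $n$) to keep $\supp\mu_n$ in a fixed interval. The well-definedness portion matches the paper's proof essentially line for line: both use \cref{prop:support} for the continuous support, the LHA property of $z\mapsto 1/z$ (\cref{lem:containment_refined}) together with \cref{lem:smooth} for the atoms, and the explicit formulas from \cref{thm:main_formula} for the $U^i\to U^{2-i}$ bookkeeping.

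The one genuine difference is in the continuity step. The paper does \emph{not} invoke \cref{prop:weakcont} on $S^1$ and pull back; instead it redoes an analogue of that argument directly on the upper half-plane, introducing $f_{\delta,n}(s)=Q_\RR[\lambda_n](s+i\delta)-i\pi^{-1}(c_{0,n}+c_{1,n}(s+i\delta))$, proving $1/f_{\delta,n}\to 1/f_\delta$ locally uniformly, and then running a Poisson-smoothing estimate with the truncated measures $\mu_{n,\delta}=P[\mu_n](s+i\delta)\chi_{I'}(s)\,ds$. Your route is more modular: you treat \cref{prop:weakcont} as a black box and then observe that, because $\psi[\mu_n]$ sits in a fixed compact of $S^1\setminus\{-1\}$, the weak limit $\nu_n\to\nu$ decouples into $\zeta_1^{(n)}\to\zeta_1$ (atom at $-1$) and $\psi[\mu_n]\to\psi[\mu]$ (the rest). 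This buys you a shorter argument with no duplicated Poisson machinery; the paper's approach has the minor advantage of being self-contained on $\RR$ without passing through the Cayley map. Both hinge on exactly the same uniform-support estimate, which you correctly identify as the crux.
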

\begin{remark}
    Although one could use \cref{thm:main_formula} to derive a result of this form, we opt instead to pull back the circle theory directly. As a result, the lemma does not require any knowledge of the size of $N_0(\lambda)$.
\end{remark}
\begin{proof}
    Let $\lambda\in\cM_c(\RR)$, and let $I\subset\RR$ be a finite interval containing $\supp\lambda$. Write
    \[\widetilde{\lambda} \doteq \psi[\lambda] + 2c_1\delta_{-1}\in\cM_+(S^1),\]
    with $\delta_{-1}$ a Dirac measure at $-1=\phi^{-1}(\infty)$. Then from \cref{thm:main_circle}, there is a unique pair $(\widetilde{\mu},\zeta_1')$ such that
    \begin{equation}\label{eq:continuity_pullback}
        (Q[\widetilde{\lambda}](z) - i\pi^{-1}c_0')(Q[\widetilde{\mu}](z) - i\pi^{-1}\zeta_0') \equiv 1
    \end{equation}
    on $\DD$, with $c'_0 = c_0-\pi\sigma(\lambda)$. From \cref{prop:support}, we know that the support of the absolutely continuous component $\widetilde{\mu}_c$ of $\widetilde{\mu}$ is bounded away from $-1$. On the other hand, the singular component of $\widetilde{\mu}$ is supported exactly where $\widetilde{\mu}_c(z)=0$ and $Q[\widetilde{\lambda}](z) = i\pi^{-1}c_0'$ on the unit circle. If $c_0=c_1=0$, there is thus an isolated pole at $-1$, and we find
    \[\widetilde{\mu} = \psi[\mu] + 2\zeta_1\delta_{-1}\]
    for a compactly-supported $\mu\in\RR$ and a nonzero $\zeta_1>0$; that $\zeta_1$ is nonzero can be deduced as in the proof of \cref{thm:main_formula}. This argument shows that $\cB_\RR$ maps $\cM_c(\RR)\times U^0$ to $\cM_c(\RR)\times U^{2}$. 
    
    If either $c_1$ or $c_0$ is nonzero, then from \cref{lem:smooth}, the singular component of $\widetilde{\mu}_c$ is supported on at most one point in each component of $S^1\setminus\left(\{-1\}\cup\supp\phi^{-1}(I)\right)$. Thus, $\cB_\RR$ maps $\cM_c(\RR)\times(U^1\cup U^2)$ to $\cM_c(\RR)\times(U^0\cup U^1)$. Suppose $c_1\neq 0$; then $Q[\widetilde{\lambda}](z) - i\pi^{-1}c_0'\to \infty$ as $z\to-1$ along non-tangential directions, but \cref{eq:continuity_pullback} thus implies that $Q[\widetilde{\mu}](z) - i\pi^{-1}\zeta_0'\to 0$ along the same. This is only possible if $\zeta_0=0$, so we find that $\cB_\RR$ maps $\cM_c(\RR)\times U^2$ to $\cM_c(\RR)\times U^0$. That $\cB_\RR$ maps $\cM_c(\RR)\times U^1$ to itself follows similarly.

    We turn now to the claim of continuity. Fix a nonzero $\lambda\in\cM_c(\RR)$ and $(c_0,c_1)\in U^i$, and suppose $\lambda_n\in\cM_c(\RR)$ converges in $W_\infty$ to $\lambda$, and $(c_{0,n},c_{1,n})\in U^i$ converges to $(c_0,c_1)$. For $\delta>0$, define the following complex functions on $\RR$:
    \[f_{\delta,n}(s) = Q_\RR[\lambda_n](s+i\delta) - i\pi^{-1}\left(c_{0,n}+c_{1,n}(s + i\delta)\right).\]
    Following the same argument as in \cref{prop:weakcont}, we can deduce that
    \[\lim_{n\to\infty} f_{\delta,n} = Q_\RR[\lambda](s+i\delta) - i\pi^{-1}\left(c_{0}+c_{1}(s + i\delta)\right) \doteq f_\delta\]
    \emph{locally} uniformly. Next, for any $R>0$, fix a neighborhood $U_R\supset f_\delta([-R,R])$, and choose $N_R\geq 1$ such that
    \[f_{\delta,n}([-R,R])\subset U_R\]
    for all $n>N_R$. Since $S:z\mapsto 1/z$ is smooth on $\ovl{U}_R$, we deduce (similar to \cref{prop:weakcont}) that
    \[\lim_{n\to\infty} 1/f_{\delta,n} = 1/f_\delta\]
    uniformly on $[-R,R]$, and thus locally uniformly.

    Now, fix $(\mu_n,\zeta_{0,n},\zeta_{1,n}) = \cB_\RR[\lambda_n,c_{0,n},c_{1,n}]$ and $(\mu,\zeta_0,\zeta_1)=\cB_\RR[\lambda,c_0,c_1]$. Notably, explicit formulas for $\zeta_0$ and $\zeta_1$ can be recovered as in the proof of \cref{thm:main_formula}, so the convergence of $\zeta_{0,n}\to\zeta_0$ and $\zeta_{1,n}\to\zeta_1$ is clear; the only non-trivial case occurs for $c_0=c_1=0$, for which convergence of such formulas follows from the $W_\infty$ convergence of $\lambda_n$ to $\lambda$. 
    
    It remains only to be seen that $\mu_n$ converge in $W_\infty$ to $\mu$. Fix $N\geq 1$ and an interval $I\subset\RR$ such that $\supp\lambda_n\subset I$ for all $n\geq N$; that such a choice is possible follows from \cref{def:winf}. Defining $M = \|\lambda\|$ and increasing $N$ if necessary, we can also suppose that $M/2\leq \|\lambda_n\|\leq 2M$ for all $n\geq N$. 

    Let $s_- = \inf I$ and $s_+ = \sup I$. Then for $s\geq s_+$, we find
    \[H_\RR[\lambda_n](s) = \frac{1}{\pi}\int\frac{d\lambda_n(s')}{s-s'}\leq \frac{1}{\pi}\int\frac{d\lambda_n(s')}{s-s_+} \leq \frac{2M/\pi}{s-s_+}.\]
    With a similar procedure, we find the string of inequalities
    \begin{equation}\label{eq:ineqs}
        \begin{gathered}
            0<\frac{M/2\pi}{s-s_-}\leq H_\RR[\lambda_n](s) \leq \frac{2M/\pi}{s-s_+},\qquad s\geq s_+,\\
            \frac{2M/\pi}{s-s_-}\leq H_\RR[\lambda_n](s) \leq \frac{M/2\pi}{s-s_+}<0, \qquad s\leq s_-,
        \end{gathered}
    \end{equation}
    and likewise for $H_\RR[\lambda]$. Of course, since $H_\RR[\lambda_n]$ is smooth (and thus everywhere well-defined) outside of $I$, any poles of 
    \[Q_\RR[\mu_n](z) - i\pi^{-1}(\zeta_{n,0} +\zeta_{n,1}z)=\left(Q_\RR[\lambda_n](z) - i\pi^{-1}(c_{n,0} + c_{n,1}z)\right)^{-1}\]
    in $\RR\setminus I$ must correspond to zeroes of \[H_\RR[\lambda_n](s) - i\pi^{-1}(c_{n,0} + c_{n,1}s).\] Combining this argument with \cref{prop:support}, we find that
    \[
        \supp\mu_n,\supp\mu\subset I\cup\{s\in\RR\setminus I\;|\;H_\RR[\lambda_n](s) - i\pi^{-1}(c_{n,0} + c_{n,1}s) = 0\}.
    \]
    First, in the case $c_0=c_1=0$, this argument shows that $\supp\mu_n,\supp\mu\subset I$. In the case that $c_0\neq 0$ (regardless of the value of $c_1$), increase $N$ such that $|c_0-c_{n,0}|<|c_0|/2$ for all $n\geq N$. Then the inequalities \cref{eq:ineqs} show that
    \[\supp\mu_n,\supp\mu\subset[-R_1,R_1],\qquad R_1 = \frac{4M}{|c_0|} +|s_+|+|s_-|.\]
    In the case that $c_1>0$ but $c_0=0$, increase $N$ once again such that $|c_1-c_{n,1}|<|c_1|/2$ for all $n\geq N$; the same inequalities then show that
    \[\supp\mu_n,\supp\mu\subset[-R_2,R_2],\qquad R_2 = \sqrt{|s_+|^2+|s_-|^2+4M/|c_1|}+|s_+|+|s_-|.\]
    By expanding $I$ appropriately, then, we can suppose that
    \[\supp\mu_n,\supp\mu\subset I\]
    for $n\geq N$; note that the inequalities \cref{eq:ineqs} continue to hold with the new definition of $I$. Fix a neighborhood $I'\supset I$, and define the measures
    \[\mu_{n,\delta} = P[\mu_n](s+i\eps)\chi_{I'}(s)\,ds\in\cM_c(\RR),\]
    where $\chi_{I'}$ is the characteristic function of $I'$. These measures converge weakly to $\mu_n$ as $\delta\to 0$, by \cref{prop:classic}.\ref{prop:classic3}. Fix a bounded, continuous function $g:\RR\to\RR$. For $\eps>0$, fix $\delta_\eps>0$ such that 
    \[\sup_{s\in I',\;\delta<\delta_\eps}\left|g(s) - P[g](s+i\delta)\right| < \eps,\]
    using the locally uniform continuity of $P[g]$. Decreasing $\delta_{\eps}$ if necessary, we can ensure also that 
    \[\int_{\RR\setminus I'} P[\mu_n](s+i\delta)\,ds \leq \frac{2M}{\pi}\int_{\RR\setminus I'}\frac{\delta}{s^2+\delta^2}\left(\delta(s-s_-)+\delta(s-s_+)\right)\,ds < \eps \]
    for all $n$ and all $\delta<\delta_\eps$. With this choice, we can apply the same argument as we did in \cref{prop:weakcont} to deduce that $\mu_n\rightharpoonup\mu$ weakly. But we also know that $\mu_n$ are uniformly compactly supported, so we recover convergence in $W_\infty$.
\end{proof}

We can now prove \cref{thm:main_continuity} in full:
\thmmaincontinuity*
\begin{proof}
    The claim about $\cM_c(\RR)$ is proven in \cref{lem:compactcontinuity}, so we prove only the statements about $\cM_\text{exp}^{(1)}(\RR)$ here. In general, it is clear that the restriction of the embedding $\Psi$ (defined by \cref{eq:embedding}) to $\cM_+^{(1)}\times U^i$ is continuous from the $W_{-2}$ topology on $\cM_+^{(1)}(\RR)\subset\cM_+^{(2)}(\RR)$ and the standard topology on $U^i$ to the weak topology on $\cM_+(S^1)$ and the standard topology on $\RR$. Write $\Psi[\lambda,c_0,c_1] = (\widetilde{\lambda},c'_0)$. From \cref{thm:main_circle}, we thus see that there is a unique pair $(\widetilde{\mu},\zeta_0')$ such that
    \begin{equation}\label{eq:continuity_pullback2}
        (Q[\widetilde{\lambda}](z) + ic_0')(Q[\widetilde{\mu}](z) + i\zeta_0') \equiv 1,
    \end{equation}
    and that the map $(\lambda,c_0,c_1)\mapsto (\widetilde{\mu},\zeta_0')$ is continuous in the same topologies. Moreover, so long as either $c_0$ or $c_1$ is nonzero, we can follow the same logic as \cref{lem:compactcontinuity} to deduce that $\widetilde{\mu}$ has no atom at $-1$, and thus that $\widetilde{\mu}=\psi[\mu]$ for some $\mu\in\cM_+^{(2)}(\RR)$.
    
    Since $\lambda\in\cM_\mathrm{exp}^{(1)}(\RR)$, we further deduce that $\mu\in\cM_\mathrm{exp}^{(2)}(\RR)$, as it can have at most one atom to the left of $\inf\supp\lambda$. Next, suppose that $\mu\notin\cM_+^{(1)}(\RR)$, and calculate
    \[Q[\psi[\mu]](\phi^{-1}(z)) = \frac{i}{\pi}\int\left(\frac{1}{z-s} + \frac{s}{1+s^2}\right)\,d\mu(s).\]
    Fix $R>0$ sufficiently large and $z_0<\inf\supp\mu$ sufficiently small that $(z-s)^{-1}>-s(1+s^2)^{-1}$ for all $s>R$ and $z<z_0$; for instance, $R=2$ and $z_0 = \min(-1,\inf\supp\mu)$ is sufficient. Then we find
    \[\lim_{z\to-\infty}Q[\psi[\mu]](\phi^{-1}(z)) = \frac{i}{\pi}\int_{-R}^R\frac{s\,d\mu(s)}{1+s^2} + \lim_{z\to-\infty}\frac{i}{\pi}\int_{R}^{\infty}\left(\frac{1}{z-s} + \frac{s}{1+s^2}\right)\,d\mu(s),\]
    but now that the latter integrand is positive, a standard application of Fatou's lemma shows that the second term diverges. Since at least one of $c_0$ and $c_1$ is nonzero by hypothesis, this contradicts the (pushforward of the) relation \cref{eq:continuity_pullback2}. Thus, $\mu\in\cM_\mathrm{exp}^{(1)}(\RR)$.
    
    Finally, let $r>2$, and consider a sequence $\widetilde{\mu}_n\in\cM_+(S^1)$ converging weakly to $\widetilde{\mu}\in\cM_+(S^1)$. Let $\mu_n,\mu$ be such that $\psi[\mu_n]+\pi^{-1}\widetilde{c}_{1,n}\delta_{-1}=\widetilde{\mu}_n$ and $\psi[\mu]+\pi^{-1}\widetilde{c}_1\delta_{-1} = \widetilde{\mu}$, for some values $\widetilde{c}_n$ and $\widetilde{c}$. For any bounded, continuous $f\in\cC(\RR)$, the function
    \[\widetilde{f}(z)\doteq (1+\phi(z)^2)^{1-r/2}(f\circ\phi)(z)\]
    is bounded and continuous on $S^1$, and so
    \[\int  (1+s^2)^{-r/2} f(s)\,d\mu_n(s) = \pi\int \widetilde{f}\,d\widetilde{\mu}_n\to \pi\int\widetilde{f}\,d\widetilde{\mu} = \int  (1+s^2)^{-r/2} f(s)\,d\mu(s)\]
    as $n\to\infty$. It follows that the projection $\psi^{-1}$ is continuous from the weak topology on $S^1$ to the $W_{-r}$ topology on $\cM_+^{(2)}(\RR)$, so the map $(\lambda,c_0,c_1)\mapsto \mu$ is continuous. The theorem follows.
\end{proof}
\section{Applications and Numerics of Interconversion Theory}\label{sec:numerics}
In this section, we show how the theory developed so far can be implemented numerically, giving rise to a simple-but-powerful spectral approach for working with scalar Volterra equations of \dave{all classes under consideration}.

Central to our approach is the \george{Adaptive Antoulas--Anderson, or \emph{AAA} (`triple-A'),} algorithm for rational approximation~\cite{nakatsukasa2018aaa}, which we use for two reasons. For one, the measures we work with---as well as their integral transforms---are generally non-smooth, so traditional (e.g., polynomial) approximation schemes are ill-suited to our problem. Equally important is, in handling time series, we are often interested in equispaced (or \dave{arbitrarily-spaced}) samples on the real line. We require high-order methods in order to accurately approximate integral transforms of such data, but polynomial methods give rise to large, non-physical oscillations when applied to equispaced sample points~\cite{doi:10.1137/1.9781611975949}. Although somewhat less foolproof than polynomial interpolation, rational approximation is able to cleanly resolve discontinuities, poles, and branch points, and it does not depend nearly as strongly as polynomial methods on the distribution of sample points.

\dave{The numerical results presented here make use of low-order quadrature schemes to compute certain intermediate expressions, so they generally achieve only low-order accuracy. Even still, we will see sharp improvement over existing approaches in various contexts, such as Volterra integral equations of the first kind (\cref{subsec:num_trajectories}) and discrete-time Volterra equations with non-decaying kernels (\cref{sec:discrete_num}). In the sequel, we extend our approach to achieve high-order accuracy and improved time complexity, and we see how it applies to problems well beyond the scope of our analytical results~\cite{sieve}}.

Our codebase, complete with all examples presented here, has been made available at the following GitHub link:
\ourgithublink
All numerical experiments are performed on a 2021 MacBook Pro personal computer (Apple M1 Pro) with 10 CPU cores and 16 GB of memory.

\davebegin
\subsection{Numerical Methods}\label{sec:B_implem}
We present the core numerical methods needed to compute Cauchy transforms, Hilbert transforms, and interconversion maps of measures. We begin by reviewing the AAA algorithm, following Nakatsukasa et al.~\cite{nakatsukasa2018aaa}.

\subsubsection{AAA Approximation}

AAA aims to approximate a complex-valued function $f(z)$ given its values at a finite set of sample points $Z \subseteq \CC$. Specifically, its goal is to produce a rational approximation $r(z)$ of $f$ such that
\begin{equation}\label{eq:AAA_tolerance}
    \max_{z\in Z}|r(z)-f(z)|<\eps
\end{equation}
for a given tolerance $\eps>0$.

The algorithm proceeds iteratively. At every iteration $m = 1, 2, 3, \dots$, we begin with a candidate rational approximant $r_{m-1}(z)$ and a list of \emph{support points} $S_{m-1} = [z_1,...,z_{m-1}]$, initialized with $r_0(z)\equiv 0$ and $S_0 =\emptyset$, respectively. We identify a previously-unvisited support point $z_m$ by maximizing $|f(z_m)-r_{m-1}(z_m)|$ over $Z\setminus S_{m-1}$. Writing $f_i = f(z_i)$, we then aim to choose an approximant of the form
\begin{equation}\label{eq:bary}
    r_m(z) = \frac{n_m(z)}{d_m(z)} = \sum_{i=1}^m \frac{w_if_i}{z - z_i} \Big/ \sum_{i=1}^m \frac{w_i}{z - z_i},
\end{equation}
where the $w_i$ are as-yet-undetermined complex numbers. It is easy to verify that such a \textit{barycentric representation} necessarily interpolates $f$ at $S_m$. We determine the weights $w = (w_1, \dots, w_m)^\top$ by solving the constrained least-squares problem
\begin{equation}
    \min_{w \in \CC^{m}} \|f(z)d_m(z) - n_m(z)\|_{z\in Z \setminus S_m}, \quad \|w\| = 1,
\end{equation}
denoting by $\|\cdot\|$ the Euclidean norm. This problem can in turn be solved by evaluating the SVD of an appropriate Loewner matrix. 

The algorithm terminates when we reach the bound \cref{eq:AAA_tolerance}; it is clear that it must terminate within $M/2$ iterations. In pseudocode, the algorithm reads as follows:

\bigskip

\begin{algorithm}[H]
\DontPrintSemicolon
\KwIn{$\{z_i, f_i = f(z_i)\}_{i=1}^M$, $\eps>0$}
\KwOut{$r(z)$}
Set $S \leftarrow []$\\
Set $d(z) = 1$, $n(z) = 0$, and $r(z) = n(z)/d(z)$\\
\For{$m = 1$ \KwTo $M/2$}{
\begin{enumerate}
    \item Append $S \leftarrow S + [z_m]$ for
    $$z_m =\argmax_{z \in Z \setminus S} |f(z) - r(z)|$$\\
    \item Write 
    $$n(z) = \sum_{i=1}^m \frac{w_if_i}{z - z_i}, \quad d(z) = \sum_{i=1}^m \frac{w_i}{z - z_i}$$
    with $w_i$ undetermined, then find least singular vector
    $$w =\argmin_{\|w\| = 1} \|f(z)d(z) - n(z)\|_{z\in Z\setminus S_m}$$\\
    \item Set rational function $r(z) = n(z)/d(z)$\\
    \item \textbf{if} $\max_Z|f - r| \leq \eps$ \textbf{then break}
\end{enumerate}
}
\Return{$r(z)$}
\caption{Adaptive Antoulas–Anderson (AAA)}
\end{algorithm}

\bigskip

Notably, the poles, residues, and zeroes of the rational function $r$ can be computed through a simple generalized eigenvalue problem, which we do not discuss here. Poles with small residues or closely-spaced pole-zero pairs, known as Froissart doublets, are typically removed in post-processing. We refer the reader to Nakatsukasa et al.~\cite[Sec.~3]{nakatsukasa2018aaa} for additional details of the algorithm.

\subsubsection{Hilbert and Cauchy Transforms, and Interconversion in the Spectral Domain}\label{subsubsec:cauchy}

An important variation on the AAA algorithm is the \emph{AAA Least Squares} (AAA-LS) algorithm formulated by Costa and Trefethen~\cite{costa2023aaa}. Given a domain $\Omega\subset \CC$ and a real-valued function $f(z)$ on $\partial\Omega$, the AAA-LS algorithm attempts to find a holomorphic function $h(z)$ on $\Omega$ such that $\Re h(z)\approx f(z)$ on $\partial\Omega$. We use a similar approach as AAA-LS to approximate the Cauchy and Hilbert transforms of measures on $\RR$ or $S^1$; below, we assume our measures consist of a continuous density and only finitely many discrete atoms:

\bigskip

\davebegin
\begin{algorithm}[H]
\DontPrintSemicolon
\KwIn{$\Omega = \HH$ or $\DD$, $\lambda = \lambda_c + \sum_{i}b_i\delta_{a_i}$, $\{z_i \in \partial \Omega\}_{i=1}^M$, $\eps>0$}
\KwOut{$Q^{(\Omega)}(z), H^{(\Omega)}(z)$, i.e., either $(Q_\RR,H_\RR)$ or $(Q,H)$}
\begin{enumerate}
\item Run AAA on continuous density
$$r(z) = \text{AAA}\big(\{z_i, \lambda_c(z_i)\}_{i=1}^M, \eps\big),$$
and let $p_i\in\CC\setminus\Omega$ be the poles of $r(z)$ outside $\Omega$\\
\item Create a holomorphic function on $\Omega$ of the form
$$Q_\text{cont}(z) = \sum_{i=0}^d a_iz^i + \sum\frac{c_i}{z - p_i},$$
with $d\geq 0$ a desired degree, and perform a least-squares optimization on $\{a_i, c_i\}$ to minimize the Euclidean error $\|\Re h(z_i)-\lambda_c(z_i)\|$
\item Compute analytical Cauchy transform of discrete part
$$Q_\text{disc}(z) = \sum b_iQ^{(\Omega)}[\delta_{a_i}](z)$$\\
\item Compute the Cauchy and Hilbert transforms
$$Q^{(\Omega)}(z) = Q_\text{cont}(z) + Q_\text{disc}(z), \quad H^{(\Omega)}(z) = \Im Q^{(\Omega)}(z)\big|_{\partial D}$$
\end{enumerate}
\Return{$Q^{(\Omega)}(z), H^{(\Omega)}(z)$}
\caption{Cauchy and Hilbert Transforms via Rational Approximation}\label{alg:cauchy}
\end{algorithm}

\bigskip

Next, we implement the triple of involutions $\cB$, $\cB_\RR$, and $\cB_\mathrm{reg}$ introduced in \cref{sec:main}. In turn, these maps allow us to solve difference equations (see \cref{prop:main_dPD}), integral and integro-differential equations (see \cref{prop:main_CM,prop:main_PD}), and delay and fractional differential equations (see \cref{prop:rPD,prop:rCM}).

The map $\cB$ takes a measure $\lambda\in\cM_+(S^1)$ and an offset $c_0\in\RR$ and returns the `interconverted' pair $(\mu,\zeta_0)\in\cM_+(S^1)\times\RR$ defined by \cref{thm:main_circle}. We can compute it in closed form by following \cref{thm:main_circ_formula}. In turn, we need two intermediate expressions: the Hilbert transform of $\lambda$, which we can now compute via Algorithm~\ref{alg:cauchy}, and the zeroes of $H_\RR[\lambda]+c_0$, which we can compute either with a standard rootfinding procedure or with AAA itself. Rootfinding is particularly easy in this context, since each component of $S^1\setminus\supp\lambda$ has at most one root.

The map $\cB_\RR$ is computed similarly, but on the real line instead of the circle. Recall that $\cB_\RR$ takes a measure $\lambda\in\cM_+^{(1)}(\RR)$, a constant offset $c_0\in\RR$, and a linear offset $c_1\geq 0$, and returns the interconverted triple $(\mu,\zeta_0,\zeta_1)\in\cM_+^{(1)}(\RR)\times\RR\times\RR_+$. In numerical tests involving $\cB_\RR$, we treat only measures satisfying the hypotheses of \cref{thm:main_formula}, so we know with certainty that $\cB_\RR$ is well-defined.

The map $\cB_\mathrm{reg}$ takes a measure $\lambda\in\cM_+^{(2)}(\RR)$, a constant offset $c_0\in\RR$, and a linear offset $c_1\geq 0$, and it returns the interconverted triple $(\mu,\zeta_0,\zeta_1)\in\cM_+^{(2)}(\RR)\times\RR\times\RR_+$. We implement $\cB_\mathrm{reg}$ by pulling back our existing implementation of $\cB$, i.e., using the formula $\cB_\mathrm{reg} = \Psi^{-1}_\mathrm{reg}\circ\cB\circ\Psi_\mathrm{reg}$, where the embedding $\Psi_\mathrm{reg}$ is defined by \cref{eq:embedding_reg}.

\subsubsection{Mapping Between the Spectral and Time Domains}

Finally, there are several approaches one can take to map between the spectral domain and the time domain. At present, we use trapezoidal quadrature to numerically compute Laplace and (continuous) Fourier transforms of known measures; we develop an improved, spectral approach for this calculation in the sequel~\cite{sieve}. In the other direction, our approach differs somewhat between the \ref{eq:integrodiff_CM} and \ref{eq:integrodiff_PD} cases.

Given values of a completely monotone kernel $K(t)$ in the time domain, we need to compute its inverse Laplace transform $\lambda\in\cM^{(1)}(\RR)$. Computing inverse Laplace transforms is closely related to the problem of fitting sums of exponentials from data; indeed, if $\lambda = \sum \lambda_i\delta(s-\alpha_i)\,ds$, then our approximation $\lambda\approx\cL^{-1}[K]$ corresponds to the exponential sum
\begin{equation}\label{eq:sieve_expseries}
    K(t)\approx \sum \lambda_i e^{-\alpha_i t}.
\end{equation}
The problem of fitting sums of exponentials is well-explored and known to be very ill-posed~\cite{van1993resolvability, waterfall2006sloppy, hauer1990initial, yeramian1987analysis}. Here, we explore the use of the AAA algorithm~\cite{nakatsukasa2018aaa} to solve this ill-posed problem. The approach we describe below is closely related to the method of Pad\'{e}--Laplace approximation~\cite{yeramian1987analysis}, which uses a one-point rational approximant (analogous to a Taylor series) to compute integral transforms.

Taking a (one-sided) Laplace transform of $K$, we find (as in \cref{sec:laplace})
\begin{equation}\label{eq:LK}
    \cL[K](s) = \cL[\cL_b[\lambda]](s) = \int_0^\infty\int e^{-st}e^{-tu}\,d\lambda(u)dt = \int \frac{d\lambda(u)}{s+u},
\end{equation}
suggesting that we can construct a discrete approximation of $\lambda$ by first approximating $\cL[K]$ with an appropriate rational function. Instead of computing $\cL[K]$ with direct quadrature, as we do elsewhere, we now use AAA to fit a rational approximation to the sample data $\{(t_i, K(t_i))\}_{i=1}^n$ to obtain\footnote{Generically, AAA may give constant or polynomial terms. We discuss how to correct this behavior in the sequel.}
\begin{align*}
    K(t) \approx \widehat{K}(t) \doteq \sum \frac{w_i}{t - z_i},\qquad \cL[\widehat{K}](s) = \sum w_ie^{-z_is}E_1(-z_is),
\end{align*}
with $w_i, z_i \in \mathbb{C}$, and where $E_1(x) = \int_x^\infty \frac{e^{-u}}{u}du$ denotes the exponential integral~\cite{abramowitz1965handbook}. We now apply AAA a \emph{second} time, to approximate $\cL[\widehat{K}]$ on a set of chosen (typically logarithmically-spaced) quadrature points $\{s_i\}_{i=1}^m$. This procedure yields
\begin{align*}
    \cL[\widehat{K}](s) \approx \sum \frac{\rho_i}{s - \zeta_i},
\end{align*}
with $\rho_i, \zeta_i \in \mathbb{C}$. Already, this calculation can be transformed back to the time domain to yield an exponential approximant of the form~\cref{eq:sieve_expseries}; we investigate this \emph{AAA-Laplace} algorithm in the sequel~\cite{sieve}.

At present, we develop the algorithm a step further to ensure that the resulting approximant is itself CM. In approximating CM kernels in the spectral domain, AAA tends to concentrate the poles $\zeta_i$ along the negative real axis\footnote{This is a manifestation of a more general principle of AAA, namely, that it concentrates zeroes and poles along singular sets of the target function.}. We thus construct an initial approximation of $\lambda$ by projecting these poles to the real line:
\begin{equation}
    \widehat{\lambda}(s) = \sum_{i=1}^m \lambda_i\delta(s - \alpha_i)ds, \qquad \alpha_i = -\Re[\zeta_i], \ \lambda_i = \Re[\rho_i].
\end{equation}
Finally, we set to zero those weights $\lambda_i$ which are negative, and we optimize the remaining pairs $(\alpha_i, \lambda_i)$ through a projected gradient descent, minimizing mean squared error while constraining $\lambda_i\geq 0$. 

This procedure can be applied even when $K$ exhibits exponential growth (i.e., if $K$ is gCM rather than CM). If $K$ grows at a rate $\tau>0$, then we perform an AAA-based Laplace transform on $\widehat{K}(t) = e^{-\tau t}K(t)$ and recover the transform of $K$ as $\cL[K](s) = \cL[\widehat{K}](s-\tau)$. We summarize our inverse Laplace transform algorithm in the following pseudocode:

\bigskip

\begin{algorithm}[H]
\DontPrintSemicolon
\KwIn{$\{t_i, K(t_i)\}_{i=1}^n, \{s_i\}_{i=1}^m, \tau \geq 0$}
\KwOut{$\widehat{\lambda} = \sum_{i=1}^m \lambda_i\delta_{\alpha_i}$}
\begin{enumerate}
\item Rescale kernel $K(t) \leftarrow e^{-\tau t}K(t)$
\item First application of AAA to approximate kernel at time samples $\{t_i\}_{i=1}^n$
$$K(t) \approx \widehat{K}(t) \doteq \sum\frac{w_i}{t - z_i}$$\\
\item Take analytical Laplace transform of rational approximant
$$\cL[\widehat{K}](s) = \sum w_ie^{-z_is}E_1(-z_is)$$
\item Second application of AAA at spectral sample points $\{s_i\}_{i=1}^m$
$$\cL[\widehat{K}](s) \approx \sum \frac{\rho_i}{s - \zeta_i}$$
\item Round poles and residues to real axis, $\alpha_i = -\Re[\zeta_i]$, $\lambda_i = \Re[\rho_i]$, and estimate empirical measure and associated kernel
$$\widehat{\lambda}(s) = \sum \lambda_i\delta(s - \alpha_i)ds, \qquad \widehat{K}(t) = \cL_b[\widehat{\lambda}](t) = \sum \lambda_ie^{-\alpha_i t}$$
\item Reoptimize real values $\{\alpha_i, \lambda_i\}$ through gradient descent (Adam) and optionally project these values to be positive after each step, minimizing the least squares objective on the time samples $\{t_i\}_{i=1}^m$
$$\min_{\alpha_i, \lambda_i}\|\widehat{K} - K\|_{\{t_i\}_{i=1}^m}$$
\end{enumerate}
\Return{$\widehat{\lambda} = \sum_{i=1}^m \lambda_i\delta_{\alpha_i}$}
\caption{AAA-Based Inverse Laplace Transform of gCM Kernels}
\label{alg:aaa_laplace}
\end{algorithm}

\bigskip

An AAA-based approach could be used equally well in the setting of \ref{eq:integrodiff_PD}, but we instead demonstrate an alternate approximation scheme using the discrete cosine transform (DCT):

\vspace{5pt}

\begin{algorithm}[H]
\DontPrintSemicolon
\KwIn{$\{t_j = (j-1)\Delta t, K(t_j)\}_{j=1}^n$}
\KwOut{$\widehat{\lambda} = \sum_{i=1}^n \tfrac{\lambda_i}{2}(\delta_{-\omega_i} + \delta_{\omega_i})$}
\begin{enumerate}
\item Perform an inverse DCT on the sample vector $\{(K(t_i)\}_{i=1}^n$ to obtain
\begin{align*}
    \widehat{\lambda}(s) = \sum \frac{\lambda_i}{2}\Big[\delta(s - \omega_i) + \delta(s + \omega_i)\Big]\,ds,
\end{align*}
with weights and frequencies
\begin{align*}
    \lambda_j = \frac{2}{n}\Big(\frac{1}{2}K(0) + \sum K(t_k)\cos(\omega_jt_k)\Big), \qquad \omega_j = \frac{\pi(2j-1)}{2n\Delta t}.
\end{align*}
\item Write an estimate for the kernel
$$\widehat{K}(t) = \cF[\lambda](t) = \sum\lambda_i\cos(\omega_it).$$
Optimize real values $\{\omega_i, \lambda_i\}$ through gradient descent (Adam) and optionally project the $\lambda_i$ to be positive after each step, minimizing the least squares objective on the time samples $\{t_i\}_{i=1}^n$
$$\min_{\omega_i, \lambda_i}\|\widehat{K} - K\|_{\{t_i\}_{i=1}^n}$$
\end{enumerate}
\Return{$\widehat{\lambda} = \sum_{i=1}^n \tfrac{\lambda_i}{2}(\delta_{-\omega_i} + \delta_{\omega_i})$}
\caption{DCT-Based Inverse Fourier Transform of gPD Kernels}
\label{alg:dct_fft}
\end{algorithm}

\bigskip

\daveend

\subsection{Numerical Examples of \texorpdfstring{$\cB_\RR$}{BR}}
We develop an intuition for the behavior of $\cB_\RR$ \dave{(whose implementation is described in \cref{subsubsec:cauchy}) by applying it to four different triples $(\lambda,c_0,c_1)$}. These are as follows: a purely discrete measure,
\begin{equation*}
    \lambda_1(s) = \sum_{i=1}^6\beta_i\delta(s-\alpha_i), \qquad \begin{array}{l}\alpha = (5.0, 7.0, 8.1, 10.3, 12.2, 15.0)\\\beta = (1.0, 2.3, 0.5, 0.7, 2.0, 0.4)\end{array},\qquad \begin{array}{l}c_0 = -10\\c_1 = 5\end{array},
\end{equation*}
a sum of two parabolic kernels,
\begin{equation*}
    \lambda_2(s) = \chi_{[4, 6]}(s)(1 - (s-5)^2) + \chi_{[14, 16]}(s)(1 - (s-15)^2), \qquad \begin{array}{l}c_0 = 1\\c_1 = 0\end{array},
\end{equation*}
a fully-supported measure with both continuous and discrete parts,
\begin{equation*}
    \lambda_3(s) = e^{-|s-6|} + \sum_{i=1}^3\beta_i\delta(s-\alpha_i), \qquad \begin{array}{l}\alpha = (3.0, 5.0, 7.0)\\\beta = (0.3, 0.4, 0.2)\end{array},\qquad \begin{array}{l}c_0 = 0\\c_1 = 0\end{array},
\end{equation*}
and a sum of two triangular kernels and several atoms,
\begin{equation*}
\begin{gathered}
    \lambda_4(s) = \chi_{[-6, -4]}(s)(1 - |s+5|) + \chi_{[4, 6]}(s)(1 - |s-5|)+ \sum_{i=1}^3\beta_i\delta(s-\alpha_i),\\
    \qquad \begin{array}{l}\alpha = (-2, 0, 2)\\\beta = (1.2, 0.2, 1.3)\end{array},\qquad \begin{array}{l}c_0 = 1+i\\c_1 = 1\end{array}.
\end{gathered}
\end{equation*}

In \cref{fig:lambda_mu_fig}, we show how each of the measures $\lambda_i$ is mapped to the interconverted measure $\mu_i$ under $\cB_\RR$, with the parameters $c_0$ and $c_1$ as indicated. As evidenced by the example of $\lambda_1$, discrete measures are always mapped to discrete measures for real $c_0$, and the atoms of $\lambda$ and $\mu$ must interlace (see \cref{cor:discrete_formula}). \dave{This interlacing phenomenon is well-known in the context of materials science, where discrete measures correspond to materials with piecewise-constant microstructure~\cite{gross1968mathematical}; see \cref{sec:materials} for more details. The behavior of $\cB_\RR$ grows more interesting for continuous and mixed measures, which correspond to materials with more general microstructures.} As the example of $\lambda_2$ demonstrates, continuous measures with compact support are mapped to other measures with the same support, along with \dave{one or more} atoms added on (see \cref{prop:support}). The example of $\lambda_3$ shows that any measure with everywhere nonzero continuous density is mapped to a continuous measure with full support; this is implied by \cref{thm:main_formula}, as $N_0(\lambda)$ is empty. Finally, the example of $\lambda_4$ shows that the same is true for \emph{any} measure when $c_0$ has positive imaginary part (see \cref{prop:easyone}).

\begin{figure}
    \centering
    \includegraphics[width=\linewidth]{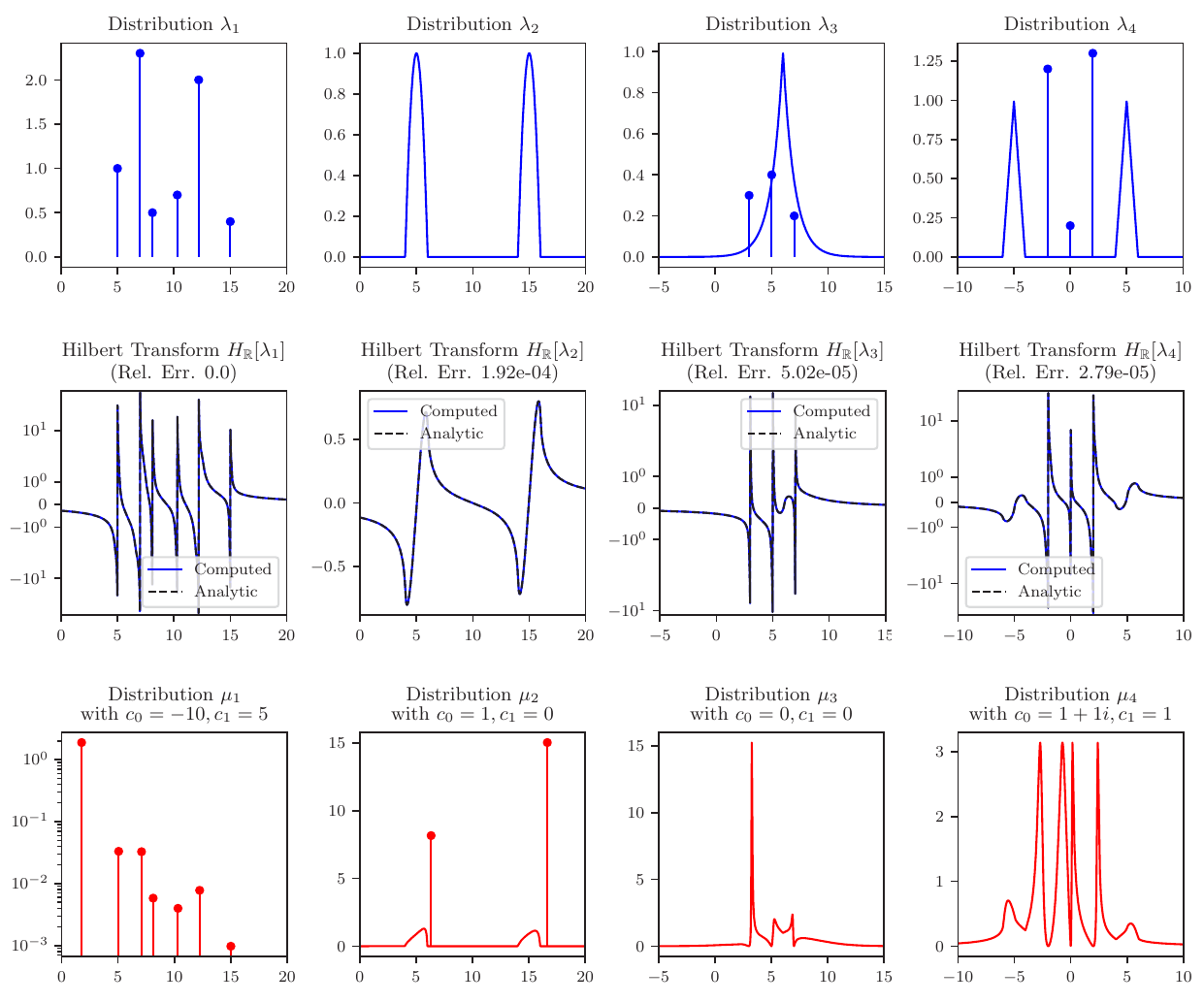}
    \caption[Examples of $\cB_\RR$ on the Real Line]{We display the diverse set of behaviors exhibited by the map $\cB_\RR$. The four examples shown above---discussed in \cref{sec:B_implem}---show a discrete measure, a continuous measure of compact support, a `mixed' measure with full support, and a mixed measure with compact support. Notably, the choice of $c_0$ and $c_1$ significantly affects the behavior of $\cB_\RR$.}
    \label{fig:lambda_mu_fig}
\end{figure}

We now apply our spectral method to solve the interconversion problem for the same four kernels\footnote{Since $c_0\notin\RR$ in the case corresponding to $\lambda_4$, note that the associated \ref{eq:integrodiff_CM} problem is not well-behaved. In that case, we treat only the \ref{eq:integrodiff_PD} problem.}.
\dave{We first consider the \ref{eq:integrodiff_CM} context, solved formally in \cref{prop:main_CM}}; here, $x(t)$ and $y(t)$ satisfy the pair of equations
\begin{equation*}
    \begin{gathered}
        y(t) = c_1\dot{x}(t) - c_0x(t) - \int_0^t K(t-s)x(s)ds,\\ -\pi^2x(t) = \zeta_1\dot{y}(t) - \zeta_0y(t) - \int_0^t J(t-s)y(s)ds,
    \end{gathered}
\end{equation*}
where $K = \cL_b[\lambda]$ and $J=\cL_b[\mu]$ are gCM integral kernels, with $\lambda,\mu\in\cM_+^{(1)}(\RR)$, and where $\cB_\RR[\lambda,c_0,c_1] = (\mu,\zeta_0,\zeta_1)$. By combining these two equations, we can see that the kernels $K$ and $J$ must satisfy the following \emph{resolvent equations}:
\begin{equation}\label{eq:resolvent_cm_detailed}
\begin{aligned}
    \zeta_1\dot{K} - \zeta_0K - K * J = 0, \ K(0) = \pi^2/\zeta_1, \quad &\text{if } c_1 = c_0 = 0,\\
     c_0J + \zeta_0K + K * J = 0,  \quad &\text{if } c_1 = 0, c_0 \neq 0,\\
    c_1\dot{J} - c_0J - K * J = 0, \ J(0) = \pi^2/c_1, \quad &\text{if } c_1 > 0.
\end{aligned}
\end{equation}
In order to evaluate the accuracy to which an estimated kernel $J$ satisfies the resolvent equations above, we can compute the relative $L^2$ error
\begin{equation}
    \cE_{\mathrm{gCM}}(J) = \begin{cases} \frac{\|\zeta_1\dot{K} - \zeta_0K - K * J\|_{L^2}}{\|\zeta_1\dot{K} - \zeta_0K\|_{L^2}}, & c_1 = c_0 = 0\\ \frac{\|\zeta_0K + c_0J + K * J\|_{L^2}}{\|\zeta_0K\|_{L^2}}, & c_1 = 0, c_0 \neq 0\\ \frac{\|c_1\dot{J} - c_0J - K * J\|_{L^2}}{\|c_1\dot{J}\|_{L^2}}, & c_1 > 0\end{cases}.
\end{equation}
The error in the first two cases is chosen to weigh against the total contribution of terms in the resolvent equation that do not involve $J$; since no such terms appear in the final case, the error is chosen to weigh against the `most irregular' expression in the resolvent equation, $c_1\dot{J}$.

\dave{We can carry out a similar program in the \ref{eq:integrodiff_PD} setting, solved formally in \cref{prop:main_PD}}; here, $x(t)$ and $y(t)$ satisfy the pair of equations
\begin{equation*}
    \begin{gathered}
        y(t) = c_1\dot{x}(t) - ic_0x(t) + \int_0^t K(t-s)x(s)ds,\\ \pi^2x(t) = \zeta_1\dot{y}(t) -i\zeta_0y(t) + \int_0^t J(t-s)y(s)ds,
    \end{gathered}
\end{equation*}
now with $K = \cF[\lambda]$ and $J=\cF[\mu]$ gPD integral kernels. The resolvent equations in this context are as follows:
\begin{equation}\label{eq:resolvent_pd_detailed}
    \begin{aligned}
    \zeta_1\dot{K} - i\zeta_0K + K * J = 0, \ K(0) = \pi^2/\zeta_1,\qquad &\text{if } c_1 = c_0 = 0\\
    i\zeta_0K = -ic_0J + K * J, \qquad &\text{if } c_1 = 0, c_0 \neq 0\\
    c_1\dot{J}- ic_0J + K * J = 0, \ J(0) = \pi^2/c_1 \qquad &\text{if } c_1 > 0.
\end{aligned}
\end{equation}
with the resulting relative $L^2$ error expression
\begin{equation}
    \cE_{\mathrm{gPD}}(J) = \begin{cases} \frac{\|i\zeta_0K - \zeta_1\dot{K} - K * J\|_{L^2}}{\|i\zeta_0K - \zeta_1\dot{K}\|_{L^2}}, & c_1 = c_0 = 0\\ \frac{\|i\zeta_0K + ic_0J - K * J\|_{L^2}}{\|i\zeta_0K\|_{L^2}}, & c_1 = 0, c_0 \neq 0\\ \frac{\|c_1\dot{J} - ic_0J + K * J\|_{L^2}}{\|c_1\dot{J}\|_{L^2}}, & c_1 > 0\end{cases}.
\end{equation}

\begin{figure}
    \centering
    \includegraphics[width=\linewidth]{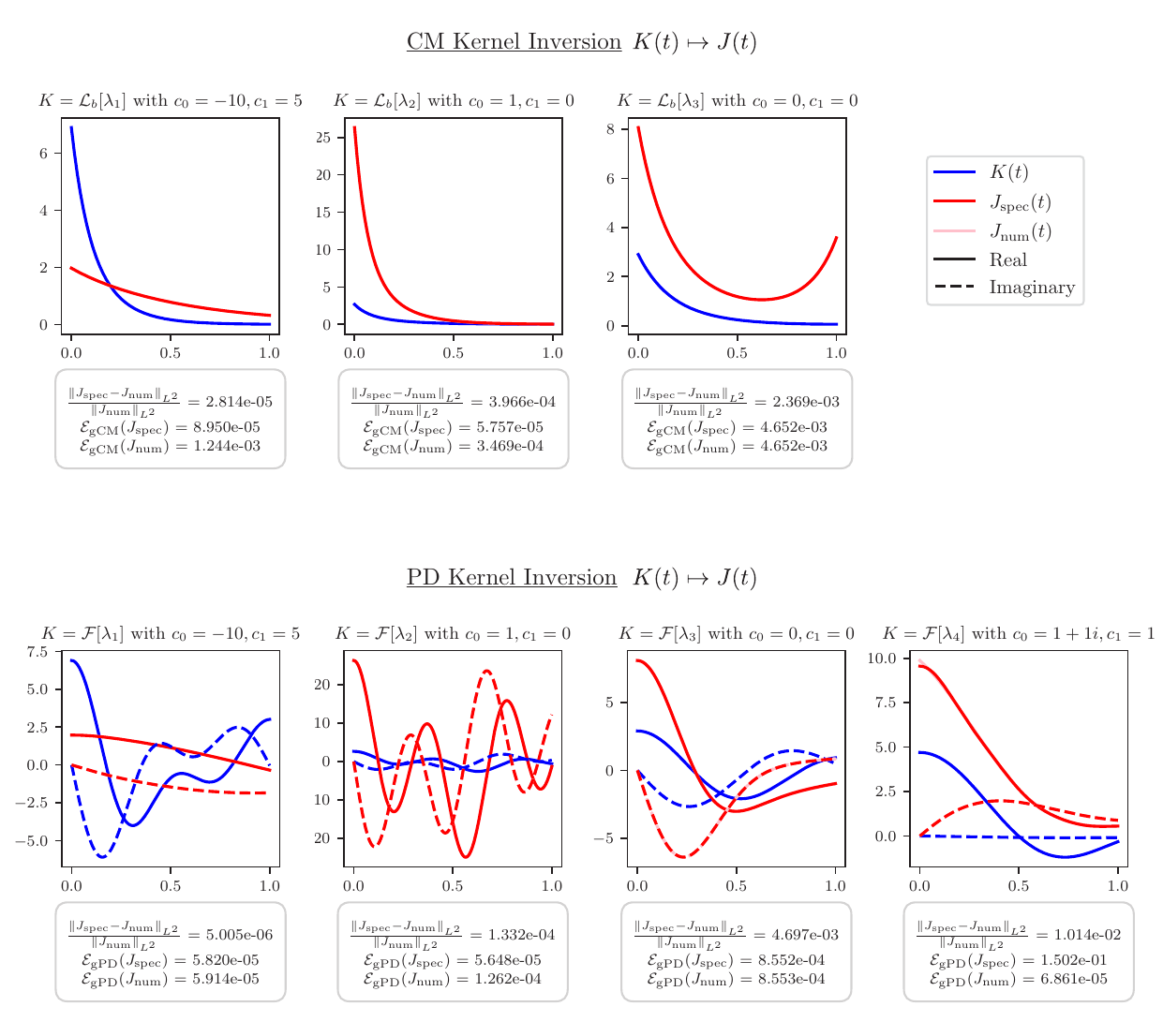}
    \caption[Interconversion of Volterra Integral and Integro-Differential Equations]{Interconversion of the equations \ref{eq:integrodiff_CM} and \ref{eq:integrodiff_PD} for various integral kernels $K$ (shown in blue). \dave{First, we implement the spectral approach introduced in \cref{sec:B_implem}, giving the interconverted kernels $J_\mathrm{spec}$ (red). Second, we implement direct numerical solutions to the resolvent equations \cref{eq:resolvent_cm_detailed} and \cref{eq:resolvent_pd_detailed} arising from our theory, giving the kernels $J_\mathrm{num}$ (pink) overlapping closely with $J_\mathrm{spec}$.}}
    \label{fig:kernel_fig}
\end{figure}

\dave{In \cref{fig:kernel_fig}, we study how spectral interconversion through $\cB_\RR$ compares against direct numerical approaches to the resolvent equations \cref{eq:resolvent_cm_detailed} and \cref{eq:resolvent_pd_detailed} derived from our spectral theory; for a comparison against existing techniques (i.e., those that do \emph{not} use our analytical results) see \cref{fig:trajectory_fig,fig:volterra_conv,fig:fft_comparison,fig:discrete} below.}

\dave{In both the CM and PD settings, our spectral interconversion algorithm starts with full spectral information $\lambda$ of the kernel $K$ along with the coefficients $(c_0, c_1)$, and computes the kernel $J$ and coefficients $(\zeta_0, \zeta_1)$ in the following two steps:
\begin{enumerate}
\item Apply numerical interconversion map
$$(\mu(z), \zeta_0, \zeta_1) = \cB_\RR[\lambda(z), c_0, c_1]$$
\item Construct inverse kernel through trapezoidal quadrature
$$J(t) = \cL_b[\mu](t) \text{ or } J(t) = \cF[\mu](t)$$
\end{enumerate}}

\dave{For comparison, we implement direct numerical solutions of the resolvent equations \cref{eq:resolvent_cm_detailed} and \cref{eq:resolvent_pd_detailed}. For integral equations of the first and second kinds, we discretize the integrals according to the trapezoid rule, with $10^4$ timepoints between $0$ and $1$; such an approach is discussed in~\cite[Ch.~18.2]{teukolsky1992numerical}. We solve integro-differential equations by approximating integral terms with Gauss quadrature (with 20 nodes), as discussed in~\cite{ansmann2018efficiently}. }

\subsection{Spectral Interconversion from Time-Sampled Kernels}\label{subsec:invlaplace}
In practice, one may not know the spectrum of our integral kernel $K$ \emph{a priori}, but only the values of $K$ at discrete time points $t_1,...,t_n$. To solve this problem with our spectral theory, one must first estimate the \dave{measure $\lambda$ associated with $K$}.

\dave{We first treat the \ref{eq:integrodiff_CM} case, using Algorithm~\ref{alg:aaa_laplace} to compute the inverse Laplace transform. At the top of \cref{fig:sample_fig}, we show how this scheme works on an integral equation of the first kind ($c_0=c_1=0$) with integral kernel
\begin{equation}\label{eq:Kcm_example}
    K(t) = \frac{1}{(t + 1)^2} + e^{-t}, \quad \lambda(s) = \cL^{-1}[K](s) = \chi_{[0,\infty)}(s)se^{-s}\,ds + \delta(s - 1)\,ds,
\end{equation}
with $n = 5$ and $n = 10$ logarithmically spaced time samples and 1000 steps of Adam gradient optimization~\cite{kingma2014adam}. Once we have estimated $\widehat{\lambda}\approx\cL_b^{-1}[K]$, we can use the methods of \cref{sec:B_implem} to compute $\cB_\RR[\widehat{\lambda},c_0=0,c_1=0] = (\widehat{\mu},\zeta_0,\zeta_1)$ and thus recover the interconverted gCM kernel $J = \cL_b[\widehat{\mu}]$. We find that we are able to accurately reconstruct $J$ with $n = 10$ samples; $n = 5$ samples are sufficient to closely approximate $K$ itself, but too few to accurately recover $J$}.

\dave{We also show the \ref{eq:integrodiff_PD} case, using Algorithm~\ref{alg:dct_fft} to compute the inverse Fourier transform. At the bottom of \cref{fig:sample_fig}, we apply this scheme to an integro-differential equation ($c_0=c_1=1$) with integral kernel
\begin{equation}\label{eq:Kpd_example}
    K(t) = e^{-t^2} + \frac{1}{4}\cos(2t) + \frac{1}{4}\cos(5t),
\end{equation}
\begin{align*}
    \lambda(s) = \cF^{-1}[\lambda](s) = \frac{1}{2\sqrt{\pi}}e^{-\frac{s^2}{4}}\,ds + \frac{1}{8}\Big[\delta(s - 5) + \delta(s - 1) + \delta(s + 1) + \delta(s + 5)\Big]\,ds,
\end{align*}
with $n = 10$ and $n = 20$ equispaced points in time and 1000 steps of Adam gradient optimization. We again use the methods of \cref{sec:B_implem} to compute $\cB_\RR[\widehat{\lambda},c_0=1,c_1=1] = (\widehat{\mu},\zeta_0,\zeta_1)$ and recover $J=\cF[\widehat{\mu}]$. We see that the reconstruction of the $J$ is highly accurate when $n = 20$ samples of $K$ are given, but still remains reasonably accurate even with $n = 10$ samples}.

\begin{figure}
    \centering
    \includegraphics[width=\linewidth]{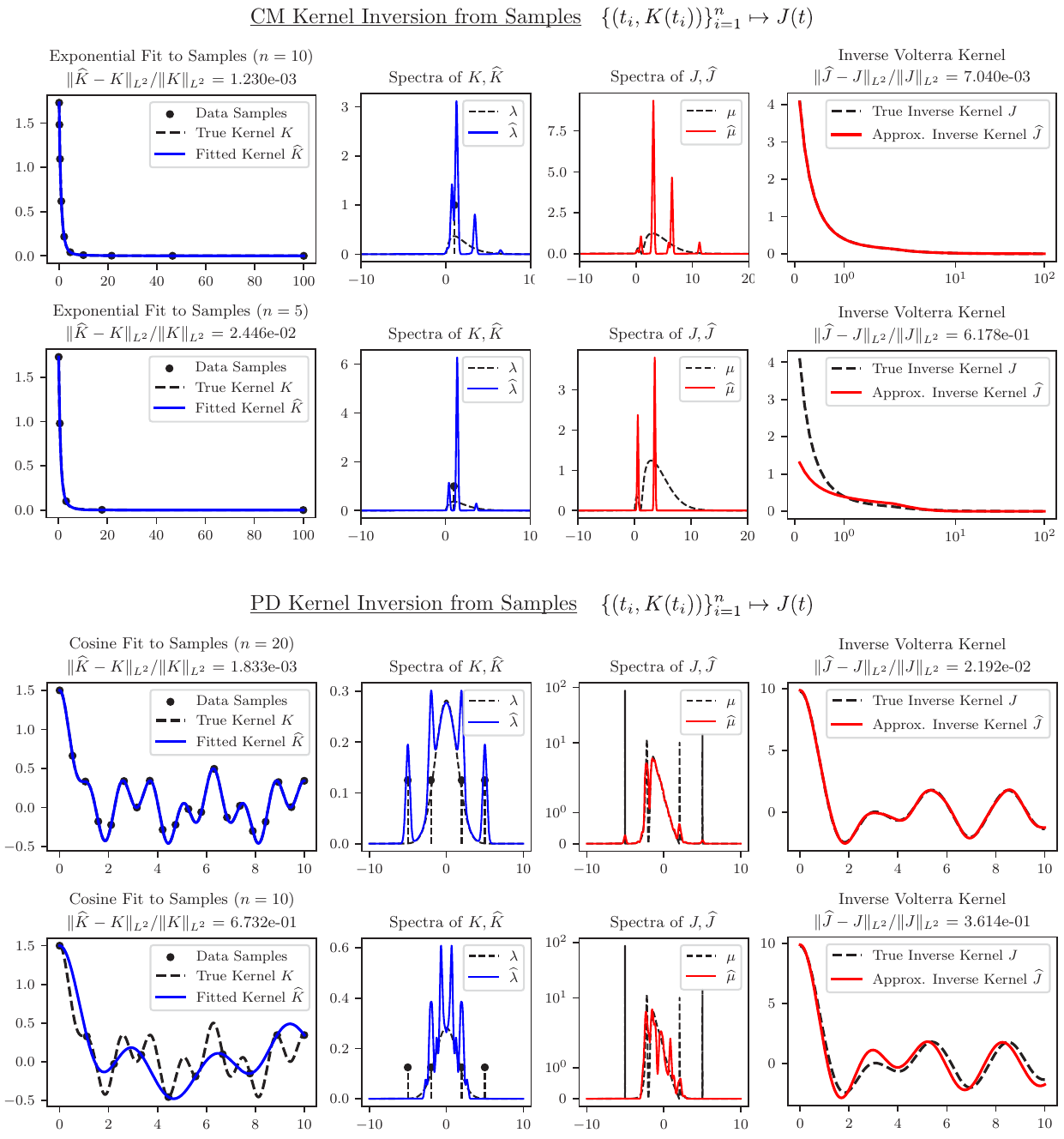}
    \caption[Interconversion from Time-Sampled Integral Kernels]{The first two rows above study the interconversion of a Volterra equation with $c_0 = c_1 = 0$ and the gCM Volterra kernel $K = \cL_b[\lambda]$ given by~\cref{eq:Kcm_example}. This interconversion is performed by first computing the approximate inverse Laplace transform of $K$ through AAA to obtain a discrete measure $\widehat{\lambda}$, which is then mapped to another discrete measure $\widehat{\mu}$ under $\cB_\RR$. Although $\widehat{\lambda}$ and $\widehat{\mu}$ in the middle two columns are exactly discrete, we plot them with kernel density estimation for clearer visualization. We see that the inversion accurately approximates the inverse kernel $J$ with only a moderate number of sample points. The second two rows study the interconversion of a Volterra equation with $c_0 = c_1 = 1$ and the gPD Volterra kernel $K = \cF[\lambda]$ given in~\eqref{eq:Kpd_example}. In this case, $\widehat{\lambda}$ is obtained by taking the inverse Fourier transform of $K$ computed through the DCT. Once again, the reconstruction of $J$ is accurate with only a moderate number of sample points.}
    \label{fig:sample_fig}
\end{figure}

\subsection{Deconvolution through Interconversion}\label{subsec:num_trajectories}
Now that we have established that our method can effectively interconvert either \ref{eq:integrodiff_CM} or \ref{eq:integrodiff_PD}, we test its ability to recover (or \textit{deconvolve}) the solution $x(t)$ from a \dave{noisy} input $y(t)$. 

We construct a random input $x(t)$ as follows. First, we generate a random walk $X_k = \frac{1}{\sqrt{N}}\sum_{i=1}^k\xi_i$, where $\xi_i\sim\cN(0,1)$ are i.i.d.~standard normal increments, normalized such that $\mathbb{E}[X_N^2]=1$. We define $x(t)$ by interpolating $X_k$ using a fifth-order spline, \dave{implemented in \texttt{SciPy}~\cite{2020SciPy-NMeth} as follows}:
\begin{equation}\label{eq:interp_rand_walk}
    x(t) = \mathtt{B\text{-}Spline}(\{(k\Delta\tau, X_k)\}_{k=1}^N).
\end{equation}
For the purposes of the present section, we generate such a trajectory $x(t)$ with timestep $\Delta \tau = 1$ and random walk steps $N = 10$.

We then study the following gCM Volterra equation
\begin{equation}\label{eq:cm_ex_invert}
    y(t) = 2x(t) - \int_0^tK(t-s)x(s)ds, \qquad K(t) = e^{-t} + e^{-2t},
\end{equation}
and the following gPD Volterra equation
\begin{equation}\label{eq:pd_ex_invert}
    y(t) = \int_0^tK(t-s)x(s)ds, \quad K(t) = \cos(t) + \cos(2t).
\end{equation}
For each equation, we numerically compute the convolution $K*x$ at 1000 time points to obtain a baseline value of $y(t)$. We then proceed to corrupt the resulting values of $y$ with $p\%$ Gaussian white noise to obtain $\widetilde{y}(t) = y(t) + \xi(t)$, where $\xi(t)$ is a Gaussian white noise process, scaled such that $\mathbb{E}[\|\xi\|_{L^2}]/\|y\|_{L^2} = \tfrac{p}{100}$.

\dave{In \cref{fig:trajectory_fig}, we apply the spectral approach above to recover $x(t)$ from noisy measurements $\widetilde{y}(t)$, and we denote this estimate by $\widehat{x}_\mathrm{spec}(t)$ (dark blue line). Specifically, we recover the interconverted kernel $J$ as before, but we now use the formulas of \cref{prop:main_CM} and \cref{prop:main_PD} to recover $\hat{x}_\text{spec}(t)$. This approach works remarkably well, even at high noise levels}. 

We compare against a baseline approach of inverting~\eqref{eq:cm_ex_invert} and~\eqref{eq:pd_ex_invert} directly, i.e., by discretizing these systems through a trapezoid rule at 1000 equispaced timepoints and solving the resulting matrix equation. This reconstruction, labeled $\widehat{x}_\mathrm{data}(t)$ (light purple line), shows high sensitivity to noise in both the gCM and gPD cases. \dave{As expected (and unlike the interconversion-based method), this approach is more noise-sensitive when applied to the first-kind equation \eqref{eq:pd_ex_invert} than to the second-kind equation~\eqref{eq:cm_ex_invert}}. 

\begin{figure}
    \centering
    \includegraphics[width=\linewidth]{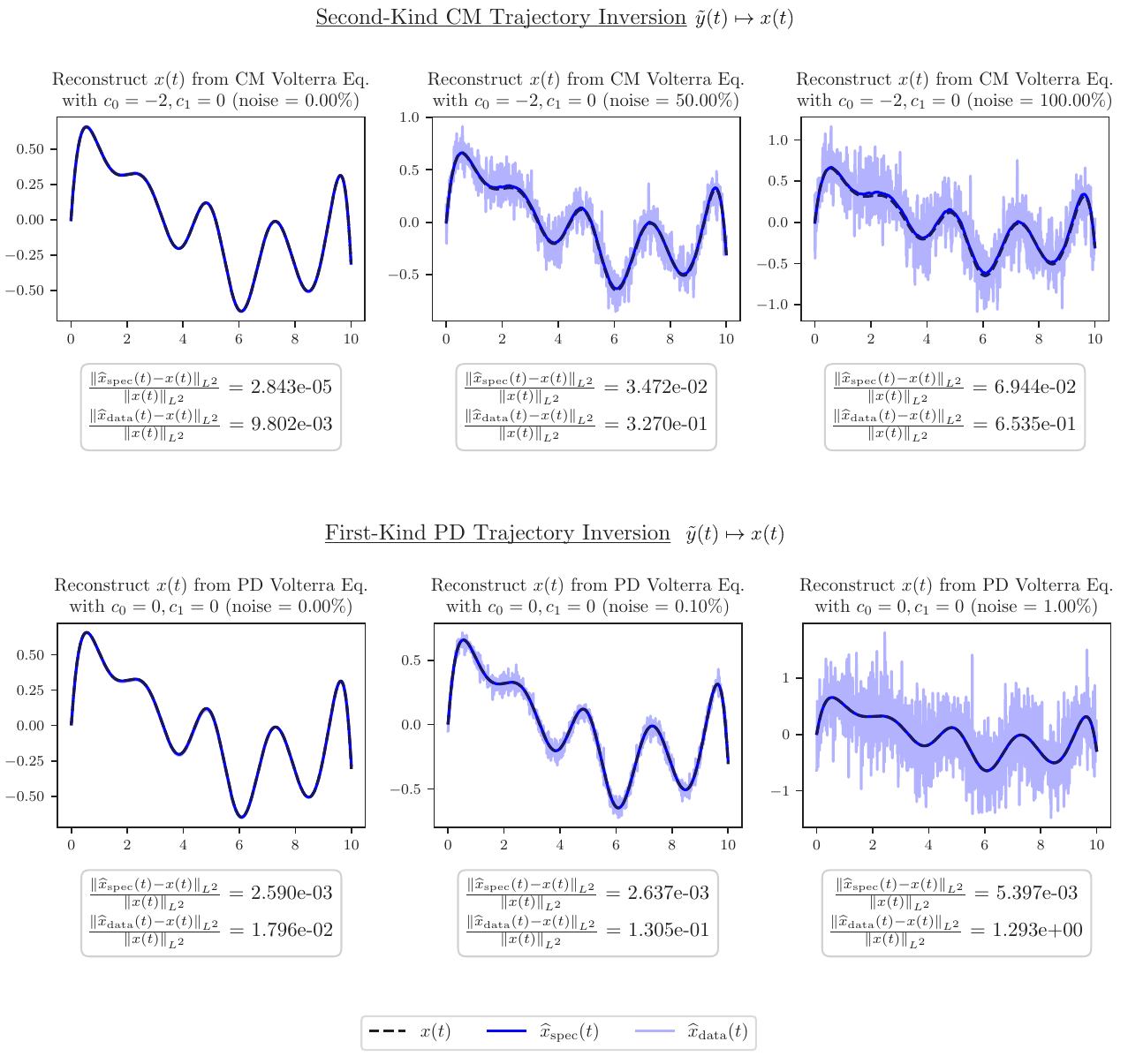}
    \caption[Comparison with Quadrature-Based Solution under Noise Corruption]{Reconstruction of the trajectory $x(t)$ given a noisy output $\widetilde{y}(t) = y(t) + \xi(t)$, in both gCM~\eqref{eq:cm_ex_invert} and gPD~\eqref{eq:pd_ex_invert} Volterra equations. With our spectral approach, we reconstruct $x(t)$ by first determining the interconverted Volterra kernel $J$ from $K$ through our spectral interconversion formulas, and then using $J$ to reconstruct $x(t)$ analytically. This reconstruction is shown as $\widehat{x}_\mathrm{spec}(t)$ (dark blue line), and we see that it is robust to significant noise corruption. An alternative approach for reconstructing $x(t)$ is to numerically solve the Volterra equations~\eqref{eq:cm_ex_invert} and~\eqref{eq:pd_ex_invert} through a trapezoid rule discretization. The resulting numerical problem is highly ill-conditioned, and as such, the reconstructed values $\widehat{x}_\mathrm{data}(t)$ (light purple line) are highly sensitive to noise.}
    \label{fig:trajectory_fig}
\end{figure}

\george{
In \cref{fig:volterra_conv}, we show how, even with no noise, solving a first-kind Volterra equation through spectral methods has superior convergence and time complexity as the length $N$ of the time series increases. We compare our method once again to the standard approach of inverting a large triangular linear system after trapezoid rule discretization of the first-kind equation.
}

\begin{figure}
    \centering
    \includegraphics[width=\linewidth]{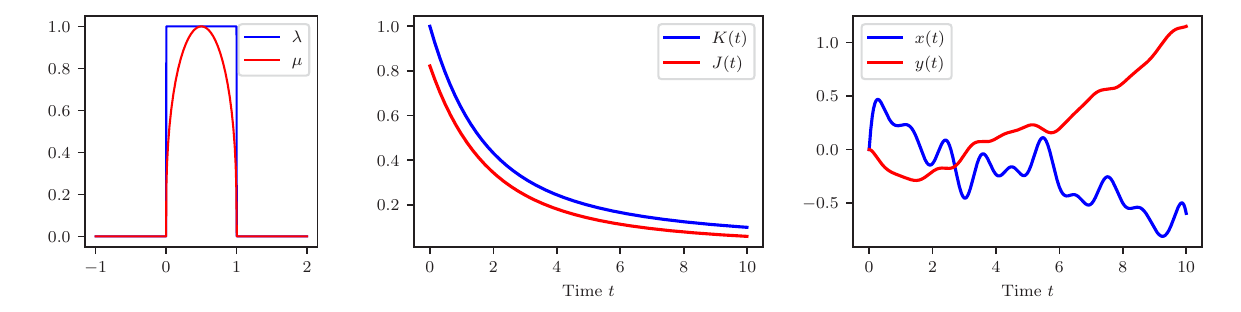}
    \begin{tabular}{ |p{3cm}||p{1.5cm}|p{1.5cm}|p{1.5cm}|p{1.5cm}|  }
     \hline
     \multicolumn{5}{|c|}{\textbf{Relative Error of Volterra Solution Methods} $\tfrac{\|\widehat{x} - x\|_{L^2}}{\|x\|_{L^2}}$} \\
     \hline
     Method & $N = 10$ & $N = 10^2$ & $N = 10^3$ & $N = 10^4$\\
     \hline
     Spectral (Ours) & 2.23e-01 & 2.73e-02 & 1.06e-03 & 5.72e-05\\
     \hline
     Linear Solver & 3.29e-01 & 6.30e-02 & 6.52e-03 & 6.70e-04\\
     \hline
    \end{tabular}

    \begin{tabular}{ |p{3cm}||p{1.5cm}|p{1.5cm}|p{1.5cm}|p{1.5cm}|  }
     \hline
     \multicolumn{5}{|c|}{\textbf{Runtime (ms) of Volterra Solution Methods}} \\
     \hline
     Method & $N = 10$ & $N = 10^2$ & $N = 10^3$ & $N = 10^4$\\
     \hline
     Spectral (Ours) & 0.093 & 0.077 & 0.278 & 0.759\\
     \hline
     Linear Solver & 0.024 & 0.029 & 0.695 & 55.536\\
     \hline
    \end{tabular}
    \caption[Comparison with Quadrature-Based Solution of First-Kind Equation]{\dave{Reconstruction of the trajectory $x(t)$ given clean measurements of $y(t) = (K * x)(t)$ on a gCM example with $c_0 = c_1 = 0$ and the kernel $K(t) = (1-e^{-t})/t$ shown above. Time series are discretized at $N = 10, \dots, 10^4$ equispaced time points. Our spectral approach converges faster as $N$ increases and also is more computationally efficient, compared to inversion of a linear triangular system via trapezoid rule discretization. Statistics are reported averaged over 1000 trials.}}
    \label{fig:volterra_conv}
\end{figure}

\subsection{Discrete-Time Volterra Equations}\label{sec:discrete_num}
\george{A fundamental problem of signals analysis is to deconvolve discrete time series that are filtered or smoothed by a one-sided kernel. As discussed in \cref{sec:signals}, prior approaches have predominantly focused on FFT or matrix inversion methods as the workhorse for numerical deconvolution.} \dave{In this section, we show how our interconversion theory allows us to achieve accuracy comparable with matrix inversion methods but efficiency competitive with FFT-based methods.}

We demonstrate our approach on the equation
\begin{equation}\label{eq:disc_x_to_y}
    y(n) = c_0x(n) + \sum_{j=0}^n K(n-j)x(j), \quad K(n) = \sum_{k=1}^N b_k\cos(n\theta_k)
\end{equation}
where $\theta_k \in [0, \pi]$ are distinct angles, $b_k\geq0$ are the corresponding weights, and $c_0 = -\tfrac{1}{2}\sum_{k=1}^N b_k$. The inverse Fourier transform of this kernel is
\begin{align*}
    d\lambda(\theta) = \sum_{k=1}^N\frac{b_k}{2}\Big(\delta(\theta - \theta_k) + \delta(\theta + \theta_k)\Big)\,d\theta,
\end{align*}
and by \cref{prop:main_PD}, we obtain $\mu\simeq (\mu, 0) = \cB[\lambda, 0]$. Because $\lambda$ is discrete, $\mu$ must take the form
\begin{align*}
    d\mu(\theta) = \sum_{k=1}^N\frac{\beta_k}{2}\Big(\delta(\theta - \gamma_k) + \delta(\theta + \gamma_k)\Big)\,d\theta,
\end{align*}
where $\gamma_k \in [0, \pi]$ interleave between the angles $\theta_k$ on the unit circle. In this case, we have
\begin{align*}
    J(t) = 4\cF[\mu](t) = 4\sum_{k=1}^N \beta_k\cos(\gamma_k t),\qquad \zeta_0 = - \tfrac{1}{2}J(0) = -2\sum_{k=1}^N \beta_k.
\end{align*}
Finally, the solution to \cref{eq:disc_x_to_y} is given by
\begin{equation}\label{eq:disc_y_to_x}
    x(n) = \zeta_0y(n) + \sum_{j=0}^n J(n-j)y(j).
\end{equation}

\begin{figure}
    \centering
    \includegraphics[width=\linewidth]{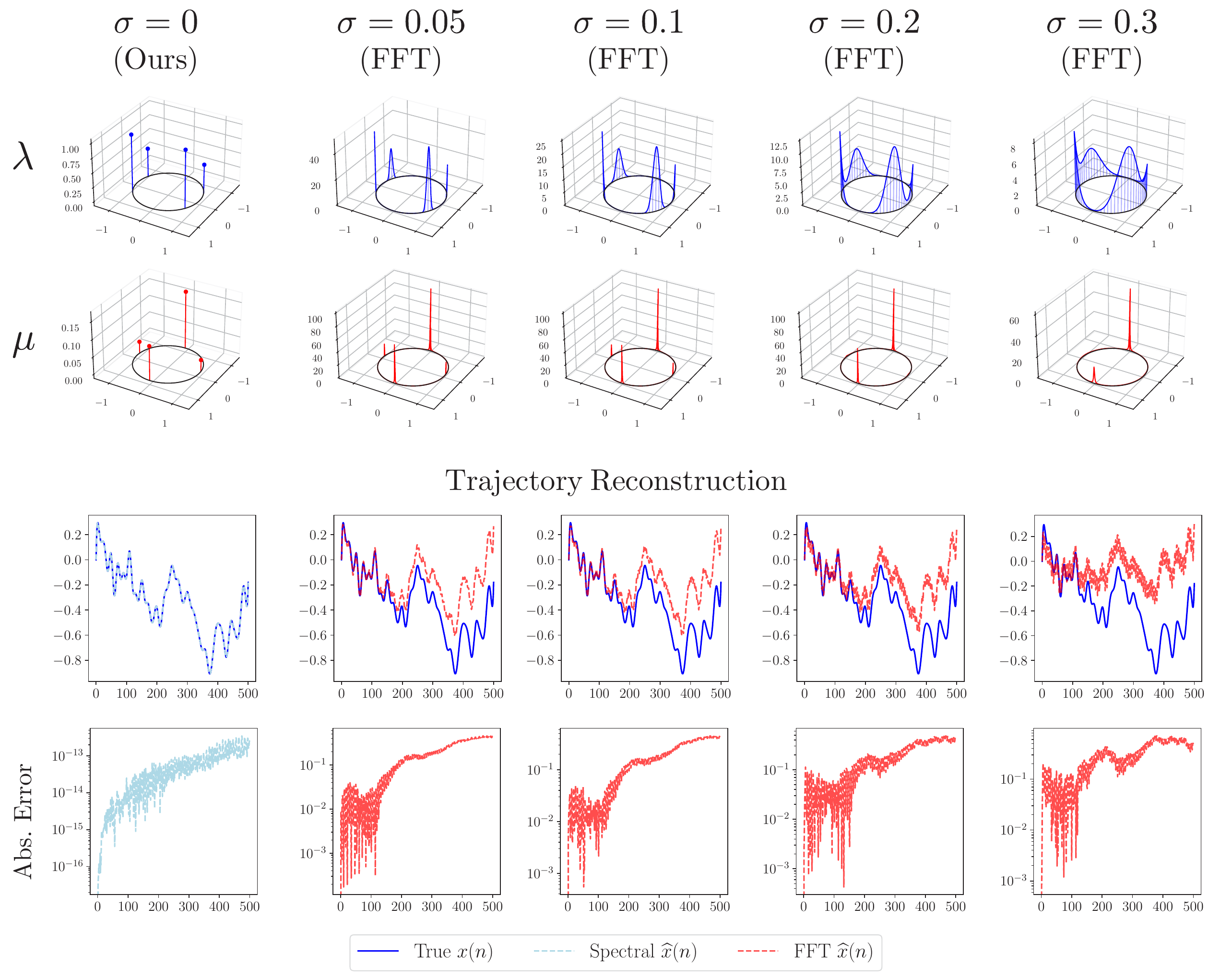}
    \caption[Comparison with FFT-Based Deconvolution of Difference Equations]{\dave{At the top of the first column, we show the discrete measure $\lambda\in\cM_+(S^1)$ associated with our discrete Volterra equation~\cref{eq:disc_x_to_y}. The interconverted measure $\mu$ (furnished by \Cref{prop:main_dPD}) is shown immediately below. We generate a random trajectory $x(n)$ on $n = 500$ points (blue curve) and convolve it against $c_0\delta_0 + \cF[\lambda]$ to produce an output $y(n)$. Our spectral approach is able to accurately reconstruct an approximation $\widehat{x}(n)\approx x(n)$ (blue dashed curve) using our interconversion formulas. In the second through fifth columns, we convolve $\lambda$ with a von Mises distribution of length scale $\sigma = 0.05$--$0.3$ and apply an FFT-based convolution. For all tested values of $\sigma$, the FFT gives order one error in its reconstruction $\widehat{x}(n)$ (red dashed curves).}}
    \label{fig:fft_comparison}
\end{figure}

In \cref{fig:fft_comparison}, we show a measure $\lambda$ (first column) with four atoms (i.e., $N = 2$) with $\theta_1, \theta_2 = 1, 2$ and $b_1, b_2 = 1, \tfrac{1}{2}$, respectively, and $c_0 = -\tfrac{1}{2}(b_1 + b_2) = -3/2$. We verify that our spectral map $\cB$ correctly inverts these dPD Volterra equations by testing it on a trajectory $x(n)$ defined as in~\eqref{eq:interp_rand_walk}, such that the random walk $X_k$ jumps every $\Delta n = 50$ discrete time intervals. We convolve $x(n)$ under~\eqref{eq:disc_x_to_y} to produce $y(n)$, and then deconvolve it under~\eqref{eq:disc_y_to_x} to reconstruct the trajectory $\widehat{x}(n)$. We see in \cref{fig:fft_comparison} (third row, first column) that this spectral reconstruction $\widehat{x}$ (blue dashed line) matches $x$ (blue line).

We compare this spectral approach to \dave{traditional} numerical solutions of \cref{eq:disc_x_to_y} in both the frequency and time domains. For this, recall from \cref{sec:signals} that the discrete deconvolution problem can be rephrased as solving a linear system. Forming the lower triangular matrix $\mathbf{T} \in \RR^{(n+1) \times (n+1)}$ with $T_{ij} = \mathbbm{1}_{\{i \geq j\}} c_0\delta(i-j) + K(i-j)$, we can solve
\begin{equation}
    \mathbf{y} = \mathbf{T}\mathbf{x}, \qquad \mathbf{x} = \begin{pmatrix}x(0)\\ \vdots\\ x(n)\end{pmatrix}, \ \mathbf{y} = \begin{pmatrix}y(0)\\ \vdots\\ y(n)\end{pmatrix}.
\end{equation}
The classical algorithm for this inversion uses forward substitution and requires $O(n^2)$ operations. However, the matrix $\mathbf{T}$ is \textit{Toeplitz} as well as triangular, so this scheme can be improved upon. Generic (i.e., non-triangular) Toeplitz matrices can be inverted in $O(n^2)$ operations using Levinson recursion~\cite{wiener1964wiener}, although relatively-involved \textit{superfast} methods exist that use the FFT to invert such matrices in $O(n\log n + np^2)$ operations, where $p$ depends on the entries of the Toeplitz matrix~\cite{chandrasekaran2008superfast}. Triangular Toeplitz matrices can likewise be inverted in $O(n \log n)$ time with $\sim 10$ applications of the FFT~\cite{commenges1984fast}.

\george{Approximate algorithms based on polynomial interpolation can drop the time complexity to the cost of only a few FFTs, but with stricter requirements on the regularity of the spectrum. The standard approach of this form~\cite{LIN2004511} is to take the Fourier transform of the first column of $\mathbf{T}$, which encodes $c_0\delta_0 + K$; compute the reciprocal of the Fourier coefficients (adding a small regularizing $\epsilon = 10^{-5}$); and evaluate the inverse FFT of the result to reconstruct $\zeta_0\delta_0 + J$, corresponding to the first column of $\mathbf{T}^{-1}$. We test this method on the example discussed above, where we produce a stochastic trajectory $x(n)$, convolve it against $c_0\delta_0 + K$ to produce $y(n)$, and use the FFT-based estimate of $\zeta_0\delta_0 + J$ to deconvolve and recover $\widehat{x}(n)$. The FFT method does not apply directly when $\lambda$ is an atomic measure, so we convolve it with a von Mises distribution as
$$\lambda(\theta) \mapsto \lambda(\theta) * \left[\frac{\exp(\cos(\theta)/\sigma^2)}{2\pi I_0(1/\sigma^2)}\right],$$
with varying length scales $\sigma$. In \cref{fig:fft_comparison} (second through fifth columns) we show how this improves the conditioning of deconvolution under the FFT, but its reconstruction $\widehat{x}(n)$ (red dashed line) still gives a poor estimate of $x(n)$ (blue line).} \dave{By contrast, our spectral approach yields near machine-precision for cases involving singular measures, without relying on regularization}.

\dave{In \cref{fig:discrete}, we compare the accuracy and efficiency of our spectral inversion to those of time-domain deconvolution, again using the example \cref{eq:disc_x_to_y}. We compare against two classical methods of inverting Toeplitz matrices: forward substitution for triangular matrices and Levinson recursion for Toeplitz matrices. }\george{Both approaches run in $O(n^2)$ time, where $n$ is the time series length. By contrast, we see that our spectral approach constructs $x$ in nearly linear time, suggesting that it is dominated by the computation of the FFT. All three approaches recover $x$ with comparable (near machine-precision) accuracy}.

\begin{figure}
    \centering
    \includegraphics[width=\linewidth]{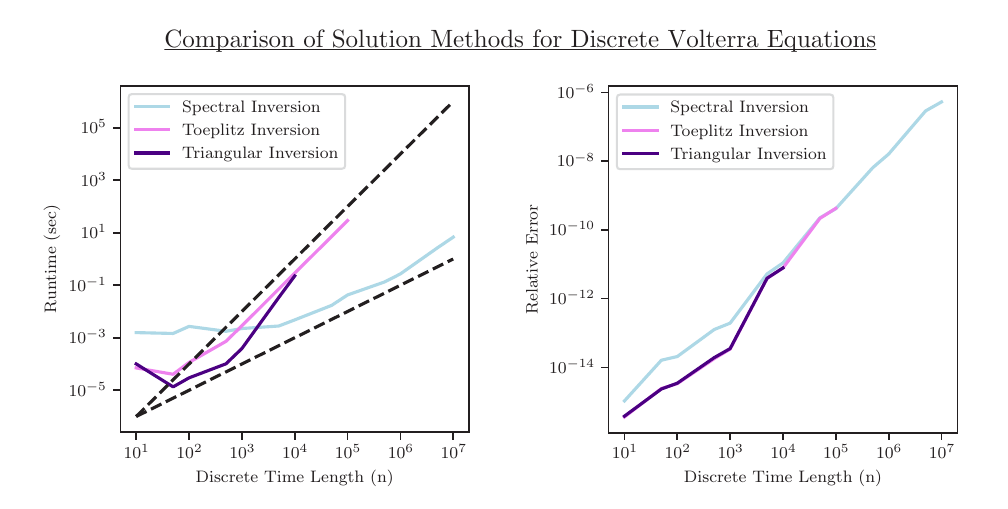}
    \caption[Comparison with Time-Domain Deconvolution of Difference Equations]{Comparison of three different methods for solving the discrete Volterra equation~\eqref{eq:disc_x_to_y}. The spectral method developed in this paper for inversion of discrete Volterra equations (light blue line) scales nearly linearly in $n$, suggesting that it is dominated by the two FFTs it performs. By contrast, Levinson recursion~\cite{wiener1964wiener} (pink line) and forward substitution (purple line) both scale quadratically with $n$. All methods have comparable relative (root squared) error in their reconstruction of $x$.}
    \label{fig:discrete}
\end{figure}

\subsection{Volterra Equations with Fractional Derivatives}\label{sec:fractional}
Next, we show how spectral interconversion allow us to solve Volterra equations with fractional derivatives. \george{As discussed in \cref{sec:history}, equations with fractional kernels are central in materials modeling, and are used to describe a variety of memory-dependent processes where long-term memory is present. Developing better approaches to solve these equations would enable better fractional models to simulate such processes.} We study the equation
\begin{equation}\label{eq:num_fracderiv}
    y(t) = \dot{x}(t) + D^{1/2}x(t) = \dot{x}(t) + \frac{d}{dt}\int_0^t K(t-\tau)x(\tau)\,d\tau,
\end{equation}
discussed in \cref{ex:fractional}, where $D^{1/2}$ is the Riemann--Liouville \dave{fractional} derivative~\cref{eq:frac_deriv}. Here, we have $K(t) = 1/\sqrt{\pi t}$, which can be represented as
\begin{equation}\label{eq:frac_lambda_laplace}
    K(t) = \int_{-\infty}^\infty \frac{e^{-ts}}{s}\lambda(s)ds, \qquad d\lambda(s) = \chi_{[0,\infty)}(s)\pi^{-1}\sqrt{s}\,ds.
\end{equation}
We note that $\lambda\notin\cM_+^{(1)}(\RR)$, so we require the machinery of \cref{sec:regularizedH} to solve this equation; recall from \cref{ex:fractional} that the solution takes the form
\[x(t) = \int_0^\infty E_{1/2}(-(t-\tau)^{1/2})y(\tau)\,d\tau,\]
where $E_{1/2}$ is the Mittag--Leffler kernel~\cite{haubold2011mittag}. In our notation, this corresponds to
\begin{align*}
    J(t) = \cL[\mu](t) = \pi^2E_\frac{1}{2}(-t^\frac{1}{2}), \qquad \mu(s) = \chi_{[0,\infty)}(s)\frac{\pi}{s^\frac{1}{2} + s^\frac{3}{2}}.
\end{align*}

In \cref{fig:fractional}, we compute the same result numerically, using the implementation of $\cB_\mathrm{reg}$ discussed in \cref{sec:B_implem}. We compare the result of our spectral interconversion against a direct implementation of the Mittag--Leffler kernel, using the \texttt{GenML} library in Python~\cite{qu2024genml}; we see that our spectral approach accurately captures both the kernel $J$ and its spectrum $\mu$ accurately, and that it is able to recover $x$ from a stochastic input $y$, generated using the same technique discussed in \cref{subsec:num_trajectories}.

\begin{figure}
    \centering
    \includegraphics[width=\linewidth]{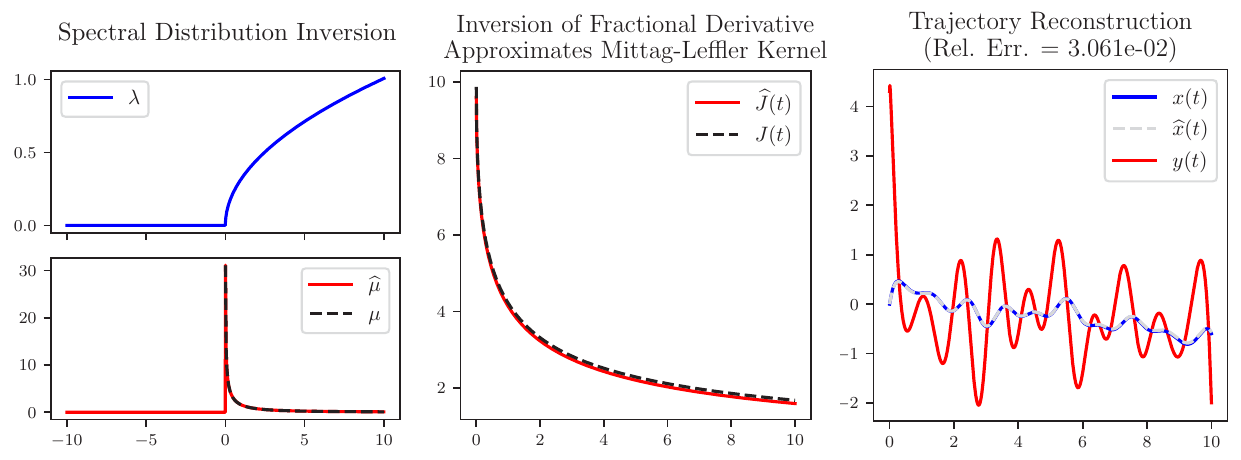}
    \caption[Interconversion of Fractional Differential Equations]{Interconversion of the fractional differential equation~\cref{eq:num_fracderiv} leads to a CM Volterra equation with a Mittag--Leffler kernel $J$, as \dave{furnished} by \cref{prop:rCM}. We see that our spectral approach accurately captures both $J$ and its spectrum $\mu=\cL^{-1}[J]$, and that it accurately recovers $x$ from a stochastic input $y$; the latter is generated using the technique discussed in \cref{subsec:num_trajectories}.}
    \label{fig:fractional}
\end{figure}

\subsection{Quantum Walks on Graphs}\label{sec:quantumwalk} \dave{Recall from \cref{sec:LTI} that the \ref{eq:integrodiff_PD} class can be seen to correspond to partially-observed quantum systems. Here, we investigate an example of practical interest: \emph{quantum walks}. Quantum walks are an analogue of the classical random walk and a basic element of many quantum algorithms---for instance, a quantum walk underlies Grover's search algorithm~\cite{Grover_2001}.}

Let $\cH$ be an $(n+1)$-dimensional complex Hilbert space, with a distinguished basis $|e\rangle,|0\rangle,|1\rangle,...,|n-1\rangle$. We consider the Hamiltonian
\[\hat{H} = \sum_{j=0}^{n-1}\Big(|j\rangle\langle j+1| + |j+1\rangle\langle j|\Big) + |0\rangle\langle e| + |e\rangle\langle 0 |,\]
writing $|n\rangle = |0\rangle$. Up to an affine transformation, $\hat{H}$ is the graph Laplacian of the $(n+1)$-vertex graph depicted in \cref{fig:quantum-search}; the numbered vertices $|0\rangle,...,|n-1\rangle$ form a cycle, and the remaining vertex $|e\rangle$ is attached only to $|0\rangle$. In this setting, we are interested in the following question: given a uniformly random initial state on the cycle, what is the probability $p=p(t)$ of measuring the particle in the state $|e\rangle$?

Let $\psi\in\cH$ be our time-evolving wavefunction. We write $\hat{P} = 1 - |e\rangle\langle e|$ for the projection onto the orthogonal complement of $|e\rangle$, and we decompose
\[\psi = \phi_0|e\rangle + \phi_1,\qquad \phi_0 = \langle e|\psi\rangle,\ \phi_1 = \hat{P}|\psi\rangle.\]
We write 
\[\hat{H}_1 = \hat{P}\hat{H}\hat{P} = \sum_{j=0}^{n-1}\Big(|j\rangle\langle j+1| + |j+1\rangle\langle j|\Big)\]
for the restriction of the Hamiltonian to the $n$-vertex cycle. As discussed in \cref{sec:LTI}, the value $\phi_0$ satisfies the integro-differential equation
\begin{equation}\label{eq:quantumID_1}
    \dot{\phi}_0(t) + \int_0^t\langle e|\hat{H}e^{-i\hat{H}_1(t-\tau)}\hat{H}|e\rangle\phi_0(\tau)\,d\tau = -i\langle e|\hat{H} e^{-i\hat{H}_1t}|\phi_1(t=0)\rangle.
\end{equation}
This equation can be simplified greatly; for one, it is clear that $\hat{H}|e\rangle = |0\rangle$, which allows us to restrict our analysis to the cycle $\cH\setminus\op{span}\{|e\rangle\}$. The restricted Hamiltonian $\hat{H}_1$ has eigenpairs
\[|E_k\rangle \doteq \frac{1}{\sqrt{n}}\sum_{j=0}^{n-1}e^{2\pi i jk/n}|j\rangle,\qquad E_k \doteq \langle E_k|\hat{H}|E_k\rangle = 2\cos(2\pi k/n),\]
for $k=0,...,n-1$. We find $|0\rangle = n^{-1/2}\sum |E_k\rangle$, and thus
\[\langle e|\hat{H}e^{-i\hat{H}_1t}\hat{H}|e\rangle = \langle 0| e^{-i\hat{H}_1t} |0\rangle = \frac{1}{n}\sum_{j=0}^{n-1} e^{-iE_k t}.\]
Moving now to the initial state, we write
\[|\psi(t=0)\rangle = \sum_{j=0}^{n-1}N_j|E_j\rangle,\]
where $(N_0,...,N_{n-1})\in\CC^n$ is uniformly distributed on the sphere $S^{2n-1}\subset\CC^n$. Since the basis $|E_j\rangle$ differs from $|j\rangle$ by a unitary transformation, this initial state also corresponds to a uniform distribution in position space. In any case, \cref{eq:quantumID_1} simplifies as
\begin{equation}\label{eq:phi0_walk}
    \dot{\phi}_0(t) + \frac{1}{n}\sum_{j=0}^{n-1}\int_0^t  e^{-2i\cos(2\pi j/n)(t-\tau)}\phi_0(\tau)\,d\tau = \frac{1}{i\sqrt{n}}\sum_{j=0}^{n-1} N_je^{-2i\cos(2\pi j/n)t}.
\end{equation}
Of course, the probability $p$ of the particle being measured at state $|e\rangle$ can be recovered as $p(t)=|\phi_0(t)|^2$. Even with stochastic forcing, such an equation can be solved exactly using \cref{prop:main_PD}. We show a solution in \cref{fig:quantum-search}, using 1000 independent initializations of the system. For instance, we see that, with $n=9$, we can maximize the 90th percentile curve of $p(t)$ by measuring at $t_c \approx 4.31$.

\begin{figure}
    \centering
    \begin{subfigure}{}
    \begin{tikzpicture}
    \begin{scope}[xshift=12cm]
    \GraphInit[vstyle=Art] 
    \SetGraphArtColor{black!50}{darkgray}
    \grCycle[prefix=a,RA=2/sin(60)]{9}
    \tikzset{AssignStyle/.append style = {below=6pt}}
    \SetGraphArtColor{red}{darkgray}
    \grEmptyCycle[prefix=b,RA=3]{1}
    \AssignVertexLabel[color = red,%
    size = \footnotesize]{b}{$\mathbf{|e\rangle}$}
    \end{scope}
    \Edges[color=darkgray](a0,b0)
    \end{tikzpicture}
    \end{subfigure}
    \begin{subfigure}{}
    \includegraphics[width=\linewidth/2]{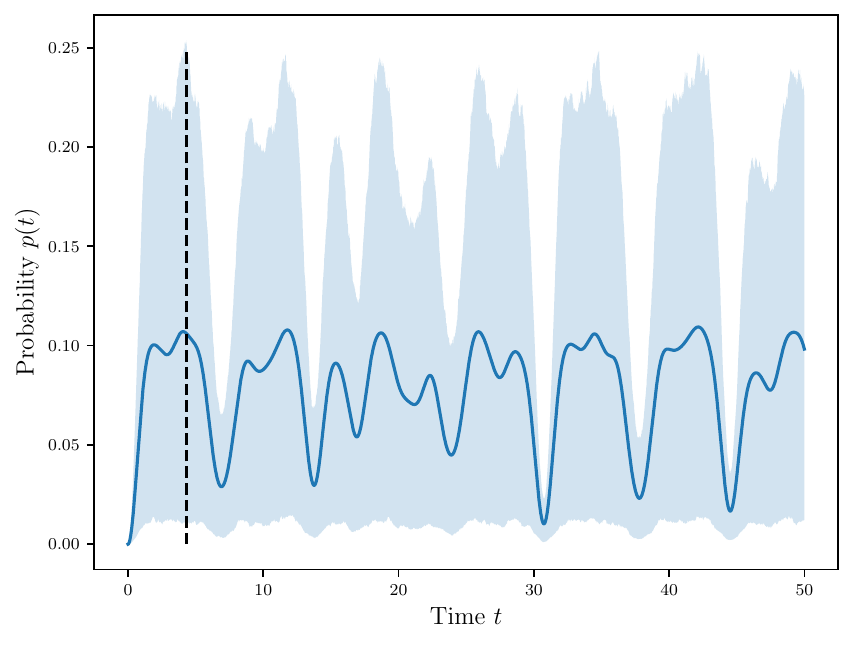}
    \end{subfigure}
    \caption[Marginal Probability of Quantum Walk Search]{Quantum walk on a graph with $n+1$ vertices (here, $n = 9$), with an initial state uniformly distributed on the $n$-point cycle. The component $\phi_0(t)$ of the wavefunction at $|e\rangle$ evolves according to \cref{eq:phi0_walk}, which is an integro-differential equation of the class \ref{eq:integrodiff_PD}. Using \cref{prop:main_PD}, then, we can explicitly recover the probability $p(t) = |\phi_0(t)|^2$ that the particle is measured at $|e\rangle$; maximizing this value is critical to quantum search algorithms. We carry this procedure out for 1000 independent initializations of the system, and report the mean value of $p(t)$ (dark blue line) and 10th/90th percentiles (light blue area) in the right-hand plot. The 90th percentile curve is maximized at time $t_c \approx 4.31$.}
    \label{fig:quantum-search}
\end{figure}

\begin{figure}
    \centering
    \includegraphics[width=0.75\linewidth]{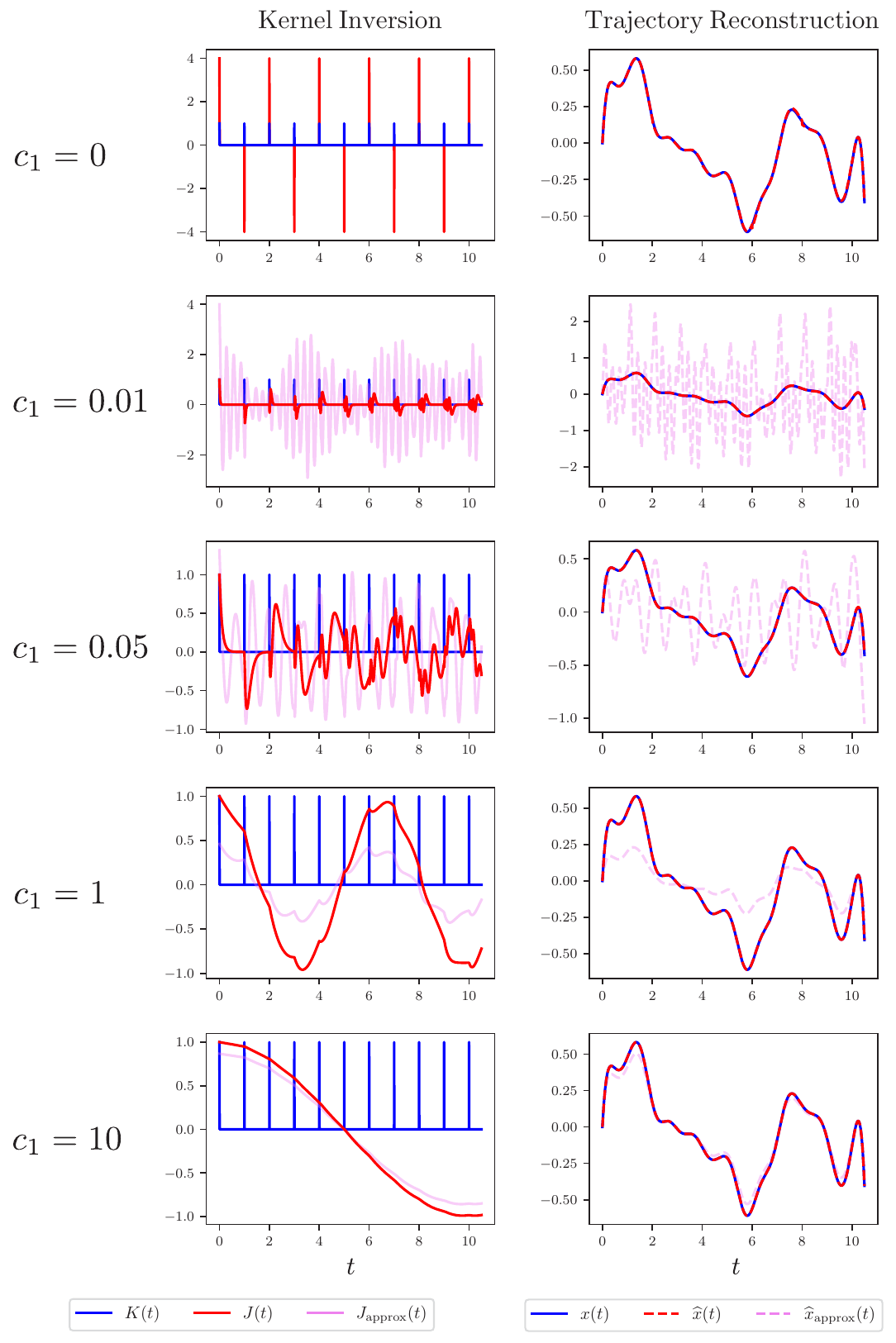}
    \caption[Delay Differential Equation with Infinitely Many Delays]{Asymptotic and numerical solutions to \cref{eq:diraccomb}, with $x_0=0$ and with various choices of $c_1>0$. The top left plot shows the Dirac comb $K(t)$ (blue line) along with its inverse kernel $J(t)$ for $c_1 = 0$ (red line). For the top right plot, we generate a stochastic trajectory $x(t)$, which we compare against the reconstructed solution $\widehat{x}\approx x$ given by our spectral approach. For different values of $c_1 > 0$, our spectral approach allows us to compute both $J(t)$ and $\widehat{x}(t)$ to a high degree of accuracy. We also construct an approximate kernel $J_\mathrm{approx}(t)$ (pink line) for each $c_1$ using the asymptotic estimates derived in \cref{sec:diraccomb}, and denote its reconstruction by $\widehat{x}_\mathrm{approx}$. As expected, these estimates converge as $c_1 \to \infty$. All kernels are rescaled by $c_1/\pi^2$ for clarity.}
    \label{fig:dirac_comb}
\end{figure}

\subsection{Delay Differential Equations with Infinitely Many Delays}\label{sec:diraccomb}
    Consider the equation
    \begin{equation}\label{eq:diraccomb}
        y(t) = c_1\dot{x}(t) + \frac{1}{2}x(t) + x(t-1) + \cdots + x(t-\lfloor t\rfloor),\qquad x(0) = x_0.
    \end{equation}
    As discussed in \cref{ex:easyvolterra}, equations of this form arise in approximating Volterra integro-differential equations with smooth integral kernels. On the other hand, \cref{eq:diraccomb} is itself in the class \ref{eq:integrodiff_rPD}, with
    \dave{\[K(t) = \sum_{k\in\ZZ}\delta(t-k)\,dt = \cF[\lambda](t),\qquad d\lambda(s) = \sum_{k\in\ZZ}\delta(s-2\pi k)\,ds,\]
    a kernel known as the \emph{Dirac comb}. The measure $\lambda$ satisfies}
    \[H_\mathrm{reg}[\lambda](s) = \frac{1}{2\pi}\cot(s/2).\]
    Now, it is easy to verify that $\lambda$ satisfies the criteria of \cref{thm:main_regularizedH_formula}. Indeed, $Z'$ clearly has no limit points away from $-1\in S^1$; but the density of $\psi[\lambda]$ at $-1\in S^1$ is nonzero, so $-1\notin N_0(\widetilde{\lambda})$ for any $c_1 \geq 0$.

    If $c_1=0$, the support of $\mu$ is exactly the zero set $2\pi(\ZZ+\frac{1}{2})$ of $H_\mathrm{reg}[\lambda]$, and the weight of each atom in $\mu$ is
    \[\beta = \pi^2\Bigg(\sum_{j\in\ZZ}\frac{1}{(2\pi)^2(j - 1/2)^2}\Bigg)^{-1} = 4\pi^2.\]
    This implies that $\zeta_1=0$ and that
    \[d\mu(s) = 4\pi^2\sum_{j\in\ZZ}\delta(s-2\pi j-\pi
    )\,ds,\qquad J(t) = \cF[\mu](t) = 4\pi^2\sum_{j\in\ZZ} (-1)^j\delta(s-j)\,ds,\]
    so we find
    \[x(t) = 2y(t) - 4y(t-1) + 4y(t-2) \pm\cdots \pm 4y(t-\lfloor t\rfloor).\]
    This solution is shown in \cref{fig:dirac_comb}. We can handle the integro-differential case similarly; if $c_1\neq 0$, the support of $\mu$ is the set
    \[Z=\{t\in\RR\;|\;\cot(t/2) = 2c_1t\},\]
    or, asymptotically (in the limit $c_1\to\infty$),
    \[\alpha_{0,\pm} = \pm \frac{1}{\sqrt{c_1}} + O(c_1^{-3/2}),\qquad \alpha_j = 2\pi j + \frac{1}{2\pi j c_1} + O(j^{-2}c_1^{-2}) \;\text{for}\;j\neq 0,\]
    with corresponding weights
    \[\beta_{0,\pm} = \frac{\pi^2}{2c_1+\pi^2/3} + O(c_1^{-3}),\qquad \beta_{j} = \frac{\pi^2}{4\pi^2j^2c_1^2+c_1} + O(j^{-4}c_1^{-4})\;\text{for}\;j\neq 0.\]
    We can test the accuracy of this approximation by evaluating
    \[(Q_\mathrm{reg}[\lambda_\mathrm{approx}](z) + \pi^{-1}c_1)Q_\mathrm{reg}[\mu_\mathrm{approx}](z) = 1 + \eps(c_1)\]
    at $z=+i$. With $c_1=2$, we find $|\eps|\approx 0.09$; with $c_1=4$, we find $|\eps|\approx 0.013$. We show the true and approximate inverse kernels $J = \cF[\mu]$ and $J_\mathrm{approx} = \cF[\mu_\mathrm{approx}]$ for several values of $c_1$ in \cref{fig:dirac_comb}. As expected, the asymptotic estimate converges as $c_1$ increases, giving a close estimate for $c_1=10$. This example highlights how our theory can yield significant analytic insight into the solutions of Volterra equations, even when they do not admit \dave{clean} analytic expressions.

\section{Perspectives and Future Directions}
\dave{Although our work covers a broad range of Volterra equations, there remain several interesting directions for future research. For one, \emph{matrix-valued} completely monotone kernels have been studied in some depth by previous authors~\cite{Gripenberg_Londen_Staffans_1990}, and we are optimistic that the perspective offered here might extend such results further. We would also like to develop a better understanding of how interconversion applies to measures with nonzero singular continuous components; such measures are covered by our general theory, but fall outside the scope of our analytical interconversion formulas. Notably, the case of second-kind CM equations has been understood to some degree by Loy \& Anderssen~\cite{loy_anderssen}, by leveraging the operator-theoretic techniques of Aronszajn and Donoghue~\cite{e48c169e-0374-3786-a618-5e3861c40257,https://doi.org/10.1002/cpa.3160180402}}.

Of course, the most restrictive of our hypotheses is that our integral kernels correspond to \emph{non-negative} measures in the spectral domain. Broadly, there are two reasons we need \dave{non-negativity}: to bound the variation norm of the interconverted measure in \cref{lem:firstone}, and to ensure that no poles exist when we take contour integrals in the proof of \cref{thm:main_formula}. If we had \emph{a priori} knowledge of either of these facts (or knowledge of any poles that \emph{do} arise), \dave{the hypothesis of non-negativity may be relaxed}.

\dave{On the applied side, we believe that the basic elements of our spectral theory can be leveraged to solve a broad class of numerical problems outside the present scope. We are presently working on a comprehensive software package, \texttt{Sieve} (\texttt{S}pectral \texttt{I}ntegral transforms, \texttt{E}xponential approximants, and \texttt{V}olterra \texttt{E}quations)~\cite{sieve}, to carry out this program. In short, by extending the tools introduced in \cref{sec:B_implem}, we recover fast, accurate, and noise robust algorithms for several problems of interest: computing continuous and discrete Fourier transforms for arbitrary discontinuous or singular data, competitive with the FFT for smooth data; approximating arbitrary integral kernels with exponential or poly-exponential series; and solving more general classes of Volterra equations. }


\section*{Acknowledgments}
DD would like to thank John Bush and Glenn Flierl (MIT) for allowing him to spend so much time on work with such tenuous connections to his doctoral research. He would also like to thank Glenn Flierl and Paolo Giani (MIT) for helpful discussions regarding the geophysical applications of scalar Volterra equations, \dave{and David Jerison (MIT) for helpful discussions on the broader mathematical context surrounding our work}. Finally, DD would like to acknowledge the support of an NDSEG Graduate Fellowship.

GS would like to thank Andrew Stuart and Kaushik Bhattacharya (Caltech) for helpful discussions on dynamics with memory in the contexts of the Mori--Zwanzig formalism and in applications to materials science, as well as to Lianghao Cao and Margaret Trautner (Caltech) for insightful discussions. He is particularly grateful to Andrew Stuart for encouraging mathematical exploration in this direction considering the well-established history of Volterra integral equations and viscoelastic material models. GS is supported by an NSF Mathematical Sciences Postdoctoral Research Fellowship (MSPRF) under award number 2402074.

\bibliographystyle{amsplain}
\bibliography{refs}

\end{document}